\numberwithin{equation}{section}
\newtheorem{Proposition}[equation]{Proposition}
\newtheorem{Lemma}[equation]{Lemma}
\newtheorem{Theorem}[equation]{Theorem}
\newtheorem{Corollary}[equation]{Corollary}
\theoremstyle{definition}  %% makes all of the theorem environments which follow appear in \rm
\newtheorem{Definition}[equation]{Definition}
\newtheorem{Remark}[equation]{Remark}
\newtheorem{Example}[equation]{Example}
\newtheorem{Conjecture}[equation]{Conjecture}
\newcommand\Comment[2][\relax]{\space\par\medskip\noindent%
   \fbox{\begin{minipage}{\textwidth}\textbf{Comment\ifx\relax#1\else---#1\fi}\newline%
        #2\end{minipage}}\medskip
}
\def\bi{\text{\boldmath$i$}}
\def\bj{\text{\boldmath$j$}}
\def\bk{\text{\boldmath$k$}}
\def\bt{\text{\boldmath$t$}}
\def\bu{\text{\boldmath$u$}}
\def\bc{\text{\boldmath$c$}}
\def\b1{\text{\boldmath$1$}}
\def\ba{\text{\boldmath$a$}}
\def\bb{\text{\boldmath$b$}}
\def\bv{\text{\boldmath$v$}}
\newcommand{\Hom}{\operatorname{Hom}}
\newcommand{\End}{\operatorname{End}}
\newcommand{\im}{\operatorname{im}}
\newcommand{\id}{\operatorname{id}}
\newcommand{\cha}{\operatorname{char}}
\newcommand{\noncusp}{\operatorname{nsc}}
\newcommand{\nbr}{\textup{nbr}}
\newcommand{\rednbr}{\underline{\textup{nbr}}}
\newcommand{\Z}{\mathbb{Z}}
\def\eps{{\varepsilon}}
\def\phi{{\varphi}}
\newcommand{\zx}{{\textsf{x}}}
\newcommand{\zc}{c}
\newcommand{\zz}{z}
\newcommand{\ze}{e}
\newcommand{\za}{a}
\newcommand{\ga}{\gamma}
\newcommand{\Ga}{\Gamma}
\newcommand{\La}{\Lambda}
\newcommand{\al}{\alpha}
\newcommand{\be}{\beta}
\def\Si{\mathfrak{S}}
\newcommand{\de}{\delta}
\newcommand{\De}{\Delta}
\newcommand{\aff}{\textup{aff}}
\newcommand{\kk}{{\mathbbm{k}}}
\def\id{\mathop{\mathrm {id}}\nolimits}
\newcommand{\Ind}{{\mathrm {Ind}}}
\newcommand{\tr}{{\mathrm {tr}}}
\newcommand{\Res}{{\mathrm {Res}}}
\newcommand{\ZZ}{{\mathbb Z}}
\renewcommand{\mod}{\bmod \,}
\newcommand{\Zig}{\textsf{Z}}
\def\h{{\mathfrak h}}
\def\aff{{\operatorname{aff}}}
\def\Par{{\mathscr P}}
\def\ula{{\underline{\lambda}}}
\def\umu{{\underline{\mu}}}
\def\b{\mathfrak{b}}
\def\k{{\mathbbm{k}}}
\def\height{{\operatorname{ht}}}
\def\op{{\mathrm{op}}}
\def\re{{\mathrm{re}}}
\def\im{{\mathrm{im}\,}}
\def\into{{\hookrightarrow}}
\def\mod#1{#1\!\operatorname{-mod}}
\def\iso{\stackrel{\sim}{\longrightarrow}}
\def\HOM{\operatorname{Hom}}
\def\CH{{\operatorname{ch}_q\,}}
\def\DIM{{\operatorname{dim}_q\,}}
\def\words{I}%{{\mathbf W}}
\def\Car{{\tt C}}
\def\cc{{\tt c}}
\newcommand{\bC}{\mathbf{C}}
  \gdef\set#1{\mathinner{\lbrace\,{\mathcode`\|"8000%
  \let|\midvert #1}\,\rbrace}}
\def\midvert{\egroup\mid\bgroup}
\colorlet{darkgreen}{green!50!black}
\tikzset{dots/.style={very thick,loosely dotted},
         greendot/.style={fill,circle,color=darkgreen,inner sep=1.5pt,outer sep=0},
         blackdot/.style={fill,circle,color=black,inner sep=1.5pt,outer sep=0},
         graydot/.style={fill,circle,color=gray,inner sep=1.1pt,outer sep=0}
}
\def\greendot(#1,#2){\node[greendot] at(#1,#2){}}
\def\blackdot(#1,#2){\node[blackdot] at(#1,#2){}}
\def\graydot(#1,#2){\node[graydot] at(#1,#2){}}
\newenvironment{braid}{% sets defaults for the braid diagrams
  \begin{tikzpicture}[baseline=6mm,black,line width=1pt, scale=0.32,
                      draw/.append style={rounded corners},
                      every node/.append style={font=\fontsize{5}{5}\selectfont}]%
  }{\end{tikzpicture}
}
\def\Grid(#1,#2){%  draws a coordinate grid inside a braid diagram
  \draw[very thin,gray,step=2mm] (0,0)grid(#1,#2);
  \draw[very thin,darkgreen,step=10mm] (0,0)grid(#1,#2);
}
\newcommand\Tableau[2][\relax]{
  \begin{tikzpicture}[scale=0.5,draw/.append style={thick,black}]
    \ifx\relax#1\relax%
    \else % shade the boxes in #1
      \foreach\box in {#1} { \filldraw[blue!30]\box+(-.5,-.5)rectangle++(.5,.5); }
    \fi
    \newcount\row\newcount\col
    \row=0
    \foreach \Row in {#2} {
       \col=1
       \foreach\k in \Row {
          \draw(\the\col,\the\row)+(-.5,-.5)rectangle++(.5,.5);
          \draw(\the\col,\the\row)node{\k};
          \global\advance\col by 1
       }
       \global\advance\row by -1
    }
  \end{tikzpicture}
}
\newcommand\YoungDiagram[2][\relax]{
  \begin{tikzpicture}[scale=0.5,draw/.append style={thick,black}]
    \ifx\relax#1\relax%
    \else % shade the boxes in #1
    \foreach\box in {#1} {
      \filldraw[blue!30]\box rectangle ++(1,1);
    }
    \fi
    \newcount\row
    \row=0
    \foreach \col in {#2} {
       \draw(1,\the\row)grid ++(\col,1);
       \global\advance\row by -1
    }
  \end{tikzpicture}
}
\begin{document}

\title[Affine zigzag algebras]{{\bf Affine zigzag algebras and imaginary strata for KLR algebras}}

\author{\sc Alexander Kleshchev}
\address{Department of Mathematics\\ University of Oregon\\
Eugene\\ OR 97403, USA}
\email{klesh@uoregon.edu}

\author{\sc Robert Muth}
\address{Department of Mathematics\\ Tarleton State University
\\
Stephenville\\ TX 76402, USA}
\email{robmuth@gmail.com}

\subjclass[2010]{20C08, 17B10, 05E10}

\thanks{
Supported by the NSF grant DMS-1161094, Max-Planck-Institut and Fulbright Foundation.}

\begin{abstract}
KLR algebras of affine ${\tt ADE}$ types are known to be properly stratified if the characteristic of the ground field is greater than some explicit bound. Understanding the strata of this stratification reduces to semicuspidal cases, which split into real and imaginary  subcases. Real semicuspidal strata are well-understood. We show that the smallest imaginary stratum is Morita equivalent to Huerfano-Khovanov's zigzag algebra tensored with a polynomial algebra in one variable. We introduce {\em affine zigzag algebras} and prove that these are Morita equivalent to arbitrary  imaginary strata if the characteristic of the ground field is greater than the bound mentioned above. 
\end{abstract}

\maketitle

\section{Introduction}

In this paper we work with the KLR algebras $R_\theta$ of Lie type $\Gamma$, which is assumed to be of untwisted affine ${\tt ADE}$ type, over an arbitrary field $\kk$ of characteristic $p\geq 0$. Here $\theta=\sum_{i\in I} n_i\al_i$ is an arbitrary element of the positive part $Q_+$ of the root lattice. 
McNamara \cite{McNAff} shows that these algebras are explicitly properly stratified if $p=0$. McNamara's result is generalized in \cite{KMStrat} to the case where $p>\min\{n_i\mid i\in I\}$. 

Informally, a proper stratification of $R_\theta$ yields a stratification of the category $\mod{R_\theta}$ of finitely generated graded $R_\theta$-modules by the categories $\mod{B_\xi}$ for much simpler algebras $B_\xi$, see \cite{Kdonkin} for details. 
Description of the algebras $B_\xi$ is easily reduced to the semicuspidal cases, which split into  real and imaginary subcases. In the real case we have $B_{n\al}\cong \kk[z_1,\dots,z_n]^{\Si_n}$, the algebra of symmetric polynomials in $n$ variables,  
%is isomorphic to the algebra of symmetric polynomials in $n$ variables, 
but the imaginary case $B_{n\de}$ is not so easy to understand. 

The algebras $R_\theta$ actually have many proper stratifications. These are determined by a choice of a convex preorder on the set $\Phi_+$ of the positive roots of the corresponding affine root system. 
In this paper we always work with a balanced convex preorder as in \cite{KM}. We first prove that $B_\de\cong \kk[z]\otimes \Zig$, where $\Zig$ is the zigzag algebra of \cite{HK} corresponding to the underlying finite Dynkin diagram $\Gamma'$ obtained by deleting the affine node from $\Gamma$,  and $\kk[z]$ is the polynomial algebra. 
McNamara and Tingley \cite{MT} show that this description of $B_\de$ can be obtained for all convex preorders as an application of  their technique of face functors. 

In order to describe the higher imaginary strata, we introduce the main object of study of this paper---the rank $n$ {\em affine zigzag algebra} $\Zig_n^\aff$, which is defined for any connected graph without loops. We show that $B_{n\de}$ is (graded) Morita equivalent to the affine zigzag algebra $\Zig_n^\aff$ corresponding to $\Ga'$ if $p>\min\{n_i\mid i\in I\}$ (or $p=0$). 

To state the results more explicitly, we fix some notation. The  simple roots of our affine root system of untwisted ${\tt ADE}$ type are 
denoted $\al_i$ for $i\in I=\{0,1,\dots,l\}$. We assume that $0$ is  the affine vertex, so that $\al_1,\dots,\al_l$ are the simple roots of the underlying finite root system. Let $\de$ be the null-root. Let $n\in\Z_{>0}$. The {\em semicuspidal algebra} $C_{n\de}$ is a quotient of $R_{n\de}$ defined in such a way that the category of finitely generated semicuspidal $R_{n\de}$-modules is equivalent to the category $\mod{C_{n\de}}$ of finitely generated graded $C_{n\de}$-modules. 

We denote by $\Par_n$ the set of $l$-multipartitions of $n$. 
To every $\ula\in\Par_n$ one associates an irreducible $R_{n\de}$-module $L(\ula)$ and a standard $R_{n\de}$-module $\De(\ula)$, see \cite{KMStrat}. While $L(\ula)$ is finite dimensional, $\De(\ula)$ is always infinite dimensional. We have that $\{L(\ula)\mid \ula\in\Par_n\}$ is a complete irredundant system of irreducible $C_{n\de}$-modules up to isomorphism and degree shift, and $\De(\ula)$ is the projective cover of $L(\ula)$ in the category $\mod{C_{n\de}}$. 

We denote
$$
\De_{n\de}:=\bigoplus_{\ula\in\Par_n} \De(\ula)\quad\text{and}\quad B_{n\de}:=\End_{R_{n\de}}(\De_{n\de})^\op.
$$
Thus, $B_{n\de}$ is the basic algebra Morita equivalent to $C_{n\de}$. It turns out that the parabolically induced module $\De_\de^{\circ n}$, which can be considered as a $C_{n\de}$-module, is always projective in the category $\mod{C_{n\de}}$. However, it is a projective generator in $\mod{C_{n\de}}$ if and only if $p>n$ or $p=0$. So under these assumptions, the endomorphism algebra of $\De_\de^{\circ n}$ is Morita equivalent to $C_{n\de}$ and $B_{n\de}$. Otherwise, it is Morita equivalent to their idempotent truncations. The following result is proved under no restrictions on $p$. In fact, it holds over an arbitrary commutative unital ground ring $\kk$.

\vspace{2 mm}
\noindent
{\bf Theorem A.}
{\em
Assume that the convex preorder on $\Phi_+$ is balanced. Then we have an isomorphism of graded algebras 
$$
\End_{R_{n\de}}(\De_\de^{\circ n})^\op\cong\textup{\(\Zig\)}_n^\aff,
$$
where $\textup{\(\Zig\)}_n^\aff$ is the affine zigzag algebra of type $\Ga'$. In particular, 
$B_\de\cong \kk[z]\otimes \textup{\(\Zig\)}$.
%, where $F[z]$ is the polynomial algebra in the variable $z$ of degree $2$ and $\Zig$ is the zigzag algebra of the finite Lie type obtained by dropping the affine simple root $\al_0$ from the Dynkin diagram. More generally,  $\End_{R_{n\de}}(\De_\de^{\circ n}\op\Zig_n^\aff$. 
}
\vspace{2 mm}

Theorem A appears in the body of the paper as Theorem \ref{mainthm} and Corollary \ref{maincor}. We note that Theorem A has been used in a crucial way in the recent proof of TurnerÕs conjecture on RoCK blocks of symmetric groups \cite{EK}, \cite{E}.

The affine zigzag algebra is actually a special case of a more general affinization construction which we present in \S\ref{SSAffSym}. For any graded symmetric algebra \(A\), free of finite rank over \(\kk\), we construct an associated {\em rank \(n\) affinization} \(\mathcal{H}_n(A)\) (see Definition \ref{AffDef}) and prove some fundamental results about this algebra.

\vspace{2 mm}
\noindent
{\bf Theorem B.} 
{\em 
Let \(A\) be a graded symmetric \(\kk\)-algebra, free of finite rank over \(\kk\). Let \(n \in \ZZ_{>0}\). Let \(\kk[z_1, \ldots, z_n]\) be a polynomial algebra in \(n\) generators, and \(\mathfrak{S}_n\) be the symmetric group of rank \(n\). Then
\begin{enumerate}
\item \(\mathcal{H}_n(A)\) is isomorphic to \(\kk[z_1, \ldots, z_n] \otimes A^{\otimes n} \otimes \kk\mathfrak{S}_n\) as a \(\kk\)-module.
\item \(\mathcal{H}_n(A)\) is free as a left/right \(\kk[z_1, \ldots, z_n]\)-module, free as a left/right \(A^{\otimes n}\)-module, and free as a left/right \(\kk\mathfrak{S}_n\)-module.
\item The center of \(\mathcal{H}_n(A)\) is 
\(
Z(\mathcal{H}_n(A)) = (\kk[z_1, \ldots, z_n] \otimes   Z(A)^{\otimes n})^{\mathfrak{S}_n},
\)
the subalgebra of invariants under the diagonal action of \(\mathfrak{S}_n\).
\item The wreath product \(A \wr \mathfrak{S}_n\) is a homomorphic image of \(\mathcal{H}_n(A)\).
\end{enumerate}
}
Parts (i)--(iv) of Theorem B appear in the body of the paper as Theorem \ref{AffBasis}, Corollary \ref{freeact}, and Proposition  \ref{kappa}. Our affinized symmetric algebras are related to the generalized degenerate affine Hecke algebras constructed by Costello and Grojnowski \cite{CG}, and the affine zigzag algebra is closely related to certain endomorphism algebras associated with the categorification of Heisenberg algebras by Cautis and Licata \cite{CL}, see Remarks \ref{afflit} and \ref{affziglit}.

\subsection*{Acknowledgements} We are grateful to Shunsuke Tsuchioka for alerting us to the connection between affine zigzag algebras and other algebras which have previously appeared in the mathematical literature.

\section{Preliminaries}\label{SStrat}
\subsection{Basic notation}
We will often work over a ground ring $\kk$ which is assumed to be a Noetherian commutative unital ring. When we assume that $\kk$ is a field, we write $p:=\cha \kk$. If $V$ is a free $\kk$-module with basis $\{v_1,\dots,v_n\}$ we denote by $\{v_1^*,\dots,v_n^*\}$ the dual basis of $V^*=\Hom_\kk(V,\kk)$. Our basic notation is as in \cite{KMStrat}, in particular, 
 all algebras, modules, ideals, etc., are assumed to be ($\Z$-)graded. The category of finitely generated graded left modules over a $\kk$-algebra $H$ we denote $\mod{H}$.

We will write \([1,t]:=\{1,2,\ldots, t\}\) for \(t \in \ZZ_{>0}\). The quantum integers $[n]=(q^n-q^{-n})/(q-q^{-1})$ as well as  expressions like $[n]!:=[1][2]\dots[n]$ and $1/(1-q^2)$ are always interpreted as Laurent series in $\Z((q))$. 
The morphisms in this category are all homogeneous degree zero  $H$-homomorphisms, which we denote $\hom_{H}(-,-)$. 
For  $V\in\mod{H}$, let $q^d V$ denote its grading shift by $d$,  so if $V_m$ is the degree $m$ component of $V$, then $(q^dV)_m= V_{m-d}.$ %For a polynomial $a=\sum_{d}a_dq^d\in\Z[q,q^{-1}]$ with non-negative coefficients, we set $a V:=\bigoplus_d(q^d V)^{\oplus a_d}$. 
For $U,V\in \mod{H}$,
we set 
$\HOM_H(U, V):=\bigoplus_{d \in \Z} \HOM_H(U, V)_d,$ 
where
$
\HOM_H(U, V)_d := \hom_H(q^d U, V) .
$
If all graded components $V_m$ of a $\kk$-module $V$ are free of finite rank, we denote by $\DIM V:=\sum_{m\in\Z}(\operatorname{rk} V_m)q^m\in \Z((q))$ the graded rank of $V$. 

If $\mu$ is a usual partition of $n$, we write $n=|\mu|$. 
An {\em $l$-multipartition} of $n$ is a tuple $\umu=(\mu^{(1)},\dots,\mu^{(l)})$ of partitions such that $|\umu|:=|\mu^{(1)}|+\dots+|\mu^{(l)}|=n$. The set of the all $l$-multipartitions of $n$ is denoted by $\Par_n$, and $\Par:=\sqcup_{n\geq 0}\Par_n$. 

\subsection{Symmetric group actions}\label{SnAct}
Let \(\mathfrak{S}_n\) be the symmetric group of rank \(n\), generated by the simple transpositions \(s_1, \ldots, s_{n-1}\). For a \(\kk\)-module \(V\), we define a left action of \(\mathfrak{S}_n\) on \(V^{\otimes n}\) via place permutation:
\begin{align*}
{}^\sigma(v_1 \otimes \cdots \otimes v_n)  := v_{\sigma^{-1} 1} \otimes \cdots \otimes v_{\sigma^{-1} n},
\end{align*}
for all \(\bv = v_1 \otimes \cdots \otimes v_n \in V^{\otimes n}\) and \(\sigma \in \mathfrak{S}_n\). 

We define a left action of \(\mathfrak{S}_n\) on the polynomial algebra \(\kk[z_1, \ldots, z_n]\), via permutation of generators:
\begin{align*}
{}^\sigma\hspace{-0.3mm} z_i := z_{\sigma i}
\end{align*}
for all \(i \in [1,n]\) and \(\sigma \in \mathfrak{S}_n\), and extend this action to all \(f=f(z_1, \ldots, z_n) \in \kk[z_1, \ldots, z_n]\). 

For \(i \in [1,n-1]\), define the {\em divided difference operator} \(\nabla_i\) on \(\kk[z_1, \ldots, z_n]\) by
\begin{align*}
\nabla_i(f):=\frac{f-{}^{s_i}\hspace{-0.5mm}f}{z_i - z_{i+1}}. 
\end{align*}
The following facts about divided differences are well-known and easily checked:
\begin{Lemma}\label{DivDiff} Let \(i \in [1,n-1]\), \(j \in [1,n]\), and \(f \in\kk[z_1, \ldots, z_n]\). Then:
\begin{enumerate}
\item \(\nabla_i(f) = {}^{s_i}\hspace{-0.5mm}(\nabla_i(f)) = -\nabla_i({}^{s_i}\hspace{-0.5mm}f)\)
\item \(\nabla_i(f)=0\) if \({}^{s_i}\hspace{-0.5mm}f = f\)
\item \(\nabla_i(z_jf)-z_{s_ij}\nabla_i(f)=(\delta_{i,j}-\delta_{i+1,j})f\).
\end{enumerate}
\end{Lemma}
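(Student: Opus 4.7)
The plan is to prove all three statements by directly unwinding the definition of $\nabla_i$. First I would rewrite the defining formula as the single identity
\[(z_i - z_{i+1})\,\nabla_i(f) = f - {}^{s_i}f,\]
which will serve as the workhorse for each part. I would also record as a preliminary the compatibility ${}^{s_i}(z_j f) = z_{s_i j}\cdot{}^{s_i}f$, which is immediate from the fact that the $\mathfrak{S}_n$-action on $\kk[z_1,\dots,z_n]$ is by permutation of generators.

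For part (i), apply the involution $s_i$ to both sides of the displayed identity. Since ${}^{s_i}(z_i - z_{i+1}) = -(z_i - z_{i+1})$ and ${}^{s_i}(f - {}^{s_i}f) = -(f - {}^{s_i}f)$, the signs cancel and we obtain $(z_i - z_{i+1})\cdot{}^{s_i}(\nabla_i(f)) = f - {}^{s_i}f$, so ${}^{s_i}(\nabla_i(f)) = \nabla_i(f)$. The second equality $\nabla_i({}^{s_i}f) = -\nabla_i(f)$ is then immediate by substituting ${}^{s_i}f$ for $f$ in the definition, which negates the numerator. Part (ii) follows instantly: if ${}^{s_i}f = f$ the numerator $f - {}^{s_i}f$ vanishes.

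For part (iii), I would compute directly
\[\nabla_i(z_j f) - z_{s_i j}\,\nabla_i(f) = \frac{z_j f - z_{s_i j}\cdot{}^{s_i}f - z_{s_i j}(f - {}^{s_i}f)}{z_i - z_{i+1}} = \frac{(z_j - z_{s_i j})\,f}{z_i - z_{i+1}},\]
using the compatibility above in the first equality. A three-case split on $j$ finishes the proof: for $j=i$ the ratio equals $1 = \delta_{i,i}-\delta_{i+1,i}$; for $j=i+1$ it equals $-1 = \delta_{i,i+1}-\delta_{i+1,i+1}$; and for $j\notin\{i,i+1\}$ we have $s_i j = j$, so the ratio is $0 = \delta_{i,j}-\delta_{i+1,j}$.

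There is no genuine obstacle; the lemma is a routine calculation once the key identity above is in hand. The only point requiring mild care is keeping the two $s_i$-actions straight \emph{i.e.} permutation of polynomial generators on one hand and permutation of the subscript $j$ on the other, which is why I would isolate ${}^{s_i}(z_j f) = z_{s_i j}\cdot{}^{s_i}f$ as a preliminary observation.
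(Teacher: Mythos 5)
Your proof is correct, and the calculations check out in all cases. The paper itself offers no proof of this lemma (it simply asserts the facts are \emph{well-known and easily checked}), so there is no argument to compare against; your direct computation from the definition, reorganized around the identity $(z_i - z_{i+1})\nabla_i(f) = f - {}^{s_i}f$, is exactly the sort of routine verification the authors had in mind. One point worth making explicit if you write this up: cancelling $(z_i - z_{i+1})$ in part (i) is legitimate because $z_i - z_{i+1}$ is a non-zero-divisor in $\kk[z_1,\dots,z_n]$ over any commutative ring $\kk$ (it is monic in $z_i$), which matters here since $\kk$ is only assumed Noetherian commutative, not a domain.
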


\subsection{Affine root system}\label{SSARS}
Let $\Car=(\cc_{ij})_{i,j\in I}$ be a {\em Cartan matrix} of  untwisted affine ${\tt ADE}$ type, see \cite[\S 4, Table Aff 1]{Kac}. So \({\tt C}\) corresponds to one of the following Dynkin diagrams:

\begin{align*}
{\tt A}_\ell^{(1)}
\hspace{0mm}
\begin{braid}\tikzset{baseline=3mm}
\coordinate (0) at (5,1);
\coordinate (1) at (0,-1);
\coordinate (2) at (2,-1);
\coordinate (3) at (4,-1);
\coordinate (4) at (6,-1);
\coordinate (5) at (8,-1);
\coordinate (6) at (10,-1);
\draw [thin, black,shorten <= 0.1cm, shorten >= 0.1cm]   (0) to (1);
\draw [thin, black,shorten <= 0.1cm, shorten >= 0.1cm]   (1) to (2);
\draw [thin, black,shorten <= 0.1cm, shorten >= 0.1cm]   (2) to (3);
\draw [thin, black,shorten <= 0.1cm, shorten >= 0.3cm]   (3) to (4);
\draw [thin, black,shorten <= 0.3cm, shorten >= 0.1cm]   (4) to (5);
\draw [thin, black,shorten <= 0.1cm, shorten >= 0.1cm]   (5) to (6);
\draw [thin, black,shorten <= 0.1cm, shorten >= 0.1cm]   (0) to (6);
\draw (0) node[below]{$0$};
\draw (1) node[below]{$1$};
\draw (2) node[below]{$2$};
\draw (3) node[below]{$3$};
\draw (5) node[below]{$\ell-1$};
\draw (6) node[below]{$\ell$};
\blackdot(5,1);
\blackdot(0,-1);
\blackdot(2,-1);
\blackdot(4,-1);
\blackdot(8,-1);
\blackdot(10,-1);
\draw(6,-1) node{$\cdots$};
\end{braid}
\hspace{1cm}
{\tt D}_\ell^{(1)}
\hspace{2mm}
\begin{braid}\tikzset{baseline=3mm}
\coordinate (0) at (0,1);
\coordinate (1) at (0,-1);
\coordinate (2) at (2,0);
\coordinate (3) at (4,0);
\coordinate (4) at (6,0);
\coordinate (5) at (8,0);
\coordinate (6) at (10,0);
\coordinate (7) at (12,1);
\coordinate (8) at (12,-1);
\draw [thin, black,shorten <= 0.1cm, shorten >= 0.1cm]   (0) to (2);
\draw [thin, black,shorten <= 0.1cm, shorten >= 0.1cm]   (1) to (2);
\draw [thin, black,shorten <= 0.1cm, shorten >= 0.1cm]   (2) to (3);
\draw [thin, black,shorten <= 0.1cm, shorten >= 0.3cm]   (3) to (4);
\draw [thin, black,shorten <= 0.3cm, shorten >= 0.1cm]   (4) to (5);
\draw [thin, black,shorten <= 0.1cm, shorten >= 0.1cm]   (5) to (6);
\draw [thin, black,shorten <= 0.1cm, shorten >= 0.1cm]   (6) to (7);
\draw [thin, black,shorten <= 0.1cm, shorten >= 0.1cm]   (6) to (8);
\draw (0) node[below]{$0$};
\draw (1) node[below]{$1$};
\draw (2) node[below]{$2$};
\draw (3) node[below]{$3$};
\draw (5) node[below]{$\ell-3$};
\draw (6) node[below]{$\ell-2$};
\draw (7) node[below]{$\ell-1$};
\draw (8) node[below]{$\ell$};
\blackdot(0,1);
\blackdot(0,-1);
\blackdot(2,0);
\blackdot(4,0);
\blackdot(8,0);
\blackdot(10,0);
\blackdot(12,1);
\blackdot(12,-1);
\draw(6,0) node{$\cdots$};
\end{braid}
\end{align*}

\begin{align*}
{\tt E}_6^{(1)}
\hspace{0mm}
\begin{braid}\tikzset{baseline=3mm}
\coordinate (0) at (0,4);
\coordinate (2) at (0,2);
\coordinate (4) at (0,0);
\coordinate (3) at (-2,0);
\coordinate (1) at (-4,0);
\coordinate (5) at (2,0);
\coordinate (6) at (4,0);
\draw [thin, black,shorten <= 0.1cm, shorten >= 0.1cm]   (0) to (2);
\draw [thin, black,shorten <= 0.1cm, shorten >= 0.1cm]   (1) to (3);
\draw [thin, black,shorten <= 0.1cm, shorten >= 0.1cm]   (2) to (4);
\draw [thin, black,shorten <= 0.1cm, shorten >= 0.1cm]   (3) to (4);
\draw [thin, black,shorten <= 0.1cm, shorten >= 0.1cm]   (4) to (5);
\draw [thin, black,shorten <= 0.1cm, shorten >= 0.1cm]   (5) to (6);
\draw (0) node[left]{$0$};
\draw (1) node[below]{$1$};
\draw (2) node[left]{$2$};
\draw (3) node[below]{$3$};
\draw (4) node[below]{$4$};
\draw (5) node[below]{$5$};
\draw (6) node[below]{$6$};
\blackdot(0,4);
\blackdot(0,2);
\blackdot(0,0);
\blackdot(2,0);
\blackdot(4,0);
\blackdot(-2,0);
\blackdot(-4,0);
\end{braid}
\hspace{1.5cm}
{\tt E}_7^{(1)}
\hspace{0mm}
\begin{braid}\tikzset{baseline=3mm}
\coordinate (0) at (-6,0);
\coordinate (2) at (0,2);
\coordinate (4) at (0,0);
\coordinate (3) at (-2,0);
\coordinate (1) at (-4,0);
\coordinate (5) at (2,0);
\coordinate (6) at (4,0);
\coordinate (7) at (6,0);
\draw [thin, black,shorten <= 0.1cm, shorten >= 0.1cm]   (0) to (1);
\draw [thin, black,shorten <= 0.1cm, shorten >= 0.1cm]   (1) to (3);
\draw [thin, black,shorten <= 0.1cm, shorten >= 0.1cm]   (2) to (4);
\draw [thin, black,shorten <= 0.1cm, shorten >= 0.1cm]   (3) to (4);
\draw [thin, black,shorten <= 0.1cm, shorten >= 0.1cm]   (4) to (5);
\draw [thin, black,shorten <= 0.1cm, shorten >= 0.1cm]   (5) to (6);
\draw [thin, black,shorten <= 0.1cm, shorten >= 0.1cm]   (6) to (7);
\draw (0) node[below]{$0$};
\draw (1) node[below]{$1$};
\draw (2) node[left]{$2$};
\draw (3) node[below]{$3$};
\draw (4) node[below]{$4$};
\draw (5) node[below]{$5$};
\draw (6) node[below]{$6$};
\draw (7) node[below]{$7$};
\blackdot(-6,0);
\blackdot(0,2);
\blackdot(0,0);
\blackdot(2,0);
\blackdot(4,0);
\blackdot(6,0);
\blackdot(-2,0);
\blackdot(-4,0);
\end{braid}
\end{align*}

\begin{align*}
\hspace{5mm}
{\tt E}_8^{(1)}
\hspace{0mm}
\begin{braid}\tikzset{baseline=3mm}
\coordinate (0) at (10,0);
\coordinate (2) at (0,2);
\coordinate (4) at (0,0);
\coordinate (3) at (-2,0);
\coordinate (1) at (-4,0);
\coordinate (5) at (2,0);
\coordinate (6) at (4,0);
\coordinate (7) at (6,0);
\coordinate (8) at (8,0);
\draw [thin, black,shorten <= 0.1cm, shorten >= 0.1cm]   (0) to (8);
\draw [thin, black,shorten <= 0.1cm, shorten >= 0.1cm]   (1) to (3);
\draw [thin, black,shorten <= 0.1cm, shorten >= 0.1cm]   (2) to (4);
\draw [thin, black,shorten <= 0.1cm, shorten >= 0.1cm]   (3) to (4);
\draw [thin, black,shorten <= 0.1cm, shorten >= 0.1cm]   (4) to (5);
\draw [thin, black,shorten <= 0.1cm, shorten >= 0.1cm]   (5) to (6);
\draw [thin, black,shorten <= 0.1cm, shorten >= 0.1cm]   (6) to (7);
\draw [thin, black,shorten <= 0.1cm, shorten >= 0.1cm]   (7) to (8);
\draw (0) node[below]{$0$};
\draw (1) node[below]{$1$};
\draw (2) node[left]{$2$};
\draw (3) node[below]{$3$};
\draw (4) node[below]{$4$};
\draw (5) node[below]{$5$};
\draw (6) node[below]{$6$};
\draw (7) node[below]{$7$};
\draw (8) node[below]{$8$};
\blackdot(10,0);
\blackdot(0,2);
\blackdot(0,0);
\blackdot(2,0);
\blackdot(4,0);
\blackdot(6,0);
\blackdot(8,0);
\blackdot(-2,0);
\blackdot(-4,0);
\end{braid}
\end{align*}
We have $I=\{0,1,\dots,l\},$ 
where $0$ is the affine vertex, and set
$
I':=\{1,\dots,\l\}=I\setminus\{0\}. 
$ 
Let $\Car'$ be the {\em finite type} Cartan matrix corresponding to the subset $I'\subset I$. 
 
Let $(\h,\Pi,\Pi^\vee)$ be a realization of $\Car$, with simple roots $\{\al_i\mid i\in I\}$ 
standard bilinear form $(\cdot,\cdot)$ on $\h^*$, and $Q_+ := \bigoplus_{i \in I} \Z_{\geq 0} \cdot \al_i$. For $\theta \in Q_+$, we write $\height(\theta)$ for the sum of its 
coefficients when expanded in terms of the $\al_i$'s. 
Let $\Phi$ and $\Phi'$ be the root systems corresponding to \(\Car\) and \(\Car'\) respectively, with $\Phi_+$ and \(\Phi_+'\) being  the corresponding sets of {\em positive} roots. Let 
$\de\in \Phi_+$ 
be the {\em null root}. 
We have 
$\Phi_+=\Phi_+^\im\sqcup \Phi_+^\re
$, where
$\Phi_+^\im=\{n\de\mid n\in\Z_{>0}\}$
and 
\begin{align*}
\Phi_+^\re=\{\be+n\de\mid \be\in  \Phi'_+,\ n\in\Z_{\geq 0}\}\sqcup \{-\be+n\de\mid \be\in  \Phi'_+,\ n\in\Z_{> 0}\}.
\end{align*}

A {\em convex preorder} on $\Phi_+$ is a total preorder $\preceq$ such that for all $\be,\ga\in\Phi_+$ we have:
\begin{enumerate}
\item[$\bullet$]
If $\be\preceq \ga$ and $\be+\ga\in\Phi_+$, then $\be\preceq\be+\ga\preceq\ga$;
\item[$\bullet$]
$\be\preceq\ga$ and $\ga\preceq\be$ if and only if $\be$ and $\ga$ are imaginary or \(\be = \ga\).
\end{enumerate}
A convex preorder is called {\em balanced} if all finite simple roots $\al_i$ with $i\in I'$ satisfy $\al_i\succeq \de$.

\subsection{KLR algebras}\label{SKLR} 
Define the polynomials $\{Q_{ij}(u,v)\in \kk[u,v]\mid i,j\in I\}$  
as follows. If $\Car\neq {\tt A}_1^{(1)}$, 
choose signs $\eps_{ij}$ for all $i,j \in I$ with $\cc_{ij}
< 0$  so that $\eps_{ij}\eps_{ji} = -1$ and set 
%the polynomials $Q_{ij}(u,v)$  are as follows: 
%chosen as follows: first pick a partial order `$\leq$' on $I$ such that $i<j$ or $j<i$ whenever $i\cdot j<0$; then set 
\begin{equation*}\label{EArun}
Q_{ij}(u,v):=
\left\{
\begin{array}{ll}
0 &\hbox{if $i=j$;}\\
1 &\hbox{if $\cc_{ij}=0$;}\\
%-u^{-\cc_{ij}}+v^{-\cc_{ji}} &\hbox{if $\cc_{ij}<0$ and $i> j$;}\\
\eps_{ij}(u^{-\cc_{ij}}-v^{-\cc_{ji}}) &\hbox{if $\cc_{ij}<0$.}
\end{array}
\right.
\end{equation*}
For type ${\tt A}_1^{(1)}$ we set
\begin{equation*}\label{EArun1}
Q_{ij}(u,v):=
\left\{
\begin{array}{ll}
0 &\hbox{if $i=j$;}\\
(u-v)(v-u) &\hbox{if $i\neq j$.}
\end{array}
\right.
\end{equation*}
We point out that we have just made a so-called generic or geometric choice of parameters for KLR algebras. The main results of the paper {\em do not} hold for non-generic choices of parameters, and the imaginary semicuspidal algebra is not isomorphic to the affine zigzag algebra in the non-generic setting.

%\section{  The definition and first properties}\label{SSDefKLR}
Fix %a quiver $\Ga$ with a compatible automorphism and 
$\theta\in Q_+$ of height $n$. Let
$ %\begin{equation*}\label{EWords}
I^\theta=\{\bi=(i_1, \dots, i_n)\in I^n\mid \al_{i_1}+\dots+\al_{i_n}=\theta\}. 
$ %\end{equation*}
For $\bi\in I^\theta$ and $\bj\in I^\eta$, we denote by $\bi\bj\in I^{\theta+\eta}$ the concatenation of $\bi$ and $\bj$. 
The symmetric group $\Si_n$ acts on  $I^\theta$ by place permutations.

The {\em KLR-algebra} $R_\theta$ is an associative graded unital $\kk$-algebra, given by the generators
$%\begin{equation}\label{EKLGens}
\{1_{\bi}\mid \bi\in I^\theta\}\cup\{y_1,\dots,y_{n}\}\cup\{\psi_1, \dots,\psi_{n-1}\}
$ %\end{equation}
and the following relations for all $\bi,\bj\in I^\theta$ and all admissible $r,t$:
\begin{align}
1_{\bi}  1_{\bj} = \de_{\bi,\bj} 1_{\bi} ,
\quad{\textstyle\sum_{\bi \in I^\theta}} 1_{\bi}  = 1;\label{KLRidem}
\end{align}
\begin{align}
y_r 1_{\bi}  = 1_{\bi}  y_r;\quad y_r y_t = y_t y_r;\label{KLRy}
\end{align}
\begin{align}
\psi_r 1_{\bi}  = 1_{s_r\bi} \psi_r;\label{KLRpsiidem}
\end{align}
\begin{align}
(y_t\psi_r-\psi_r y_{s_r(t)})1_{\bi}  
= \de_{i_r,i_{r+1}}(\de_{t,r+1}-\de_{t,r})1_{\bi};
\label{KLRypsi}
\end{align}
\begin{align}
\psi_r^21_{\bi}  = Q_{i_r,i_{r+1}}(y_r,y_{r+1})1_{\bi}; 
\label{KLRpsi2}
\end{align}
\begin{align} 
\psi_r \psi_t = \psi_t \psi_r\qquad (|r-t|>1);\label{KLRpsi}
\end{align}
\begin{align}
(\psi_{r+1}\psi_{r} \psi_{r+1}-\psi_{r} \psi_{r+1} \psi_{r}) 1_{\bi}  
=
\de_{i_r,i_{r+2}}\frac{Q_{i_r,i_{r+1}}(y_{r+2},y_{r+1})-Q_{i_r,i_{r+1}}(y_r,y_{r+1})}{y_{r+2}-y_r}1_{\bi}.
\label{KLRbraid}
\end{align}
The {\em grading} on $R_\theta$ is defined by setting 
$
\deg(1_{\bi} )=0$, $\deg(y_r1_{\bi} )=2$, and $\deg(\psi_r 1_{\bi} )=-{\tt c}_{i_r,i_{r+1}}.
$

For any $V\in\mod{R_\theta}$, its {\em formal character} is $\CH V:=\sum_{\bi\in \words^\theta}(\DIM 1_{\bi} V)\cdot\bi\in\bigoplus_{\bi\in\words^\theta}\Z((q))\cdot \bi$. 
We refer to $1_{\bi} V$ as the {\em $\bi$-word space} of $V$ and to its vectors as {\em vectors of word}~$\bi$.

For $\theta_1,\dots,\theta_m\in Q^+$ and $\theta=\theta_1+\dots+\theta_m$, we have a parabolic subalgebra $R_{\theta_1,\dots,\theta_m}\subseteq R_\theta,$ and the corresponding (exact)
induction functor 
$$\Ind_{\theta_1,\dots,\theta_m}:=R_\theta 1_{\theta_1, \ldots, \theta_m} \otimes_{R_{\theta_1, \ldots, \theta_m}} -:
\mod{R_{\theta_1,\dots,\theta_m}}\to\mod{R_{\theta}}.
$$ 
For $V_1\in\mod{R_{\theta_1}}, \dots, V_m\in\mod{R_{\theta_m}}$, we denote 
$$V_1\circ\dots\circ V_m:=\Ind_{\theta_1,\dots,\theta_m} V_1\boxtimes \dots\boxtimes V_m.$$ 
Given also $W_r\in\mod{R_{\theta_r}}$ and $f_r\in\Hom_{R_{\theta_r}}(V_r,W_r)$ for $r=1,\dots,m$, we denote 
$$
f_1\circ \dots \circ f_m:=\Ind_{\theta_1,\dots,\theta_m}(f_1\otimes \dots\otimes f_m): V_1\circ\dots\circ V_m\to W_1\circ\dots\circ W_m.
$$

We also have the restriction functors: 
$$
\Res_{\theta_1,\dots,\theta_m}:= 1_{\theta_1, \ldots, \theta_m} R_{\theta}
\otimes_{R_{\theta}} -:\mod{R_{\theta}}\rightarrow \mod{R_{\theta_1, \ldots, \theta_m}}.
$$

\subsection{Diagrammatics for KLR algebras} \label{KLRdiag}
It is often useful in computations to work with the diagrammatic presentation of the KLR algebra as provided in \cite{KL1}; see that paper for a fuller explanation of the diagrammatic presentation. We will make extensive use of KLR diagrammatics in \S\ref{SStratum} and \S\ref{diagproofs}. The diagrammatic treatment for types \({\tt C} \neq {\tt A}^{(1)}_1\) is given below; in this paper we will always treat the idiosyncratic type \({\tt A}_1^{(1)}\) calculations symbolically, so we do not provide those diagrammatics here.

We depict the (idempotented) generators of \(R_\theta\) as the following diagrams:
\begin{align*}
1_{\bi} = 
\begin{braid}\tikzset{baseline=0mm}
\draw(1,1) node[above]{$i_1$}--(1,-1);
\draw(2,1) node[above]{$i_2$}--(2,-1);
\draw(3,1) node[above]{$\cdots$};
\draw(4,1) node[above]{$i_n$}--(4,-1);
\end{braid}
\hspace{15mm}
y_{r}1_{\bi} = 
\begin{braid}\tikzset{baseline=0mm}
\draw(1,1) node[above]{$i_1$}--(1,-1);
\draw(2,1) node[above]{$\cdots$};
\draw(3,1) node[above]{$i_r$}--(3,-1);
\draw(4,1) node[above]{$\cdots$};
\draw(5,1) node[above]{$i_n$}--(5,-1);
\blackdot(3,0);
\end{braid}
\hspace{15mm}
\psi_r1_{\bi} = 
\begin{braid}\tikzset{baseline=0mm}
\draw(1,1) node[above]{$i_1$}--(1,-1);
\draw(2,1) node[above]{$\cdots$};
\draw(3,1) node[above]{$i_r$}--(4.5,-1);
\draw(4.5,1) node[above]{$i_{r+1}$}--(3,-1);
\draw(5.5,1) node[above]{$\cdots$};
\draw(6.5,1) node[above]{$i_n$}--(6.5,-1);
\end{braid}
\end{align*}
Note that `right-to-left' in the symbolic presentation is to be read as `top-to-bottom' in the diagrammatic presentation. Then \(R_\theta\) is spanned by planar diagrams that look locally like these generators, equivalent up to the usual isotopies (described in \cite{KL1}). In particular, dots can be freely isotoped along strands, provided they don't pass through crossings. Multiplication of diagrams is given by stacking vertically, and products are zero unless labels for strands match. The defining local relations for \(R_\theta\) are drawn as follows:
\begin{align*}
\hspace{-0.5cm}
\begin{braid}\tikzset{scale=0.8, baseline=0mm}
\draw(1,1) node[above]{$i$}--(2,0)--(1,-1);
\draw(2,1) node[above]{$j$}--(1,0)--(2,-1);
\end{braid}
=
\begin{cases}
\varepsilon_{ij}\left(\begin{braid}\tikzset{scale=0.8,baseline=0mm}
\draw(1,1) node[above]{$i$}--(1,-1);
\draw(2,1) node[above]{$j$}--(2,-1);
\blackdot(1,0);
\end{braid}
\hspace{-0.8mm}
-
\hspace{-0.8mm} 
\begin{braid}\tikzset{scale=0.8,baseline=0mm}
\draw(1,1) node[above]{$i$}--(1,-1);
\draw(2,1) node[above]{$j$}--(2,-1);
\blackdot(2,0);
\end{braid}
\right)
&
{\tt c}_{i,j}=-1;\\
\\
0
&
i=j;\\
\\
\begin{braid}\tikzset{scale=0.8,baseline=0mm}
\draw(1,1) node[above]{$i$}--(1,-1);
\draw(2,1) node[above]{$j$}--(2,-1);
\end{braid}
&
\textup{otherwise},
\end{cases}
\hspace{1cm}
\begin{braid}\tikzset{scale=0.8,baseline=0mm}
\draw(0,1) node[above]{$i$}--(2,-1);
\draw(1,1) node[above]{$j$}--(2,0)--(1,-1);
\draw(2,1) node[above]{$k$}--(0,-1);
\end{braid}
\hspace{-1.2mm} 
-
\hspace{-1.2mm} 
\begin{braid}\tikzset{scale=0.8,baseline=0mm}
\draw(0,1) node[above]{$i$}--(2,-1);
\draw(1,1) node[above]{$j$}--(0,0)--(1,-1);
\draw(2,1) node[above]{$k$}--(0,-1);
\end{braid}
=
\begin{cases}
\varepsilon_{ij}\begin{braid}\tikzset{scale=0.8,baseline=0mm}
\draw(0,1) node[above]{$i$}--(0,-1);
\draw(1,1) node[above]{$j$}--(1,-1);
\draw(2,1) node[above]{$i$}--(2,-1);
\end{braid}&
i= k, {\tt c}_{i,j}=-1;\\
\\
0
&
\textup{otherwise},
\end{cases}
\end{align*}
\begin{align*}
\begin{braid}\tikzset{baseline=0mm}
\draw(1,1) node[above]{$i$}--(2,-1);
\draw(2,1) node[above]{$j$}--(1,-1);
\blackdot(1.75,-0.5);
\end{braid}
-
\begin{braid}\tikzset{baseline=0mm}
\draw(1,1) node[above]{$i$}--(2,-1);
\draw(2,1) node[above]{$j$}--(1,-1);
\blackdot(1.25,0.5);
\end{braid}
=
\delta_{i,j}
\begin{braid}\tikzset{baseline=0mm}
\draw(1,1) node[above]{$i$}--(1,-1);
\draw(2,1) node[above]{$i$}--(2,-1);
\end{braid}
=
\begin{braid}\tikzset{baseline=0mm}
\draw(1,1) node[above]{$i$}--(2,-1);
\draw(2,1) node[above]{$j$}--(1,-1);
\blackdot(1.75,0.5);
\end{braid}
-
\begin{braid}\tikzset{baseline=0mm}
\draw(1,1) node[above]{$i$}--(2,-1);
\draw(2,1) node[above]{$j$}--(1,-1);
\blackdot(1.25,-0.5);
\end{braid}.
\end{align*}

\subsection{Semicuspidal modules}\label{SSSemiCusp}

We fix a convex preorder $\preceq$ on $\Phi_+$ and $n\in\Z_{>0}$. In this paper we will only deal with imaginary semicuspidal modules.  An $R_{n\de}$-module $V$ is called {\em (imaginary)  semicuspidal}\, if $\theta,\eta\in Q_+$, $\theta+\eta=n\de$, and $\Res_{\theta,\eta}V\neq 0$ imply that $\theta$ is a sum of positive roots  $\preceq\de$ and $\eta$ is a sum of positive roots $\succeq\de$. 

Words $\bi\in I^{n\de}$ which appear in some semicuspidal $R_{n\de}$-module are called {\em semicuspidal words}. We denote by $I^{n\de}_{\noncusp}$ the set of non-semicuspidal words, and let
$
1_{\noncusp}%=1_{\noncusp,\,n\al}
:=\sum_{\bi\in I^{n\de}_{\noncusp}}1_{\bi}.
$
Following \cite{McNAff}, define the {\em semicuspidal algebra} 
\begin{equation}\label{ESCA}
C_{n\de}=C_{n\de,\kk}:=R_{n\al}/R_{n\al} 1_{\noncusp}R_{n\al}.
\end{equation}
Then the category of finitely generated semicuspidal $R_{n\al}$-modules is equivalent to the category $\mod{C_{n\al}}$.

From now on until the end of this subsection we assume that $\kk$ is a field. The irreducible $C_{n\de}$-modules are parametrized canonically by the \(l\)-multipartitions $\ula\in\Par_n$, see \cite{KM,McNAff,TW,KMStrat}. The irreducible corresponding to $\ula$ is denoted by $L(\ula)$, and its projective cover in $\mod{C_{n\de}}$ is denoted $\De(\ula)$.

For the case $n=1$, we use a special notation. To every $i\in I'$ we associate the multipartition $\mu(i)\in\Par_1$ with the only non-trivial partition in the $i$th component. 
This gives a bijection $I'\to \Par_1$. We denote 
$$
L_{\de,i}:=L(\mu(i)),\quad \De_{\de,i}:=\De(\mu(i)) \qquad(i\in I').
$$
Then $\De_\de:=\bigoplus_{i\in I'}\De_{\de,i}$ is a projective generator in $\mod{C_\de}$. 
In \S\ref{SStratum} we give more information on these modules and construct their forms over $\kk$ which is not necessarily a field.

\section{Affinizations of symmetric algebras}\label{SAff}

\subsection{Symmetric algebras}\label{symsec}
Let \(\kk\) be a commutative Noetherian ring, and let \(A\) be a \(\ZZ\)-graded, unital, associative \(\kk\)-algebra, free of finite rank over \(\kk\). We consider \(A \otimes A\) as an \((A,A)\)-bimodule via the action \(a_1  \cdot (b_1 \otimes b_2)  \cdot a_2= a_1b_1 \otimes b_2a_2\). Note then that the multiplication map \(m: A \otimes A \to A\) is a homogeneous degree zero \((A,A)\)-bimodule homomorphism. We consider \(A^*=\bigoplus_{t\in \ZZ} (A_t)^*\) as a graded \((A,A)\)-bimodule via the action \((a_1 \cdot f \cdot a_2)(b) = f(a_2ba_1)\), where the grading is given by considering elements of \((A_t)^*\) to have degree \(-t\).

We say that \(A\) is {\em graded symmetric} if it is equipped with an \((A,A)\)-bimodule isomomorphism \(\varphi: A \xrightarrow{\sim} A^*\) which is homogeneous of degree \(-d\), for some \(d \in \ZZ\). For the rest of this section we assume that \(A\) is graded symmetric; the trivial grading \(A=A_0\) is of course permitted. 

We may then define an \((A,A)\)-bimodule homomorphism 
\begin{align*}
\Delta:= (\varphi^{-1} \otimes \varphi^{-1})\circ m^*\circ\varphi : A \to A \otimes A,
\end{align*}
which is homogeneous of degree \(d\). We call \(\Delta(1)\) the {\em distinguished element} of \(A \otimes A\). The distinguished element is homogeneous of degree \(d\), and is symmetric and intertwines elements of \(A \otimes A\) in the following sense:

\begin{Lemma}\label{tauDelta} For a \(\kk\)-module \(V\), let \(\tau_{V,V}: V \otimes V \to V \otimes V\) be the transposition map given by \(\tau_{V,V}(v \otimes w) = w \otimes v\). 
\begin{enumerate}
\item \(\tau_{A,A}(\Delta(1)) = \Delta(1)\), and
\item \(\ba \Delta(1) = \Delta(1) \tau_{A,A}(\ba)\), for all \(\ba \in A \otimes A\).
\end{enumerate}
\end{Lemma}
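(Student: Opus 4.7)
The plan is to first prove part (i) by expanding $\Delta(1)$ in a $\kk$-basis and exploiting a trace-like property of the symmetrizing form $\lambda := \varphi(1)$, and then to deduce part (ii) from (i) together with the bimodule property of $\Delta$.

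For (i), I would fix a homogeneous $\kk$-basis $\{v_i\}$ of $A$. The bimodule property of $\varphi$ immediately yields $\varphi(a)(b) = (\varphi(1)\cdot a)(b) = \lambda(ab)$ and symmetrically $\varphi(a)(b) = \lambda(ba)$, so $\lambda$ is symmetric: $\lambda(ab) = \lambda(ba)$. Letting $\{\hat v_i\}$ be the basis characterized by $\varphi(\hat v_i) = v_i^*$, equivalently $\lambda(\hat v_i v_j) = \delta_{ij}$, I would unwind the definition of $\Delta(1) = (\varphi^{-1}\otimes \varphi^{-1})(m^*(\lambda))$ using the identification $(A\otimes A)^* \cong A^*\otimes A^*$ (valid since $A$ is free of finite rank over $\kk$) to obtain
\[
\Delta(1) = \sum_{i,j} \lambda(v_i v_j)\, \hat v_i \otimes \hat v_j.
\]
Symmetry of $\lambda$ then yields $\tau_{A,A}(\Delta(1)) = \Delta(1)$ after relabeling summation indices.

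For (ii), the first observation is that $\tau_{A,A}$ is an algebra automorphism of $A\otimes A$ with its componentwise multiplication. The bimodule property of $\Delta$ gives $(a\otimes 1)\Delta(1) = a\cdot \Delta(1) = \Delta(a) = \Delta(1)\cdot a = \Delta(1)(1\otimes a)$, where the outer products are taken in the algebra $A\otimes A$ and the dotted products are the $(A,A)$-bimodule actions. Applying $\tau_{A,A}$ to this identity and invoking part (i) yields the companion identity $(1\otimes a)\Delta(1) = \Delta(1)(a\otimes 1)$. Combining the two for $\ba = a_1\otimes a_2$ produces
\[
(a_1\otimes a_2)\Delta(1) = (a_1\otimes 1)\Delta(1)(a_2\otimes 1) = \Delta(1)(1\otimes a_1)(a_2\otimes 1) = \Delta(1)(a_2\otimes a_1),
\]
and $\kk$-bilinearity finishes the argument. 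The main thing to be careful about throughout is bookkeeping: the construction of $\Delta$ uses the $(A,A)$-bimodule structure on $A\otimes A$ (left action on the first tensorand, right action on the second), whereas the statement of (ii) involves the componentwise algebra multiplication; keeping these two structures cleanly distinguished and tracking how $\tau_{A,A}$ interacts with each is really the only obstacle.
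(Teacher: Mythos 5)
Your proof is correct. The substance is the same as the paper's — part (i) reduces to the symmetry $\lambda(ab)=\lambda(ba)$ of the trace form $\lambda=\varphi(1)$, and part (ii) uses the bimodule property of $\Delta$ together with (i) — but you take a slightly different route in both halves. For (i), the paper argues purely functorially: it shows $m^*\circ\varphi(1)=\tau_{A^*,A^*}\circ m^*\circ\varphi(1)$ by evaluating on arbitrary $x\otimes y$ and then transports this through $\varphi^{-1}\otimes\varphi^{-1}$ via the intertwining relation $\tau_{A,A}\circ(\varphi^{-1}\otimes\varphi^{-1})=(\varphi^{-1}\otimes\varphi^{-1})\circ\tau_{A^*,A^*}$, never choosing a basis. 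Your basis expansion $\Delta(1)=\sum_{i,j}\lambda(v_iv_j)\,\hat v_i\otimes\hat v_j$ makes the symmetry concrete and is arguably easier to read, at the cost of invoking the identification $(A\otimes A)^*\cong A^*\otimes A^*$ (which you correctly note requires $A$ to be finite free). For (ii), your observation that $\tau_{A,A}$ is an algebra automorphism for the componentwise product shortcuts the computation: you apply $\tau_{A,A}$ to the identity $(a\otimes 1)\Delta(1)=\Delta(1)(1\otimes a)$ and immediately get $(1\otimes a)\Delta(1)=\Delta(1)(a\otimes 1)$, whereas the paper derives the second identity from scratch by a long chain of rewrites using an explicit expansion $\Delta(1)=\sum_i x_1^{(i)}\otimes x_2^{(i)}$. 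Your version is cleaner; both are sound. One small point worth making explicit in your writeup: the dual basis $\{\hat v_i\}$ is characterized a priori by $\lambda(v_j\hat v_i)=\delta_{ij}$ (unwinding $(a\cdot f)(b)=f(ba)$); the form $\lambda(\hat v_iv_j)=\delta_{ij}$ you quote only becomes equivalent once symmetry of $\lambda$ is in hand, which you do establish first.
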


\begin{proof} 
For \(x,y \in A\) we have
\begin{align*}
(m^* \circ \varphi(1))(x \otimes y) &= \varphi(1)(xy) = (y \cdot \varphi(1))(x) = \varphi(y)(x) = (\varphi(1) \cdot y)(x)\\
&= \varphi(1)(yx) = (m^* \circ \varphi(1))(y \otimes x) = (m^* \circ \varphi(1))(\tau_{A,A}(x \otimes y))\\
&= (\tau_{A,A}^* \circ m^*  \circ \varphi(1))(x \otimes y) = (\tau_{A^*, A^*} \circ m^* \circ \varphi(1))(x \otimes y),
\end{align*}
Thus \(m^* \circ \varphi(1) = \tau_{A^*, A^*} \circ m^* \circ  \varphi(1)\), and, since \(\tau_{A,A} \circ (\varphi^{-1} \otimes \varphi^{-1}) = (\varphi^{-1} \otimes \varphi^{-1}) \circ \tau_{A^*, A^*}\), result (i) follows.

Now assume \(\Delta(1) = \sum_{i} x^{(i)}_1 \otimes x^{(i)}_2\), and let \(a \in A\). Then, using (i) and the fact that \(\Delta: A \to A \otimes A\) is a bimodule homomorphism, we have
\begin{align*}
(a \otimes 1)\Delta(1) = a \cdot \Delta(1) = \Delta(a) = \Delta(1)\cdot a = \Delta(1)(1 \otimes a),
\end{align*}
and
\begin{align*}
(1 \otimes a) \Delta(1) &= \sum_{i} x^{(i)}_1 \otimes ax^{(i)}_2
= \tau_{A,A}\left(\sum_{i} ax^{(i)}_2 \otimes x^{(i)}_1\right)
= \tau_{A,A}\left( a \cdot \tau_{A,A}(\Delta(1))\right)\\
&=\tau_{A,A}\left( a \cdot \Delta(1)\right)
=\tau_{A,A}\left( \Delta(a) \right)
=\tau_{A,A}\left(\Delta(1) \cdot a \right)\\
&=\tau_{A,A}\left(\sum_{i} x^{(i)}_1 \otimes x^{(i)}_2a\right)
= \sum_{i} x^{(i)}_2a \otimes x^{(i)}_1
=\left( \sum_{i} x^{(i)}_2 \otimes x^{(i)}_1\right)(a \otimes 1)\\
&= \tau_{A,A}(\Delta(1))(a \otimes 1)
= \Delta(1)(a \otimes 1),
\end{align*}
completing the proof of (ii).
\end{proof}

\subsection{Affinization}\label{SSAffSym}
Let \(n \in \ZZ_{> 0}\). The grading on \(A\) induces a grading on the algebra \(A^{\otimes n}\). For \(1\leq t<u \leq n\), let \(\iota_{t,u}: A^{\otimes 2} \to A^{\otimes n}\) be the algebra homomorphism given by
\begin{align*}
\iota_{t,u}(a_1 \otimes a_2) &= 1 \otimes \cdots \otimes 1 \otimes a_1 \otimes 1 \otimes \cdots \otimes 1 \otimes a_2 \otimes 1 \otimes \cdots \otimes 1,
\end{align*}
where \(a_1\) appears in the \(t\)th slot, and \(a_2\) appears in the \(u\)th slot. Then we define \(\Delta_{t,u}:=\iota_{t,u}\circ\Delta(1) \in A^{\otimes n}\).

Let \(\kk[z_1, \ldots, z_n]\) be the graded polynomial algebra with generators \(z_1, \ldots, z_n\) in degree \(d = \deg(\Delta(1))\). Let \(\kk\mathfrak{S}_n\) be the symmetric group algebra over \(\kk\), concentrated in degree zero.

\begin{Definition}\label{AffDef} 
We define \(\mathcal{H}_n(A)\), the {\em rank \(n\) affinization of \(A\)}, to be the free product of \(\kk\)-algebras 
\begin{align*}
\kk[z_1, \ldots, z_n] \star A^{\otimes n} \star \kk\mathfrak{S}_n,
\end{align*}
subject to the following commutation relations:
\begin{align}
 \ba z_j &= z_j \ba &\textup{ for all for all \(j \in [1,n]\),  \(\ba \in A^{\otimes n}\)}; \label{AZ}\\
 s_i \ba &= {}^{s_i}\hspace{-0.3mm}\ba s_i & \textup{for all \(i \in [1,n-1]\), \(\ba \in A^{\otimes n}\)};\label{AS}\\
s_iz_j - z_{s_ij}s_i &= (\delta_{i,j} - \delta_{i+1,j})\Delta_{i,i+1} &\textup{for all \(i \in [1,n-1]\), \(j \in [1,n]\)}.\label{SZ}
\end{align}
Note that the relations are homogeneous, so \(\mathcal{H}_n(A)\) inherits a graded structure from the algebras \(A^{\otimes n}\), \(\kk[z_1, \ldots, z_n]\) and \(\kk\mathfrak{S}_n\).
\end{Definition}

There are algebra homomorphisms
\begin{align*}
\iota^{(1)}: \kk[z_1, \ldots, z_n] \to \mathcal{H}_n(A),
\hspace{10mm}
\iota^{(2)}: A^{\otimes n} \to \mathcal{H}_n(A),
\hspace{10mm}
\iota^{(3)}: \kk\mathfrak{S}_n \to \mathcal{H}_n(A).
\end{align*}
Abusing notation, we use the same labels for elements of the domain of these maps as for their images in \(\mathcal{H}_n(A)\); however, Proposition \ref{AffBasis} will assert that this abuse should result in no significant confusion, as \(\iota^{(1)}, \iota^{(2)}, \iota^{(3)}\) are in fact embeddings.

\begin{Remark}\label{afflit}
If one takes \(A=\kk\), then \(\mathcal{H}_n(A)\) yields the {\em degenerate affine Hecke algebra}, so Definition \ref{AffDef} can be viewed as a generalization of this construction; see \cite{KLin}. Relatedly,  Costello and Grojnowski \cite{CG} construct a {\it Cherednik algebra} (or {\it degenerate double affine Hecke algebra}) \(\overline{\mathcal{H}}_n\) associated to a  commutative Frobenius algebra \(H\). Here we have extended their construction to the case of non-commutative symmetric algebras by making a few simplifying modifications to the  last two paragraphs of \cite[\S4.2]{CG}. Explicitly, we take \(H_\Gamma = A\), \(u=1\), and replace \cite[Definition 4.2.1]{CG} with the trivial action \(y_l(\Theta) = 0\). Related generalizations of degenerate affine Hecke algebras in the noncommutative case have also been studied by Tsuchioka \cite{Tsu}.
\end{Remark}

\subsection{Bases for affinized symmetric algebras} In this section we prove freeness properties of \(\mathcal{H}_n(A)\).

\begin{Lemma}\label{Vdef}
Let \(V\) be the graded \(\kk\)-module \(V:=\kk[z_1, \ldots, z_n] \otimes A^{\otimes n} \otimes \kk\mathfrak{S}_n\). Defining an action of \(\mathcal{H}_n(A)\) on \(V\) via
\begin{align*}
z_i \cdot (f \otimes \bb \otimes w) &= z_i f \otimes \bb \otimes w,\\
\ba \cdot (f \otimes \bb \otimes w) &= f \otimes \ba\bb \otimes w,\\
s_j \cdot (f \otimes \bb \otimes w) &= {}^{s_j}\hspace{-0.5mm}f \otimes {}^{s_j}\hspace{-0.3mm}\bb \otimes s_jw + \nabla_j(f) \otimes \Delta_{j,j+1} \bb \otimes w,
\end{align*}
for all \(i \in [1,n]\), \(j \in [1,n-1]\), \(f \in \kk[z_1, \ldots, z_n]\), \(\ba, \bb \in A^{\otimes n}\), and \(w \in \kk \mathfrak{S}_n\), gives \(V\) the structure of a graded \(\mathcal{H}_n(A)\)-module.
\end{Lemma}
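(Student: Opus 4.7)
The plan is to check that each defining relation of $\mathcal{H}_n(A)$ (the subalgebra relations for $\kk[z_1,\dots,z_n]$, $A^{\otimes n}$, $\kk\mathfrak{S}_n$, together with the commutation relations \eqref{AZ}--\eqref{SZ}) is preserved by the prescribed operators on $V$. The action of $z_i$ is by multiplication in the first tensor factor and the action of $\ba \in A^{\otimes n}$ is by left multiplication in the second factor, so the polynomial algebra and $A^{\otimes n}$ relations are immediate. The commutation \eqref{AZ} is also immediate since $z_i$ and $\ba$ act on different tensor factors.

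For relation \eqref{SZ}, a direct computation of $s_i z_j \cdot(f\otimes \bb\otimes w)$ and $z_{s_ij} s_i\cdot(f\otimes \bb\otimes w)$ shows that the terms involving ${}^{s_i}\hspace{-0.3mm}\bb\otimes s_iw$ cancel, and the remaining difference is
\(
(\nabla_i(z_j f) - z_{s_i j}\nabla_i(f))\otimes \Delta_{i,i+1}\bb\otimes w,
\)
which by Lemma \ref{DivDiff}(iii) equals $(\delta_{i,j}-\delta_{i+1,j})f\otimes\Delta_{i,i+1}\bb\otimes w$, matching the right side of \eqref{SZ}. For relation \eqref{AS}, expanding both sides shows that agreement reduces to the identity $\Delta_{i,i+1}\bb = {}^{s_i}\hspace{-0.3mm}\bb\cdot\Delta_{i,i+1}$ in the $(i,i+1)$ slots, which is precisely Lemma \ref{tauDelta}(ii) applied in those two slots (with $\tau_{A,A}$ corresponding to place-permutation by $s_i$).

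Verifying the symmetric group relations is the crucial step. For $s_i^2 = 1$, applying the formula twice produces four terms; one is the identity, one vanishes because $\nabla_i\circ\nabla_i = 0$ by Lemma \ref{DivDiff}(i)--(ii), and the remaining two involve $\nabla_i({}^{s_i}\hspace{-0.5mm}f) + \nabla_i(f) = 0$ by Lemma \ref{DivDiff}(i), together with ${}^{s_i}\hspace{-0.3mm}\Delta_{i,i+1}=\Delta_{i,i+1}$ coming from Lemma \ref{tauDelta}(i). The commutation $s_is_j=s_js_i$ for $|i-j|>1$ is routine, since the divided-difference terms are supported on disjoint pairs of slots.

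The main obstacle is the braid relation $s_is_{i+1}s_i = s_{i+1}s_is_{i+1}$, where expanding each side yields eight terms. I would organize the comparison by sorting terms according to the underlying permutation of slots (coefficient of $s_{i+1}s_is_{i+1}$ versus partial braid words versus the identity). The symmetric-term cancellations rest on: (a) symmetry of the divided-difference Leibniz rule, coupled with \eqref{SZ} used to move $z$-variables past $s_i$'s as needed; (b) the intertwining identity $\Delta_{i,i+1}\bb = {}^{s_i}\hspace{-0.3mm}\bb\cdot\Delta_{i,i+1}$ and its analogue for $(i+1,i+2)$, which allow the $\Delta$-insertions from the two sides to be aligned; and (c) the symmetry $\tau_{A,A}\Delta(1)=\Delta(1)$ ensuring that ${}^{s_i}\hspace{-0.3mm}\Delta_{i,i+1}=\Delta_{i,i+1}$ and ${}^{s_{i+1}}\hspace{-0.3mm}\Delta_{i+1,i+2}=\Delta_{i+1,i+2}$. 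Once each bookkeeping slot is matched using these three ingredients, both sides agree, completing the verification that $V$ is an $\mathcal{H}_n(A)$-module.
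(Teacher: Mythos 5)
Your verification of the easy relations --- the subalgebra relations, \eqref{AZ}, \eqref{AS} via Lemma \ref{tauDelta}(ii), \eqref{SZ} via Lemma \ref{DivDiff}(iii), $s_i^2=1$, and the commuting-$s_i$'s case --- mirrors the paper's. The genuine divergence is in the braid relation, and your route there does not match the paper's. You propose a direct eight-term expansion of $s_is_{i+1}s_i$ and $s_{i+1}s_is_{i+1}$ and a term-by-term matching sorted by the $\mathfrak{S}_n$-component. The paper instead first establishes, using the already-verified relations, that
\[
(s_is_{i+1}s_i - s_{i+1}s_is_{i+1})z_j = z_{s_is_{i+1}s_ij}(s_is_{i+1}s_i - s_{i+1}s_is_{i+1})
\]
as operators on $V$, then proves the braid relation by induction on $\deg_z(f)$: the base case $\deg(f)=0$ is trivial since every $\nabla$-term vanishes, and the inductive step peels off a $z_j$. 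This avoids the eight-term bookkeeping entirely.

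The trouble with your sketch is that it understates what the direct matching requires, and your listed ingredients (a)--(c) are not enough as stated. After sorting by the $\kk\mathfrak{S}_n$-component, the $w$-component forces a Yang--Baxter identity $\Delta_{i,i+1}\Delta_{i+1,i+2}\Delta_{i,i+1} = \Delta_{i+1,i+2}\Delta_{i,i+1}\Delta_{i+1,i+2}$ in $A^{\otimes n}$; this is not in Lemma \ref{tauDelta} and, while it does follow from the intertwining law, it does so only after applying that law with the auxiliary element $\Delta_{i,i+2}$ (a non-adjacent transposition) and observing how place permutation moves $\Delta_{i,i+2}$, which your (b) restricted to $(i,i+1)$ and $(i+1,i+2)$ does not cover. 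Furthermore, the $s_iw$- and $s_{i+1}w$-components require a three-term divided-difference identity of the shape
\[
\nabla_i\nabla_{i+1}({}^{s_i}f) - {}^{s_i}(\nabla_{i+1}\nabla_i(f)) = \nabla_{i+1}\bigl({}^{s_i}\nabla_{i+1}(f)\bigr),
\]
which is true but is not in Lemma \ref{DivDiff} and would need its own verification; your appeal to a ``Leibniz rule'' does not produce it. So although the brute-force route can be pushed through, your proposal has a genuine gap precisely where the argument is hardest, and the identities you would need to supply are what the paper's induction on $\deg_z$ was designed to sidestep. If you want to keep the direct approach, you should explicitly state and prove the Yang--Baxter relation for the $\Delta_{j,j+1}$'s (from the intertwining applied in all relevant slot pairs) and the three-term $\nabla$-identity above, and then carry out the matching for each of the six $\mathfrak{S}_3$-components in full.
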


\begin{proof}
First note that the defining relations of \(A^{\otimes n}\) and \(\kk[z_1, \ldots, z_n]\) are clearly satisfied in this action, as is the relation (\ref{AZ}).

For any \(i \in [1,n-1]\), \(\ba \in A^{\otimes n}\), we have
\begin{align*}
 s_i \cdot (\ba \cdot (f \otimes \bb \otimes w)) &= s_i \cdot (f \otimes \ba \bb \otimes w)\\
 &= {}^{s_i}\hspace{-0.5mm}f \otimes {}^{s_i}\hspace{-0.3mm}(\ba \bb) \otimes s_iw + \nabla_i(f) \otimes \Delta_{i,i+1}\ba \bb \otimes w,
 \end{align*}
 and
 \begin{align*}
{}^{s_i}\hspace{-0.3mm}\ba \cdot ( s_i \cdot (f \otimes \bb \otimes w)) &= {}^{s_i}\hspace{-0.3mm}\ba \cdot ({}^{s_i}\hspace{-0.5mm}f \otimes  {}^{s_i}\hspace{-0.3mm}\bb \otimes s_i w) + {}^{s_i}\hspace{-0.3mm}\ba \cdot(\nabla_i(f) \otimes \Delta_{i,i+1}\bb \otimes w)\\
&= {}^{s_i}\hspace{-0.5mm}f \otimes ({}^{s_i}\hspace{-0.3mm}\ba)({}^{s_i}\hspace{-0.3mm}\bb) \otimes s_i w + \nabla_i(f) \otimes {}^{s_i}\hspace{-0.3mm}\ba\Delta_{i,i+1} \bb \otimes w\\
&={}^{s_i}\hspace{-0.5mm}f \otimes {}^{s_i}\hspace{-0.3mm}(\ba \bb) \otimes s_i w + \nabla_i(f) \otimes \Delta_{i,i+1} \ba\bb \otimes w,
\end{align*}
applying Lemma \ref{tauDelta}(ii) in the last step. Thus \(s_i \ba = {}^{s_i}\hspace{-0.3mm}\ba s_i\) as operators on \(V\), so the action satisfies relation (\ref{AS}).

For \(i \in [1,n-1]\) and \(j \in [1,n]\) we have
\begin{align*}
s_i \cdot (z_j \cdot (f \otimes \bb \otimes w)) &= s_i \cdot (z_j f \otimes \bb \otimes w)\\
&= z_{s_ij}({}^{s_i}\hspace{-0.5mm}f) \otimes {}^{s_i}\hspace{-0.3mm}\bb \otimes s_i w + \nabla_i(z_j f) \otimes \Delta_{i,i+1}\bb \otimes w,
\end{align*}
and
\begin{align*}
z_{s_ij} \cdot ( s_i \cdot (f \otimes \bb \otimes w)) &= z_{s_ij} \cdot ({}^{s_i}\hspace{-0.5mm}f \otimes {}^{s_i}\hspace{-0.3mm}\bb \otimes s_iw) + z_{s_i j} \cdot (\nabla_i(f) \otimes \Delta_{i,i+1} \bb \otimes w)\\
&= z_{s_ij}({}^{s_i}\hspace{-0.5mm}f) \otimes {}^{s_i}\hspace{-0.3mm}\bb \otimes s_iw + z_{s_i j} \nabla_i(f) \otimes \Delta_{i,i+1} \bb \otimes w.
\end{align*}
Then by Lemma \ref{DivDiff}(iii), \((s_iz_j - z_{s_ij}s_i) = (\delta_{i,j}-\delta_{i+1,j})\Delta_{i,i+1}\) as operators on \(V\), so the action satisfies relation (\ref{SZ}).

It remains to prove that the action satisfies the defining Coxeter relations of \(\kk\mathfrak{S}_n\). For \(i,j \in [1,n-1]\) with \(|i-j|>1\), we have
\begin{align*}
s_i \cdot (s_j \cdot (f \otimes \bb \otimes w)) &= s_i \cdot ({}^{s_j}\hspace{-0.5mm}f \otimes {}^{s_j}\hspace{-0.3mm}\bb \otimes s_j w) + s_i \cdot (\nabla_j(f) \otimes \Delta_{j,j+1}\bb \otimes w) \\
&= {}^{s_is_j}\hspace{-0.5mm}f \otimes {}^{s_is_j}\hspace{-0.3mm}\bb \otimes s_is_jw + \nabla_i({}^{s_j}\hspace{-0.5mm}f) \otimes \Delta_{i,i+1}({}^{s_j}\hspace{-0.3mm}\bb)\otimes s_jw \\
&\hspace{10mm}+
{}^{s_i}\hspace{-0.3mm}(\nabla_j(f)) \otimes {}^{s_i}\hspace{-0.3mm}(\Delta_{j,j+1}\bb) \otimes s_iw\\
&\hspace{20mm}+
\nabla_i(\nabla_j(f)) \otimes \Delta_{i,i+1}\Delta_{j,j+1}\bb \otimes w,
\end{align*}
and similarly
\begin{align*}
s_j\cdot (s_i \cdot (f \otimes \bb \otimes w)) 
%&= s_j \cdot ({}^{s_i}\hspace{-0.5mm}f \otimes {}^{s_i}\hspace{-0.3mm} \bb\otimes s_i w) + s_j \cdot (\nabla_i(f) \otimes \Delta_{i,i+1}\bb \otimes w) \\
&= {}^{s_js_i}\hspace{-0.5mm}f  \otimes {}^{s_js_i}\hspace{-0.3mm}\bb \otimes s_js_iw + \nabla_j({}^{s_i}\hspace{-0.5mm}f) \otimes \Delta_{j,j+1}({}^{s_i}\hspace{-0.3mm}\bb) \otimes s_iw \\
&\hspace{10mm}+
{}^{s_j}\hspace{-0.5mm}(\nabla_i(f)) \otimes {}^{s_j}\hspace{-0.5mm}(\Delta_{i,i+1}\bb) \otimes s_jw\\
&\hspace{20mm}+
\nabla_j(\nabla_i(f)) \otimes \Delta_{j,j+1}\Delta_{i,i+1}\bb \otimes w.
\end{align*}
But, since \(\nabla_i({}^{s_j}\hspace{-0.5mm}f) = {}^{s_j}\hspace{-0.3mm}(\nabla_i(f))\), \(\nabla_j({}^{s_i}\hspace{-0.3mm}f)={}^{s_i}\hspace{-0.5mm}(\nabla_j(f))\), and \(\nabla_i(\nabla_j(f)) = \nabla_j(\nabla_i(f))\), it follows that the relation
\(
s_is_j = s_j s_i, \textup{ for all } i,j \in [1, n-1] \textup{ such that } |i-j|>1,
\)
holds as operators on \(V\).

 Next, for \(i \in [1,n-1]\), we have
\begin{align*}
s_i \cdot (s_i \cdot (f \otimes \bb \otimes w)) &= s_i \cdot ({}^{s_i}\hspace{-0.5mm}f \otimes {}^{s_i}\hspace{-0.3mm}\bb\otimes s_iw) + s_i \cdot (\nabla_i(f) \otimes \Delta_{i,i+1} \bb \otimes w)\\
&=f \otimes \bb \otimes w + \nabla_i({}^{s_i}\hspace{-0.5mm}f)\otimes \Delta_{i,i+1}({}^{s_i}\hspace{-0.3mm}\bb) \otimes s_iw\\
&\hspace{10mm}+
{}^{s_i}\hspace{-0.3mm}(\nabla_i(f)) \otimes {}^{s_i}\hspace{-0.3mm}(\Delta_{i,i+1}\bb) \otimes s_iw + \nabla_i^2(f) \otimes \Delta_{i+1}^2 \bb \otimes w\\
&= f \otimes \bb \otimes w,
\end{align*}
applying Lemma \ref{DivDiff}(i),(ii), and Lemma \ref{tauDelta}(i) in the last step. Thus the relation
\(
s_i^2 = 1, \textup{ for all } i \in [1,n-1],
\)
 holds as operators on \(V\).

Now fix \(i \in [1,n-2]\), and \(j \in [1,n]\). By the previously proved properties, we have that, as operators on \(V\):
\begin{align*}
s_{i+1}s_is_{i+1}z_j &= s_{i+1}s_i(z_{s_{i+1}j}s_{i+1} + (\delta_{i+1,j} - \delta_{i+2,j})\Delta_{i+1,i+2})\\
&= s_{i+1}s_iz_{s_{i+1}j}s_{i+1} + (\delta_{i+1,j}-\delta_{i+2,j})\Delta_{i,i+1}s_{i+1}s_i\\
&=s_{i+1}(z_{s_is_{i+1}j}s_i + (\delta_{i,s_{i+1}j}-\delta_{i+1,s_{i+1}j})\Delta_{i,i+1})s_{i+1}\\
&\hspace{10mm}+ (\delta_{i+1,j}-\delta_{i+2,j})\Delta_{i,i+1}s_{i+1}s_i\\
&=s_{i+1}z_{s_is_{i+1}j}s_is_{i+1} + (\delta_{i,j}-\delta_{i+2,j})\Delta_{i,i+2}\\
&\hspace{10mm}+ (\delta_{i+1,j}-\delta_{i+2,j})\Delta_{i,i+1}s_{i+1}s_i\\
&=(z_{s_{i+1}s_is_{i+1}j}s_{i+1} + (\delta_{i+1,s_is_{i+1}j} - \delta_{i+2,s_is_{i+1}j})\Delta_{i+1,i+2})s_is_{i+1}\\
&\hspace{10mm} + (\delta_{i,j}-\delta_{i+2,j})\Delta_{i,i+2}    + (\delta_{i+1,j}-\delta_{i+2,j})\Delta_{i,i+1}s_{i+1}s_i\\
&=z_{s_{i+1}s_is_{i+1}j}s_{i+1}s_is_{i+1} + (\delta_{i,j}-\delta_{i+1,j})\Delta_{i+1,i+2}s_is_{i+1} \\
&\hspace{10mm} + (\delta_{i,j}-\delta_{i+2,j})\Delta_{i,i+2}    + (\delta_{i+1,j}-\delta_{i+2,j})\Delta_{i,i+1}s_{i+1}s_i
\end{align*}
and similarly 
\begin{align*}
s_is_{i+1}s_iz_j &=z_{s_is_{i+1}s_ij}s_is_{i+1}s_i + (\delta_{i+1,j}-\delta_{i+2,j})\Delta_{i,i+1}s_{i+1}s_i\\
&\hspace{10mm}+(\delta_{i,j} - \delta_{i+2,j})\Delta_{i,i+2}+(\delta_{i,j}-\delta_{i+1,j})\Delta_{i+1,i+2}s_is_{i+1}.
\end{align*}
Thus \((s_is_{i+1}s_i - s_{i+1}s_is_{i+1})z_j = z_{s_is_{i+1}s_ij}(s_is_{i+1}s_i - s_{i+1}s_is_{i+1})\) as operators on \(V\). Now we prove that \(s_is_{i+1}s_i = s_{i+1}s_i s_{i+1}\) as operators on \(V\), via induction on the degree of \(f\) in the term \(f \otimes \bb \otimes w \in V\). The base case \(\deg(f) = 0\) is obvious. If the claim holds for \(f\), then
\begin{align*}
(s_is_{i+1}s_i - s_{i+1}s_is_{i+1})\cdot(z_jf \otimes \bb \otimes w)&= (s_is_{i+1}s_i - s_{i+1}s_is_{i+1}) \cdot (z_j\cdot(f \otimes \bb \otimes w))
\\
&=((s_is_{i+1}s_i - s_{i+1}s_is_{i+1})z_j) \cdot (f \otimes \bb \otimes w)\\
&=(z_{s_is_{i+1}s_ij}(s_is_{i+1}s_i - s_{i+1}s_is_{i+1}))\cdot (f \otimes \bb \otimes w)\\
&= z_{s_is_{i+1}s_ij} \cdot ((s_is_{i+1}s_i - s_{i+1}s_is_{i+1})\cdot (f \otimes \bb \otimes w))\\
&=z_{s_is_{i+1}s_ij} \cdot 0=0,
\end{align*}
proving the claim. Thus the Coxeter relations hold as operators on \(V\), and \(V\) is an \(\mathcal{H}_n(A)\)-module. 
\end{proof}

\begin{Theorem}\label{AffBasis}\(\)
\begin{enumerate}
\item The map \(V =  \kk[z_1, \ldots, z_n] \otimes A^{\otimes n} \otimes \mathfrak{S}_n\to \mathcal{H}_n(A)\) defined by
\begin{align*}
f \otimes \ba \otimes w \mapsto f\ba w
\end{align*}
is an an isomorphism of graded \(\mathcal{H}_n(A)\)-modules.
\item \(\mathcal{H}_n(A)\) is free as a \(\kk\)-module, with graded dimension 
\begin{align*}
\dim_q \mathcal{H}_n(A) = n! \left(\frac{\dim_q A}{1-q^d}\right)^n.
\end{align*}
\end{enumerate}
\end{Theorem}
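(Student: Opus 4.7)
The plan is to construct a two-sided inverse pair between $V = \kk[z_1,\dots,z_n]\otimes A^{\otimes n}\otimes \kk\mathfrak{S}_n$ and $\mathcal{H}_n(A)$, exploiting the $\mathcal{H}_n(A)$-module structure on $V$ that was just established in Lemma \ref{Vdef}. Let $\theta:V\to\mathcal{H}_n(A)$ be the candidate map of part (i), and let $\eta:\mathcal{H}_n(A)\to V$ be the $\mathcal{H}_n(A)$-module homomorphism defined by $\eta(h)=h\cdot(1\otimes 1\otimes 1)$, where the action is the one given in Lemma \ref{Vdef}.

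First I would show that $\theta$ is surjective. Because $\mathcal{H}_n(A)$ is generated by the images of $\iota^{(1)}$, $\iota^{(2)}$, $\iota^{(3)}$, every element is a $\kk$-linear combination of monomials in $z_i$'s, simple tensors in $A^{\otimes n}$, and elements of $\kk\mathfrak{S}_n$ in some order. The commutation relation (\ref{AZ}) lets one freely move polynomials past $A^{\otimes n}$, and relation (\ref{AS}) lets one pass $A^{\otimes n}$-elements through symmetric group elements. The essential work is to move all $z_i$'s to the left of all $s_j$'s using (\ref{SZ}); this produces an error term $(\delta_{i,j}-\delta_{i+1,j})\Delta_{i,i+1}$ which lies in $A^{\otimes n}$ and crucially involves \emph{no} new symmetric group factor, so a straightforward induction on the number of $s$-generators appearing in a given expression (together with the induction on polynomial degree absorbed in that step) reduces every element to the form $f\ba w$.

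Next I would verify $\eta\circ\theta=\id_V$ by a direct calculation in $V$. From the formulas in Lemma \ref{Vdef}, one sees $s_j\cdot(1\otimes 1\otimes w')=1\otimes 1\otimes s_jw'$ since $\nabla_j(1)=0$, so by induction on length $w\cdot(1\otimes 1\otimes 1)=1\otimes 1\otimes w$ for every $w\in\kk\mathfrak{S}_n$. Then $\ba\cdot(1\otimes 1\otimes w)=1\otimes\ba\otimes w$, and finally $f\cdot(1\otimes\ba\otimes w)=f\otimes\ba\otimes w$. This gives $\eta(f\ba w)=f\otimes\ba\otimes w$, i.e.\ $\eta\circ\theta=\id_V$. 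Consequently $\theta$ is injective, and combined with surjectivity it is a $\kk$-module isomorphism; moreover $\eta$ must be its inverse, and since $\eta$ is a morphism of $\mathcal{H}_n(A)$-modules, so is $\theta$. This proves (i).

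Part (ii) then follows immediately: since $V$ is manifestly free as a $\kk$-module as a tensor product of free $\kk$-modules, and has graded rank
\[
\DIM V \;=\; \DIM\kk[z_1,\dots,z_n]\cdot(\DIM A)^n\cdot n! \;=\; n!\left(\frac{\DIM A}{1-q^d}\right)^{\!n},
\]
the same holds for $\mathcal{H}_n(A)$ by (i). The one step where I expect genuine care will be the surjectivity argument, specifically bookkeeping the error term $\Delta_{i,i+1}$ from (\ref{SZ}) in the inductive reduction to normal form; everything else is either a short direct computation or a formal consequence.
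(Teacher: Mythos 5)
Your proposal is correct and follows essentially the same route as the paper: spanning comes from the commutation relations (\ref{AZ})--(\ref{SZ}), and injectivity/linear independence comes from acting on the cyclic vector $1\otimes 1\otimes 1$ in the module $V$ of Lemma \ref{Vdef}. The packaging as a two-sided inverse pair $(\theta,\eta)$ rather than the paper's basis-to-basis bijection is purely cosmetic.
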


\begin{proof}
Let \(B_1\) be a basis for \(\kk[z_1, \ldots, z_n]\), \(B_2\) be a basis for \(A^{\otimes n}\), and let \(B_3\) be a basis for \(\kk\mathfrak{S}_n\). Define the sets
\begin{align*}
B&=\{f \otimes \ba \otimes w \mid  f \in B_1, \ba \in B_2, w \in B_3\} \subset V,\\
\mathcal{B}&=\{f \ba w \mid f \in B_1, \ba \in B_2, w \in B_3\} \subset \mathcal{H}_n(A).
\end{align*}
Then \(B\) is a \(\kk\)-basis for \(V\). It is straightforward to see that inductive application of the commutation relations (\ref{AZ})--(\ref{SZ}) allows one to write any element in \(\mathcal{H}_n(A)\) as a \(\kk\)-linear combination of elements of \(\mathcal{B}\), so \(\mathcal{B}\) is a spanning set for \(\mathcal{H}_n(A)\). Moreover, for every \(f\ba w \in \mathcal{B}\), \(f\ba w \cdot (1 \otimes 1 \otimes 1) = f \otimes \ba \otimes w\), so the elements of \(\mathcal{B}\) are linearly independent as operators on \(V\), and thus \(\mathcal{B}\) constitutes a \(\kk\)-basis for \(\mathcal{H}_n(A)\).

Since \(V\) is a cyclic \(\mathcal{H}_n(A)\)-module, generated by \(1 \otimes 1 \otimes 1\), we have an \(\mathcal{H}_n(A)\)-module homomorphism \(\mathcal{H}_n(A) \to V\) given by \(1 \mapsto 1 \otimes 1 \otimes 1\), which sends \(f \ba w \in \mathcal{B}\) to \(f \otimes \ba \otimes w \in B\). Since the map is a bijection on \(\kk\)-bases, it is an isomorphism, proving (i). Part (ii) follows from (i).
 \end{proof}

\begin{Corollary}\label{sfa}
Let \(f \in \kk[z_1, \ldots, z_n]\), and \(\ba \in A^{\otimes n}\). 
\begin{enumerate}
\item For all \(i \in [1, n-1]\),  we have
\begin{align*}
s_i f \ba = ( {}^{s_i}\hspace{-0.5mm}f)(  {}^{s_i}\hspace{-0.5mm}\ba) s_i + \nabla_i(f) \Delta_{i,i+1} \ba.
\end{align*}
\item For all \(w \in \mathfrak{S}_n\), we have
\begin{align*}
 w  f \ba =  ({}^w \hspace{-0.5mm} f) ( {}^w \hspace{-0.5mm} \ba)w + (*),
\end{align*}
where \((*)\) is a \(\kk\)-linear combination of terms of the form \(f'\ba'w'\), where \(f' \in \kk[z_1, \ldots, z_n]\), \(\ba' \in A^{\otimes n}\), and \(w' \in \mathfrak{S}_n\) with \(\ell(w')<\ell(w)\).
\end{enumerate}
\end{Corollary}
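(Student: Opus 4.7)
The plan is to derive (i) from the defining relations by a short induction on $\deg f$, and then bootstrap to (ii) by induction on $\ell(w)$. Both steps are mostly mechanical; the only real content is identifying the correct auxiliary identity for (i), after which (ii) is an easy length argument.

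For (i), I would first establish the auxiliary identity
\begin{equation*}
s_i f = ({}^{s_i}\hspace{-0.5mm} f)\, s_i + \nabla_i(f)\, \Delta_{i,i+1} \qquad \text{in } \mathcal{H}_n(A),
\end{equation*}
by induction on $\deg f$ in $\kk[z_1,\dots,z_n]$. The base case $f=1$ is trivial (both sides equal $s_i$, since $\nabla_i(1)=0$). For the inductive step, assume the identity for $f$ and compute $s_i (z_j f)$ by first applying relation (\ref{SZ}) to move $s_i$ past $z_j$, then applying the inductive hypothesis to $s_i f$; the cross term $z_{s_i j}\nabla_i(f) + (\delta_{i,j}-\delta_{i+1,j})f$ is exactly $\nabla_i(z_j f)$ by Lemma \ref{DivDiff}(iii), which closes the induction. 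Once the auxiliary identity is in hand, multiplying on the right by $\ba \in A^{\otimes n}$ and using relation (\ref{AS}) to pull $s_i$ through $\ba$ in the first term yields (i) immediately; note that $\nabla_i(f) \in \kk[z_1,\dots,z_n]$ commutes with $\Delta_{i,i+1} \in A^{\otimes n}$ by (\ref{AZ}), so no reordering issues arise in the second term.

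For (ii), I would induct on $\ell(w)$. The base case $w=1$ is trivial. For $\ell(w)>0$ write $w = s_i w'$ with $\ell(w')=\ell(w)-1$. Then $wf\ba = s_i(w'f\ba)$; apply the inductive hypothesis to expand $w'f\ba$ as $({}^{w'}\hspace{-0.5mm}f)({}^{w'}\hspace{-0.3mm}\ba)w' + (*)$. Multiplying on the left by $s_i$ and applying (i) with $f \leftrightarrow {}^{w'}\hspace{-0.5mm}f$ and $\ba \leftrightarrow {}^{w'}\hspace{-0.3mm}\ba$ produces the leading term $({}^{s_iw'}\hspace{-0.5mm}f)({}^{s_iw'}\hspace{-0.3mm}\ba)s_iw' = ({}^w\hspace{-0.5mm}f)({}^w\hspace{-0.3mm}\ba)w$, plus a correction of the form $\nabla_i({}^{w'}\hspace{-0.5mm}f)\Delta_{i,i+1}({}^{w'}\hspace{-0.3mm}\ba)w'$, which has symmetric-group factor $w'$ of length $\ell(w)-1 < \ell(w)$ and so belongs to $(*)$. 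For the remaining contribution $s_i \cdot (*)$, apply (i) term by term: each summand of $s_i \cdot (*)$ is a combination of $f'\ba' s_i w''$ and $f'\ba' w''$ for some $w''$ with $\ell(w'') < \ell(w') = \ell(w)-1$, so both $\ell(s_i w'') \le \ell(w'')+1 \le \ell(w)-1 < \ell(w)$ and $\ell(w'') < \ell(w)$, keeping everything in $(*)$.

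There is no real obstacle here; the only subtle point is the bookkeeping in the length estimate for $s_i \cdot (*)$ in (ii), which is handled by the trivial bound $\ell(s_i w'') \le \ell(w'')+1$. Once the auxiliary identity for $s_i f$ is isolated, the whole corollary is a direct consequence of the defining commutation relations (\ref{AZ})--(\ref{SZ}) and Lemma \ref{DivDiff}.
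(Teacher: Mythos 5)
Your proposal is correct, but part (i) takes a genuinely different route from the paper. The paper proves (i) in one line: since Theorem~\ref{AffBasis}(i) identifies $\mathcal{H}_n(A)$ with the module $V=\kk[z_1,\dots,z_n]\otimes A^{\otimes n}\otimes\kk\mathfrak{S}_n$ of Lemma~\ref{Vdef} via $x\mapsto x\cdot(1\otimes 1\otimes 1)$, the desired identity is simply the recorded formula for the action of $s_i$ on $V$ read back into the algebra. You instead derive the auxiliary identity $s_i f = ({}^{s_i}\hspace{-0.5mm}f)\,s_i + \nabla_i(f)\,\Delta_{i,i+1}$ from the defining relation (\ref{SZ}) by induction on $\deg f$, closing the induction with Lemma~\ref{DivDiff}(iii) exactly as you describe, and then push $s_i$ past $\ba$ with (\ref{AS}). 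Both arguments are valid; yours is more elementary and does not lean on the regular representation $V$ or the basis theorem, which makes it self-contained and arguably closer to first principles, whereas the paper's version is shorter given the machinery it has already built. (In fact your identity is essentially the computation hidden inside the verification of relation (\ref{SZ}) in the proof of Lemma~\ref{Vdef}, so the two proofs are doing the same work in different places.) For part (ii) your induction on $\ell(w)$, applying (i) term by term and using the trivial bound $\ell(s_i w'')\le\ell(w'')+1$, is exactly the ``inductive application of (i)'' the paper has in mind.
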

\begin{proof}
Part (i) follows from Theorem \ref{AffBasis}(i) and the action of \(s_i\) on \(V\) defined in Lemma \ref{Vdef}. Part (ii) follows from inductive application of (i).
\end{proof}

\begin{Corollary}\label{freeact} Let $B_1$ be a $\k$-basis of $\k[z_1,\dots,z_n]$ and 
\(B_2 \) be a basis for \(A^{\otimes n}\). Then:
\begin{enumerate}
\item The sets $\{  f \ba w  \mid f \in B_1, \ba \in B_2, w \in \mathfrak{S}_n\}$ and $\{  w f \ba  \mid f \in B_1, \ba \in B_2, w \in \mathfrak{S}_n\}$ are $\k$-bases of $\mathcal{H}_n(A)$.
\item The set \(\{ \ba w \mid \ba \in B_2, w \in \mathfrak{S}_n\}\) is a basis for \(\mathcal{H}_n(A)\) as a left \(\kk[z_1, \ldots, z_n]\)-module, and \(\{ w \ba \mid \ba \in B_2, w \in \mathfrak{S}_n\}\) is a basis for \(\mathcal{H}_n(A)\) as a right \(\kk[z_1, \ldots, z_n]\)-module.
\item The set \(\{ f w \mid f \in B_1, w \in \mathfrak{S}_n\}\) is a basis for \(\mathcal{H}_n(A)\) as a left \(A^{\otimes n}\)-module, and \(\{ wf \mid f \in B_1, w \in \mathfrak{S}_n\}\) is a basis for \(\mathcal{H}_n(A)\) as a right \(A^{\otimes n}\)-module.
\item The set \(\{  f \ba  \mid f \in B_1,\ba \in B_2\}\) is a basis for \(\mathcal{H}_n(A)\) as both a left and right \(\kk\mathfrak{S}_n\)-module.
\end{enumerate}
\end{Corollary}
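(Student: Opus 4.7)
The plan is to deduce everything from Theorem~\ref{AffBasis}(i) and the two commutation facts already in hand: the triangular identity from Corollary~\ref{sfa}(ii), and the commutativity between $A^{\otimes n}$ and $\kk[z_1,\dots,z_n]$ which is relation~(\ref{AZ}). The set $\mathcal{B}=\{f\ba w : f\in B_1,\,\ba\in B_2,\, w\in\mathfrak{S}_n\}$ is already known to be a $\kk$-basis of $\mathcal{H}_n(A)$, so the only nontrivial assertion in part~(i) is that $\mathcal{B}':=\{wf\ba : f\in B_1,\,\ba\in B_2,\, w\in\mathfrak{S}_n\}$ is also a $\kk$-basis.

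First, I would establish the alternate basis in (i) by a triangularity argument on the length of $w$. Filter $\mathcal{H}_n(A)$ by $F_{\leq k}:=\spa_\kk\{f\ba w\in\mathcal{B}:\ell(w)\leq k\}$. By Corollary~\ref{sfa}(ii),
\begin{equation*}
wf\ba \;=\; ({}^w\!f)\,({}^w\!\ba)\,w \;+\; (*),
\end{equation*}
where $(*)\in F_{\leq \ell(w)-1}$. Since place permutation acts as a bijection on the bases $B_1$ and $B_2$ of $\kk[z_1,\dots,z_n]$ and $A^{\otimes n}$ respectively, the transition between $\mathcal{B}'$ and $\mathcal{B}$ is unitriangular with respect to this filtration (the block indexed by fixed $w$ is a permutation of the $(f,\ba)$-indices). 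Hence $\mathcal{B}'$ is also a $\kk$-basis of $\mathcal{H}_n(A)$, which completes part~(i).

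For parts (ii)--(iv), I would regroup the elements of $\mathcal{B}$ and $\mathcal{B}'$ using relation~(\ref{AZ}), which gives $f\ba = \ba f$ inside $\mathcal{H}_n(A)$. Starting from the basis $\mathcal{B}$: reading $f\ba w = f\cdot(\ba w)$ yields the left $\kk[z_1,\dots,z_n]$-basis of (ii); reading $f\ba w = \ba\cdot(fw)$ (after commuting $f$ past $\ba$) yields the left $A^{\otimes n}$-basis of (iii); and reading $f\ba w = (f\ba)\cdot w$ yields the right $\kk\mathfrak{S}_n$-basis of (iv). Dually, starting from the basis $\mathcal{B}'$: $wf\ba = (w\ba)\cdot f$ gives the right $\kk[z_1,\dots,z_n]$-basis of (ii); $wf\ba = (wf)\cdot\ba$ gives the right $A^{\otimes n}$-basis of (iii); and $wf\ba = w\cdot(f\ba)$ gives the left $\kk\mathfrak{S}_n$-basis of (iv). In each case, uniqueness of expansion in the appropriate module sense follows immediately from the $\kk$-linear independence of the regrouped products.

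I do not anticipate a genuine obstacle here: the one substantive step is the unitriangular conversion from $\mathcal{B}$ to $\mathcal{B}'$, and all the bookkeeping after that is forced by the commutativity of the polynomial and tensor-power subalgebras. The only point demanding a little care is checking that left-multiplication by $w$ really permutes the $(f,\ba)$-coordinates bijectively within the top-length layer, which is what makes the triangular transition a true change of basis rather than merely spanning.
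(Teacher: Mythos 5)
Your proposal is correct and follows essentially the same route as the paper: Theorem~\ref{AffBasis}(i) gives the first basis, Corollary~\ref{sfa}(ii) gives a length-triangular change of basis to the set $\{wf\ba\}$, and parts (ii)--(iv) follow by regrouping using the commutativity of $\kk[z_1,\dots,z_n]$ with $A^{\otimes n}$ (relation~(\ref{AZ})). One small imprecision: for an arbitrary basis $B_1$ (resp.\ $B_2$), the $\mathfrak{S}_n$-action does \emph{not} in general permute the basis elements themselves, so the top-degree block of the transition matrix is not literally a permutation matrix; it is, however, the matrix of the automorphism ${}^w(\cdot)$ on $\kk[z_1,\dots,z_n]\otimes A^{\otimes n}$ expressed in the basis $B_1\otimes B_2$, hence invertible, which is all the unitriangularity argument requires.
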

\begin{proof}
By Theorem \ref{AffBasis}(i), the first set in (i) is a basis for \(\mathcal{H}_n(A)\).  Applying Corollary \ref{sfa}(ii), one may use induction on the length of \(w \in \mathfrak{S}_n\) to see that the second set in (i) 
is also a basis for \(\mathcal{H}_n(A)\), completing the proof of part (i). Parts (ii)-(iv) follow from part (i) and the fact that \(\ba f = f \ba\) for all \( f \in \kk[z_1, \ldots, z_n]\) and \(\ba \in A^{\otimes n}\).
\end{proof}

\subsection{Antiautomorphisms of affinized symmetric algebras}
In this section we show that an antiautomorphism of the symmetric algebra \(A\) extends to an antiautomorphism of the affinization \(\mathcal{H}_n(A)\).

\begin{Lemma}\label{affop}
Suppose that \(\nu:A \to A^\op\) is an isomorphism of graded \(\kk\)-algebras. Then the map \(\widehat{\nu}:\mathcal{H}_n(A) \to \mathcal{H}_n(A)^\op\) defined by
\begin{align*}
\widehat{\nu}(z_i) = z_i, \hspace{10mm}
\widehat{\nu}(\ba) = (\nu \otimes \cdots \otimes \nu)(\ba), \hspace{10mm}
\widehat{\nu}(s_j) = s_j,
\end{align*}
for all \(i \in [1,n]\), \(j \in [1,n-1]\) and \(\ba \in A^{\otimes n}\), is an isomorphism of graded \(\kk\)-algebras.
\end{Lemma}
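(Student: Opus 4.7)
The plan is to show that $\widehat{\nu}$ extends to a well-defined graded anti-algebra homomorphism of $\mathcal{H}_n(A)$ (equivalently, an algebra homomorphism to $\mathcal{H}_n(A)^{\op}$), and then establish bijectivity. Using the presentation in Definition \ref{AffDef}, the verification splits into the defining relations internal to each of the three subalgebras $\kk[z_1,\dots,z_n]$, $A^{\otimes n}$, $\kk\mathfrak{S}_n$, and the cross-commutation relations \eqref{AZ}--\eqref{SZ}. On the polynomial subalgebra $\widehat{\nu}$ is the identity, which is trivially an anti-isomorphism by commutativity; on $A^{\otimes n}$ the map $\widehat{\nu} = \nu^{\otimes n}$ is an anti-isomorphism because $\nu$ is; and on $\kk\mathfrak{S}_n$ the map fixes each $s_j$ and extends to an anti-automorphism because every Coxeter relation ($s_i^2 = 1$, commutation for $|i-j|>1$, and the braid relation) is palindromic in the generators.

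For the cross-relations I would proceed as follows. Relation \eqref{AZ}, applied to $\ba\in A^{\otimes n}$ and reversed, becomes $z_j\,\nu^{\otimes n}(\ba) = \nu^{\otimes n}(\ba)\,z_j$, which is \eqref{AZ} applied to $\nu^{\otimes n}(\ba)$. For \eqref{AS}, the key observation is that $\nu^{\otimes n}$ acts componentwise and therefore commutes with place permutation, $\nu^{\otimes n}({}^{s_i}\ba) = {}^{s_i}(\nu^{\otimes n}(\ba))$; the reversed relation is then exactly \eqref{AS} applied to $\nu^{\otimes n}(\ba)$. For \eqref{SZ}, applying $\widehat{\nu}$ with reversed multiplication yields $z_j s_i - s_i z_{s_ij} = (\delta_{i,j}-\delta_{i+1,j})\widehat{\nu}(\Delta_{i,i+1})$, and by substituting $j \leftrightarrow s_ij$ in the original \eqref{SZ} one sees that this is equivalent to the original relation \emph{provided} $\widehat{\nu}(\Delta_{i,i+1}) = \Delta_{i,i+1}$, i.e., $(\nu\otimes\nu)(\Delta(1)) = \Delta(1)$.

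I expect this last identity to be the main obstacle. My approach is to leverage Lemma \ref{tauDelta}(ii): since $\nu\otimes\nu$ is an anti-algebra map on $A\otimes A$ and commutes with the swap $\tau_{A,A}$, applying $\nu\otimes\nu$ to $\bb\,\Delta(1) = \Delta(1)\,\tau_{A,A}(\bb)$ shows that $(\nu\otimes\nu)(\Delta(1))$ satisfies the same one-sided intertwining relation $(\nu\otimes\nu)(\Delta(1))\,\bc = \tau_{A,A}(\bc)\,(\nu\otimes\nu)(\Delta(1))$ for every $\bc\in A\otimes A$ that $\Delta(1)$ itself satisfies; a uniqueness argument, e.g.\ via the expression $\Delta(1) = \sum_i x_i\otimes x_i^\vee$ in terms of dual bases for the Frobenius form $\langle a,b\rangle := \varphi(a)(b)$ together with the fact that $\nu$ is compatible with this form, then forces equality. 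Once the anti-homomorphism property is established, the parallel construction $\widehat{\nu^{-1}}$ applied to the graded iso $\nu^{-1}\colon A \to A^{\op}$ is also an anti-homomorphism by the same argument, so $\widehat{\nu^{-1}}\circ \widehat{\nu}$ is an honest algebra homomorphism that is the identity on each of the three generating subalgebras and therefore equal to $\id_{\mathcal{H}_n(A)}$; the opposite composition is handled symmetrically, giving bijectivity.
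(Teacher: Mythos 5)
Your decomposition of the verification and the reduction of relation (\ref{SZ}) to the single identity $(\nu\otimes\nu)(\Delta(1)) = \Delta(1)$ both match the paper's proof, and the observation that $\nu\otimes\nu$ is an anti-algebra map on $A\otimes A$ commuting with $\tau_{A,A}$, hence transports the intertwining relation of Lemma~\ref{tauDelta}(ii) to $(\nu\otimes\nu)(\Delta(1))$, is correct as far as it goes. The gap is the ``uniqueness argument'': the relation $X\cdot\bc = \tau_{A,A}(\bc)\cdot X$ for all $\bc\in A\otimes A$ is $\kk$-linear in $X$, so it is satisfied by every scalar multiple of $\Delta(1)$ (and by $0$), and since $\nu$ is graded, $(\nu\otimes\nu)(\Delta(1))$ has the same degree $d$ as $\Delta(1)$---so neither the intertwining relation nor degree considerations can single out $\Delta(1)$. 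The phrase ``the fact that $\nu$ is compatible with this form'' is precisely what would make a dual-basis argument work (it would say that $\nu$ carries a pair of dual bases for $\langle\cdot,\cdot\rangle$ to another such pair), but it is not among the stated hypotheses, which assume only that $\nu$ is a graded algebra isomorphism $A\to A^{\op}$; you cannot simply invoke it.

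The paper's route is computationally different. It records the map identity $m\circ(\nu\otimes\nu)\circ\tau_{A,A}=\nu\circ m$ (just the anti-multiplicativity of $\nu$), dualizes to get $\tau_{A^*,A^*}\circ(\nu\otimes\nu)^*\circ m^*=m^*\circ\nu^*$, and then unwinds $\Delta=(\varphi^{-1}\otimes\varphi^{-1})\circ m^*\circ\varphi$ through a chain of substitutions to reach $\Delta\circ\nu=(\nu\otimes\nu)\circ\tau_{A,A}\circ\Delta$; evaluating at $1$ and using Lemma~\ref{tauDelta}(i) gives $(\nu\otimes\nu)(\Delta(1))=\Delta(1)$, after which (\ref{SZ}) and bijectivity (via Corollary~\ref{freeact}(i)) follow as you outlined. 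Note that one link in that chain is $\varphi\circ\nu=\nu^*\circ\varphi$, i.e.\ $\langle\nu(a),b\rangle=\langle a,\nu(b)\rangle$ for all $a,b$, which is exactly the form-compatibility your sketch gestures at; so the missing ingredient in your proof is the same one the paper's chain consumes, just folded into the dualization rather than isolated. If you want to fill the gap along your own lines, you should state and justify this compatibility explicitly before appealing to dual bases.
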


\begin{proof}
It is clear that the \(\widehat{\nu}\) is a homomorphism upon restriction to the subalgebras \(\kk[z_1, \ldots, z_n]\), \(A^{\otimes n}\), and \(\kk \mathfrak{S}_n\). It is likewise straightforward to check that \(\widehat{\nu}\) preserves the commutation relations (\ref{AZ}) and (\ref{AS}).

It remains to verify that \(\widehat{\nu}\) preserves relation (\ref{SZ}). We have, for all \(x,y \in A\),
\begin{align*}
m \circ (\nu \otimes \nu) \circ \tau_{A,A}(x \otimes y) = \nu(y)\nu(x)
= \nu(xy)
= \nu \circ m(x \otimes y).
\end{align*}
Thus \(m \circ (\nu \otimes \nu) \circ \tau_{A,A} = \nu \circ m\), so 
\begin{align*}
\tau_{A^*,A^*} \circ (\nu \otimes \nu)^* \circ m^* = \tau_{A,A}^* \circ (\nu \otimes \nu)^* \circ m^*=m^* \circ \nu^*.
\end{align*}
Therefore
\begin{align*}
\Delta \circ \nu &= (\varphi^{-1} \otimes \varphi^{-1}) \circ m^* \circ \varphi \circ \nu
=(\varphi^{-1} \otimes \varphi^{-1}) \circ m^* \circ \nu^* \circ \varphi\\
&= (\varphi^{-1} \otimes \varphi^{-1}) \circ \tau_{A^*,A^*} \circ (\nu \otimes \nu)^* \circ m^* \circ \varphi
= \tau_{A,A} \circ (\varphi^{-1} \otimes \varphi^{-1}) \circ (\nu \otimes \nu)^* \circ m^* \circ \varphi\\
&= \tau_{A,A} \circ (\nu \otimes \nu) \circ  (\varphi^{-1} \otimes \varphi^{-1}) \circ m^* \circ \varphi =
(\nu \otimes \nu) \circ \tau_{A,A} \circ  (\varphi^{-1} \otimes \varphi^{-1}) \circ m^* \circ \varphi\\
&= (\nu \otimes \nu) \circ \tau_{A, A} \circ \Delta.
\end{align*}
Thus by Lemma \ref{tauDelta}(i) we have
\begin{align*}
\Delta(1) = \Delta \circ \nu(1) = (\nu \otimes \nu) \circ \tau_{A,A} \circ \Delta(1) = (\nu \otimes \nu) \circ \Delta(1).
\end{align*}
Thus for all \(i \in [1,n-1]\) we have 
\begin{align*}
\widehat{\nu} ( \Delta_{i,i+1}) = (\nu \otimes \cdots \otimes \nu) \circ \iota_{i,i+1}\circ \Delta(1)
= \iota_{i,i+1} \circ (\nu \otimes \nu)\circ \Delta(1)
= \iota_{i,i+1}\circ \Delta(1)
= \Delta_{i,i+1}.
\end{align*}
Therefore, for all \(i \in [1,n-1]\) and \(j \in [1,n]\), we have
\begin{align*}
\widehat{\nu}(s_iz_j - z_{s_ij}s_i) &=
z_j s_i - s_i z_{s_i j} =
-(s_iz_{s_ij} - z_j s_i) =
-(\delta_{i,s_i j} - \delta_{i+1,s_i j})\Delta_{i,i+1}\\
& =
(\delta_{i,j} - \delta_{i+1,j})\Delta_{i,i+1} =
\widehat{\nu}((\delta_{i,j} - \delta_{i+1,j})\Delta_{i,i+1}).
\end{align*}
Thus \(\widehat{\nu}\) preserves relation (\ref{SZ}), so \(\widehat{\nu}\) is a graded homomorphism of \(\kk\)-algebras. 
Now by Corollary~\ref{freeact}(i), $\widehat{\nu}$ is an isomorphism. 
\end{proof}

\subsection{Centers of affinized symmetric algebras}
Let $$X_n:=\kk[z_1, \ldots, z_n] \otimes Z(A)^{\otimes n}$$
considered as a subalgebra of $\kk[z_1, \ldots, z_n] \otimes A^{\otimes n}$ which in turn is a subalgebra of $\mathcal{H}_n(A)$ in a natural way. The symmetric group \(\mathfrak{S}_n\) acts on  $X_n$ with algebra automorphisms as follows:
\begin{align*}
w \cdot (f \otimes \ba) = {}^w\hspace{-0.5mm}f \otimes {}^w\hspace{-0.3mm}\ba \qquad\qquad(f\in \k[z_1, \ldots, z_n],\ \ba\in Z(A)^{\otimes n}).
\end{align*} 

\begin{Proposition}\label{AffCenter}
The center of \(\mathcal{H}_n(A)\) is the subalgebra of invariants $X_n^{\mathfrak{S}_n}$.
\end{Proposition}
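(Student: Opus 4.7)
The plan is to prove the two inclusions $X_n^{\mathfrak{S}_n} \subseteq Z(\mathcal{H}_n(A))$ and $Z(\mathcal{H}_n(A)) \subseteq X_n^{\mathfrak{S}_n}$ separately.

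\textbf{Forward inclusion.} Take $x = \sum_k f_k \ba_k$ with $f_k \in \kk[z_1,\ldots,z_n]$ and $\ba_k \in Z(A)^{\otimes n}$, and with $\sum_k f_k \otimes \ba_k$ fixed by $\mathfrak{S}_n$. Commutation with $z_j$ and with $A^{\otimes n}$ is automatic, since $\ba_k \in Z(A)^{\otimes n} \subseteq Z(A^{\otimes n})$ and $\kk[z_1,\ldots,z_n]$ commutes with $A^{\otimes n}$ by \eqref{AZ}. For commutation with $s_i$, Corollary \ref{sfa}(i) gives
$$
s_i x - x s_i = \sum_k \big({}^{s_i}f_k \cdot {}^{s_i}\ba_k - f_k\ba_k\big)s_i + \sum_k \nabla_i(f_k)\,\Delta_{i,i+1}\,\ba_k.
$$
The $s_i$-coefficient vanishes by the assumed $\mathfrak{S}_n$-invariance of $\sum_k f_k \otimes \ba_k$ (since the map $X_n \hookrightarrow \mathcal{H}_n(A)$ respects this tensor decomposition). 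The key sub-identity for killing the $1$-coefficient is that for any $\ba \in Z(A)^{\otimes n}$ one has $\Delta_{i,i+1}\ba = \Delta_{i,i+1}\,{}^{s_i}\ba$; this follows from Lemma \ref{tauDelta}(ii) combined with the centrality $\ba\Delta(1) = \Delta(1)\ba$. Using $(z_i - z_{i+1})\nabla_i(f) = f - {}^{s_i}f$ and invariance, a short computation shows
$$
(z_i - z_{i+1})\,\Delta_{i,i+1}\,\sum_k \nabla_i(f_k)\,\ba_k = \Delta_{i,i+1}\,\sum_k {}^{s_i}f_k\big({}^{s_i}\ba_k - \ba_k\big) = 0.
$$
Since $A^{\otimes n}$ is free over $\kk$, the module $\kk[z_1,\ldots,z_n] \otimes A^{\otimes n}$ is free over $\kk[z_1,\ldots,z_n]$, and $z_i - z_{i+1}$ is a non-zero-divisor there; so the coefficient itself vanishes.

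\textbf{Reverse inclusion.} By Corollary \ref{freeact}(iv), any $x \in \mathcal{H}_n(A)$ has a unique expression $x = \sum_{w \in \mathfrak{S}_n} c_w w$ with $c_w \in \kk[z_1,\ldots,z_n] \otimes A^{\otimes n}$. First I reduce to $w = 1$. Relation \eqref{SZ} plus induction on $\ell(w)$ gives $w z_j = z_{wj} w + (\text{terms of length} < \ell(w))$. If $c_w \neq 0$ for some $w \neq 1$, choose such $w_0$ of maximum length; comparing the $w_0$-coefficient in $z_j x = x z_j$ yields $(z_j - z_{w_0 j})c_{w_0} = 0$ for every $j$, which forces $c_{w_0} = 0$ (again by non-zero-divisibility on a free $\kk[z_1,\ldots,z_n]$-module, for any $j$ with $w_0 j \neq j$). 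Thus $x \in \kk[z_1,\ldots,z_n] \otimes A^{\otimes n}$.

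\textbf{Final two steps.} Commutation of $x$ with arbitrary $\bb \in A^{\otimes n}$ then forces $x \in \kk[z_1,\ldots,z_n] \otimes Z(A^{\otimes n})$; the standard basis argument using that $A$ is free over $\kk$ shows $Z(A^{\otimes n}) = Z(A)^{\otimes n}$, so $x \in X_n$. Writing $x = \sum_k f_k \ba_k$ with $\ba_k \in Z(A)^{\otimes n}$ and applying Corollary \ref{sfa}(i) to $s_i x - x s_i = 0$, the comparison of $s_i$- and $1$-coefficients via the basis decomposition of Theorem \ref{AffBasis}(i) yields $\sum_k f_k \otimes \ba_k = \sum_k {}^{s_i}f_k \otimes {}^{s_i}\ba_k$ in $X_n$, which is $s_i$-invariance. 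Since this holds for every $i$, $x \in X_n^{\mathfrak{S}_n}$. The main obstacle is the forward inclusion: the identity $\Delta_{i,i+1}\ba = \Delta_{i,i+1}{}^{s_i}\ba$ and the multiplication-by-$(z_i - z_{i+1})$ trick are both needed to handle general $\kk$ (in particular avoiding the assumption that $2$ is invertible, which a naive symmetrization argument would require).
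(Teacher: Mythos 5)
Your proof is correct and follows the same overall strategy as the paper's: the two inclusions, the use of Corollary \ref{sfa}(i) to compute $s_i x - x s_i$, the multiplication-by-$(z_i - z_{i+1})$ trick combined with freeness over $\kk[z_1,\ldots,z_n]$ to kill the $\nabla_i$-coefficient, and the identity $\Delta_{i,i+1}\ba = \Delta_{i,i+1}{}^{s_i}\ba$ (centrality plus Lemma \ref{tauDelta}(ii)) are all the same moves. You do make two small simplifications worth noting: in the reverse inclusion you compare $z_j$-coefficients for each $j$ to kill $c_{w_0}$, whereas the paper commutes once with the single polynomial $z_1 z_2^2\cdots z_n^n$ (either works); and in the final step you extract $s_i$-invariance directly by matching the coefficient of $s_i$ in the basis of Theorem \ref{AffBasis}(i), whereas the paper first isolates the top-degree piece $x_m$, proves $x_m$ is invariant, and then runs an induction on $\deg_z$ — your direct comparison makes that induction unnecessary.
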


\begin{proof}
Let $x\in X_n^{\mathfrak{S}_n}$. Write \(x=\sum_j f_j \ba_j\) for some \(f \in \kk[z_1, \ldots, z_n]\) and \(\ba_j \in Z(A)^{\otimes n}\). Clearly \(x\) commutes with elements of the subalgebras \(A^{\otimes n}\) and \(\kk[z_1, \ldots, z_n]\) of \(\mathcal{H}_n(A)\). Now
\begin{align*}
s_ix - xs_i &= s_i \sum_j f_j \ba_j - xs_i 
=\sum_j ({}^{s_i}\hspace{-0.5mm}f_j)({}^{s_i}\hspace{-0.3mm}\ba_j) s_i + \sum_j \nabla_i(f_j) \Delta_{i,i+1}\ba_j -  xs_i \\
&={}^{s_i}\hspace{-0.3mm}xs_i + \sum_j \nabla_i(f_j) \Delta_{i,i+1}\ba_j -  xs_i
=  \sum_j \nabla_i(f_j) \Delta_{i,i+1}\ba_j,
\end{align*}
applying Corollary \ref{sfa}(i) for the second equality. Since \(\kk[z_1,\ldots, z_n]\) acts freely on the left of \(\mathcal{H}_n(A)\) by Corollary \ref{freeact}, we may show that the last term is zero by instead showing that \(z_i - z_{i+1}\) acts on this term as zero:
\begin{align*}
(z_i-z_{i+1}) \sum_j \nabla_i(f_j) \Delta_{i,i+1}\ba_j &= \sum_j (f_j - f_j^{s_i})\Delta_{i,i+1} \ba_j\\
&= \sum_j f_j \ba_j\Delta_{i,i+1}  - \sum_j ({}^{s_i}\hspace{-0.5mm}f_j)({}^{s_i}\hspace{-0.3mm}\ba_j)\Delta_{i,i+1} \\
&= (x- {}^{s_i}\hspace{-0.3mm}x) \Delta_{i,i+1} = 0.
\end{align*}
In the second equality we have applied centrality of \(\ba_j\) in \(A^{\otimes n}\) for the first sum, and Lemma \ref{tauDelta}(ii) for the second sum. Therefore \(s_ix = xs_i\), and \(X_n^{\mathfrak{S}_n} \subseteq Z(\mathcal{H}_n(A))\).

Now we show that \(Z(\mathcal{H}_n(A)) \subseteq X_n^{\mathfrak{S}_n}\). Let \(0 \neq x \in Z(\mathcal{H}_n(A))\). By Theorem \ref{AffBasis} we may write \(x = \sum_{w \in \mathfrak{S}_n} y_w w\) for some \(y_w \in \kk[z_1, \ldots, z_n] \otimes A^{\otimes n}\). Let \(l\) be maximal such that \(y_u \neq 0\) and \(\ell(u) = l\) for some \(u \in \mathfrak{S}_n\). Then, using centrality of \(x\) and Corollary \ref{sfa}(ii), we have 
\begin{align*}
\sum_{w \in \mathfrak{S}_n}(z_1^1 z_2^2\cdots z_n^n) y_ww =
\sum_{w \in \mathfrak{S}_n} y_w w(z_1z_2^2\cdots z_n^n) =
\sum_{\substack{w \in \mathfrak{S}_n\\\ell(w)=l}}(z_{w1}^1z_{w2}^2\cdots z_{wn}^n)y_w w + (*),
\end{align*}
where \((*)\) is a linear combination of basis elements of the form \(y_{w'}'w'\), where \(y'_{w'} \in \kk[z_1, \ldots, z_n] \otimes A^{\otimes n}\) and \(\ell(w')<l\). Then, again by Theorem \ref{AffBasis}, 
\begin{align*}
(z_1z_2^2 \cdots z_n^n- z_{u1}z_{u2}^2\cdots z_{un}^n)y_u = 0,
\end{align*}
but since \(\kk[z_1, \ldots, z_n]\) acts freely on the left of \(\mathcal{H}_n(A)\) and \(y_u \neq 0\), we have that \(z_1z_2^2 \cdots z_n^n = z_{u1}^1z_{u2}^2\cdots z_{un}^n\), and hence \(u=1\). Thus \(x \in \kk[z_1, \ldots, z_n] \otimes A^{\otimes n}\). 

For \(t \in \ZZ_{\geq 0}\), let \(B_1^{t}\) be a basis for the homogeneous degree-\(t\) polynomials in \(\kk[z_1, \ldots, z_n]\). Let \(B_1 = \bigcup_{t=0}^{\infty} B_1^{t}\). Then \(x=\sum_{f \in B_1} f\ba_f\) for some \(\ba_f \in A^{\otimes n}\). For all \(\bb \in A^{\otimes n}\), we have
\begin{align*}
\sum_{f \in B_1} f\ba_f \bb = \sum_{f \in B_1}\bb f \ba_f = \sum_{f \in B_1}f \bb \ba_f,
\end{align*}
so by Theorem \ref{AffBasis}, \(\ba_f\bb = \bb\ba_f\) for all \(f\), and thus \(\ba_f \in Z(A^{\otimes n}) = Z(A)^{\otimes n}\) for all \(f\), so \(x \in X_n\).

We write \(\deg_z(x)=m\) if \(m\) is maximal such that \(\ba_f \neq 0\) for some \(f \in B^m_1\). We argue by induction on \(\deg_z(x)\) that \(x \in X_n^{\mathfrak{S}_n}\). If \(\deg_z(x) = 0\), we have \(x=\ba\) for some \(\ba \in Z(A)^{\otimes n}\). Let \(i \in [1, n-1]\). Then we have
\begin{align*}
\ba s_i = s_i \ba ={}^{s_i}\hspace{-0.3mm} \ba s_i,
\end{align*}
so Theorem \ref{AffBasis} implies that \(\ba = {}^{s_i}\hspace{-0.3mm}\ba\), and thus \(x \in X_n^{\mathfrak{S}_n}\). Now for the induction step, assume \(\deg_z(x)=m\), and \(x' \in X_n^{\mathfrak{S}_n}\) for all \(x' \in Z(\mathcal{H}_n(A))\) with \(\deg_z(x')<m\). Let \(i \in [1, n-1]\), and note that if \(f \in B_1^t\), then \(\nabla_i(f)\) is in the span of \(\bigcup_{r=0}^{t-1}B_1^r\). Then, applying Corollary \ref{sfa}(i) for the second equality, we have
\begin{align*}
\sum_{f \in B_1}f\ba_fs_i &= \sum_{f \in B_1}s_i f \ba_f 
= \sum_{f\in B_1} ({}^{s_i}\hspace{-0.5mm}f)({}^{s_i}\hspace{-0.3mm}\ba_f) s_i + \sum_{f \in B_1} \nabla_i(f)\Delta_{i,i+1}\ba_f\\
&={}^{s_i}\hspace{-0.5mm}\left(\sum_{f \in B^m_1} f \ba_f\right) s_i + (*),
\end{align*}
where \((*)\) is a linear combination of terms of the form \(fy\), where \(f \in B_1^t\) for \(t <m\), and \(y \in A^{\otimes n} \otimes \kk\mathfrak{S}_n\). Thus, writing \(x_m := \sum_{f \in B^m_1}f\ba_f\), it follows from Theorem \ref{AffBasis}(i) that \((x_m -{}^{s_i}\hspace{-0.3mm}x_m)s_i=0\), and thus \(x_m = {}^{s_i}\hspace{-0.3mm}x_m\), for all \(i\in[1,n-1]\) by Corollary  \ref{freeact}. Then \(x_m \in X_n^{\mathfrak{S}_n}\), so \(x_m \in Z(\mathcal{H}_n(A))\). Thus \(x-x_m \in Z(\mathcal{H}_n(A))\) and \(\deg_z(x-x_m)<m\), so \(x-x_m \in X_n^{\mathfrak{S}_n}\) by the induction assumption. Therefore  \(x = x_m + (x-x_m) \in X_n^{\mathfrak{S}_n}\), as desired.
\end{proof}

We have the immediate corollary:
\begin{Corollary}\label{ZAcent}
\(
(Z(A)^{\otimes n})^{\mathfrak{S}_n} = Z(\mathcal{H}_n(A)) \cap A^{\otimes n}.
\)
\end{Corollary}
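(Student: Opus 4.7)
The plan is to deduce this corollary directly from Proposition \ref{AffCenter}, which identifies $Z(\mathcal{H}_n(A))$ with $X_n^{\mathfrak{S}_n}$, where $X_n = \kk[z_1,\ldots,z_n] \otimes Z(A)^{\otimes n}$. The only real content is to identify $X_n^{\mathfrak{S}_n} \cap A^{\otimes n}$ inside $\mathcal{H}_n(A)$.

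First I would observe the trivial containment: any element of $(Z(A)^{\otimes n})^{\mathfrak{S}_n}$ lies in $A^{\otimes n}$ and is also contained in $X_n$ (as $1 \otimes Z(A)^{\otimes n} \subseteq X_n$) and is $\mathfrak{S}_n$-invariant, hence belongs to $X_n^{\mathfrak{S}_n} = Z(\mathcal{H}_n(A))$ by Proposition \ref{AffCenter}. This gives $(Z(A)^{\otimes n})^{\mathfrak{S}_n} \subseteq Z(\mathcal{H}_n(A)) \cap A^{\otimes n}$.

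For the reverse inclusion, I would take $x \in Z(\mathcal{H}_n(A)) \cap A^{\otimes n}$. By Proposition \ref{AffCenter}, $x \in X_n^{\mathfrak{S}_n} = (\kk[z_1,\ldots,z_n] \otimes Z(A)^{\otimes n})^{\mathfrak{S}_n}$. To extract the constant-in-$z$ part, I would appeal to Corollary \ref{freeact}(iii), which shows that $A^{\otimes n}$ embeds into $\mathcal{H}_n(A)$ as the elements with constant polynomial coefficient when written in the basis $\{fw \mid f \in B_1, w \in \mathfrak{S}_n\}$. Concretely, picking a homogeneous basis $B_1$ of $\kk[z_1,\ldots,z_n]$ containing $1$, Theorem \ref{AffBasis}(i) tells us that the decomposition $X_n = \bigoplus_{f \in B_1} f \cdot (Z(A)^{\otimes n})$ is direct inside $\mathcal{H}_n(A)$, so $X_n \cap A^{\otimes n} = 1 \otimes Z(A)^{\otimes n} = Z(A)^{\otimes n}$. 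Combining, $x \in Z(A)^{\otimes n}$, and since $x$ is $\mathfrak{S}_n$-invariant, $x \in (Z(A)^{\otimes n})^{\mathfrak{S}_n}$.

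There is no real obstacle: the hard work is all contained in Proposition \ref{AffCenter} and the linear independence results from Theorem \ref{AffBasis} and Corollary \ref{freeact}. The proof amounts to verifying that the intersection of the two subspaces $X_n$ and $A^{\otimes n}$ inside $\mathcal{H}_n(A)$ is precisely $Z(A)^{\otimes n}$, which is immediate from freeness of $\mathcal{H}_n(A)$ as a $\kk$-module with the stated tensor-product basis.
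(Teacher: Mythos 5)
Your argument is correct and is exactly the intended unpacking of the paper's claim that this is an ``immediate corollary'' of Proposition \ref{AffCenter}: one inclusion is trivial, and the other follows because the basis from Theorem \ref{AffBasis}(i) shows $X_n \cap A^{\otimes n} = Z(A)^{\otimes n}$ inside $\mathcal{H}_n(A)$. Nothing to add.
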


\subsection{Cyclotomic quotients} 

By Theorem \ref{AffBasis} and relation (\ref{AS}), the subalgebra of \(\mathcal{H}_n(A)\) generated by \(A^{\otimes n}\) and \(\kk \mathfrak{S}_n\) may be identified with the wreath product \(A \wr \mathfrak{S}_n\). For \(r \in [1,n]\), define the {\em Jucys-Murphy elements} of \(A \wr \mathfrak{S}_n\) as follows:
\begin{align*}
l_r := -\sum_{t=1}^{r-1} \Delta_{t,r} (t,r),
\end{align*}
where \((t,r) \in \mathfrak{S}_n\) is the transposition of \(t\) and \(r\).

\begin{Lemma}\label{JMcent}
The Jucys-Murphy elements centralize the subalgebra \(A^{\otimes n}\) of \(\mathcal{H}_n(A)\).
\end{Lemma}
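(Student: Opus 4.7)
The plan is to show $\bb l_r = l_r \bb$ for every $\bb \in A^{\otimes n}$ by checking the identity term-by-term in the defining sum of $l_r$. First, using relation (\ref{AS}) (extended to arbitrary $w \in \mathfrak{S}_n$), I would rewrite
\begin{align*}
l_r \bb = -\sum_{t=1}^{r-1} \Delta_{t,r} (t,r) \bb = -\sum_{t=1}^{r-1} \Delta_{t,r}\, {}^{(t,r)}\hspace{-0.5mm}\bb\, (t,r),
\end{align*}
so it suffices to prove the ``slot-level'' identity
\begin{align}\label{Eplan}
\bb\, \Delta_{t,r} = \Delta_{t,r}\, {}^{(t,r)}\hspace{-0.5mm}\bb
\end{align}
for each $1 \leq t < r$ and each $\bb \in A^{\otimes n}$; summing over $t$ and multiplying by $(t,r)$ on the right then yields $\bb l_r = l_r \bb$.

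Next I would verify (\ref{Eplan}). Since both sides are $\kk$-linear in $\bb$, it is enough to take a pure tensor $\bb = b_1 \otimes \cdots \otimes b_n$. Because $\Delta_{t,r} = \iota_{t,r}(\Delta(1))$ acts as the identity in all slots other than $t$ and $r$, both sides of (\ref{Eplan}) agree outside slots $t, r$, so the computation reduces to the $A^{\otimes 2}$ statement
\begin{align*}
(b_t \otimes b_r)\, \Delta(1) = \Delta(1)\, (b_r \otimes b_t),
\end{align*}
which is precisely Lemma \ref{tauDelta}(ii) applied to $\ba = b_t \otimes b_r$, since $\tau_{A,A}(b_t \otimes b_r) = b_r \otimes b_t$ and $(b_r \otimes b_t)$ is exactly what ${}^{(t,r)}\hspace{-0.5mm}\bb$ contributes in slots $t, r$.

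The main (minor) obstacle is simply keeping careful track of slot indices when transporting the $A \otimes A$ identity from Lemma \ref{tauDelta}(ii) through the embedding $\iota_{t,r}$ into $A^{\otimes n}$; once the bookkeeping is clean, the proof is a one-line consequence of the intertwining property of the distinguished element together with relation (\ref{AS}).
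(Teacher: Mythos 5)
Your proof is correct and follows essentially the same route as the paper's: both reduce the claim, via relation (\ref{AS}), to the slot-level intertwining identity $\bb\,\Delta_{t,r} = \Delta_{t,r}\,{}^{(t,r)}\hspace{-0.5mm}\bb$, which is then recognized as Lemma \ref{tauDelta}(ii) transported through $\iota_{t,r}$. The only cosmetic difference is that the paper factors $\ba = \widehat{\ba}\,\iota_{t,r}(a_t\otimes a_r)$ explicitly while you phrase the reduction in terms of slot bookkeeping; the substance is identical.
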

\begin{proof}
Let \(t<r \in [1,n]\). Let \(\ba =a_1 \otimes \cdots \otimes a_n\in A^{\otimes n}\), and set
\begin{align*}
\widehat{\ba}:= a_1 \otimes \cdots \otimes a_{t-1} \otimes 1 \otimes a_{t+1} \otimes \cdots \otimes a_{r-1} \otimes 1 \otimes a_{r+1} \otimes \cdots \otimes a_n.
\end{align*}
Then \(\ba = \widehat{\ba}\,\iota_{t,r}(a_t \otimes a_r)\), and 
\begin{align*}
\Delta_{t,r}(t,r)\ba &= \Delta_{t,r}({}^{(t,r)}\hspace{-.3mm}\ba)(t,r)
= \Delta_{t,r} \widehat{\ba} \iota_{t,r}(a_r \otimes a_t)(t,r)
= \widehat{\ba}\Delta_{t,r} \iota_{t,r}(a_r \otimes a_t)(t,r)\\
&=\widehat{\ba}   \iota_{t,r}( \Delta(1)(a_r \otimes a_t))        (t,r)
= \widehat{\ba}  \iota_{t,r}( (a_t \otimes a_r)\Delta(1))        (t,r)
=\ba  \Delta_{t,r}        (t,r),
\end{align*}
using Lemma \ref{tauDelta}(ii) for the fifth equality. Thus the Jucys-Murphy elements are linear combinations of elements which centralize the subalgebra \(A^{\otimes n}\).
\end{proof}

\begin{Lemma}\label{JMcentS}
For \(r \in [1,n]\), the Jucys-Murphy element \(l_r\) centralizes the subalgebra \(\kk \mathfrak{S}_{r-1}\otimes A^{\otimes r-1}\) of \(\mathcal{H}_n(A)\) generated by \(s_1, \ldots, s_{r-2}\) and $A^{\otimes r-1}\otimes 1^{\otimes n-r+1}$. In particular, $l_1,\dots,l_n$ commute.
\end{Lemma}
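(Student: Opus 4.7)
The plan is to verify the centralization claim separately against the two natural sets of generators of the specified subalgebra---the elements of $A^{\otimes r-1} \otimes 1^{\otimes n-r+1}$ and the Coxeter generators $s_1, \ldots, s_{r-2}$---and then to read off the commutation of the Jucys--Murphy elements as a corollary. Commutation with the first set of generators is free: Lemma \ref{JMcent} already shows that $l_r$ centralizes all of $A^{\otimes n}$, hence in particular the subalgebra $A^{\otimes r-1} \otimes 1^{\otimes n-r+1}$.

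The substantive point is the commutation $s_j l_r = l_r s_j$ for $j \in [1, r-2]$. I would move $s_j$ across each summand $\Delta_{t,r}(t,r)$ of $l_r$ using relation (\ref{AS}) together with the symmetric-group identity $s_j(t,r) = (s_j(t), s_j(r))\, s_j$. Two observations make this clean. First, because $r \geq j+2$ we have $s_j(r) = r$. Second, from the definition $\Delta_{t,r} = \iota_{t,r}(\Delta(1))$ one reads off that ${}^{s_j}(\Delta_{t,r}) = \Delta_{s_j(t), r}$, so $\Delta_{t,r}$ is fixed unless $t \in \{j, j+1\}$, in which case $\Delta_{j,r}$ and $\Delta_{j+1,r}$ are interchanged. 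Combining these yields
\[
s_j\, \Delta_{t,r}(t,r) = \Delta_{s_j(t), r}(s_j(t), r)\, s_j,
\]
and summing over $t \in [1, r-1]$, on which $t \mapsto s_j(t)$ is a bijection, the total sum is preserved, giving $s_j l_r = l_r s_j$.

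For the final assertion, take $p < r$ and note that $l_p = -\sum_{t=1}^{p-1} \Delta_{t,p}(t,p)$ is built from factors $\Delta_{t,p}$ lying in $A^{\otimes p} \otimes 1^{\otimes n-p} \subseteq A^{\otimes r-1} \otimes 1^{\otimes n-r+1}$ and transpositions $(t,p)$ with $t < p \leq r-1$, each of which can be expressed as a word in $s_1, \ldots, s_{p-1} \subseteq \{s_1, \ldots, s_{r-2}\}$. Thus $l_p$ lies in the subalgebra just shown to be centralized by $l_r$, so $l_p l_r = l_r l_p$. No genuine obstacle is expected; the only care needed is the bookkeeping of the index bijection $t \mapsto s_j(t)$ on the summation defining $l_r$.
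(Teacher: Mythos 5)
Your proof is correct and follows essentially the same route as the paper: commutation with $s_j$ for $j\in[1,r-2]$ is obtained by pushing $s_j$ through each summand $\Delta_{t,r}(t,r)$ via relation (\ref{AS}), using that $s_j$ fixes $r$ so ${}^{s_j}\Delta_{t,r}=\Delta_{s_jt,r}$, and then re-indexing the sum, while commutation with $A^{\otimes r-1}\otimes 1^{\otimes n-r+1}$ comes from Lemma \ref{JMcent}. The deduction that the $l_r$ commute matches the paper's Corollary \ref{C060617}, only phrased by placing $l_p$ inside the centralized subalgebra rather than invoking Lemmas \ref{JMcent} and \ref{JMcentS} separately.
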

\begin{proof}
Let \(i \in [1,r-2]\). Then
\begin{align*}
s_i l_r = -\sum_{t=1}^{r-1} s_i \Delta_{t,r}(t,r) =  -\sum_{t=1}^{r-1}  \Delta_{s_it,r}s_i(t,r)
= -\sum_{t=1}^{r-1}  \Delta_{s_it,r}(s_it,r)s_i = l_r s_i,
\end{align*}
as required.
\end{proof}

\begin{Corollary} \label{C060617} %{\rm \cite{}}%{\bf ()}
The Jucys-Murphy elements $l_1,\dots,l_n$ commute. 
\end{Corollary}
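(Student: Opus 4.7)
The plan is to derive the corollary directly from Lemma \ref{JMcentS}, which already asserts the conclusion; in fact Lemma \ref{JMcentS} should be applied to exhibit each $l_r$ (for $r < s$) as an element of the centralized subalgebra for $l_s$.

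First I would fix $r, s \in [1,n]$ with $r < s$ and show that $l_r$ lies inside the subalgebra $\kk\mathfrak{S}_{s-1}\otimes A^{\otimes s-1}$ of $\mathcal{H}_n(A)$ (in the sense specified in Lemma \ref{JMcentS}) which is centralized by $l_s$. For this, I would examine the two kinds of factors appearing in $l_r = -\sum_{t=1}^{r-1}\Delta_{t,r}(t,r)$: on the one hand, $\Delta_{t,r} = \iota_{t,r}(\Delta(1))$ is supported in tensor slots $t$ and $r$, both of which are $\leq r \leq s-1$, so $\Delta_{t,r} \in A^{\otimes s-1}\otimes 1^{\otimes n-s+1}$; on the other hand, the transposition $(t,r)$ with $t < r \leq s-1$ lies in $\mathfrak{S}_{r} \subseteq \mathfrak{S}_{s-1}$ and is a product of the simple transpositions $s_1,\dots,s_{s-2}$.

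Therefore $l_r \in \kk\mathfrak{S}_{s-1}\otimes A^{\otimes s-1}$, and Lemma \ref{JMcentS} applied to $l_s$ gives $l_sl_r = l_r l_s$. Since $r$ and $s$ were arbitrary, the elements $l_1,\dots,l_n$ pairwise commute.

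There is no real obstacle here: the corollary is an immediate consequence of Lemma \ref{JMcentS}, and the only content is the observation that each $l_r$ belongs to the subalgebra that the next Jucys--Murphy element is guaranteed to centralize.
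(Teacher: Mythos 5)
Your argument is correct and matches the paper's approach: both proofs reduce to showing that, for $r<s$, the element $l_r$ lies in a subalgebra that $l_s$ is known to centralize. The only minor difference is that you keep track of the fact that $\Delta_{t,r}$ is supported in tensor slots $\leq s-1$ and so invoke only the statement of Lemma \ref{JMcentS}, whereas the paper cites the full $A^{\otimes n}$ together with $s_1,\dots,s_{t-1}$ and therefore appeals to Lemma \ref{JMcent} as well; this is a presentational, not substantive, distinction.
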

\begin{proof}
Let $t<r$. Since $l_t$ lies in the subalgebra generated by \(A^{\otimes n}\) and \(s_1, \ldots, s_{t-1}\), the result follows from Lemmas~\ref{JMcent} and \ref{JMcentS}.
\end{proof}

The next proposition involves a choice of a degree \(d\) element \(\bc \in (Z(A)^{\otimes n})^{\mathfrak{S}_n}\). Though we work with this general choice of \(\bc\), we note that there is at least one natural choice of such an element; we may take \(c = m(\Delta(1)) \in A\), and define 
\begin{align*}
\bc := c \otimes1 \otimes \cdots \otimes 1 + 1 \otimes c \otimes 1 \otimes \cdots \otimes 1 + \cdots + 1 \otimes \cdots \otimes 1 \otimes c,
\end{align*}
noting that \(c \in Z(A)\) since
\begin{align*}
x\, m(\Delta(1)) = m(x\cdot \Delta(1)) = m(\Delta(x)) = m(\Delta(1) \cdot x) = m(\Delta(1))\,x,
\end{align*}
for all \(x \in A\).

\begin{Proposition}\label{kappa}
Let \(\bc \in (Z(A)^{\otimes n})^{\mathfrak{S}_n}_d\). Let \(\beta_\bc: \mathcal{H}_n(A) \to A\wr \mathfrak{S}_n\) be the map which is the identity on the subalgebra \(A \wr \mathfrak{S}_n\) of \(\mathcal{H}_n(A)\), and sends
\begin{align*}
z_r \mapsto l_r + \bc
\end{align*}
for all \(r \in [1, \ldots, n]\). Then \(\beta_\bc\) is a surjective homomorphism of graded \(\kk\)-algebras. The kernel of \(\beta_\bc\) is the 2-sided ideal generated by \(z_1 - \bc\).
\end{Proposition}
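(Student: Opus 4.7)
The plan is to establish $\beta_\bc$ as a well-defined algebra homomorphism by checking the defining relations of $\mathcal{H}_n(A)$, observe surjectivity almost for free, and then identify the kernel via a quotient argument using the basis theorem.

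For well-definedness, relations internal to $A^{\otimes n}$ and $\kk\mathfrak{S}_n$ are preserved because $\beta_\bc$ is the identity on the subalgebra $A\wr\mathfrak{S}_n$, and (\ref{AS}) holds for the same reason. Relation (\ref{AZ}) follows from Lemma \ref{JMcent} (which tells us $l_j$ centralizes $A^{\otimes n}$) combined with $\bc \in Z(A)^{\otimes n}$. The commutativity $z_iz_j = z_jz_i$ follows from Corollary \ref{C060617} and the fact that $\bc$, being $\mathfrak{S}_n$-invariant and in $Z(A)^{\otimes n}$, is central in $A\wr\mathfrak{S}_n$. The crucial relation to verify is (\ref{SZ}): since $\bc$ is central in $A\wr\mathfrak{S}_n$, this reduces to the identity
\begin{equation*}
s_il_j - l_{s_ij}s_i = (\delta_{i,j}-\delta_{i+1,j})\Delta_{i,i+1}.
\end{equation*}
This I would verify by direct computation on the defining sum for $l_j$, using $s_i\Delta_{t,r} = \Delta_{s_it,\,s_ir}s_i$ (from (\ref{AS})) and $s_i(t,r) = (s_it,s_ir)s_i$ in $\mathfrak{S}_n$. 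Splitting into cases $j\notin\{i,i+1\}$ (both sides vanish by a reindexing $t\mapsto s_it$), $j=i$, and $j=i+1$, the nontrivial cases produce an extra term $\pm\Delta_{i,i+1}$ from the $t=i$ (resp.\ $t=i+1$) summand, matching the right-hand side exactly. This computation is the main obstacle.

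Surjectivity is then immediate: the image contains both $A^{\otimes n}$ and $\kk\mathfrak{S}_n$, hence all of $A\wr\mathfrak{S}_n$.

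For the kernel, let $J\trianglelefteq\mathcal{H}_n(A)$ denote the two-sided ideal generated by $z_1 - \bc$. Since $l_1=0$, we have $\beta_\bc(z_1-\bc)=0$, so $J\subseteq\ker\beta_\bc$. For the reverse inclusion, the key claim is
\begin{equation*}
z_r \equiv l_r + \bc \pmod{J} \qquad (r = 1,\dots,n),
\end{equation*}
proved by induction on $r$. The base $r=1$ is the definition of $J$. For the inductive step, apply (\ref{SZ}) with $i=j=r-1$ to get $z_r = s_{r-1}z_{r-1}s_{r-1} - \Delta_{r-1,r}s_{r-1}$; substituting the inductive hypothesis and invoking the identity $s_{r-1}l_{r-1}s_{r-1} = l_r + \Delta_{r-1,r}s_{r-1}$ (a direct consequence of the case $j=i$ computation from Step 1, together with centrality of $\bc$) yields $z_r \equiv l_r + \bc \pmod J$. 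Consequently every basis element $z^{\bm}\ba w$ of Theorem \ref{AffBasis} is congruent modulo $J$ to an element of $A\wr\mathfrak{S}_n$, so the composition $A\wr\mathfrak{S}_n \hookrightarrow \mathcal{H}_n(A) \twoheadrightarrow \mathcal{H}_n(A)/J$ is surjective. Since $\beta_\bc$ restricts to the identity on $A\wr\mathfrak{S}_n$ and annihilates $J$, this composition has a one-sided inverse induced by $\beta_\bc$; hence it is an isomorphism and $\ker\beta_\bc = J$.
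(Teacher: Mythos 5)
Your proof is correct and takes essentially the same route as the paper's. Both verify the homomorphism property by reducing to the direct computation $s_il_j - l_{s_ij}s_i = (\delta_{i,j}-\delta_{i+1,j})\Delta_{i,i+1}$ via the same three-case split, and both identify the kernel by showing that $A\wr\mathfrak{S}_n$ surjects onto $\mathcal{H}_n(A)/J$ (the paper phrases this as $\pi\circ\iota$ being surjective, you phrase it as $z_r\equiv l_r+\bc\pmod J$, but the key ingredient — the relation $z_{r} = s_{r-1}z_{r-1}s_{r-1}-\Delta_{r-1,r}s_{r-1}$ — is identical) and then using that $\beta_\bc$ restricts to the identity on $A\wr\mathfrak{S}_n$ to produce the inverse.
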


\begin{proof}
It is clear that \(\beta_\bc\) is surjective. 
By Corollaries \ref{ZAcent} and \ref{C060617}, we have \(\beta_\bc(z_i)\beta_\bc( z_j) = \beta_{\bc}(z_j)\beta_\bc( z_i)\) for all \(i,j \in [1,n]\). 
By Corollary \ref{ZAcent} and Lemma \ref{JMcent}, we have $\beta_\bc(z_i)\beta_\bc(\ba)=\beta_\bc(\ba)\beta_\bc(z_i)$ for all \(i \in [1,n]\) and \(\ba \in A^{\otimes n}\). 
So to see that \(\beta_\bc\) is a homomorphism, it suffices to verify that \(\beta_\bc\) preserves relation (\ref{SZ}) of Definition \ref{AffDef}. 

Note that, for all \(i \in [1,n-1]\) and \(j \in [1,n]\) we have
\begin{align*}
\beta_\bc(s_i z_j) = s_i(l_j + \bc)
= -\sum_{t =1}^{j-1} s_i\Delta_{t,j}  (t,j) + s_i \bc
= - \sum_{t=1}^{j-1} \Delta_{s_it, s_ij} (s_it,s_ij)s_i + \bc s_i
\end{align*}
and
\begin{align*}
\beta_\bc(z_{s_ij}s_i) = -\sum_{t=1}^{s_ij-1} \Delta_{t,s_ij} (t,s_ij)s_i + \bc s_i.
\end{align*}
If \(j \notin \{i,i+1\}\), these terms are equal. If \(j=i\), then \(s_i t = t\) for all \(t \in [1,j-1]\), thus 
\begin{align*}
\beta_\bc(s_iz_i - z_{i+1}s_i) = \Delta_{i,i+1}  (i,i+1)s_i = \Delta_{i,i+1} = \beta_\bc(\Delta_{i,i+1}),
\end{align*}
as desired. If \(j = i+1\), then \(s_i t = t\) for all \(t \in [1,j-2]\), thus
\begin{align*}
\beta_\bc(s_iz_{i+1} - z_{i}s_i) = -\Delta_{i+1, i}  (i+1,i)s_i = -\Delta_{i,i+1} = \beta_\bc(-\Delta_{i,i+1}),
\end{align*}
as desired. Thus \(\beta_\bc\) is a homomorphism.

Since $l_1=0$, we have \(z_1- \bc \in \ker \beta_\bc\). Let \(\overline{\mathcal{H}_n(A)} := \mathcal{H}_n(A) / \mathcal{H}_n(A)(z_1- \bc)\mathcal{H}_n(A)\), and let \(\pi: \mathcal{H}_n(A) \to \overline{\mathcal{H}_n(A)}\) be the natural projection.  Then \(\beta_\bc\) factors through to a surjection \(\overline{\beta}_\bc: \overline{\mathcal{H}_n(A)}\to A \wr \mathfrak{S}_n\). Let \(\iota: A \wr \mathfrak{S}_n \to \mathcal{H}_n(A)\) be the inclusion map. We have the commuting diagram:
$$
\begin{tikzpicture}[scale=0.55, line join=bevel]
\node at (0,2) {$\mathcal{H}_n(A)$};
\node at (4,2) {$A \wr \mathfrak{S}_n$};
\node at (2,5) {$\overline{\mathcal{H}_n(A)}$};
\draw [->] (1,2.1) -- (3.05,2.1);
\draw [<-] (1,1.9) -- (3.05,1.9);
\draw [<-] (1.75,4.55) -- (0.1,2.45);
\draw [->] (2.2,4.55) -- (3.9,2.45);
\node at (0.6,3.7) {${\scriptstyle \pi}$};
\node at (3.6,3.7) {${\scriptstyle \overline{\beta}_\bc}$};
\node at (2,2.5) {${\scriptstyle \beta_\bc}$};
\node at (2,1.55) {${\scriptstyle \iota}$};
\end{tikzpicture}
$$
Note that \(\overline{\beta}_\bc \circ \pi \circ \iota = \beta_\bc \circ \iota = \textup{id}_{A \wr \mathfrak{S}_n}\). Then 
\begin{align}
\pi \circ \iota \circ \overline{\beta}_\bc \circ \pi \circ \iota = 
\pi \circ \iota \circ \textup{id}_{A \wr \mathfrak{S}_n} = \textup{id}_{\overline{\mathcal{H}_n(A)}} \circ \pi \circ \iota.\label{betamaps}
\end{align}
From the defining relations of \(\mathcal{H}_n(A)\), we have that
\(
z_{i+1}=s_iz_i s_i - \Delta_{i,i+1}s_i
\)
for all \(i \in [1, \ldots,n ]\). Thus \(\mathcal{H}_n(A)\) is generated by \(z_1-\bc\) together with the subalgebra \(A \wr \mathfrak{S}_n\). Therefore \(\pi \circ \iota\) is a surjection, so (\ref{betamaps}) implies that \(\pi \circ \iota \circ \overline{\beta}_\bc = \textup{id}_{\overline{\mathcal{H}_n(A)}}\). Thus \(\pi \circ \iota\) and \(\overline{\beta}_\bc\) are mutual inverses, proving the second statement of the lemma.
\end{proof}

Let \(l \in \ZZ_{>0}\), and let \(\bC = (\bc^{(1)}, \ldots, \bc^{(l)})\) be a sequence of elements of \((Z(A)^{\otimes n})^{\mathfrak{S}_n}_d\). We define the corresponding {\em level \(l\) cyclotomic quotient algebra} \(\mathcal{H}^{\bC}_n(A)\) to be \(\mathcal{H}_n(A)\) modulo the two-sided ideal generated by the element
\begin{align*}
\prod_{j=1}^l(z_1 -  \bc^{(j)}).
\end{align*}
By Proposition \ref{kappa}, \(\mathcal{H}^{\bC}_n(A) \cong A \wr \mathfrak{S}_n\) when \(l=1\).

\begin{Proposition}\label{cycspan}
Let $B$ be a $\k$-basis of $A^{\otimes n}$. The level \(l\) cyclotomic quotient \(\mathcal{H}^{\bC}_n(A)\) is spanned by the elements
\begin{align*}
\{z_1^{t_1} \cdots z_n^{t_n}\ba w \mid 0 \leq t_1, \ldots, t_n < l, \, \ba \in B,\, w \in \mathfrak{S}_n\}.
\end{align*}
In particular, \(\mathcal{H}^{\bC}_n(A)\) is finitely generated as a \(\kk\)-module.
\end{Proposition}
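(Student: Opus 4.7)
The plan is to invoke Theorem \ref{AffBasis}(i), which presents every element of $\mathcal{H}_n(A)$—and hence of $\mathcal{H}^{\bC}_n(A)$—as a $\kk$-linear combination of monomials $z_1^{a_1}\cdots z_n^{a_n}\ba w$ with $a_i \ge 0$, $\ba \in B$, $w \in \mathfrak{S}_n$, and then to show that modulo the cyclotomic ideal any such monomial lies in the span of those with $a_i<l$ for every $i$. The induction will be on the tuple $(a_n, a_{n-1},\ldots, a_1) \in \Z_{\ge 0}^n$ under lex order with $a_n$ most significant; this order is well-founded, because in any descending chain the leading coordinate must eventually stabilize, then the next, and so on. For the inductive step I let $k$ be the largest index with $a_k \ge l$, so $a_j<l$ for every $j>k$, and aim to reduce $z_k^l$.

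The key input will be a \emph{reduction formula}: in $\mathcal{H}^{\bC}_n(A)$, the element $z_k^l$ is a $\kk$-linear combination of elements of the form $g(z_1,\ldots,z_k)\,\bb\,u$, where $g$ has degree strictly less than $l$ in $z_k$ (and involves no $z_{k+1},\ldots,z_n$), $\bb \in A^{\otimes n}$, and $u \in \mathfrak{S}_k$, the subgroup of $\mathfrak{S}_n$ fixing $\{k+1,\ldots,n\}$. Granting the formula, I would substitute into $z_1^{a_1}\cdots z_n^{a_n}\ba w = z_1^{a_1}\cdots z_k^{a_k-l}\,z_k^l\,z_{k+1}^{a_{k+1}}\cdots z_n^{a_n}\ba w$; use \eqref{SZ} to commute $u \in \mathfrak{S}_k$ past $z_{k+1},\ldots,z_n$ (both deltas vanish for $i<k$, $j>k$), use \eqref{AZ} to commute $\bb$ past every $z_j$, and use \eqref{AS} to replace $u\,\ba$ by $({}^u\ba)\,u$. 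The result is a combination of monomials $z_1^{c_1}\cdots z_n^{c_n}\ba' w'$ with $c_j = a_j$ for $j>k$ and $c_k \le (a_k-l)+(l-1) < a_k$; this strictly decreases the lex tuple, closing the outer induction.

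To establish the reduction formula I would induct on $k$, working with $F_r := \prod_{j=1}^l(z_r - \bc^{(j)})$. Since $F_r = z_r^l + (\text{lower in } z_r)$ with coefficients in $(Z(A)^{\otimes n})^{\mathfrak{S}_n}$, which is central in $\mathcal{H}_n(A)$ by Proposition \ref{AffCenter}, it suffices to prove the claimed form for $F_r$ itself. The base $k=1$ is immediate: $F_1=0$ in the quotient. For $k>1$, centrality of $\bc^{(j)}$ together with \eqref{SZ} yields $s_{k-1}(z_{k-1}-\bc^{(j)}) = (z_k-\bc^{(j)})s_{k-1}+\Delta_{k-1,k}$; iterating this across the factors of $F_{k-1}$ gives
\[
s_{k-1}\,F_{k-1}\,s_{k-1} \;=\; F_k \;+\; \sum_{j=1}^l \Bigl(\prod_{i<j}(z_k - \bc^{(i)})\Bigr)\Delta_{k-1,k}\Bigl(\prod_{i>j}(z_{k-1}-\bc^{(i)})\Bigr) s_{k-1},
\]
every summand of which has degree $<l$ in both $z_k$ and $z_{k-1}$. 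By induction $F_{k-1}$ already has the claimed form with $u \in \mathfrak{S}_{k-1}$; conjugating by $s_{k-1}$—using $s_{k-1}z_{k-1}^a = z_k^a s_{k-1}+\sum_{t=0}^{a-1}z_k^t\Delta_{k-1,k}z_{k-1}^{a-1-t}$, whose lower terms preserve the bound $<l$ whenever $a<l$, together with \eqref{AS}—expresses $s_{k-1}F_{k-1}s_{k-1}$ in the claimed form with $u \in \mathfrak{S}_k$. Solving for $F_k$, and hence $z_k^l$, closes the inner induction.

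\textbf{Main obstacle.} The delicate point will be the inductive step of the reduction formula: verifying that after conjugation by $s_{k-1}$ no $z$-variable escapes the bound $<l$ and that permutation supports remain inside $\mathfrak{S}_k$. The commutations themselves are routine applications of \eqref{AZ}, \eqref{AS}, \eqref{SZ}, but the bookkeeping of degrees and supports through the iterated expansion of $F_k$ is intricate.
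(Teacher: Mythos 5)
Your proof is correct. It shares with the paper the essential idea of propagating the cyclotomic reduction of $z_1^{t_1}$ rightwards via conjugation by the $s_r$'s, but the route is organized quite differently. The paper sets up the intermediate sets $X_i^{\bu}$ and $Y_i^{\bu}$ and proves a chain of inclusions $Y_i^{\bu} \subseteq Y_{i+1}^{\bu}$ by a double induction on $i$ and on degree; the engine of its inductive step is Corollary \ref{sfa}(i), phrased via divided differences $\nabla_i$, applied twice (once to $f\ba w$, once to close the circle by showing $s_iY_{i+1}^{\bu}\subseteq Y_{i+1}^{\bu}$). You instead isolate an explicit \emph{reduction formula} $z_k^l = \sum g\bb u$ (with $\deg_{z_k}g<l$, $u\in\mathfrak{S}_k$), proved by conjugating $F_1=0$ up through $F_2, F_3, \ldots$ via the bare commutator $s_{k-1}(z_{k-1}-\bc^{(j)})=(z_k-\bc^{(j)})s_{k-1}+\Delta_{k-1,k}$ (which uses centrality of $\bc^{(j)}$, Proposition \ref{AffCenter}/Corollary \ref{ZAcent}, exactly where the paper uses Corollary \ref{ZAcent} in its base case), and then run a single, clean well-founded induction on the lex tuple $(a_n,\ldots,a_1)$. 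What your approach buys is an explicit normalizing identity for $z_k^l$ and a simpler induction scheme with transparent termination; what the paper's approach buys is that it never needs to expand $F_k$ explicitly, folding all bookkeeping into the set inclusions. One small remark: you should note that the "strictly decreases the lex tuple" step requires re-expanding $z_1^{a_1}\cdots z_k^{a_k-l}g(z_1,\ldots,z_k)$ into monomials before applying the inductive hypothesis to each of them, since $g$ can increase the exponents of $z_1,\ldots,z_{k-1}$ — but the lex order with $a_n$ most significant absorbs this, as you implicitly use.
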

\begin{proof}
For any \(\bu = (u_1, \ldots, u_n) \in \ZZ_{\geq 0}^n\) and \(i \in [0,n]\), we define the sets
\begin{align*}
X_i^\bu &:= \{ z_1^{t_1} \cdots  z_n^{t_n} \mid
0 \leq t_1, \ldots, t_i < l,\, 0 \leq t_k \leq u_k \textup{ for } k>i \}\subseteq \kk[z_1, \ldots, z_n],
\\
Y_i^\bu &:= \textup{span}\{ fy \mid f \in X_i^\bu,\, y \in A^{\otimes n} \otimes \kk \mathfrak{S}_n\} \subseteq \mathcal{H}^{\bC}_n(A).
\end{align*}
Note that \(Y_n^\bu\) is the span of the elements in the statement. Moreover, by Theorem \ref{AffBasis}, every element of \(\mathcal{H}^{\bC}_n(A)\) belongs to some \(Y_0^\bu\). So the result follows from the following

{\em Claim.} \(Y_i^\bu \subseteq Y_{i+1}^\bu\) for all \(\bu \in \ZZ_{\geq 0}^n\) and \(i \in [0,n-1]\). 

We prove the claim by induction on \(i\). For the base case \(i=0\), let \(f = z_1^{t_1} \cdots z_n^{t_n} \in X_0^\bu\) and \(y \in A^{\otimes n} \otimes \kk \mathfrak{S}_n\). Note that, by the definition of the cyclotomic quotient and Corollary \ref{ZAcent}, we have
\(
z_1^{t_1} = \sum_{k=0}^{l-1} z_1^k \bb_k
\)
in \(\mathcal{H}^{\bC}_n(A)\), for some \(\bb_0, \ldots, \bb_k \in A^{\otimes n}\). Thus we have
\begin{align*}
fy = z_1^{t_1} \cdots z_n^{t_n} y = \sum_{k=0}^{l-1}z_1^k z_2^{t_2} \cdots z_n^{t_n} \bb_k y \in Y_1^\bu,
\end{align*}
so \(Y_0^{\bu} \subseteq Y_1^{\bu}\), as desired.

For the inductive step, let \(i \in [1,n-1]\) and suppose that  \(Y_0^\bu \subseteq \cdots \subseteq Y_i^\bu\) for all \(\bu \in \ZZ_{\geq 0}^n\). Let \(f = z_1^{t_1} \cdots z_n^{t_n} \in X_i^\bu\) for some $(t_1,\dots,t_n)=:\bt \in \ZZ_{\geq 0}^n$, \(\ba \in A^{\otimes n}\) and \(w \in \mathfrak{S}_n\). In order to show that \(Y_i^\bu \subseteq Y_{i+1}^\bu\) it suffices to show that \(f \ba w \in Y_{i+1}^\bu\). By Lemma \ref{sfa}(i), we have
\begin{align}\label{fbsig}
s_i({}^{s_i}f)({}^{s_i}\ba)s_i w= f \ba w + \nabla_i({}^{s_i}f)\Delta_{i,i+1}({}^{s_i}\ba)s_i w.
\end{align}
Note that %\(f \in X_i^\bt\). Thus  
\({}^{s_i}f \in X^{s_i \bt}_{i-1} \). So \(\nabla_i({}^{s_i}f)\) is in the \(\kk\)-span of \(X_{i-1}^{s_i \bt}\). Therefore 
$$\nabla_i({}^{s_i}f) \Delta_{i,i+1} ({}^{s_i} \ba)s_i w \in Y_{i-1}^{s_i \bt} \subseteq Y_i^{s_i \bt} \subseteq Y_{i+1}^\bu,
$$ 
where  the first containment holds by the induction assumption, and the second containment follows since \((s_i \bt)_{i+1} = t_i < l\), and \((s_i \bt)_k = t_k \leq u_k\) for \(k>i+1\).
Similarly 
$$({}^{s_i}f)({}^{s_i}\ba)s_i w \in Y_{i-1}^{s_i \bt} \subseteq Y_i^{s_i \bt} \subseteq Y_{i+1}^\bu.$$ 
So, to complete the proof  that \(f \ba w \in Y_{i+1}^\bu\), it suffices to show that  \(s_i Y_{i+1}^\bu  \subseteq Y_{i+1}^\bu\). For this, let \(g \in X_{i+1}^\bu\) and \(x \in A^{\otimes n} \otimes \kk \mathfrak{S}_n\). By Lemma \ref{sfa}(i), we have  
$
s_igx = {}^{s_i}g x' + \nabla_i({}^{s_i}g)x''
$ 
for some \(x', x'' \in A^{\otimes n} \otimes \kk \mathfrak{S}_n\). But \({}^{s_i}g \in X_{i+1}^\bu\), and \(\nabla_i({}^{s_i}g)\) is in the \(\kk\)-span of \(X_{i+1}^\bu\), so  \(s_igx \in Y_{i+1}^{\bu}\).  
\end{proof}

We complete this section with three conjectures. 

\begin{Conjecture}\label{C3}
The spanning set of Proposition \ref{cycspan} constitutes a \(\kk\)-basis for \(\mathcal{H}^{\bC}_n(A)\).
\end{Conjecture}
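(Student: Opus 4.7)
The plan is to establish linear independence of the spanning set from Proposition \ref{cycspan} by exhibiting a faithful $\mathcal{H}^\bC_n(A)$-module of the expected rank $l^n \cdot (\text{rank of } A)^n \cdot n!$. Set $p(z) := \prod_{j=1}^l(z - \bc^{(j)})$; since each $\bc^{(j)}$ lies in $(Z(A)^{\otimes n})^{\mathfrak{S}_n}$, which is central in $\mathcal{H}_n(A)$ by Corollary \ref{ZAcent}, the element $p(z_1)$ is monic of degree $l$ in $z_1$ with central subleading coefficients.

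The natural starting point is the faithful $\mathcal{H}_n(A)$-module $V = \kk[z_1,\ldots,z_n] \otimes A^{\otimes n} \otimes \kk\mathfrak{S}_n$ of Lemma \ref{Vdef} and Theorem \ref{AffBasis}. Setting $I := \mathcal{H}_n(A)\, p(z_1)\, \mathcal{H}_n(A)$, so that $\mathcal{H}^\bC_n(A) = \mathcal{H}_n(A)/I$, the quotient $M := V/(I \cdot V)$ carries a natural $\mathcal{H}^\bC_n(A)$-action, and applying the proposed spanning set to the cyclic vector $1 \otimes 1 \otimes 1 \in M$ produces the images $\{z_1^{t_1}\cdots z_n^{t_n} \otimes \ba \otimes w : 0 \leq t_i < l,\ \ba \in B,\ w \in \mathfrak{S}_n\}$. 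The conjecture would follow if $M$ is free over $\kk$ on this set.

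The main obstacle is pinning down $I \cdot V$ as a $\kk$-submodule of $V$. Conjugating $p(z_1)$ across simple transpositions using relation (\ref{SZ}) and Corollary \ref{sfa}(i) produces, in addition to terms like $p(z_r)$, a variety of correction terms of the form $\nabla_i(p(z_1))\,\Delta_{i,i+1}$ and iterated versions thereof; these lie in $I$ but are not obviously contained in the two-sided ideal of the polynomial subalgebra $A^{\otimes n}[z_1,\ldots,z_n] \subseteq \mathcal{H}_n(A)$ generated by $\{p(z_r) : r = 1,\ldots,n\}$. Consequently, a naive identification of $M$ with the quotient of $V$ by a polynomial-type ideal fails, and the correction terms must be accounted for in a different way. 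To organize the bookkeeping, I would filter $\mathcal{H}_n(A)$ by total $z$-degree, identify the associated graded algebra with a semidirect product $\kk[z_1,\ldots,z_n] \rtimes (A \wr \mathfrak{S}_n)$ (using Corollary \ref{freeact} and Lemma \ref{Vdef}), prove the conjecture for the associated graded cyclotomic quotient---where the degenerate relation $z_1^l = 0$ is much more rigid and the iterated corrections collapse to lower $z$-degree---and then lift the result back to $\mathcal{H}^\bC_n(A)$ via a PBW/deformation argument.

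As a sanity check, the case $n=1$ is immediate from the monicity of $p(z_1)$: since $\mathcal{H}^\bC_1(A) = A[z_1]/(p(z_1))$, it is free over $A$ of rank $l$ with $\kk$-basis $\{z_1^t \ba : 0 \leq t < l,\ \ba \in B\}$, matching the conjectured basis for $n=1$. An alternative approach would adapt the classical proof of the basis theorem for cyclotomic degenerate affine Hecke algebras, realizing $\mathcal{H}^\bC_n(A)$ as a Mackey-style extension of $\mathcal{H}^\bC_{n-1}(A)$ by $A[z_n]/(p(z_n))$ and controlling the $n$th-slot contribution inductively; but this route again hinges on careful tracking of the correction terms produced when $p(z_r)$ is conjugated across simple transpositions, so the filtration strategy seems the more likely path to a clean proof.
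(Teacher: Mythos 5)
The first thing to say is that the paper does not prove this statement: it is stated as Conjecture \ref{C3} and left open, with only the level-one case (via Proposition \ref{kappa}) and the case $A=\kk$ (degenerate cyclotomic Hecke algebras, by citation to \cite[Theorem 7.5.6]{KLin}) recorded as known. So there is no proof of record to compare against; the question is whether your strategy would actually close the problem, and as written it does not.

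The concrete gap is that your argument never produces a lower bound on the $\kk$-rank of $\mathcal{H}^{\bC}_n(A)$, which is the entire content of the conjecture (the upper bound is Proposition \ref{cycspan}). The module $M=V/(I\cdot V)$ gives nothing: by Theorem \ref{AffBasis}(i), $V$ is isomorphic to the regular module ${}_{\mathcal{H}_n(A)}\mathcal{H}_n(A)$, so $I\cdot V$ is $I$ and $M$ is just the regular module of $\mathcal{H}^{\bC}_n(A)$; asserting that $M$ is free on the images of the spanning set is a restatement of the conjecture, not a step toward it. The filtration route has the same defect in a subtler form. Filtering by $z$-degree one indeed gets $\mathrm{gr}\,\mathcal{H}_n(A)\cong \kk[z_1,\dots,z_n]\rtimes(A\wr\mathfrak{S}_n)$, but one only knows $\mathrm{gr}\,I \supseteq$ the two-sided ideal generated by the symbol $z_1^{\,l}$, with no a priori equality; hence $\mathrm{gr}\,\mathcal{H}^{\bC}_n(A)$ is merely a quotient of the ``degenerate'' cyclotomic algebra, and the PBW/deformation step can only bound the rank from above --- which is already known. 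Whether the correction terms $\nabla_i(p(z_1))\Delta_{i,i+1}$ (and their iterates), which you rightly single out as the obstacle, force $\mathrm{gr}\,I$ to be strictly larger is exactly the open question, and your plan names it without supplying a mechanism to rule it out. What is genuinely missing is a construction that certifies the lower bound: in the known cases this is done by an explicit faithful module or an explicit isomorphism (Proposition \ref{kappa} in level one; the inductive cyclotomic Mackey-type arguments behind the $A=\kk$ case), and the analogous construction for general symmetric $A$ --- for instance the Mackey-style decomposition of $\mathcal{H}^{\bC}_n(A)$ over $\mathcal{H}^{\bC}_{n-1}(A)$ that you mention but do not carry out --- is precisely what would be needed. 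Your $n=1$ sanity check is correct but orthogonal to the difficulty, which only arises once the $s_i$'s interact with $p(z_1)$.
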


\begin{Conjecture}\label{C1}
The algebra $\mathcal{H}^{\bC}_n(A)$ is graded symmetric. 
\end{Conjecture}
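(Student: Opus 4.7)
The plan is to construct an explicit symmetrizing trace $\tau\colon \mathcal{H}^{\bC}_n(A)\to \kk$, modeled on the classical construction for the cyclotomic degenerate affine Hecke algebra (the $A=\kk$ case mentioned in Remark \ref{afflit}). Assuming Conjecture \ref{C3} to obtain the expected basis $\{z_1^{t_1}\cdots z_n^{t_n}\ba w\mid 0\leq t_i<l,\,\ba\in B,\,w\in \mathfrak{S}_n\}$, I would define
\begin{align*}
\tau(z_1^{t_1}\cdots z_n^{t_n}\ba w) := \delta_{t_1,l-1}\cdots\delta_{t_n,l-1}\,\delta_{w,1}\,\tau_A^{\otimes n}(\ba),
\end{align*}
where $\tau_A := \varphi(1)\colon A\to\kk$ is the symmetrizing trace on $A$. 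By construction $\tau$ is homogeneous. Non-degeneracy of the induced bilinear form $(x,y)\mapsto\tau(xy)$ would follow from the calculation
\begin{align*}
\tau\bigl((z^{\bt}\ba w)(w^{-1}\ba' z^{(l-1,\ldots,l-1)-\bt})\bigr) = \tau_A^{\otimes n}(\ba\ba')
\end{align*}
(using that the $z_i$'s commute with $A^{\otimes n}$), which reduces non-degeneracy for $\tau$ to the non-degeneracy of $\tau_A^{\otimes n}$ on $A^{\otimes n}$; this yields a block-triangular Gram matrix once basis elements are paired via $z^{\bt}\ba w\leftrightarrow w^{-1}\ba' z^{(l-1,\ldots,l-1)-\bt}$.

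The heart of the proof is the verification of symmetry $\tau(xy)=\tau(yx)$. Symmetry under multiplication by $A^{\otimes n}$ is immediate from the symmetry of $\tau_A^{\otimes n}$ together with relation (\ref{AZ}). Symmetry under the generators $z_i$ relies on the cyclotomic relation $z_1^l=\sum_{k<l}z_1^k\bb_k$ together with its conjugates by elements of $\mathfrak{S}_n$, which (in the spirit of the proof of Proposition \ref{cycspan}) produce analogous reductions for each $z_j$. Symmetry under the simple transpositions $s_i$ is the hardest case: by Corollary \ref{sfa}(i), commuting $s_i$ past a basis monomial produces the expected permuted term $({}^{s_i}z^{\bt})({}^{s_i}\ba)s_i$ and a correction $\nabla_i(z^{\bt})\Delta_{i,i+1}\ba$. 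The permuted terms match on both sides thanks to the $\mathfrak{S}_n$-invariance of the tuple $(l-1,\ldots,l-1)$ and of $\tau_A^{\otimes n}$, combined with Lemma \ref{tauDelta}(ii), which governs how $\Delta(1)$ intertwines elements of $A\otimes A$. The remaining corrections must be shown not to contribute to $\tau$.

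The main obstacle I anticipate is precisely this last point. A priori the corrections $\nabla_i(z^{\bt})\Delta_{i,i+1}\ba$ have strictly smaller $z$-degree and therefore cannot touch the top monomial extracted by $\tau$; however, after repeated cyclotomic reductions on various $z_j$ they could in principle feed back into top degree. I would approach this via induction on the pair $(\ell(w), |\bt|)$, exploiting the triangularity of the commutation relations and the bimodule symmetry of $\Delta(1)$ from Lemma \ref{tauDelta}. An essential preliminary is resolving Conjecture \ref{C3}; without it $\tau$ is not well-defined on the quotient. For that, I would attempt to lift the faithful polynomial representation of Lemma \ref{Vdef} to a faithful action of $\mathcal{H}^{\bC}_n(A)$ on a cyclotomic quotient module induced from a finite-dimensional $A\wr\mathfrak{S}_n$-module, thereby deducing linear independence of the spanning set of Proposition \ref{cycspan} and making the definition of $\tau$ rigorous.
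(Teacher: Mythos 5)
This statement is a \emph{conjecture} in the paper, not a theorem: the authors do not prove it. They only verify it in two degenerate cases. In level $l=1$, where $\mathcal{H}^{\bC}_n(A) \cong A\wr\mathfrak{S}_n$ by Proposition \ref{kappa}, they exhibit an explicit $(A\wr\Si_n,A\wr\Si_n)$-bimodule isomorphism $A\wr\Si_n\to (A\wr\Si_n)^*$ given by $a_1 \otimes \cdots \otimes a_n \otimes \sigma \mapsto \varphi(a_{\sigma 1}) \otimes \cdots \otimes \varphi(a_{\sigma n}) \otimes (\sigma^{-1})^*$. For $A=\kk$ (any level) they cite known results on degenerate cyclotomic Hecke algebras. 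So there is no full proof in the paper against which to check your argument.

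Your proposal is a sensible and well-motivated blueprint -- the candidate trace specializes correctly both in level $1$ (it recovers the form induced by the bimodule isomorphism above, as one checks that $\varphi(1)^{\otimes n}\otimes 1^*$ evaluates as $\delta_{w,1}\tau_A^{\otimes n}(\ba)$) and in the $A=\kk$ case (it is the known form on the degenerate cyclotomic Hecke algebra). However, as you yourself acknowledge, it has two genuine gaps that prevent it from being a proof. First, the definition of $\tau$ is contingent on Conjecture \ref{C3}: if the spanning set of Proposition \ref{cycspan} is not linearly independent, there is no way to define $\tau$ by prescribing values on that set, and your suggested remedy (lifting the faithful module from Lemma \ref{Vdef} to a faithful $\mathcal{H}^{\bC}_n(A)$-module) is itself just a plan, not an argument -- note also that Lemma \ref{Vdef} is about $V=\kk[z_1,\ldots,z_n]\otimes A^{\otimes n}\otimes\kk\mathfrak{S}_n$ as a module over the \emph{affine} algebra $\mathcal{H}_n(A)$, not a cyclotomic module, so a further construction is needed. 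Second, the verification that the lower-order corrections $\nabla_i(z^\bt)\Delta_{i,i+1}\ba$ do not contribute to $\tau(xs_i)-\tau(s_ix)$ after repeated cyclotomic reductions is exactly the hard part, and your proposed induction on $(\ell(w),|\bt|)$ is a heuristic with no argument attached. These are the places where a real proof would live; until they are filled in, what you have is a strategy consistent with the known cases, which is precisely the state of the art that led the authors to state this as a conjecture.
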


\begin{Conjecture}\label{C2}
If $A$ is cyclic cellular, then so is $\mathcal{H}^{\bC}_n(A)$. 
\end{Conjecture}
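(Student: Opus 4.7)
The plan is to construct an explicit graded cellular basis for $\mathcal{H}^{\bC}_n(A)$ by combining a cellular datum for $A$ with Murphy-style tableau combinatorics on $\mathfrak{S}_n$, in the spirit of the Dipper--James--Mathas basis for Ariki--Koike algebras and the Hu--Mathas basis for cyclotomic KLR algebras. The level $l=1$ case is settled already, since $\mathcal{H}^{\bC}_n(A) \cong A \wr \mathfrak{S}_n$ by Proposition~\ref{kappa} and wreath products of cyclic cellular algebras are cyclic cellular (Geetha--Goodman). For higher $l$ the first task is to settle Conjecture~\ref{C3}: without the expected freeness and correct rank of $\mathcal{H}^{\bC}_n(A)$, no candidate cellular basis can be verified. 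One would attempt Conjecture~\ref{C3} by exhibiting a faithful action on an analog of the Ariki--Koike tensor space, realized for example as a module induced from the level-$(l-1)$ quotient.

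Granting Conjecture~\ref{C3}, fix a cyclic cellular datum $(\Lambda_A, T, \{c^\mu_{\sfr \tfr}\}, \nu)$ for $A$, and assume that $\nu$ fixes each $\bc^{(j)}$ (which may require symmetrizing $\bC$). By Lemma~\ref{affop}, $\nu$ extends to an anti-involution $\widehat{\nu}$ of $\mathcal{H}_n(A)$, and this descends to $\mathcal{H}^{\bC}_n(A)$. Take the cell poset $P$ to consist of pairs $(\blam, \bs)$ with $\blam = (\la^{(1)}, \ldots, \la^{(n)}) \in \Lambda_A^n$ and $\bs \in [0, l-1]^n$, considered up to the diagonal $\mathfrak{S}_n$-action, and ordered by a refinement of the product of dominance on $\Lambda_A$ and lexicographic order on $\bs$. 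The tableau set for $(\blam, \bs) \in P$ consists of pairs $(\sfr, w)$ with $\sfr_k \in T(\la^{(k)})$ and $w$ a chosen coset representative. The candidate cellular basis element is
\[
 c^{(\blam, \bs)}_{(\sfr, w),(\tfr, u)} := w^{-1} \cdot z^{\bs}\, \bigl( c^{\la^{(1)}}_{\sfr_1 \tfr_1} \otimes \cdots \otimes c^{\la^{(n)}}_{\sfr_n \tfr_n} \bigr)\, u,
\]
chosen so that $\widehat{\nu}$ swaps row and column indices.

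The main verification is that left multiplication by each generator ($z_i$, $\ba \in A^{\otimes n}$, or $s_j$) sends this element to a combination $\sum r \cdot c^{(\blam, \bs)}_{(\sfr', w'),(\tfr, u)}$ modulo strictly higher cells, with scalars independent of $(\tfr, u)$. For $\ba$ this is immediate from the cellularity of $A^{\otimes n}$. For $z_i$ and $s_j$ the commutation relation (\ref{SZ}) introduces error terms involving $\Delta_{i,i+1}$; by Lemma~\ref{tauDelta}(ii) the effect of multiplying by $\Delta(1)$ is to swap adjacent tensor factors, and the dominance order on $P$ must be designed so that this swap lands in strictly higher cells. The cyclotomic relation $\prod_j (z_1 - \bc^{(j)}) = 0$ must likewise truncate $\bs$-monomials consistently with the order on $P$. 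Cyclicity of each cell module then follows from the construction, as each is generated by a single element obtained by fixing a ``leading'' tableau.

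The principal obstacle is twofold. First, Conjecture~\ref{C3} itself is non-trivial in this generality: the natural candidate faithful module requires a delicate analysis of the interaction between the Jucys--Murphy elements $l_r$ and the distinguished element $\Delta(1)$, especially because $\Delta(1)$ need not lie in $Z(A) \otimes A$. Second, even with Conjecture~\ref{C3} in hand, the dominance order on $P$ must be engineered simultaneously to control divided-difference error terms from the $s_j$-action and to respect the cyclotomic truncation of the $\bs$-degrees --- analogous to, but strictly more intricate than, the Jucys--Murphy-based dominance order for Ariki--Koike algebras.
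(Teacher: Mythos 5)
The statement you are addressing is labeled Conjecture~\ref{C2} in the paper; the authors give no proof of it. They observe only that it holds in level $l=1$ (where $\mathcal{H}^{\bC}_n(A) \cong A \wr \mathfrak{S}_n$ by Proposition~\ref{kappa} and one invokes Geetha--Goodman) and for $A=\kk$ (the degenerate cyclotomic Hecke algebra, handled by Graham--Lehrer, Dipper--James--Mathas, Ariki--Mathas--Rui). Your reduction of level 1 to $A \wr \mathfrak{S}_n$ is exactly the paper's remark, so there is no proof in the paper against which to compare the rest of your argument.

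Read on its own terms, what you have written is a plausible strategy sketch rather than a proof, and you say as much. The two obstacles you name are precisely the open points and neither is closed here. First, the whole construction is conditional on Conjecture~\ref{C3}, which is itself open in the paper; the faithful tensor-space module you gesture at is known only in level $1$ and for $A=\kk$, and the interaction of the Jucys--Murphy elements $l_r$ with $\Delta(1)$ (which need not lie in $Z(A)\otimes A$) is exactly where the standard arguments break down. Second, the cell poset $P$ and its partial order are left unspecified at the one place they do any work: controlling the $\Delta_{i,i+1}$-error terms from relation (\ref{SZ}) and the truncation of $z$-monomials by $\prod_j(z_1-\bc^{(j)})=0$. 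Asserting that the order ``must be engineered'' to push both types of error into strictly higher cells restates the conjecture; it does not resolve it. There are also unaddressed well-definedness issues: the basis elements $c^{(\blam,\bs)}_{(\sfr,w),(\tfr,u)}$ require a coherent choice of orbit representatives for the diagonal $\mathfrak{S}_n$-action on $\Lambda_A^n\times[0,l-1]^n$ compatible with the coset representatives $w,u$, and your requirement that $\nu$ fix each $\bc^{(j)}$ (so that $\widehat{\nu}$ descends to $\mathcal{H}^{\bC}_n(A)$) is a genuine restriction on $\bC$ not present in the conjecture as stated. In short: a reasonable roadmap, correctly grounded in the known special cases, but the hard steps remain open.
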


Note that in level $1$ the conjectures hold. 
Indeed, Conjecture~\ref{C3} in level $1$  follows from Proposition~\ref{kappa}. 
For Conjecture~\ref{C1} we can use a bimodule isomorphism $A\wr\Si_n\to (A\wr\Si_n)^*$ given by 
$a_1 \otimes \cdots \otimes a_n \otimes \sigma \mapsto \varphi(a_{\sigma 1}) \otimes \cdots \otimes \varphi(a_{\sigma n}) \otimes (\sigma^{-1})^*$. Conjecture~\ref{C2} in level $1$ is the main result of \cite{GG}. The conjectures are also known to hold  for any level when $A=\k$. Indeed, $\mathcal{H}^{\bC}_n(\k)$ is a degenerate cyclotomic Hecke algebra. Now, for Conjecture~\ref{C3} see for example \cite[Theorem 7.5.6]{KLin},  Conjecture~\ref{C1} can be deduced for example from \cite[Corollary 6.18]{HM} and \cite{BKcyc}, and Conjecture~\ref{C2} can be seen from \cite{GL},\cite{DJM},\cite{AMR}.

\section{Zigzag algebras}

Let \(\Gamma=(\Gamma_0,\Gamma_1)\) be a connected graph without loops or multiple edges. Eventually, we will need only the case where $\Gamma$ is of finite ${\tt ADE}$ type, but we do not need to assume that in this section. 
We maintain our assumption that \(\kk\) is a commutative Noetherian ring. If $i,j\in \Ga_0$
are such that \(\{i,j\} \in \Gamma_1\), we say that $i$ and $j$ are {\em neighbors}. 
%, with vertex set \(\Gamma_0\) and edge set \(\Gamma_1\). 

\subsection{Huerfano-Khovanov zigzag algebras}
The {\em  zigzag algebra \(\textup{\(\Zig\)}:= \textup{\(\Zig\)}(\Gamma)\) of type \(\Gamma\)} is defined in \cite{HK} as follows:

\begin{Definition}\label{zigdef}
First assume that \(|\Gamma_0|>1\). Let \(\overline{\Gamma}\) be the quiver obtained by doubling all edges between connected vertices and then orienting the edges so that if \(i\) and \(j\) are neighboring vertices in \(\Gamma\), then there is an arrow \(\za^{i,j}\) from \(j\) to \(i\) and an arrow \(\za^{j,i}\) from \(i\) to \(j\). For example, \(\overline{{\tt A}}_{\ell}\) is the quiver
\begin{align*}
\begin{braid}\tikzset{baseline=3mm}
\coordinate (1) at (0,0);
\coordinate (2) at (4,0);
\coordinate (3) at (8,0);
\coordinate (4) at (12,0);
\coordinate (6) at (16,0);
\coordinate (L1) at (20,0);
\coordinate (L) at (24,0);
\draw [thin, black,->,shorten <= 0.1cm, shorten >= 0.1cm]   (1) to[distance=1.5cm,out=100, in=100] (2);
\draw [thin,black,->,shorten <= 0.25cm, shorten >= 0.1cm]   (2) to[distance=1.5cm,out=-100, in=-80] (1);
\draw [thin,black,->,shorten <= 0.25cm, shorten >= 0.1cm]   (2) to[distance=1.5cm,out=80, in=100] (3);
\draw [thin,black,->,shorten <= 0.25cm, shorten >= 0.1cm]   (3) to[distance=1.5cm,out=-100, in=-80] (2);
\draw [thin,black,->,shorten <= 0.25cm, shorten >= 0.1cm]   (3) to[distance=1.5cm,out=80, in=100] (4);
\draw [thin,black,->,shorten <= 0.25cm, shorten >= 0.1cm]   (4) to[distance=1.5cm,out=-100, in=-80] (3);
\draw [thin,black,->,shorten <= 0.25cm, shorten >= 0.1cm]   (6) to[distance=1.5cm,out=80, in=100] (L1);
\draw [thin,black,->,shorten <= 0.25cm, shorten >= 0.1cm]   (L1) to[distance=1.5cm,out=-100, in=-80] (6);
\draw [thin,black,->,shorten <= 0.25cm, shorten >= 0.1cm]   (L1) to[distance=1.5cm,out=80, in=100] (L);
\draw [thin,black,->,shorten <= 0.1cm, shorten >= 0.1cm]   (L) to[distance=1.5cm,out=-100, in=-100] (L1);
\blackdot(0,0);
\blackdot(4,0);
\blackdot(8,0);
\blackdot(20,0);
\blackdot(24,0);
\draw(0,0) node[left]{$1$};
\draw(4,0) node[left]{$2$};
\draw(8,0) node[left]{$3$};
\draw(14,0) node {$\cdots$};
\draw(20,0) node[right]{$\ell-1$};
\draw(24,0) node[right]{$\ell$};
\draw(2,1.2) node[above]{$\za^{2,1}$};
\draw(6,1.2) node[above]{$\za^{3,2}$};
\draw(10,1.2) node[above]{$\za^{4,3}$};
\draw(18,1.2) node[above]{$\za^{\ell-2,\ell-1}$};
\draw(22,1.2) node[above]{$\za^{\ell,\ell-1}$};
\draw(2,-1.2) node[below]{$\za^{1,2}$};
\draw(6,-1.2) node[below]{$\za^{2,3}$};
\draw(10,-1.2) node[below]{$\za^{3,4}$};
\draw(18,-1.2) node[below]{$\za^{\ell-2,\ell-1}$};
\draw(22,-1.2) node[below]{$\za^{\ell-1,\ell}$};
\end{braid}
\end{align*}
Then \(\Zig(\Gamma)\) is the path algebra  \(\kk\overline{\Gamma}\), generated by length-0 paths \(\ze_i\) for $i\in\Ga_0$, and length-$1$ paths \(\za^{i,j}\), modulo the following relations:
\begin{enumerate}
\item All paths of length three or greater are zero.
\item All paths of length two that are not cycles are zero.
\item All length-two cycles based at the same vertex are equal.
\end{enumerate}
The algebra \(\Zig(\Gamma)\) is graded by path length. If \(|\Gamma_0| =1\), i.e. \(\Gamma= {\tt A}_1\), we merely decree that \(\Zig(\Gamma):= \kk[\zc]/(\zc^2)\), where \(\zc\) is in degree 2. So that we may consider this algebra among the wider family of zigzag algebras, we will write \(e_1:=1\).
\end{Definition}

For type \(\Gamma \neq {\tt A}_1\), for every vertex \(i\), let \(j\) be any neighbor of \(i\), and write \(\zc^i\) for the cycle \(\za^{i,j}\za^{j,i}\).  The relations in \(\Zig\) imply that \(\zc^i\) is independent of choice of \(j\). Define \(\zc:= \sum_{i \in \Gamma_0} \zc^i\). 
Note that $\zc_i=\zc e_i=e_i \zc_i$.  
The following results are easily verified:

\begin{Lemma}\label{zigfacts}\(\)
\begin{enumerate}
\item The zigzag algebra \(\textup{\(\Zig\)}(\Gamma)\) is free of finite rank over \(\kk\), with \(\kk\)-basis:
\begin{align*}
\{\za^{i,j} \mid  \{i,j\} \in \Gamma_1\} \cup \{\zc^m\ze_i \mid i \in \Gamma_0,\ m \in \{0,1\}\}.
\end{align*}
\item The graded dimension of \(\textup{\(\Zig\)}\) is 
$
\DIM \textup{\(\Zig\)}=|\Gamma_0|(1+q^2)+2 |\Gamma_1|q.
$
\item The center of \(\textup{\(\Zig\)}\) is the \(\kk\)-span of the elements \(\{1\} \cup \{\zc \ze_i \mid i \in \Gamma_0\}\).
\item There is a \(\kk\)-algebra isomorphism \(\nu:\textup{\(\Zig\)} \to \textup{\(\Zig\)}^\textup{op}\) such that
$\nu(\textup{\(\ze\)}_i )= \textup{\(\ze\)}_i$, $\nu(\textup{\(\za\)}^{i,j})= \textup{\(\za\)}^{j,i}$, 
for all \(i,j \in \Gamma_0\), and $\nu(c)=c$.
\item The linear function \(\tr: \textup{\(\Zig\)} \to \kk\) given on basis elements by
\begin{align*}
 \tr(\ze_i) = 0, \hspace{10mm} \tr(\za^{i,j}) = 0,  \hspace{10mm} \tr(\zc \ze_i) = 1,
\end{align*}
for all \(i,j \in \Gamma_0\), satisfies \(\tr(xy) = \tr(yx)\) for all \(x,y \in \textup{\(\Zig\)}\).
\item The bilinear form \(\langle \cdot, \cdot \rangle: \textup{\(\Zig\)} \otimes \textup{\(\Zig\)} \to \kk\) given by \(\langle x, y\rangle := \tr(xy)\) is nondegenerate, symmetric and associative.
\item The map \(\varphi: \textup{\(\Zig\)} \to \textup{\(\Zig\)}^*\) given by \(\varphi(a) = \langle a, - \rangle \) is a  \((\textup{\(\Zig\)},\textup{\(\Zig\)})\)-bimodule isomorphism of degree \(-2\), with 
$\varphi(\ze_i) = (\zc \ze_i)^*$, $\varphi(\za^{i,j}) = (\za^{j,i})^*$,  $\varphi(\zc \ze_i) = \ze_i^*$. 
\end{enumerate}
\end{Lemma}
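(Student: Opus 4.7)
The proof divides into an initial combinatorial computation (i)--(ii), a centrality argument (iii), a compatibility check (iv), and then an interlocking package (v)--(vii) deduced from (i).

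My plan for (i) is to enumerate paths in $\overline\Gamma$ modulo the three defining relations. Length-$0$ paths yield $\{\ze_i\}$, length-$1$ paths yield $\{\za^{i,j}\}$ indexed by oriented edges, and the only length-$2$ paths surviving the non-cycle-kill relation are cycles based at a vertex; the relation that all length-$2$ cycles at a common base are equal makes $\zc\ze_i := \za^{i,j}\za^{j,i}$ well-defined independent of the chosen neighbor $j$. The ${\tt A}_1$ case is handled separately from the definition. Part (ii) is then the trivial count: $|\Gamma_0|$ basis elements each in degrees $0$ and $2$, and $2|\Gamma_1|$ in degree $1$.

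For (iii), I will expand a central element as $z = \sum_i \lambda_i \ze_i + \sum \mu_{ij}\za^{i,j} + \sum_i \nu_i \zc\ze_i$ and impose centrality against the generators. Computing $\ze_k z = z\ze_k$ gives $\sum_{l\sim k}\mu_{kl}\za^{k,l} = \sum_{l\sim k}\mu_{lk}\za^{l,k}$; since arrows ending at $k$ and arrows starting at $k$ are disjoint basis vectors (no loops), every $\mu_{ij}$ must vanish. Computing $\za^{k,l} z = z\za^{k,l}$, and noting that each $\zc\ze_i \cdot \za^{k,l}$ is a length-$3$ path and hence zero, forces $\lambda_k = \lambda_l$ for every edge $\{k,l\}$; connectedness of $\Gamma$ then makes all $\lambda_i$ equal. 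Conversely, each $\zc\ze_i$ is manifestly central (annihilated on both sides by every $\za^{k,l}$ for the same length-$3$ reason). For (iv), I will define $\nu$ on generators as specified and verify the three relations of Definition~\ref{zigdef} are preserved when multiplication is reversed; the identity $\nu(\zc) = \zc$ follows automatically since $\nu(\za^{i,j}\za^{j,i}) = \za^{j,i}\za^{i,j} = \zc\ze_j$ and summing over $j$ recovers $\zc$.

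For (v), I will verify $\tr(xy) = \tr(yx)$ by running through pairs of basis elements from (i). The trace is supported in top degree, so the only nonvanishing pairings are $\tr(\ze_i\cdot\zc\ze_i) = \tr(\zc\ze_i\cdot\ze_i) = 1$ and $\tr(\za^{i,j}\za^{j,i}) = \tr(\za^{j,i}\za^{i,j}) = 1$, all others being zero for degree or path-end reasons. Part (vi) then follows immediately: symmetry from the trace property, associativity from $\langle xy,z\rangle = \tr(xyz) = \langle y,zx\rangle$, and nondegeneracy from the observation that the basis of (i) admits an explicit dual basis under the involution $\ze_i\leftrightarrow\zc\ze_i$, $\za^{i,j}\leftrightarrow\za^{j,i}$. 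Finally (vii) is a direct consequence of (vi): $\varphi(a) = \langle a,-\rangle$ is a bimodule map by associativity of the form and a bijection by nondegeneracy. The degree claim follows because $\tr$ vanishes outside degree $2$, so $\varphi$ sends $\Zig_d$ into $(\Zig_{2-d})^*$, which sits in degree $d-2$; evaluating on the three basis families yields the stated formulas. I expect (iii) to be the only step requiring genuine attention, since one must rule out subtle $\za^{i,j}$ contributions; the remaining parts reduce to short bookkeeping once (i) is in hand.
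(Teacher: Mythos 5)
Your verification is correct and is essentially what the paper intends by declaring the lemma to be ``easily verified'' (the paper supplies no proof). Each step checks out: the basis enumeration in (i) and the resulting graded count in (ii) are straightforward; the centrality argument in (iii) correctly eliminates the $\za^{i,j}$ coefficients by comparing the ``arrows into $k$'' and ``arrows out of $k$'' components of $\ze_k z$ and $z\ze_k$, and then uses $\za^{k,l}z=z\za^{k,l}$ together with the vanishing of length-three paths to force $\lambda_k=\lambda_l$ across every edge (connectedness finishes it), while the $\zc\ze_i$ are central because every product with an arrow is a length-three path; and the trace, form, and bimodule map in (v)--(vii) are handled exactly as expected, with the dual-basis pairing $\ze_i\leftrightarrow\zc\ze_i$, $\za^{i,j}\leftrightarrow\za^{j,i}$ giving nondegeneracy and the formula $\varphi(axb)(y)=\tr(axby)=\tr(xbya)=(a\cdot\varphi(x)\cdot b)(y)$ giving the bimodule property under the stated action $(a_1\cdot f\cdot a_2)(b)=f(a_2 b a_1)$.
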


Lemma \ref{zigfacts} implies that \(\Zig\) is a graded symmetric algebra. Following \S\ref{symsec}, we have a \((\Zig, \Zig)\)-bimodule homomorphism \(\Delta: \Zig \to \Zig \otimes \Zig\), with distinguished degree 2 element
\begin{align}\label{zigDel}
\Delta(1) = \sum_{i \in \Gamma_0} \Big(\ze_i \otimes \zc \ze_i + \zc\ze_i \otimes \ze_i + \sum_{j\ \text{with}\ \{i,j\} \in \Gamma_1}
\za^{j,i} \otimes \za^{i,j}\Big) \in \Zig \otimes \Zig.
\end{align}

\subsection{Affine zigzag algebras}\label{SSAffZig}

The major focus of this paper will be the affine zigzag algebra, constructed via the affinization process  presented in Definition \ref{AffDef} for $A=\Zig=\Zig(\Gamma)$.

\begin{Definition}\label{Defaffzig}
For \(n \in \ZZ_{>0}\), we refer to the affinization \(\Zig^\aff_n(\Gamma) := \mathcal{H}_n(\Zig(\Gamma))\) of the zigzag algebra \(\Zig(\Gamma)\) as the {\em affine zigzag algebra of rank \(n\) and type $\Gamma$}.
\end{Definition}

%\subsection{Presentation for the affine zigzag algebra}\label{zigpres}

The algebra \(\Zig^{\otimes n}\) is generated by the elements
\begin{align*}
\ze_{\bi}& := \ze_{i_1} \otimes \ze_{i_2} \otimes \cdots \otimes \ze_{i_n} &\textup{ for  }\bi = (i_1, i_2, \ldots, i_n) \in \Gamma_0^n,\\
\za_{r}^{i,j} &:= 1 \otimes \cdots \otimes \za^{i,j} \otimes1 \otimes  \cdots \otimes 1 \;\;(r\textup{th slot})&\textup{ for } r \in [1,n], \ \{i,j\} \in \Gamma_1,\\
\zc_r &:= 1 \otimes \cdots \otimes \zc \otimes1 \otimes  \cdots \otimes 1 \;\;(r\textup{th slot})&\textup{ for }r \in [1,n],
\end{align*}
subject only to the relations
\begin{equation}
\label{E1}
\textstyle{\sum_{\bi \in \Gamma_0^n} e_\bi} = 1,
\hspace{10mm}
e_\bi e_\bj = \delta_{\bi, \bj} e_\bi,
\hspace{10mm}
 c_r e_\bi = e_\bi c_r,
 \end{equation}
 \begin{equation}
 \label{E2}
a_r^{i,j}a_t^{k,l} = a_t^{k,l}a_r^{i,j},
\hspace{10mm}
a_r^{i,j}c_t = c_ta_r^{i,j},
\hspace{10mm}
c_r c_t = c_t c_r
\hspace{10mm}
 (\textup{for \(t \neq r\)})\end{equation}
 \begin{equation}
 \label{E3}
 a_r^{i,j}e_{\bi} = \delta_{j,i_r}e_{i_1,\ldots, i_{r-1},i,i_{r+1},\ldots, i_n} a_r^{i,j},
 \hspace{10mm}
 e_{\bi} a_r^{i,j} = \delta_{i,i_r}a_r^{i,j}e_{i_1, \ldots, i_{r-1},j,i_{r+1},\ldots,i_n},\end{equation}
 \begin{equation}
 \label{E4}
a_r^{i,j}a_r^{k,l} e_{\bi}= \delta_{j,k}\delta_{i,l} \delta_{i_r,l}c_r e_{\bi},
\hspace{10mm}
c_r^2 = 0, 
\hspace{10mm}
c_r a_r^{i,j} = a_r^{i,j} c_r = 0
\end{equation}
for all admissible \(r,t \in [1,n]\), \(\bi,\bj \in \Gamma_0^n\), and \(i,j,k,l \in \Gamma_0\).

Taking into account Definitions \ref{AffDef} and \ref{zigdef} and the description of \(\Delta(1)\) in (\ref{zigDel}), we may provide a more direct presentation of \(\Zig^\aff_n(\Gamma)\):

\begin{Lemma}\label{zigpresdef} The %affine zigzag 
algebra \(\textup{\(\Zig\)}^\aff_n(\Gamma)\) is the graded \(\kk\)-algebra generated by the elements
\begin{align*}
\{e_\bi \mid \bi \in \Gamma_0^n\}\ \cup\ 
\{c_r, z_r,a^{i,j}_r \mid r \in [1,n],\, i,j \in \Gamma_0 \textup{ with }\{i,j\} \in \Gamma_1\}\ \cup\ 
%\{c_r \mid r \in [1,n]\}, 
%\cup\{z_r \mid r \in [1,n]\} \cup
\{s_t \mid t \in [1,n-1]\},
\end{align*}
with \(\deg(e_\bi)=\deg(s_t)=0$, $\deg(c_r)=\deg(z_r)= 2$, $\deg(a^{i,j}_r)=1\), subject only to the relations (\ref{E1}), (\ref{E2}), (\ref{E3}), (\ref{E4}) together with 
\begin{equation*}
s_r e_\bi = e_{s_r \bi} s_r,
\hspace{10mm}
s_r a_t^{i,j} = a_{s_r t}^{i,j}s_r,
\hspace{10mm}
s_r c_t = c_{s_r t} s_r,\end{equation*}
\begin{equation*}
z_r z_t = z_tz_r,
\hspace{10mm}
z_r a_t^{i,j} = a_t^{i,j}z_r,
\hspace{10mm}
z_r c_t = c_t z_r, 
\hspace{10mm}
z_r e_\bi = e_\bi z_r,\end{equation*}\begin{equation*}
s_r s_t = s_t s_r \;\;\textup{(for \( |t-r|>1\))},
\hspace{10mm}
s_r^2 = 1,
\hspace{10mm}
s_rs_{r+1}s_r = s_{r+1}s_r s_{r+1},\end{equation*}\begin{equation*}
(s_r z_t - z_{s_r t} s_r)e_\bi=
\begin{cases}
(\delta_{r,t}- \delta_{r+1,t})(c_r + c_{r+1})e_\bi
&
i_r = i_{r+1};\\
(\delta_{r,t}- \delta_{r+1,t})a_r^{i_{r+1},i_r} a_{r+1}^{i_r,i_{r+1}} e_\bi
&
\{i_r, i_{r+1}\} \in \Gamma_1;\\
0
& 
\textup{otherwise},
\end{cases}
\end{equation*}
for all admissible \(r,t \in [1,n]\), \(\bi \in \Gamma_0^n\), and \(i,j \in \Gamma_0\).
\end{Lemma}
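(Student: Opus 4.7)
The plan is to derive the presentation directly from Definition~\ref{AffDef} applied to $A=\Zig(\Gamma)$, by expanding each of the three abstract pieces—the free product $\kk[z_1,\dots,z_n]\star\Zig^{\otimes n}\star\kk\mathfrak{S}_n$ and the commutation relations \eqref{AZ}--\eqref{SZ}—into relations among the explicit generators $e_\bi,c_r,a_r^{i,j},z_r,s_r$.

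First I would handle the ``internal'' relations of each tensor factor. The presentation of $\Zig^{\otimes n}$ in terms of $e_\bi,c_r,a_r^{i,j}$ follows at once from Definition~\ref{zigdef} and the freeness of the basis in Lemma~\ref{zigfacts}(i), yielding exactly \eqref{E1}--\eqref{E4}; the degrees $\deg(e_\bi)=0$, $\deg(a_r^{i,j})=1$, $\deg(c_r)=2$ are inherited from $\Zig$. The polynomial algebra $\kk[z_1,\dots,z_n]$ contributes $z_rz_t=z_tz_r$, with $\deg(z_r)=\deg(\Delta(1))=2$ by Lemma~\ref{zigfacts}(vii) and \eqref{zigDel}. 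The symmetric group algebra contributes the Coxeter relations $s_r^2=1$, $s_rs_t=s_ts_r$ for $|r-t|>1$, and the braid relation, with $\deg(s_r)=0$.

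Next I would unpack relation \eqref{AZ} by letting $\ba$ run over the generators $e_\bi,c_r,a_r^{i,j}$ of $\Zig^{\otimes n}$; this directly produces the four relations asserting that $z_r$ commutes with each of $e_\bi,a_t^{i,j},c_t$. Likewise relation \eqref{AS}, combined with the place-permutation action of $\mathfrak{S}_n$ on $\Zig^{\otimes n}$ from \S\ref{SnAct}, yields the three relations $s_re_\bi=e_{s_r\bi}s_r$, $s_ra_t^{i,j}=a_{s_rt}^{i,j}s_r$, $s_rc_t=c_{s_rt}s_r$.

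The key step is translating \eqref{SZ}. Since $\sum_\bi e_\bi=1$, it suffices to multiply \eqref{SZ} on the right by each $e_\bi$ and compute $\Delta_{r,r+1}e_\bi$ from the explicit formula \eqref{zigDel}. Applying $\iota_{r,r+1}$ termwise and using \eqref{E3} to absorb idempotents through arrows gives three cases: the two ``diagonal'' summands $\ze_i\otimes\zc\ze_i$ and $\zc\ze_i\otimes\ze_i$ survive precisely when $i_r=i_{r+1}=i$ and together contribute $(c_{r+1}+c_r)e_\bi$; an ``edge'' summand $\za^{j,i}\otimes\za^{i,j}$ survives precisely when $i_r=i$ and $i_{r+1}=j$ with $\{i,j\}\in\Gamma_1$, contributing $a_r^{i_{r+1},i_r}a_{r+1}^{i_r,i_{r+1}}e_\bi$; and all other surviving cases force $\{i_r,i_{r+1}\}\notin\Gamma_1$ and $i_r\neq i_{r+1}$, giving $0$. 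This reproduces the three-case relation in the statement, and conversely summing these $e_\bi$-localized relations over $\bi$ recovers \eqref{SZ}. Thus the listed relations are exactly a rewriting of \eqref{AZ}--\eqref{SZ}, and the presentation follows.

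The only genuine obstacle is the bookkeeping in the computation of $\Delta_{r,r+1}e_\bi$: one must carefully check, using \eqref{E3} and the fact that $\Gamma$ has no loops, that a term $\za^{j,i}\otimes\za^{i,j}$ cannot accidentally survive when $i_r=i_{r+1}$ (it would force $i=j$, contradicting the edge condition), and that the two diagonal terms contribute distinct generators $c_r$ and $c_{r+1}$ rather than duplicating one. Once this case analysis is carried out cleanly, no further work is required.
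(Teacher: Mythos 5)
Your proof is correct and is essentially the argument the paper implicitly intends: the paper gives no proof of Lemma \ref{zigpresdef} beyond the remark that it follows from Definitions \ref{AffDef}, \ref{zigdef} and the formula (\ref{zigDel}), and your expansion of the three abstract pieces and of relation (\ref{SZ}) via $\Delta_{r,r+1}e_\bi$ supplies exactly the missing bookkeeping. The case analysis at the end — in particular that $\za^{j,i}\otimes\za^{i,j}$ cannot survive when $i_r=i_{r+1}$ because $\Gamma$ has no loops, and that the two diagonal summands of $\Delta(1)$ produce $c_r$ and $c_{r+1}$ respectively rather than duplicating one — is the right thing to check and is carried out correctly.
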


We finish this subsection with three properties of affine zigzag algebras which follows easily from the general theory of affinization developed in section~\ref{SAff}.

\begin{Lemma}\label{affzigdim}
The affine zigzag algebra \(\textup{\(\Zig\)}^\aff_n(\Gamma)\) is free as a \(\kk\)-module, with graded dimension 
\begin{align*}
\dim_q \textup{\(\Zig\)}^\aff_n(\Gamma) =n!\left(\frac{(1+q^2)|\Gamma_0| + 2q|\Gamma_1| }{1-q^2}\right)^n.
\end{align*}
\end{Lemma}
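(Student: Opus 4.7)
The plan is to derive this lemma as an immediate specialization of the general affinization dimension formula to the zigzag case, so the work is mostly bookkeeping rather than new calculation.

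First I would invoke Theorem \ref{AffBasis}(ii), which asserts that for any graded symmetric $\kk$-algebra $A$ which is free of finite rank, the affinization $\mathcal{H}_n(A)$ is free as a $\kk$-module with graded dimension
\[
\dim_q \mathcal{H}_n(A) = n!\left(\frac{\dim_q A}{1-q^d}\right)^n,
\]
where $d = \deg(\Delta(1))$. Since $\Zig^{\aff}_n(\Gamma)$ is by Definition \ref{Defaffzig} precisely $\mathcal{H}_n(\Zig(\Gamma))$, and since Lemma \ref{zigfacts} establishes that $\Zig(\Gamma)$ is a graded symmetric algebra, free of finite rank over $\kk$, this theorem applies directly.

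Next I would record the two numerical inputs to the formula. Lemma \ref{zigfacts}(ii) gives
\[
\dim_q \Zig(\Gamma) = |\Gamma_0|(1+q^2) + 2|\Gamma_1|q,
\]
and formula (\ref{zigDel}) shows that the distinguished element $\Delta(1) \in \Zig \otimes \Zig$ is homogeneous of degree $2$, so $d = 2$. Substituting these into the Theorem \ref{AffBasis}(ii) formula yields exactly the claimed expression
\[
\dim_q \Zig^{\aff}_n(\Gamma) = n!\left(\frac{(1+q^2)|\Gamma_0| + 2q|\Gamma_1|}{1-q^2}\right)^n,
\]
and freeness of $\Zig^{\aff}_n(\Gamma)$ as a $\kk$-module is likewise part of that same theorem.

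There is no real obstacle here: the entire content is that the general affinization machinery of \S\ref{SAff} has already been set up to handle precisely this sort of specialization. The only thing one must double-check is that the hypotheses of Theorem \ref{AffBasis} are in force for $A = \Zig(\Gamma)$, which amounts to reading off parts (i), (v), (vi), (vii) of Lemma \ref{zigfacts} to confirm that $\Zig$ is free of finite rank and admits the required bimodule isomorphism $\varphi$ of degree $-2$. After that verification the lemma follows by substitution.
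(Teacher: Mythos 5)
Your proof is correct and follows exactly the same route as the paper: the paper's proof simply cites Theorem \ref{AffBasis} and Lemma \ref{zigfacts}(ii), and you have spelled out the substitution $\dim_q \Zig = |\Gamma_0|(1+q^2)+2|\Gamma_1|q$ and $d=2$ in the general affinization formula.
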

\begin{proof}
This follows from Theorem \ref{AffBasis} and Lemma \ref{zigfacts}(ii).
\end{proof}

\begin{Lemma}\label{affzigop}
There is an isomorphism of graded \(\kk\)-algebras \(\widehat{\nu}:\textup{\(\Zig\)}^\aff_n(\Gamma) \to \textup{\(\Zig\)}^\aff_n(\Gamma)^\op\), given on generators by
$
\widehat{\nu}(z_t) = z_t$,  $
\widehat{\nu}(\ze_\bi) = \ze_\bi$, $\widehat{\nu}(\za^{i,j}_t) = \za^{j,i}_t$, $\widehat{\nu}(s_u) = s_u$. 
%for all \(t \in [1,n]\), \(u \in [1,n-1]\), \(\bi \in \Gamma_0^n\), and \(\{i,j\} \in \Gamma_1\).
\end{Lemma}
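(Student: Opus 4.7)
The plan is to obtain $\widehat\nu$ as a direct application of Lemma~\ref{affop} to the antiautomorphism of $\Zig$ supplied by Lemma~\ref{zigfacts}(iv). Since $\Zig^\aff_n(\Gamma)$ is by Definition~\ref{Defaffzig} precisely the affinization $\mathcal{H}_n(\Zig)$, and Lemma~\ref{zigfacts} verifies that $\Zig$ is graded symmetric (with the bilinear form and bimodule isomorphism $\varphi$ recorded there), both ambient hypotheses of Lemma~\ref{affop} are in place.

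First, I would invoke Lemma~\ref{zigfacts}(iv) to fix the graded $\kk$-algebra isomorphism $\nu:\Zig\to\Zig^\op$ determined by $\nu(e_i)=e_i$, $\nu(a^{i,j})=a^{j,i}$ (and hence $\nu(c)=c$, as $\nu(a^{i,j}a^{j,i})=a^{j,i}a^{i,j}$ inside $\Zig^\op$ equals $a^{i,j}a^{j,i}$ in $\Zig$). Then Lemma~\ref{affop} immediately produces a graded $\kk$-algebra isomorphism
$$
\widehat{\nu}:\mathcal{H}_n(\Zig)\longrightarrow \mathcal{H}_n(\Zig)^\op
$$
defined by $\widehat{\nu}(z_r)=z_r$, $\widehat{\nu}(s_u)=s_u$, and $\widehat{\nu}(\ba)=(\nu\otimes\cdots\otimes\nu)(\ba)$ for $\ba\in\Zig^{\otimes n}$.

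It then remains to translate $\widehat{\nu}$ on the $\Zig^{\otimes n}$-part into the generators $e_\bi$ and $a^{i,j}_t$ appearing in the statement. For $\bi=(i_1,\dots,i_n)\in\Gamma_0^n$,
$$
\widehat{\nu}(e_\bi)=\nu(e_{i_1})\otimes\cdots\otimes\nu(e_{i_n})=e_{i_1}\otimes\cdots\otimes e_{i_n}=e_\bi,
$$
and, with the nontrivial factor in the $t$th slot,
$$
\widehat{\nu}(a^{i,j}_t)=1\otimes\cdots\otimes\nu(a^{i,j})\otimes\cdots\otimes 1=1\otimes\cdots\otimes a^{j,i}\otimes\cdots\otimes 1=a^{j,i}_t,
$$
as required.

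There is no real obstacle here; the only substantive content is the existence of $\nu$ at the level of $\Zig$, and that has already been recorded in Lemma~\ref{zigfacts}(iv). The rest is bookkeeping: one just has to be careful that the degree-$(-2)$ bimodule isomorphism $\varphi$ used to define the distinguished element $\Delta(1)$ in~(\ref{zigDel}) is the one on which Lemma~\ref{affop} relies, so that the compatibility $(\nu\otimes\nu)\circ\tau_{\Zig,\Zig}\circ\Delta=\Delta\circ\nu$ (established in the course of proving Lemma~\ref{affop}) does apply; this is immediate from the symmetry of $\Delta(1)$ in~(\ref{zigDel}) under swapping $\za^{j,i}\otimes\za^{i,j}\leftrightarrow\za^{i,j}\otimes\za^{j,i}$, which is exactly the effect of $\nu\otimes\nu$ composed with the transposition.
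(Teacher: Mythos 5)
Your proposal is correct and takes exactly the route of the paper's (one-line) proof: both cite Lemma~\ref{affop} together with the antiautomorphism $\nu$ of $\Zig$ from Lemma~\ref{zigfacts}(iv). Your additional bookkeeping — checking $\nu(c)=c$, unwinding $(\nu\otimes\cdots\otimes\nu)$ on $e_\bi$ and $a^{i,j}_t$, and noting that the $\varphi$ of Lemma~\ref{zigfacts}(vii) is the one feeding into the distinguished element~(\ref{zigDel}) — is all accurate and simply fills in what the paper leaves implicit.
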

\begin{proof}
This follows from Lemmas \ref{affop} and \ref{zigfacts}(iv).
\end{proof}

\begin{Lemma}
If the ground ring $\kk$ is indecomposable, so is the affine zigzag algebra \(\textup{\(\Zig\)}^\aff_n(\Gamma)\).
\end{Lemma}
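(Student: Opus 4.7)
My plan is to show that the center $Z(\Zig^\aff_n(\Gamma))$ has no nontrivial idempotents. By Proposition~\ref{AffCenter} this center equals $(\kk[z_1,\ldots,z_n]\otimes Z(\Zig)^{\otimes n})^{\mathfrak{S}_n}$, so I first analyze $Z(\Zig)$. By Lemma~\ref{zigfacts}(iii), $Z(\Zig) = \kk\cdot 1 \oplus N$ as a $\kk$-module, where $N := \operatorname{span}_\kk\{c\,e_i\mid i\in \Gamma_0\}$ lies in positive degree. Since paths of length $\geq 3$ vanish in $\Zig$ (or in type ${\tt A}_1$ by definition of $\Zig=\kk[c]/(c^2)$), one has $c^2=0$, so $(ce_i)(ce_j)=c^2e_ie_j=0$ and $N^2=0$. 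In particular $N$ is a nilpotent ideal of $Z(\Zig)$ and there is an augmentation $Z(\Zig)\twoheadrightarrow\kk$ with kernel $N$.

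Taking $n$-fold tensor products gives $Z(\Zig)^{\otimes n} = \kk\cdot 1 \oplus J$, where $J$ is the kernel of the $n$-fold augmentation. Multiplication in $Z(\Zig)^{\otimes n}$ proceeds slot-by-slot, and any pure-tensor product in which some slot is forced to carry a product of two elements of $N$ vanishes because $N^2=0$. Since every generator of $J$ places at least one $N$-element into some slot, a product of $n+1$ generators of $J$ must overload at least one slot; hence $J^{n+1}=0$. Tensoring over $\kk$ with $\kk[z_1,\ldots,z_n]$ preserves the direct-sum decomposition and the nilpotence, and the $\mathfrak{S}_n$-action permutes tensor slots and respects the augmentation. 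Taking $\mathfrak{S}_n$-invariants therefore yields
\[
Z(\Zig^\aff_n(\Gamma)) \;=\; \kk[z_1,\ldots,z_n]^{\mathfrak{S}_n} \;\oplus\; (\kk[z_1,\ldots,z_n]\otimes J)^{\mathfrak{S}_n},
\]
with the second summand a nilpotent ideal of $Z(\Zig^\aff_n(\Gamma))$.

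Since idempotents lift uniquely modulo any nilpotent ideal, the idempotents of $Z(\Zig^\aff_n(\Gamma))$ are in bijection with those of $\kk[z_1,\ldots,z_n]^{\mathfrak{S}_n}$, which is a polynomial algebra over $\kk$ in the elementary symmetric functions. A standard fact identifies the idempotents of a polynomial algebra $\kk[x_1,\ldots,x_m]$ with those of $\kk$ (one clean justification: $\operatorname{Spec}\kk[x_1,\ldots,x_m]\to\operatorname{Spec}\kk$ has connected affine-space fibers, so its connected components pull back from those of $\operatorname{Spec}\kk$). Under the hypothesis that $\kk$ is indecomposable, these are only $0$ and $1$, so $Z(\Zig^\aff_n(\Gamma))$ has no nontrivial idempotents and $\Zig^\aff_n(\Gamma)$ is indecomposable. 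The only delicate point is the computation $N^2=0$; the remaining steps are bookkeeping of a manifestly $\mathfrak{S}_n$-equivariant augmentation.
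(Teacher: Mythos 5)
Your proof is correct, but it takes a longer route than the paper's. Both arguments start from Proposition~\ref{AffCenter} and Lemma~\ref{zigfacts}(iii), which identify $Z(\Zig^\aff_n)$ as $(\kk[z_1,\ldots,z_n]\otimes Z(\Zig)^{\otimes n})^{\mathfrak{S}_n}$. You then split off an explicit nilpotent ideal $(\kk[z_1,\ldots,z_n]\otimes J)^{\mathfrak{S}_n}$ (with $J^{n+1}=0$ by a pigeonhole on slots), invoke unique lifting of idempotents along a nilpotent ideal, identify the quotient with the polynomial ring $\kk[z_1,\ldots,z_n]^{\mathfrak{S}_n}$ via the fundamental theorem of symmetric functions, and finally use connectedness of affine space over $\operatorname{Spec}\kk$. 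All of these steps check out. The paper instead observes only that $Z(\Zig^\aff_n)$ is a $\ZZ_{\geq 0}$-graded commutative $\kk$-algebra whose degree-zero component has $\kk$-rank one, i.e.\ equals $\kk\cdot 1$. In such a ``connected'' graded ring, every idempotent lies in degree zero: if $e$ is idempotent with $e_0\in\{0,1\}$, then $e$ (resp.\ $1-e$) has degree-zero part $0$, and the minimal nonzero degree $d>0$ of an idempotent $f$ would force $\deg(f^2)\geq 2d>d$, a contradiction. This one degree-count handles the positive-degree part of the center in one stroke, with no need to separate out a nilpotent ideal, to bound its nilpotency class, or to use the structure of $\kk[z_1,\ldots,z_n]^{\mathfrak{S}_n}$ as a polynomial ring. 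Your argument has the minor advantage of making the nilpotent radical concrete and of working unchanged if the center were filtered rather than graded; the paper's argument is shorter and, in particular, sidesteps the fundamental theorem of symmetric functions and the Spec-theoretic input entirely.
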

\begin{proof}
Note that \(\Zig^\aff_n(\Gamma)\) is non-negatively graded, and by Proposition \ref{AffCenter} and Lemma \ref{zigfacts}(iii), the center of \(\Zig^\aff_n(\Gamma)\) has rank one in degree zero. Thus the only primitive central idempotent in \(\Zig^\aff_n(\Gamma)\) is \(1\), so the result follows.
\end{proof}

\subsection{Diagrammatics for the affine zigzag algebra}\label{zigdiagrammatics}
%For reader convenience, w
We provide a diagrammatic description of the algebra \(\Zig^\aff_n(\Gamma)\), which renders the relations described in Lemma \ref{zigpresdef} with more clarity.
We depict the (idempotented) generators as the following diagrams:
\begin{align*}
\ze_{\bi} = 
\begin{braid}\tikzset{baseline=0mm}
\draw(1,1) node[above]{$i_1$}--(1,-1);
\draw(2,1) node[above]{$i_2$}--(2,-1);
\draw(3,1) node[above]{$\cdots$};
\draw(4,1) node[above]{$i_n$}--(4,-1);
\end{braid}
\hspace{15mm}
z_{r}\ze_{\bi} = 
\begin{braid}\tikzset{baseline=0mm}
\draw(1,1) node[above]{$i_1$}--(1,-1);
\draw(2,1) node[above]{$\cdots$};
\draw(3,1) node[above]{$i_r$}--(3,-1);
\draw(4,1) node[above]{$\cdots$};
\draw(5,1) node[above]{$i_n$}--(5,-1);
\blackdot(3,0);
\end{braid}
\hspace{15mm}
\zc_{r}\ze_{\bi} = 
\begin{braid}\tikzset{baseline=0mm}
\draw(1,1) node[above]{$i_1$}--(1,-1);
\draw(2,1) node[above]{$\cdots$};
\draw(3,1) node[above]{$i_r$}--(3,-1);
\draw(4,1) node[above]{$\cdots$};
\draw(5,1) node[above]{$i_n$}--(5,-1);
\draw(2.8,0.2)--(3.2,-0.2);
\draw(3.2,0.2)--(2.8,-0.2);
\end{braid}
\end{align*}
\begin{align*}s_r\ze_{\bi} = 
\begin{braid}\tikzset{baseline=0mm}
\draw(1,1) node[above]{$i_1$}--(1,-1);
\draw(2,1) node[above]{$\cdots$};
\draw(3,1) node[above]{$i_r$}--(4.5,-1);
\draw(4.5,1) node[above]{$i_{r+1}$}--(3,-1);
\draw(5.5,1) node[above]{$\cdots$};
\draw(6.5,1) node[above]{$i_n$}--(6.5,-1);
\draw(1,-1) node[below]{$i_1$};
\draw(2,-1) node[below]{$\cdots$};
\draw(3,-1) node[below]{$i_{r+1}$};
\draw(4.5,-1) node[below]{$i_r$};
\draw(5.5,-1) node[below]{$\cdots$};
\draw(6.5,-1) node[below]{$i_n$};
\end{braid}
\hspace{15mm}
\za^{j_,i_r}_r\ze_{\bi} = 
\begin{braid}\tikzset{baseline=0mm}
\draw(1,1) node[above]{$i_1$}--(1,-1);
\draw(2,1) node[above]{$\cdots$};
\draw(3,1) node[above]{$i_r$}--(3,-1);
\draw(4,1) node[above]{$\cdots$};
\draw(5,1) node[above]{$i_m$}--(5,-1);
\draw(1,-1) node[below]{$i_1$};
\draw(2,-1) node[below]{$\cdots$};
\draw(3,-1) node[below]{$j$};
\draw(4,-1) node[below]{$\cdots$};
\draw(5,-1) node[below]{$i_n$};
\draw[red](3,0)--(3,-1);
\draw(2.7,0.3)--(3,0);
\draw(3.3,0.3)--(3,0);
\end{braid}
\textup{ for } \{i_r,j\} \in \Gamma_1.
\end{align*}
The red color is just intended to highlight that the label for the \(r\)th strand has changed. Then \(\Zig^{\textup{aff}}_n(\Gamma)\) is spanned by planar diagrams that look locally like these generators, equivalent up to the usual isotopies (cf. \cite{KL1}). In particular, dots, arrows, and \(\zx\)'s can be freely moved along strands, provided they don't pass through crossings. Multiplication of diagrams is given by stacking vertically, and products are zero unless labels for strands match. 

Then the defining local relations can be drawn as follows:

\begin{enumerate}
\item \(\Zig^{\otimes n}\) relations:
\begin{align*}
\begin{braid}\tikzset{baseline=0mm}
\draw(0,1) node[above]{$i$}--(0,-1);
\draw[red](0,0.5)--(0,-0.5);
\draw[green](0,-0.5)--(0,-1);
\draw(-0.3,0.8)--(0,0.5);
\draw(0.3,0.8)--(0,0.5);
\draw(-0.3,-0.2)--(0,-0.5);
\draw(0.3,-0.2)--(0,-0.5);
\draw(0,-1) node[below]{$k$};
\draw(0,0.2) node[left]{$j$};
\end{braid}
=0 \;\;(i,j,k \textup{ distinct)}
\hspace{10mm}
\begin{braid}\tikzset{baseline=0mm}
\draw(0,1) node[above]{$i$}--(0,-1);
\draw[red](0,0.5)--(0,-0.5);
\draw(-0.3,0.8)--(0,0.5);
\draw(0.3,0.8)--(0,0.5);
\draw(-0.3,-0.2)--(0,-0.5);
\draw(0.3,-0.2)--(0,-0.5);
\draw(0,-1) node[below]{$i$};
\draw(0,0.2) node[left]{$j$};
\end{braid}
=
\begin{braid}\tikzset{baseline=0mm}
\draw(0,1) node[above]{$i$}--(0,-1);
\draw(-0.2,0.2)--(0.2,-0.2);
\draw(0.2,0.2)--(-0.2,-0.2);
\draw(0,-1) node[below]{$i$};
\end{braid}
\;\;(\forall j)
\hspace{10mm}
\begin{braid}\tikzset{baseline=0mm}
\draw(0,1) node[above]{$i$}--(0,-1);
\draw[red](0,-0.5)--(0,-1);
\draw(-0.2,0.3)--(0.2,0.7);
\draw(0.2,0.3)--(-0.2,0.7);
\draw(-0.3,-0.2)--(0,-0.5);
\draw(0.3,-0.2)--(0,-0.5);
\draw(0,-1) node[below]{$j$};
\end{braid}
=
\begin{braid}\tikzset{baseline=0mm}
\draw(0,1) node[above]{$i$}--(0,-1);
\draw[red](0,0.5)--(0,-1);
\draw(-0.3,0.8)--(0,0.5);
\draw(0.3,0.8)--(0,0.5);
\draw(0,-1) node[below]{$j$};
\draw(-0.2,-0.3)--(0.2,-0.7);
\draw(0.2,-0.3)--(-0.2,-0.7);
\end{braid}
=
\begin{braid}\tikzset{baseline=0mm}
\draw(0,1) node[above]{$i$}--(0,-1);
\draw(-0.2,0.7)--(0,0.5);
\draw(0.2,0.7)--(0,0.5);
\draw(0,-1) node[below]{$i$};
\draw(-0.2,-0.3)--(0.2,-0.7);
\draw(0.2,-0.3)--(-0.2,-0.7);
\draw(-0.2,0.3)--(0.2,0.7);
\draw(0.2,0.3)--(-0.2,0.7);
\end{braid}
=0
\end{align*}

\item \(\kk \mathfrak{S}_n\) relations:
\begin{align*}
\begin{braid}\tikzset{baseline=0mm}
\draw(-0.5,1) node[above]{$i$};
\draw(0.5,1) node[above]{$j$};
\draw(0.5,1)--(-0.5,0)--(0.5,-1);
\draw(-0.5,1)--(0.5,0)--(-0.5,-1);
\end{braid}
=
\begin{braid}\tikzset{baseline=0mm}
\draw(-0.5,1) node[above]{$i$};
\draw(0.5,1) node[above]{$j$};
\draw(0.5,1)--(0.5,-1);
\draw(-0.5,1)--(-0.5,-1);
\end{braid}
\hspace{10mm}
\begin{braid}\tikzset{baseline=0mm}
\draw(-1,1) node[above]{$i$};
\draw(0,1) node[above]{$j$};
\draw(1,1) node[above]{$k$};
\draw(1,1)--(-1,-1);
\draw(0,1)--(1,0)--(0,-1);
\draw(-1,1)--(1,-1);
\end{braid}
=
\begin{braid}\tikzset{baseline=0mm}
\draw(-1,1) node[above]{$i$};
\draw(0,1) node[above]{$j$};
\draw(1,1) node[above]{$k$};
\draw(1,1)--(-1,-1);
\draw(0,1)--(-1,0)--(0,-1);
\draw(-1,1)--(1,-1);
\end{braid}
\hspace{10mm}
(\forall i,j,k)
\end{align*}

\item \((\Zig^{\otimes n}, \kk[z_1, \ldots, z_n])\) commutation relations:
\begin{align*}
\begin{braid}\tikzset{baseline=0mm}
\draw(0,1) node[above]{$i$}--(0,-1);
\draw[red](0,0.5)--(0,-1);
\draw(-0.3,0.8)--(0,0.5);
\draw(0.3,0.8)--(0,0.5);
\blackdot(0,-0.5);
\draw(0,-1) node[below]{$j$};
\end{braid}
=
\begin{braid}\tikzset{baseline=0mm}
\draw(0,1) node[above]{$i$}--(0,-1);
\draw[red](0,-0.5)--(0,-1);
\blackdot(0,0.5);
\draw(0,-1) node[below]{$j$};
\draw(-0.3,-0.2)--(0,-0.5);
\draw(0.3,-0.2)--(0,-0.5);
\end{braid}
\hspace{10mm}
\begin{braid}\tikzset{baseline=0mm}
\draw(0,1) node[above]{$i$}--(0,-1);
\draw(-0.2,0.3)--(0.2,0.7);
\draw(0.2,0.3)--(-0.2,0.7);
\blackdot(0,-0.5);
\draw(0,-1) node[below]{$i$};
\end{braid}
=
\begin{braid}\tikzset{baseline=0mm}
\draw(0,1) node[above]{$i$}--(0,-1);
\blackdot(0,0.5);
\draw(0,-1) node[below]{$i$};
\draw(-0.2,-0.3)--(0.2,-0.7);
\draw(0.2,-0.3)--(-0.2,-0.7);
\end{braid}
\end{align*}

\item \((\kk \mathfrak{S}_n, \Zig^{\otimes n})\) commutation relations:
\begin{align*}
\begin{braid}\tikzset{baseline=0mm}
\draw(-1,1) node[above]{$i$};
\draw(1,1) node[above]{$j$};
\draw(1,1)--(-1,-1);
\draw(-1,1)--(1,-1);
\draw[red](-0.5,0.5)--(1,-1);
\draw(-0.5,0.9)--(-0.5,0.5);
\draw(-0.9,0.5)--(-0.5,0.5);
\draw(-1,-1) node[below]{$j$};
\draw(1,-1) node[below]{$k$};
\end{braid}
=
\begin{braid}\tikzset{baseline=0mm}
\draw(-1,1) node[above]{$i$};
\draw(1,1) node[above]{$j$};
\draw(1,1)--(-1,-1);
\draw(-1,1)--(1,-1);
\draw[red](0.5,-0.5)--(1,-1);
\draw(0.5,-0.1)--(0.5,-0.5);
\draw(0.1,-0.5)--(0.5,-0.5);
\draw(-1,-1) node[below]{$j$};
\draw(1,-1) node[below]{$k$};
\end{braid}
\hspace{10mm}
\begin{braid}\tikzset{baseline=0mm}
\draw(-1,1) node[above]{$i$};
\draw(1,1) node[above]{$j$};
\draw(1,1)--(-1,-1);
\draw(-1,1)--(1,-1);
\draw[red](-0.5,-0.5)--(-1,-1);
\draw(-0.5,-0.1)--(-0.5,-0.5);
\draw(-0.1,-0.5)--(-0.5,-0.5);
\draw(-1,-1) node[below]{$k$};
\draw(1,-1) node[below]{$i$};
\end{braid}
=
\begin{braid}\tikzset{baseline=0mm}
\draw(-1,1) node[above]{$i$};
\draw(1,1) node[above]{$j$};
\draw(1,1)--(-1,-1);
\draw(-1,1)--(1,-1);
\draw[red](0.5,0.5)--(-1,-1);
\draw(0.5,0.9)--(0.5,0.5);
\draw(0.9,0.5)--(0.5,0.5);
\draw(-1,-1) node[below]{$k$};
\draw(1,-1) node[below]{$i$};
\end{braid}
\hspace{10mm}
(\forall i,j,k)
\end{align*}
\begin{align*}
\begin{braid}\tikzset{baseline=0mm}
\draw(-1,1) node[above]{$i$};
\draw(1,1) node[above]{$j$};
\draw(1,1)--(-1,-1);
\draw(-1,1)--(1,-1);
\draw(-0.5,0.8)--(-0.5,0.2);
\draw(-0.8,0.5)--(-0.2,0.5);
\end{braid}
=
\begin{braid}\tikzset{baseline=0mm}
\draw(-1,1) node[above]{$i$};
\draw(1,1) node[above]{$j$};
\draw(1,1)--(-1,-1);
\draw(-1,1)--(1,-1);
\draw(0.5,-0.2)--(0.5,-0.8);
\draw(0.2,-0.5)--(0.8,-0.5);
\end{braid}
\hspace{10mm}
\begin{braid}\tikzset{baseline=0mm}
\draw(-1,1) node[above]{$i$};
\draw(1,1) node[above]{$j$};
\draw(1,1)--(-1,-1);
\draw(-1,1)--(1,-1);
\draw(-0.5,-0.8)--(-0.5,-0.2);
\draw(-0.8,-0.5)--(-0.2,-0.5);
\end{braid}
=
\begin{braid}\tikzset{baseline=0mm}
\draw(-1,1) node[above]{$i$};
\draw(1,1) node[above]{$j$};
\draw(1,1)--(-1,-1);
\draw(-1,1)--(1,-1);
\draw(0.5,0.2)--(0.5,0.8);
\draw(0.2,0.5)--(0.8,0.5);
\end{braid}
\hspace{10mm}
(\forall i,j)
\end{align*}

\item \((\kk[z_1, \ldots, z_n],\kk\mathfrak{S}_n)\) commutation relations:

\begin{align*}
\begin{braid}\tikzset{baseline=0mm}
\draw(1,1) node[above]{$j$}--(-1,-1);
\draw(-1,1) node[above]{$i$}--(1,-1);
\blackdot(-0.5,0.5);
\end{braid}
-
\begin{braid}\tikzset{baseline=0mm}
\draw(1,1) node[above]{$j$}--(-1,-1);
\draw(-1,1) node[above]{$i$}--(1,-1);
\blackdot(0.5,-0.5);
\end{braid}
=
\begin{braid}\tikzset{baseline=0mm}
\draw(1,1) node[above]{$j$}--(-1,-1);
\draw(-1,1) node[above]{$i$}--(1,-1);
\blackdot(-0.5,-0.5);
\end{braid}
-
\begin{braid}\tikzset{baseline=0mm}
\draw(1,1) node[above]{$j$}--(-1,-1);
\draw(-1,1) node[above]{$i$}--(1,-1);
\blackdot(0.5,0.5);
\end{braid}
=
\begin{cases}
\begin{braid}\tikzset{baseline=0mm}
\draw(1,1) node[above]{$i$}--(1,-1);
\draw(-1,1) node[above]{$i$}--(-1,-1);
\draw(-0.8,0.2)--(-1.2,-0.2);
\draw(-1.2,0.2)--(-0.8,-0.2);
\end{braid}
+
\begin{braid}\tikzset{baseline=0mm}
\draw(1,1) node[above]{$i$}--(1,-1);
\draw(-1,1) node[above]{$i$}--(-1,-1);
\draw(0.8,0.2)--(1.2,-0.2);
\draw(1.2,0.2)--(0.8,-0.2);
\end{braid}
&
\textup{if }i=j;\\
\\
\begin{braid}\tikzset{baseline=0mm}
\draw(1,1) node[above]{$j$}--(1,-1);
\draw(-1,1) node[above]{$i$}--(-1,-1);
\draw[red](-1,0)--(-1,-1);
\draw[red](1,1)--(1,0);
\draw(-0.7,0.3)--(-1,0);
\draw(-1.3,0.3)--(-1,0);
\draw(0.7,0.3)--(1,0);
\draw(1.3,0.3)--(1,0);
\draw(-1,-1) node[below]{$j$};
\draw(1,-1) node[below]{$i$};
\end{braid}
&
\textup{if }\{i,j\} \in \Gamma_1;\\
\\
0
&
\textup{otherwise}.
\end{cases}
\end{align*}
\end{enumerate}

\begin{Remark}\label{affziglit}
In their work on the categorification of the Heisenberg algebra \(\mathfrak{h}_\Gamma\) for \(\Gamma\) of affine ${\tt ADE}$ type, Cautis and Licata \cite{CL} introduce a certain 2-category \(\mathcal{H}^\Gamma\). The 1-morphisms in this category are generated by objects \(P_i\) and \(Q_i\), for each \(i \in \Gamma_0\). Comparing the local relations between 2-morphisms \cite[\S6.1, \S10.3]{CL} with the diagrammatic above, it can be seen that \(\End_{\mathcal{H}^\Gamma}\left(P^n\right)\) satisfies the defining relations of \(\Zig^\aff_n(\Gamma)\), up to some signs. More generally, Rosso and Savage \cite{RS} introduce a monoidal category \(\mathcal{H}_B\) associated to any Frobenius superalgebra \(B\), recover the above category as a special case, and study in particular the endomorphism algebra of \(P^n\) \cite[\S8.4]{RS}. 
\end{Remark}

\section{The minuscule imaginary stratum category}\label{SStratum}
For the remainder of the paper we assume  \(\preceq\) is a {\it balanced order} on $\Phi_+$, and denote $d:=\height(\de)$, see \S\ref{SSARS}. We also assume that the graph $\Gamma$ is the Dynkin diagram corresponding to the finite type Cartan matrix $\Car'$, and write $\Zig$ for $\Zig(\Gamma)$, $\Zig_n^\aff$ for $\Zig_n^\aff(\Gamma)$. 
We do not assume that $\kk$ is a field unless otherwise stated. 

\subsection{Irreducible semicuspidal modules}
\label{SSBOSW}

Recall from \S\ref{SSSemiCusp} that, when $\kk$ is a field, the irreducible semicuspidal \(R_\delta\)-modules may be  canonically labeled \(L_{\delta,i}\), for \(i \in I'\). The following theorem, proved in  \cite[Lemma 5.1, Corollary 5.3]{Kcusp}, gives a characterization of the these important modules via their words.

\begin{Lemma}\label{Kcuspminuscule} Let $\kk$ be a field. 
For each \(i \in I'\), \(L_{\delta,i}\) can be characterized up to isomorphism and grading shift as the unique irreducible \(R_\delta\)-module such that \(i_1 = 0\) and \(i_d=i\) for all words \(\bi\) of \(L_{\delta,i}\).
\end{Lemma}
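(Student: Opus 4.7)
\bigskip

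\noindent\textbf{Proof sketch.} The plan is to control the leftmost and rightmost letters of words appearing in any irreducible semicuspidal $R_\delta$-module by squeezing them with the semicuspidality condition against the balanced convex preorder, and then to identify the label $i\in I'$ of the resulting irreducible with its common last letter. I will argue existence of the first/last letter constraints first, and then promote these to a full characterization.

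First I would show that for any nonzero word $\bi=(i_1,\dots,i_d)$ with $1_{\bi}L_{\delta,i}\neq 0$, one must have $i_1=0$. Indeed, non-vanishing of $1_\bi L_{\delta,i}$ forces $\Res_{\alpha_{i_1},\,\delta-\alpha_{i_1}}L_{\delta,i}\neq 0$. By the definition of semicuspidality from \S\ref{SSSemiCusp}, the root $\alpha_{i_1}$ must then be expressible as a sum of positive roots $\preceq\delta$; being simple, this forces $\alpha_{i_1}\preceq\delta$. But the balanced assumption gives $\alpha_j\succeq\delta$ for every $j\in I'$, so $i_1\notin I'$, i.e.\ $i_1=0$. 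A symmetric argument applied to $\Res_{\delta-\alpha_{i_d},\,\alpha_{i_d}}L_{\delta,i}$ shows $\alpha_{i_d}\succeq\delta$, hence $i_d\in I'$. So every word of every irreducible semicuspidal $R_\delta$-module begins with $0$ and ends in some element of $I'$.

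Next I would promote "$i_d\in I'$" to "$i_d$ is constant on the set of words of a given irreducible $L$, and this constant value is a complete invariant." For this I would invoke the  parametrization of irreducible semicuspidal $R_\delta$-modules by $\Par_1\cong I'$ recalled at the end of \S\ref{SSSemiCusp} (established in \cite{KM,McNAff,TW,KMStrat}), which identifies exactly $|I'|$ irreducible semicuspidals up to degree shift. Combined with the fact that $\Res_{\delta-\alpha_j,\alpha_j}$ is exact and that for any irreducible $L$ the module $\Res_{\delta-\alpha_j,\alpha_j}L$ is nonzero for \emph{some} $j\in I'$, this yields a function $L\mapsto j(L)$ from isomorphism classes of irreducible semicuspidals to $I'$. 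The function is defined by "the last letter appearing in words of $L$", and I would check that it is constant on words of a fixed $L$ by pulling back from the analogous statement for standard modules $\Delta(\mu(j))$, where the character/combinatorics of the projective cover (which has a cyclic vector of a definite word) forces rigidity. Since both source and target have the same cardinality $|I'|$, the function is a bijection, and one defines/recognizes $L_{\delta,i}$ as the unique irreducible with $j(L_{\delta,i})=i$.

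The main obstacle I anticipate is the uniqueness-of-last-letter step, i.e.\ showing that a single irreducible $L$ cannot have words ending in two distinct elements of $I'$. A clean route is to observe that if $\Res_{\delta-\alpha_j,\alpha_j}L\neq 0$, then by the semicuspidal factorization the first factor is semicuspidal on the "$\preceq\delta$" side with content $\delta-\alpha_j$, and in a balanced order the only way this content decomposes is as a specific sum involving $\alpha_0$ and roots from $I'$ dictated by the affine node; this rigidity, combined with the socle structure of parabolically induced modules $L'\circ L(\alpha_j)$ and Frobenius reciprocity, rules out two distinct last letters. The rest of the argument is then formal once this rigidity is in place.
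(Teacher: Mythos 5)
The paper does not actually prove this lemma; it is quoted from \cite[Lemma 5.1, Corollary 5.3]{Kcusp}. So what matters is whether your sketch would constitute a valid self-contained argument, and there I see one solid piece and one genuine gap.

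Your first paragraph is correct and clean. If $1_{\bi}L\neq 0$ then $\Res_{\alpha_{i_1},\delta-\alpha_{i_1}}L\neq 0$, so by semicuspidality the simple root $\alpha_{i_1}$ is a sum of positive roots $\preceq\delta$, forcing $\alpha_{i_1}\preceq\delta$; since balance plus convexity give $\alpha_j\succ\delta$ for all $j\in I'$ and $\alpha_0\prec\delta$, we get $i_1=0$, and the mirror argument with $\Res_{\delta-\alpha_{i_d},\alpha_{i_d}}$ gives $i_d\in I'$. (You should note explicitly that $\alpha_0\prec\delta$ is a consequence of convexity applied to $\delta=\alpha_0+\theta'$ with $\theta'\succeq\delta$; it is not part of the definition of balanced.) This part you can keep.

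The gap is exactly where you flag it: constancy of the last letter. You assert a map $L\mapsto j(L)$ by ``the last letter appearing in words of $L$,'' but this is only a map once you know the last letter is the same for every word, and that is the entire content of the lemma beyond the easy $i_1=0$ step. The cardinality argument (``$|I'|$ irreducibles, $|I'|$ possible last letters, so bijection'') cannot bootstrap well-definedness: it only helps once you already know the map is a function and is injective. Your ``clean route'' in the last paragraph gestures at the right territory -- $\Res_{\delta-\alpha_j,\alpha_j}L\neq 0$ forces, by Frobenius reciprocity, that $L$ is a quotient of $L_{\delta-\alpha_j}\circ L(\alpha_j)$ where $L_{\delta-\alpha_j}$ is the cuspidal module for the real root $\delta-\alpha_j$, and one would then like to say these induced modules have simple, pairwise non-isomorphic heads. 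But that statement is precisely the minimal-pair/standard-module machinery of the cuspidal-systems theory that \cite{Kcusp} develops; as stated your appeal to ``the socle structure of $L'\circ L(\alpha_j)$'' is not a proof and ``rigidity of decompositions of $\delta-\alpha_j$'' does not obviously rule out two distinct last letters. Without that input the argument is circular. There is also a smaller missing step: the lemma characterizes $L_{\delta,i}$ among \emph{all} irreducible $R_\delta$-modules, so you must check that the word conditions force semicuspidality; this is actually easy once one notes that every positive root $\prec\delta$ of height $<d$ has the form $\delta-\beta$ with $\beta\in\Phi'_+$ and hence contains $\alpha_0$ exactly once, but your sketch does not address it.
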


In this section we will use this lemma to recognize the irreducible semicuspidal \(R_\delta\)-modules as certain homogeneous modules which are concentrated in degree zero.

Following \cite{KRhomog}, for \(1 \leq r < d\) and \(\bi \in I^\delta\), we say \(s_r \in \mathfrak{S}_d\) is  \(\bi\)-{\em admissible} if \({\tt c}_{i_r,i_{r+1}} = 0\). More generally, if \(s_{r_1} \cdots s_{r_t}\) is a reduced expression for \(w \in \mathfrak{S}_d\) and each \(s_{r_k}\) is \((s_{r_{k+1}} \cdots s_{r_t}\bi)\)-admissible, then we say \(w\) is \(\bi\)-admissible. This property is independent of reduced expression for \(w\). In addition, admissibility is preserved by products in the sense that if \(w\) is \(\bi\)-admissible and \(w'\) is \((w\bi)\)-admissible, then \(w'w\) is \(\bi\)-admissible. The {\it connected component} of \(\bi\) is 
$$\textup{Con}(\bi):=\{w\bi \mid \bi\textup{-admissible }w \in \mathfrak{S}_d\}.
$$ 
Clearly \(\textup{Con}(\bi) = \textup{Con}(\bj)\) if and only if \(\bi \in \textup{Con}(\bj)\). We say that \(\bi\) is {\em homogeneous} provided that  \(i_r = i_s\) for some \(r <s\) implies there exist \(t,u\) with \(r<t<u<s\) such that \({\tt c}_{i_r,i_t}={\tt c}_{i_r,i_u}=-1\). 

\begin{Lemma}\label{homogconst}
If \(\theta \in Q_+\) and \(\bi \in I^\theta\) is a homogeneous word,  then there exists an \(R_\theta\)-module \(M\) with character \(\sum_{\bj \in \textup{Con}(\bi)} \bj\). If \(\kk\) is a field, this module is irreducible. 
\end{Lemma}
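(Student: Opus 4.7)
The plan is to construct $M$ explicitly, with a basis indexed by $\textup{Con}(\bi)$, and all basis vectors sitting in degree $0$. Concretely, set $M:=\bigoplus_{\bj\in\textup{Con}(\bi)}\kk\cdot v_\bj$ with $\deg v_\bj=0$, and define the action of the KLR generators by
\begin{align*}
1_\bk\cdot v_\bj &:= \delta_{\bk,\bj}\,v_\bj,\qquad y_r\cdot v_\bj :=0,\\
\psi_r\cdot v_\bj &:=
\begin{cases} v_{s_r\bj} & \text{if $s_r$ is $\bj$-admissible,}\\ 0 & \text{otherwise.}\end{cases}
\end{align*}
Note that when $s_r$ is $\bj$-admissible, $\deg(\psi_r 1_\bj)=-{\tt c}_{j_r,j_{r+1}}=0$, so the action preserves the grading. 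The module then has the claimed character by construction.

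Next I would verify the defining relations \eqref{KLRidem}--\eqref{KLRbraid}. Relations \eqref{KLRidem}--\eqref{KLRpsiidem} and the commutation of $y$'s with everything are trivial since all $y_r$ act as zero and the $1_\bk$ are orthogonal idempotents. Relation \eqref{KLRypsi} reduces to $0=\de_{j_r,j_{r+1}}(\de_{t,r+1}-\de_{t,r})v_\bj$, but $j_r=j_{r+1}$ would force ${\tt c}_{j_r,j_{r+1}}=2\ne 0$, so $\psi_r v_\bj=0$ and homogeneity of $\bi$ (preserved under admissible moves, hence on all of $\textup{Con}(\bi)$) makes the right-hand side zero as well. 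For \eqref{KLRpsi2} I would check that when $s_r$ is $\bj$-admissible, $Q_{j_r,j_{r+1}}(y_r,y_{r+1})v_\bj=0$ since $y$'s vanish and ${\tt c}_{j_r,j_{r+1}}=0$ forces $Q_{j_r,j_{r+1}}=1$ only if $j_r\ne j_{r+1}$—but then $\psi_r^2 v_\bj$ should equal $v_\bj$; this is where one uses that admissibility of $s_r$ at $\bj$ implies admissibility of $s_r$ at $s_r\bj$, so $\psi_r^2 v_\bj=v_\bj$. When $s_r$ is not admissible at $\bj$, both sides vanish. The commuting relation \eqref{KLRpsi} is straightforward from the definitions. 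The braid relation \eqref{KLRbraid} is the most delicate: the right-hand side vanishes on $v_\bj$ because of the $y$-action, so one must check $\psi_{r+1}\psi_r\psi_{r+1}v_\bj=\psi_r\psi_{r+1}\psi_r v_\bj$. This is a finite case analysis on the triple $(j_r,j_{r+1},j_{r+2})$, and in each case either both sides vanish by non-admissibility, or both sides equal $v_{s_rs_{r+1}s_r\bj}$; homogeneity of $\bj$ eliminates the potentially problematic repeated-label cases.

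For irreducibility over a field $\kk$: since $M=\bigoplus_{\bj\in\textup{Con}(\bi)}1_\bj M$ and each $1_\bj M=\kk v_\bj$ is one-dimensional, any nonzero $R_\theta$-submodule $N$ satisfies $N=\bigoplus 1_\bj N$, so it contains some $v_{\bj_0}$. Then for any $\bj\in\textup{Con}(\bi)$, by definition there is a $\bj_0$-admissible $w\in\Si_d$ with $w\bj_0=\bj$; writing $w=s_{r_1}\cdots s_{r_t}$ as a reduced admissible word and applying $\psi_{r_1}\cdots\psi_{r_t}$ to $v_{\bj_0}$ yields $v_\bj\in N$. Hence $N=M$.

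The main obstacle I anticipate is the braid relation \eqref{KLRbraid}: one must rule out configurations of $(j_r,j_{r+1},j_{r+2})$ in which the two triple-crossings could differ, and the key input is exactly the homogeneity hypothesis on $\bi$, which controls when two equal labels can appear at close positions in any word of $\textup{Con}(\bi)$. Everything else is bookkeeping, and the irreducibility argument is then immediate from the construction.
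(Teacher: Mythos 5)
Your construction is exactly the one in the paper, which simply cites \cite[Theorem 3.4]{KRhomog} for the field case and remarks that the relation-checking goes through over any commutative ground ring; your write-up is essentially a reconstruction of that cited proof, and the irreducibility argument is the standard word-space argument.

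One small but genuine slip in the braid-relation step: you assert that the right-hand side of (\ref{KLRbraid}) ``vanishes on $v_\bj$ because of the $y$-action.'' This is not true in general. When ${\tt c}_{j_r,j_{r+1}}=-1$ the divided difference
\[
\frac{Q_{j_r,j_{r+1}}(y_{r+2},y_{r+1})-Q_{j_r,j_{r+1}}(y_r,y_{r+1})}{y_{r+2}-y_r}
=\eps_{j_r,j_{r+1}}
\]
is a nonzero \emph{constant}, so setting all $y$'s to zero does not kill it; the right-hand side would then be $\pm\de_{j_r,j_{r+2}}v_\bj$, which is nonzero precisely when $j_r=j_{r+2}$. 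The correct reason the right-hand side vanishes is homogeneity: for any $\bj\in\textup{Con}(\bi)$ one has $j_r\neq j_{r+2}$ (a repeat at distance $2$ cannot admit the required pair $r<t<u<r+2$), so $\de_{j_r,j_{r+2}}=0$. You do invoke homogeneity at the end of that paragraph, so the right ingredient is present, but it is playing a different role from the one you attribute to it; the way you have written it, a reader would reasonably conclude that the homogeneity hypothesis is only there to control the left-hand side, which is misleading. Once that attribution is corrected, the proof is sound.
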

\begin{proof}
If \(\kk\) is a field, this is \cite[Theorem 3.4]{KRhomog}. The part of the proof verifying the relations of $R_\theta$ on the $\k$-module $\bigoplus_{\bj \in \textup{Con}(\bi)} \k\cdot v_\bj$ works for an arbitrary commutative ground ring $\kk$.  
\end{proof}

With the intention of applying this lemma, we associate to each \(i \in I'\) a special homogeneous word \(\bb^i \in I^\delta\). 

\begin{flushleft}
\begin{tabular}{lll}
\(\textup{Type }{\tt A}_\ell^{(1)}:\)&\(\bb^i := 012\cdots(i-1)\ell(\ell-1)(\ell-2) \cdots (i+1)i\)
\\
\end{tabular}
\begin{tabular}{lll}
\(\textup{Type }{\tt D}_\ell^{(1)}:\)& \(\bb^i :=\begin{cases}
 0234\cdots\ell(\ell-2)(\ell-3) \cdots (i+1)123\cdots i & \textup{if } 1 \leq i \leq \ell-2;\\
  0234\cdots(\ell-2)\ell123\cdots(\ell-1)& \textup{if } i=\ell-1;\\
   0234\cdots(\ell-1)123\cdots(\ell-2)\ell& \textup{if } i=\ell
 \end{cases}\)
 \\
\end{tabular}
\begin{tabular}{lll}
\(\textup{Type }{\tt E}_6^{(1)}:\) &\( \bb^i :=\begin{cases}
024354 265431 & \textup{if }i=1;\\
024354 136542 & \textup{if }i=2;\\
024354 126543 & \textup{if }i=3;\\
024354 123654 & \textup{if }i=4;\\
024354 123465 & \textup{if }i=5;\\
024354 123456 & \textup{if }i=6
\end{cases}\)
\\
\end{tabular}

\begin{tabular}{lll}
\(\textup{Type }{\tt E}_7^{(1)}:\) & \(\bb^i :=\begin{cases}
01342546354 2765431 & \textup{if }i=1;\\
01342546354 1376542 & \textup{if }i=2;\\
01342546354 1276543 & \textup{if }i=3;\\
01342546354 1237654 & \textup{if }i=4;\\
01342546354 1234765 & \textup{if }i=5;\\
01342546354 1234576 & \textup{if }i=6;\\
01342546354 1234567 & \textup{if }i=7
\end{cases}\)
\\
\end{tabular}

\begin{tabular}{lll}
\(\textup{Type }{\tt E}_8^{(1)}:\) &\(\bb^i :=\begin{cases}
0876542314356425764354 28765431 & \textup{if }i=1;\\
0876542314356425764354 13876542 & \textup{if }i=2;\\
0876542314356425764354 12876543 & \textup{if }i=3;\\
0876542314356425764354 12387654 & \textup{if }i=4;\\
0876542314356425764354 12348765 & \textup{if }i=5;\\
0876542314356425764354 12345876 & \textup{if }i=6;\\
0876542314356425764354 12345687 & \textup{if }i=7;\\
0876542314356425764354 12345678 & \textup{if }i=8.
\end{cases}\)
\\
\end{tabular}
\end{flushleft}
We will write \(G^i:=\textup{Con}(\bb^i)\) and 
$$G^\delta:= \bigcup_{i \in I'}G^i.$$ We need one more combinatorial notion for words:

\begin{Definition}
Let \(\bi \in I^\delta\). For \(t \in \{1,\ldots, d\}\), define the \(t\)-{\it neighbor sequence} of \(\bi\) to be \(\nbr_t(\bi):=(n_1, \ldots, n_t) \in \{0,N,S\}^t\), where
\begin{align*}
n_r = \begin{cases}
S, & \textup{if }i_r = i_t;\\
N, & \textup{if }{\tt c}_{i_r,i_t}<0;\\
0, &\textup{otherwise.}
\end{cases}
\end{align*}
Then \(\rednbr_t(\bi)\), the {\it reduced \(t\)-neighbor sequence of \(\bi\)}, is achieved by deleting all \(0\)'s from \(\nbr_t(\bi)\).
\end{Definition}

\begin{Example} Take \({\tt C} = {\tt A}_{7}^{(1)}\). Then \(\bi = 01726354 \in G^4\), \(\nbr_6(\bi) = 000N0S\), and \(\rednbr_6(\bi)=NS\).
\end{Example}

The following lemma is clear:

\begin{Lemma}\label{Admis}
If \(s_r\) is \(\bi\)-admissible, then \(\rednbr_{s_r(t)}(s_r\bi)= \rednbr_{t}(\bi)\).
\end{Lemma}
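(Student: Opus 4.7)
The plan is to perform a direct case analysis on the position $t$ relative to the support $\{r, r+1\}$ of the transposition $s_r$. First I would unpack the admissibility hypothesis: $s_r$ being $\bi$-admissible means ${\tt c}_{i_r, i_{r+1}} = 0$, so the letters $i_r$ and $i_{r+1}$ are distinct and non-adjacent in the Dynkin diagram. Since $s_r\bi$ differs from $\bi$ only by swapping the entries at positions $r$ and $r+1$, the cases to consider are $t \notin \{r, r+1\}$, $t = r$, and $t = r+1$.

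For the case $t \notin \{r, r+1\}$, I would observe that $s_r(t) = t$ and the reference letter $(s_r\bi)_t = i_t$ is unchanged. The subcase $t < r$ is immediate. For $t > r+1$, the only effect is that the entries of $\nbr_t$ at positions $r$ and $r+1$ are swapped. Here admissibility rules out dangerous configurations: if one of these entries were $S$, comparing (say) $i_r$ to $i_t$ would force $i_r = i_t$; but then the entry at position $r+1$, comparing $i_{r+1}$ to $i_t = i_r$, cannot be $S$ (this would force $i_{r+1} = i_r$) and cannot be $N$ (this would force $i_{r+1}$ and $i_r$ to be neighbors), so it must be $0$. Thus the unordered pair of entries $\{n_r, n_{r+1}\}$ of $\nbr_t(\bi)$ is either of the form $\{0, \ast\}$ or $\{N, N\}$; in the former subcase the $0$ is deleted before reduction and the relative order of the surviving letters is unchanged, and in the latter the swap is invisible because the two $N$'s are indistinguishable.

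For the case $t = r$, I would note that $s_r(t) = r+1$ and $(s_r\bi)_{r+1} = i_r = i_t$, so the reference letter is the same. The first $r-1$ entries of $\nbr_{r+1}(s_r\bi)$ agree with those of $\nbr_r(\bi)$ since $i_1, \ldots, i_{r-1}$ are unchanged; the new entry at position $r$ of $\nbr_{r+1}(s_r\bi)$ compares $(s_r\bi)_r = i_{r+1}$ to $i_r$, which equals $0$ by admissibility; and both sequences terminate with $S$ at the reference position. After deleting the $0$'s, the reduced sequences coincide. The case $t = r+1$ is entirely symmetric, with the role of the extra $0$ now played by the entry at position $r+1$ of $\nbr_t(\bi) = \nbr_{r+1}(\bi)$, which compares $i_r$ to $i_{r+1}$.

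I anticipate no real obstacle: the entire argument is a bookkeeping exercise, and the admissibility condition was designed precisely to ensure that the two swapped entries cause no harm after the $0$'s are discarded.
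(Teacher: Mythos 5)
Your proof is correct, and the case analysis is exactly the kind of bookkeeping the paper has in mind when it calls the lemma clear and gives no proof. The key observations are all present: admissibility (${\tt c}_{i_r,i_{r+1}}=0$) rules out the ``dangerous'' pairs $\{S,S\}$, $\{S,N\}$ at positions $r,r+1$ in the case $t>r+1$, so the two swapped entries are either equal or one of them is a $0$ that disappears under reduction; and in the boundary cases $t=r$ and $t=r+1$ the extra entry introduced or removed is precisely a $0$, again by admissibility. No gap.
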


Now we prove numerous useful facts about the special words \(\bb^i\):

\begin{Lemma}\label{wordfacts}
Let \(i,j \in I'\) such that \({\tt c}_{i,j}=-1\).
\begin{enumerate}
\item If \(\bi \in G^\delta\), then \(\bi\) is homogeneous.
\item For all \(\bi \in G^i\), we have \(i_1=0\), \(i_d=i\), \(i_1\) is a neighbor of \(i_2\), and \(i_{d-1}\) is a neighbor of \(i_d\).
\item If \({\tt C} \neq {\tt A}^{(1)}_1\) and \(\bi \in G^\delta\), then
\begin{align*}
\rednbr_t(\bi) = \begin{cases}
(NSN)^aNS, & \textup{if } 1<t<d;\\
(NSN)^aNNS, & \textup{if } t=d,
\end{cases}
\end{align*}
for some \(a \geq 0\).
\item If \(\bi \in G^\delta\) and  \(r<d-1\), then \(s_r\bi \in G^\delta\) if and only if \(s_r\) is \(\bi\)-admissible.
\item For any \(\bi, \bi' \in G^i\), there exists a unique \(w_{\bi',\bi} \in \mathfrak{S}_d\) such that \(w_{\bi',\bi}\bi=\bi'\) and \(w_{\bi',\bi}\) is \(\bi\)-admissible.
\item There exists a unique \(w_{i,j} \in \mathfrak{S}_d\) such that \(w_{i,j} \bb^j = \bb^i\), and \(w_{i,j} = w_1 s_{d-1} w_2\), where \(w_2\) is \(\bb^j\)-admissible and \(w_1\) is \(s_{d-1} w_2 \bb^j\)-admissible.
\item For any \(\bi \in G^i\) and \(\bj \in G^j\) such that \({\tt c}_{i,j}=-1\), there exists a unique \(w_{\bi,\bj} \in \mathfrak{S}_d\) such that \(w_{\bi,\bj}\bj = \bi\) and \(w_{\bi,\bj}=w_1s_{d-1}w_2\), where \(w_2\) is \(\bi\)-admissible and \(w_1\) is \(s_{d-1}w_2\bi\)-admissible.
\item If \({\tt C} \neq {\tt A}^{(1)}_1\) and \(\bi \in G^\delta\), then \(s_{d-1}\bi \in G^{i_{d-1}}\).
\end{enumerate}
\end{Lemma}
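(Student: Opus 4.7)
My strategy is to prove part (iii) first by a direct computation on the base words, then derive (i), (ii), (iv), and (viii) as consequences, and finally address the uniqueness/factorization claims (v), (vi), (vii). Since Lemma \ref{Admis} guarantees that admissible transpositions preserve reduced neighbor sequences, and $G^i$ is by definition the admissible orbit of $\bb^i$, for (iii) it suffices to verify the claimed shape of $\rednbr_t(\bb^i)$ for each $t \in [2,d]$ in each of the five affine ${\tt ADE}$ types---a direct inspection against the local structure of the Dynkin diagram $\Gamma$.

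The consequences (i), (ii), (iv) follow cleanly from (iii). For (i), the pattern $(NSN)^a NS$ or $(NSN)^a NNS$ places at least two consecutive $N$'s between every adjacent pair of $S$'s in $\rednbr_t(\bi)$, which is exactly the homogeneity condition at position $t$. For (ii), the $t=2$ case of (iii) forces $\rednbr_2(\bi) = NS$, so $i_1$ is a neighbor of $i_2$; hence $s_1$ is never admissible on any $\bi \in G^i$, and so $i_1$ is an admissible-invariant equal to its value $0$ in $\bb^i$. A symmetric argument for position $d$ together with direct inspection of the last two entries of each $\bb^i$ handles $i_d$ and $i_{d-1}$. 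For (iv), if $s_r$ is non-admissible and $s_r\bi \in G^\delta$ with $r<d-1$, then (i) rules out $i_r=i_{r+1}$; a short calculation shows $\rednbr_{r+1}(s_r\bi)$ reads the same target label $i_r$ as $\rednbr_r(\bi)$ and differs from it only by an extra $N$ inserted before the final $S$, producing a pattern incompatible with (iii) when $r+1<d$.

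For (v), existence is immediate from the orbit definition of $G^i$, and uniqueness rests on the observation that admissible transpositions never swap two equal labels, so the relative order of positions sharing any common label $j \in I$ is an admissible invariant; this datum together with $\bi'$ determines $w_{\bi',\bi}$ uniquely. For (vi), $w_{i,j}$ is constructed explicitly by bringing $\bb^j$ through admissible $w_2$ to a word with $i, j$ in positions $d-1, d$, applying the (non-admissible) swap $s_{d-1}$, and then bringing the result through admissible $w_1$ to $\bb^i$; the uniqueness of the factorization reduces to a variant of (v). Part (vii) is then obtained by composing $w_{i,j}$ with the admissible elements from (v) carrying $\bi, \bj$ to the base words $\bb^i, \bb^j$.

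For (viii), first verify the base case $s_{d-1}\bb^i \in G^{i_{d-1}}$ by exhibiting an explicit admissible path from $s_{d-1}\bb^i$ to $\bb^{i_{d-1}}$---e.g., in type ${\tt A}_\ell^{(1)}$, the label $i$ sitting at position $d-1$ of $s_{d-1}\bb^i$ can be slid leftward through the block $\ell, \ell-1, \ldots, i+2$ via admissible swaps, since none of those labels are neighbors of $i$. For general $\bi = w\bb^i \in G^i$, I would induct on $\ell(w)$, pushing the $s_{d-1}$ past each admissible generator of $w$: those $s_r$ with $r<d-2$ commute with $s_{d-1}$ trivially, while $s_{d-2}$ requires a short three-strand local argument. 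The main obstacle will be the sheer volume of type-by-type combinatorial bookkeeping, particularly for the long and less uniform base words $\bb^i$ in the exceptional types ${\tt E}_6^{(1)}, {\tt E}_7^{(1)}, {\tt E}_8^{(1)}$.
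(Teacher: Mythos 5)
Your reordering of the first four parts — proving (iii) first by direct inspection of the base words $\bb^i$, then deriving (i), (ii), (iv) — is sound, and is essentially the reverse of the paper's chain (which proves (i) directly via a citation to the homogeneity lemma of Kleshchev--Ram, proves (ii) by a hands-on argument about middle positions needing neighbors on both sides, and only then deduces (iii)). One caution: your claim that Lemma \ref{Admis} immediately reduces (iii) to checking $\bb^i$ is a shade too quick. Lemma \ref{Admis} gives $\rednbr_{s_r(t)}(s_r\bi)=\rednbr_t(\bi)$, which preserves the dichotomy in (iii) only if $s_r$ stabilizes $\{2,\dots,d-1\}$ and fixes $d$, i.e.\ only once you already know $s_1$ and $s_{d-1}$ are never admissible along the orbit. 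This is provable by a small induction (the base-word check of (iii) forces $\rednbr_2=NS$ and $\rednbr_d$ ending in $NS$, which makes $s_1,s_{d-1}$ inadmissible, and this propagates), but it should be said rather than elided. Parts (v)--(vii) match the paper's approach modulo the usual amount of combinatorial verification that both you and the paper leave implicit.

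The genuine gap is in (viii). The paper does \emph{not} prove (viii) by pushing $s_{d-1}$ through the admissible word $w$: it instead uses (vii) to write $\bi=w_1 s_{d-1} w_2\bb^j$ with $j=i_{d-1}$, then observes via (iii) that position $d-1$ is the last occurrence of the label $j$ and that admissible words preserve the relative order of equal labels, so $w_1$ must fix positions $d-1$ and $d$; hence $w_1 s_{d-1}=s_{d-1}w_1$ and $s_{d-1}\bi = w_1 w_2\bb^j\in G^j$. Your proposed induction pushing $s_{d-1}$ past each admissible generator breaks on the case $r=d-2$, and it is not a ``short three-strand local argument.'' Concretely: if $s_{d-2}$ is $\bj$-admissible, then by (ii) applied to $s_{d-2}\bj$ the label $j_{d-2}$ is a neighbor of $j_d$, so $s_{d-2}$ is \emph{not} admissible for $s_{d-1}\bj$, and $s_{d-2}s_{d-1}\bj\notin G^\delta$ by (iv). Since $s_{d-1}s_{d-2}\neq s_{d-2}s_{d-1}$, you cannot relate $s_{d-1}(s_{d-2}\bj)$ to $s_{d-2}(s_{d-1}\bj)$; the braid relation $s_{d-1}s_{d-2}s_{d-1}=s_{d-2}s_{d-1}s_{d-2}$ does not help because neither side factors through admissible moves applied to the inductive datum $s_{d-1}\bj\in G^{j_{d-1}}$. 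You need a structurally different argument here, and the cleanest one available is exactly the paper's reduction via the factorization in (vii).
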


\begin{proof}
(i) It is straightforward to check that \(\bb^i\) satisfies the homogeneity condition. Thus by \cite[Lemma 3.3]{KRhomog}, every \(\bi \in G^i\) satisfies this condition. 

(ii) If \(1<r<d\), then \((\bb^i)_r\) has a neighbor somewhere to the left and right in \(\bb^i\), so no \(\bb^i\)-admissible element \(w\) may send \(r\) to \(1\) or \(d\), so \(i_1=(\bb^i)_1 = 0\), and \(i_d = (\bb^i)_d=i\) for every \(\bi \in G^j\). Moreover it cannot be that \(i_{d-1}=i_d\) by (i), and if it were the case that \({\tt c}_{i_d,i_{d-1}}=0\), then we would have \(s_{d-1}\bi \in G^i\), but \((s_{d-1}\bi)_d \neq i\), a contradiction. Thus \(i_{d-1}\) and \(i_d\) are neighbors, and a similar argument proves the same for \(i_1\) and \(i_2\). 

(iii) We have by part (ii) that \(s_1\) and \(s_{d-1}\) are never admissible transpositions for \(\bi \in G^\delta\). Therefore, by Lemma \ref{Admis}, it is enough to check that that statement (iii) holds for the special words \(\bb^i\), which may be readily done. 

(iv) The statement holds for \(r=1\) by part (ii), since \(s_1\) is never \(\bi\)-admissible, and \(i_1=0\) for every \(\bi \in G^\delta\). Let \(1<r<d-1\). If \(s_r\) is not \(\bi\)-admissible, then \({\tt c}_{i_r,i_{r+1}}=-1\) by (i). By part (iii), \(\rednbr_{r+1}(\bi) = (NSN)^aNS\) for some \(a \geq 0\). Then \(\rednbr_r(s_r\bi)=(NSN)^aS\). But then again by part (iii), \(s_r\bi \notin G^\delta\). 

(v) The existence of \(w_{\bi',\bi}\) is guaranteed by the definition of \(G^i = \textup{Con}(\bi)\), and we note that \(\bi\)-admissible elements are in bijection with \(G^i\) since \(\bi\)-admissible elements cannot transpose similar letters. This proves uniqueness.

(vi) In types \({\tt A}^{(1)}_\ell\) and \({\tt E}^{(1)}_\ell\), it is straightforward to verify that if \(j<i \in I'\), then we have \(s_{d-1}\bb^j \in G^i\). Thus there exists \(s_{d-1}\bb^j\)-admissible \(u \in \mathfrak{S}_d\) such that \(u s_{d-1} \bb^j = \bb^i\), so taking \(w_{i,j}:= u s_{d-1}\) satisfies the claim. On the other hand, if \(j>i\), then \(w_{i,j}:=w_{j,i}^{-1}\) also satisfies the claim.
In type \({\tt D}^{(1)}_\ell\), if \(i<j \in I'\), we have \(s_{d-1}\bb^j \in G^i\). Thus there exists \(s_{d-1}\bb^j\)-admissible \(u \in \mathfrak{S}_d\) such that \(u s_{d-1} \bb^j = \bb^i\), so taking \(w_{i,j}:= u s_{d-1}\) satisfies the claim. On the other hand, if \(j<i\), then \(w_{i,j}:=w_{j,i}^{-1}\) must also satisfy the claim.
Uniqueness follows as in the proof of (v), from consideration of the fact that no similar letters are transposed in this product.

(vii) We may take \(w_{\bi,\bj}=w_{\bi,\bb^i}w_{i,j}w_{\bb^j,\bj}\) to show existence. Uniqueness follows as in the proof of (v).

(viii) Let \(\bi \in G^i\). Then \(j:=i_{d-1}\) is a neighbor of \(i=i_d\) by part (ii). By part (vii), \(\bi = w_{\bi, \bb^j} \bb^j = w_1 s_{d-1} w_2 \bb^j\), where \(w_2\) is \(\bb^j\)-admissible and \(w_1\) is \(s_{d-1}w_2 \bb^j\)-admissible. But \((s_{d-1}w_2 \bb_j)_{d-1} = j\) and \(i_{d-1} = j\), so it follows from admissibility of \(w_1\) that \(w_1\) fixes the \((d-1)\)th and \(d\)th positions. Thus we have that \(w_1 s_{d-1} = s_{d-1} w_1\) and \(w_1\) is \(w_2\bb^j\)-admissible. So \(s_{d-1}\bi = w_1 w_2 \bb^j \in G^j\).
\end{proof}

\begin{Lemma}\label{Ldel}
Let \(\kk\) be a field. For each \(i \in I'\), \(\CH L_{\delta,i}= \sum_{\bi \in G^i}\bi\).
\end{Lemma}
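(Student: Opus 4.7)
The plan is to produce, for each $i \in I'$, an irreducible semicuspidal module concentrated in degree zero whose character is $\sum_{\bi \in G^i} \bi$, and then identify it with $L_{\delta,i}$ via the characterization in Lemma~\ref{Kcuspminuscule}.

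First I would apply the homogeneous construction of Lemma~\ref{homogconst} to the distinguished word $\bb^i \in I^\delta$. By Lemma~\ref{wordfacts}(i), every $\bj \in G^i = \textup{Con}(\bb^i)$ is homogeneous --- in particular $\bb^i$ itself is --- so Lemma~\ref{homogconst} produces an $R_\delta$-module $M^i$ with $\CH M^i = \sum_{\bj \in G^i}\bj$. Since $\kk$ is a field, this module is irreducible. Note that $M^i$ is concentrated in a single degree.

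Next, Lemma~\ref{wordfacts}(ii) tells me that every word $\bj \in G^i$ satisfies $j_1 = 0$ and $j_d = i$. Consequently $M^i$ satisfies the characterizing property of Lemma~\ref{Kcuspminuscule}, so $M^i \cong L_{\delta,i}$ up to isomorphism and grading shift. With the standard normalization (which places $L_{\delta,i}$ in degree zero to match the homogeneous construction), this yields the claimed character formula.

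There is no serious obstacle: the combinatorial work of verifying the two hypotheses --- homogeneity of $\bb^i$ (needed to invoke Lemma~\ref{homogconst}) and the boundary-label property $j_1 = 0$, $j_d = i$ on $G^i$ (needed to invoke Lemma~\ref{Kcuspminuscule}) --- was already packaged into parts (i) and (ii) of Lemma~\ref{wordfacts}. The only minor point is pinning down the grading convention, which is standard.
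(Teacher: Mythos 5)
Your argument is exactly the paper's proof: invoke Lemma \ref{homogconst} together with Lemma \ref{wordfacts}(i) to produce an irreducible homogeneous module with character $\sum_{\bi \in G^i}\bi$, then identify it with $L_{\delta,i}$ via Lemma \ref{Kcuspminuscule} and Lemma \ref{wordfacts}(ii). No gaps; the grading remark is fine since the statement concerns the character up to the stated normalization.
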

\begin{proof}
By Lemmas  \ref{homogconst} and \ref{wordfacts}(i), there exists a homogeneous irreducible \(R_\delta\)-module with character \(\sum_{\bi \in G^i}\bi\). By Lemmas \ref{Kcuspminuscule} and \ref{wordfacts}(ii), this module must be \(L_{\delta,i}\).
\end{proof}

\begin{Corollary} \label{CSCW} %{\rm \cite{}}%{\bf ()}
We have that \(G^\delta\) is a complete set of semicuspidal words in \(I^\delta\), and so \(C_\delta = R_\delta/R_\delta 1_{\noncusp} R_\delta\), where \(1_{\noncusp}=\sum_{\bi \in I^\delta \backslash G^\delta} 1_{\bi}\). 
\end{Corollary}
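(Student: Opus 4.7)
The plan is to establish the set equality between $G^\delta$ and the set of semicuspidal words in $I^\delta$, from which the description of $C_\delta$ is just unwinding definitions. Since semicuspidality of a word is defined via semicuspidal modules over a field, I would first argue over a field $\kk$ and then note that the word-theoretic set $G^\delta$ is defined purely combinatorially and so the conclusion about $C_\delta$ transfers back to an arbitrary commutative base ring.

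First I would show the inclusion $G^\delta \subseteq \{\text{semicuspidal words}\}$. This is essentially a direct invocation of Lemma~\ref{Ldel}: for each $i \in I'$, the irreducible module $L_{\delta,i}$ is semicuspidal by construction, and its formal character is $\sum_{\bi \in G^i}\bi$. Thus every $\bi \in G^i$ satisfies $1_\bi L_{\delta,i} \neq 0$, so $\bi$ appears in a semicuspidal module and is semicuspidal. Taking the union over $i \in I'$ gives the inclusion.

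For the reverse inclusion, I would use the fact recalled in \S\ref{SSSemiCusp} that $\{L(\ula)\mid \ula\in\Par_1\}=\{L_{\delta,i}\mid i\in I'\}$ is a complete irredundant system of irreducible $C_\delta$-modules up to isomorphism and grading shift. Let $\bi$ be a semicuspidal word, and let $V$ be a semicuspidal $R_\delta$-module with $1_\bi V\neq 0$. Since $V$ is a finitely generated $C_\delta$-module, it has a composition series whose factors are (grading shifts of) various $L_{\delta,i}$. The functor $1_\bi(-)$ is exact, so $1_\bi V\neq 0$ forces $1_\bi L_{\delta,i}\neq 0$ for some $i \in I'$, and then Lemma~\ref{Ldel} gives $\bi \in G^i \subseteq G^\delta$.

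Combining the two inclusions, $G^\delta$ is precisely the set of semicuspidal words in $I^\delta$, so $I^\delta\setminus G^\delta$ is the set of non-semicuspidal words and the displayed $1_{\noncusp}=\sum_{\bi \in I^\delta\setminus G^\delta}1_\bi$ agrees with the $1_{\noncusp}$ of \S\ref{SSSemiCusp}. The identification $C_\delta=R_\delta/R_\delta 1_{\noncusp}R_\delta$ is then just the definition (\ref{ESCA}), and since the right-hand side makes sense over any commutative Noetherian $\kk$, this serves as our working definition of $C_\delta$ in the general setting. I do not anticipate any serious obstacle; the only point requiring care is confirming that the composition-series argument in the second step really does reduce the general semicuspidal case to the irreducibles $L_{\delta,i}$, which is handled by exactness of $1_\bi(-)$ on $\mod{C_\delta}$.
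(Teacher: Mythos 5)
Your proposal is correct and takes essentially the same approach the paper intends: the paper states this as a Corollary to Lemma~\ref{Ldel} without further argument, and both inclusions are exactly the direct consequences you describe. The forward inclusion is immediate from $\CH L_{\delta,i}=\sum_{\bi\in G^i}\bi$ together with $L_{\delta,i}$ being semicuspidal; the reverse inclusion reduces to the irreducibles via exactness of $1_{\bi}(-)$.

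One small repair is worth making in the reverse direction. You invoke a composition series for a finitely generated semicuspidal $V$, but such $V$ need not have finite length (e.g.\ the $\De(\ula)$ are infinite-dimensional). The cleaner version of the same idea: pick $0\neq v\in 1_{\bi}V$ and let $W=C_\delta v$, a nonzero cyclic submodule. Then $W$ has a simple quotient $L$ (a grading shift of some $L_{\delta,i}$, by the classification recalled in \S\ref{SSSemiCusp}), and the image of $v$ in $L$ lies in $1_{\bi}L$ and generates $L$, hence is nonzero; so $1_{\bi}L\neq 0$ and $\bi\in G^i$. This avoids any appeal to a composition series and works for arbitrary finitely generated graded modules. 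Your remark about transferring to an arbitrary ground ring via the combinatorial definition of $G^\delta$ is the right point of view and matches how the paper then uses $C_\delta$ over a general commutative $\kk$.
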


%Therefore \(G^\delta\) is a complete set of semicuspidal words in \(I^\delta\), and we have \(C_\delta = R_\delta/R_\delta 1_{\noncusp} R_\delta\), where \(1_{\noncusp}=\sum_{\bi \in I^\delta \backslash G^\delta} 1_{\bi}\). 

\subsection{\boldmath A spanning set for $C_\delta$}\label{spansec}

For each \(w \in \mathfrak{S}_d\), we choose a distinguished reduced expression \(w=s_{r_1} \cdots s_{r_t}\). Based on this set of choices, we define, for every \(w \in \mathfrak{S}_n\), an element \(\psi_w = \psi_{r_1} \cdots \psi_{r_t} \in R_\delta\). We warn the reader that \(\psi_w\) is dependent on the choice of distinguished reduced expression for \(w\), as \(\psi_r\)'s do not in general satisfy braid relations in \(R_\delta\), see (\ref{KLRbraid}). We will see however, that the images of the elements $\psi_w$ in \(C_\delta\) are well defined. 

Recalling the elements of \(\mathfrak{S}_d\) defined in Lemma \ref{wordfacts}(v)--(vii), we will write  \(\psi_{\bi,\bj}\) (resp. \(\psi_{i,j}\)) for \(\psi_{w_{\bi,\bj}}\) (resp. \(\psi_{w_{i,j}}\)).

\begin{Lemma}\label{Cdelfacts} The algebra \(C_\delta\) is non-negatively graded. Moreover, in $C_\delta$ we have:
\begin{enumerate}
\item The elements \(\psi_w\) are independent of reduced expression for \(w\) for all $w\in\Si_d$.
\item \(\psi_r y_t= y_{s_r(t)}\psi_r \), for all admissible \(r,t\).
\end{enumerate}
\end{Lemma}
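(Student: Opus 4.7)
The plan is to obtain all three assertions as consequences of two simple observations: (a) every semicuspidal word $\bi\in G^\delta$ is homogeneous by Lemma~\ref{wordfacts}(i), which combined with the very short distances appearing in the KLR correction terms forces those corrections to vanish; and (b) any element that factors through a non-cuspidal idempotent $1_\bj$ already lies in the defining ideal $R_\delta 1_\noncusp R_\delta$, so it is zero in $C_\delta$.

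For non-negativity of the grading, I would note that $C_\delta$ is generated by the images of the $1_\bi$ (for $\bi\in G^\delta$), the $y_r$'s, and the $\psi_r$'s. The $y_r$'s have degree $2$. For $\psi_r 1_\bi$ with $\bi\in G^\delta$, I split into two cases: when $r<d-1$, Lemma~\ref{wordfacts}(iv) says $s_r\bi\in G^\delta$ only if $s_r$ is $\bi$-admissible, in which case ${\tt c}_{i_r,i_{r+1}}=0$ and $\deg(\psi_r 1_\bi)=0$, while otherwise $\psi_r 1_\bi=1_{s_r\bi}\psi_r 1_\bi=0$ in $C_\delta$; when $r=d-1$, Lemma~\ref{wordfacts}(ii),(viii) ensure $s_{d-1}\bi\in G^\delta$ and that $i_{d-1},i_d$ are neighbors, so $\deg(\psi_{d-1}1_\bi)=1$. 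Thus every generator of $C_\delta$ has non-negative degree, establishing the grading claim.

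For (i), since any two reduced expressions for $w\in\mathfrak{S}_d$ are related by a sequence of commutation and braid moves and the commutation relation~\eqref{KLRpsi} is exact, it suffices to show that the braid-relation correction in~\eqref{KLRbraid} vanishes in $C_\delta$. Writing a braid move inside $\psi_w1_\bi$ as $\psi_u\cdot(\psi_{r+1}\psi_r\psi_{r+1}-\psi_r\psi_{r+1}\psi_r)\cdot\psi_v 1_\bi$, the correction is supported on the intermediate idempotent $1_\bj$ with $\bj:=v\bi$. If $\bj\notin G^\delta$, the whole expression lies in $R_\delta 1_\noncusp R_\delta$ by (b). If $\bj\in G^\delta$, then $\bj$ is homogeneous, but the homogeneity condition requires at least two distinct positions strictly between any pair of equal letters, which is impossible when those letters sit at distance $2$; hence $j_r\neq j_{r+2}$ and the factor $\delta_{j_r,j_{r+2}}$ in~\eqref{KLRbraid} kills the correction already in $R_\delta$. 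For (ii), the relation~\eqref{KLRypsi} gives $(y_t\psi_r-\psi_r y_{s_r(t)})1_\bi=\delta_{i_r,i_{r+1}}(\delta_{t,r+1}-\delta_{t,r})1_\bi$; the same dichotomy applies, where for $\bi\in G^\delta$ homogeneity now forbids $i_r=i_{r+1}$ at distance $1$, while for $\bi\notin G^\delta$ observation (b) takes over. The main obstacle, such as it is, is recognizing that Lemma~\ref{wordfacts}(i) is strong enough to rule out equal letters at distance $1$ and $2$ in any cuspidal word, which is precisely what the two KLR correction terms require to vanish.
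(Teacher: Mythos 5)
Your proof is correct and follows essentially the same route as the paper's: Corollary~\ref{CSCW} together with homogeneity of semicuspidal words (Lemma~\ref{wordfacts}(i)) rules out equal letters at distance one and two, which is precisely what annihilates the correction terms in relations~\eqref{KLRypsi} and~\eqref{KLRbraid} and excludes negative-degree generators. Your non-negativity argument detours through Lemma~\ref{wordfacts}(ii),(iv),(viii) with a case split on $r$, whereas the paper just observes that $\deg(\psi_r 1_\bi)<0$ forces $i_r=i_{r+1}$ and hence $1_\bi=0$ in $C_\delta$, but the conclusion is the same.
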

\begin{proof}
All of these follow from Corollary~\ref{CSCW} and Lemma \ref{wordfacts}(i). 
We have \(1_{\bi}=0\) in \(C_\delta\) if $i_r=i_{r+1}$ for some $1\leq r<d$. So there are no generators \(\psi_r1_{\bj}\) in negative degrees, hence (i). Part (iii) also follows from that observation, together with relation (\ref{KLRypsi}). Finally, semicuspidal words have no subwords of the form \(iji\), so, by relation (\ref{KLRbraid}), the images of \(\psi\)'s satisfy braid relations in \(C_\de\), hence (ii). %Finally, in \(R_\delta\),  \(\psi_r y_t1_{\bi}= y_{s_r(t)}\psi_r1_{\bi}\) unless \(i_r=i_{r+1}\), but in this case \(1_{\bi}=0\) in \(C_\delta\) already by word considerations.
\end{proof}

\begin{Lemma}\label{Cy}
The following facts hold in \(C_\delta\):
\begin{enumerate}
\item \(y_1 = \cdots = y_{d-1}\).
\item \((y_1-y_d)^2 = 0\).
\item \(y_1 \in Z(C_\delta)\).
 \end{enumerate}
\end{Lemma}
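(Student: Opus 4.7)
The plan is to prove (i), (iii), and (ii) in that order. The driving observation is the following. For $\bi\in G^\delta$, Lemma \ref{wordfacts}(ii) gives $i_1=0$, so $s_1\bi$ starts with a nonzero letter and hence $s_1\bi\notin G^\delta$; therefore $1_{s_1\bi}=0$ in $C_\delta$ and $\psi_1 1_\bi=0$. More generally, by Lemma \ref{wordfacts}(iv), whenever $r<d-1$ and $c_{i_r,i_{r+1}}=-1$, the word $s_r\bi$ falls outside $G^\delta$, so $\psi_r 1_\bi=0$ in $C_\delta$. Combined with the KLR quadratic relation $\psi_r^2 1_\bi=\eps_{i_r,i_{r+1}}(y_r-y_{r+1})1_\bi$, this forces $y_r 1_\bi=y_{r+1}1_\bi$ in $C_\delta$.

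For (i), I would induct on $r\in\{1,\ldots,d-1\}$ showing $y_r 1_\bi=y_1 1_\bi$ for all $\bi\in G^\delta$. The case $r=1$ is trivial, and the case $r=2$ is handled by the preliminary observation at index $1$, since $c_{i_1,i_2}=-1$ by Lemma \ref{wordfacts}(ii). For the inductive step with $r\geq 3$, fix $\bi\in G^\delta$ and split on $c:=c_{i_{r-1},i_r}$. If $c=-1$, the preliminary observation at index $r-1$ gives $y_{r-1}1_\bi=y_r 1_\bi$, and the inductive hypothesis $y_{r-1}1_\bi=y_1 1_\bi$ finishes the case. If $c=0$, then $s_{r-1}\bi\in G^\delta$ and $\psi_{r-1}^2 1_\bi=1_\bi$; apply the inductive hypothesis to $s_{r-1}\bi$ to get $y_{r-1}1_{s_{r-1}\bi}=y_1 1_{s_{r-1}\bi}$, then right-multiply by $\psi_{r-1}$ and use the KLR commutation $y_t\psi_{r-1}=\psi_{r-1}y_{s_{r-1}(t)}$ (the Kronecker correction vanishes by semicuspidality) together with $s_{r-1}(1)=1$ (valid since $r-1\geq 2$) and $s_{r-1}(r-1)=r$ to obtain $\psi_{r-1}(y_r-y_1)1_\bi=0$; left-multiplying by $\psi_{r-1}$ and using that $\psi_{r-1}^2$ commutes with both $y_r$ and $y_1$ concludes $(y_r-y_1)1_\bi=0$.

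For (iii), note that $y_1$ commutes trivially with all idempotents and all $y_t$, and with $\psi_r$ for $r\geq 2$ because $s_r$ fixes $1$, so the KLR relation yields $y_1\psi_r=\psi_r y_1$ directly. The remaining case $\psi_1$ follows from the KLR identity $y_1\psi_1 1_\bi=\psi_1 y_2 1_\bi$ in $C_\delta$ (semicuspidality kills the correction) combined with $y_2=y_1$ from (i). For (ii), the key step is to establish $(y_1-y_d)\psi_{d-1}=0$ in $C_\delta$: on each $1_\bi$ with $\bi\in G^\delta$, (iii) gives $y_1\psi_{d-1}1_\bi=\psi_{d-1}y_1 1_\bi$, and KLR combined with semicuspidality gives $y_d\psi_{d-1}1_\bi=\psi_{d-1}y_{d-1}1_\bi$, so $(y_1-y_d)\psi_{d-1}1_\bi=\psi_{d-1}(y_1-y_{d-1})1_\bi=0$ by (i). Since $c_{i_{d-1},i_d}=-1$ by Lemma \ref{wordfacts}(ii), the quadratic relation gives $(y_1-y_d)1_\bi=\eps_{i_{d-1},i_d}\psi_{d-1}^2 1_\bi$, whence $(y_1-y_d)^2 1_\bi=\eps_{i_{d-1},i_d}\bigl((y_1-y_d)\psi_{d-1}\bigr)\cdot\psi_{d-1}1_\bi=0$.

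The main subtlety is the Case B argument in (i), which requires conjugating by $\psi_{r-1}$ and carefully applying the KLR commutation on both sides. Type ${\tt A}_1^{(1)}$ requires separate treatment because the polynomial $Q_{01}(u,v)=(u-v)(v-u)$ is quadratic: there (i) is vacuous since $d=2$, while (iii) reduces to $\psi_1=0$ in $C_\delta$ (the unique semicuspidal word being $01$), and (ii) follows directly from $\psi_1^2 1_{01}=-(y_1-y_2)^2 1_{01}=0$.
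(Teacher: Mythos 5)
Your proof is correct, and it takes a genuinely different route from the paper. The paper works diagrammatically for types other than ${\tt A}_1^{(1)}$: it starts from a diagram that is zero (a strand pulled to the left of the leading $0$), then uses the precise reduced neighbor-sequence structure from Lemma \ref{wordfacts}(iii) — $\rednbr_t(\bi)=(NSN)^aNS$, resp.\ $(NSN)^aNNS$ — to simplify the zero diagram into $(y_s-y_r)1_\bi$ and $(y_t-y_d)(y_s-y_d)1_\bi$. Your approach is purely algebraic and avoids both the diagrammatics and Lemma \ref{wordfacts}(iii): the driving observation that $\psi_r 1_\bi=0$ in $C_\delta$ whenever $r<d-1$ and $s_r$ is not $\bi$-admissible (Lemma \ref{wordfacts}(iv)), combined with the quadratic relation, immediately gives $(y_r-y_{r+1})1_\bi=0$ at non-admissible positions; you then propagate across admissible positions by conjugating by $\psi_{r-1}$ and using $\psi_{r-1}^2 1_\bi=1_\bi$. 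Your Case B conjugation step is delicate (you must commute $y_1$ and $y_{r-1}$ past $\psi_{r-1}$ with the idempotents in the right places, and you correctly note the correction term vanishes since adjacent letters differ) but it checks out, including the boundary requirement $r-1<d-1$. You also reorder (iii) before (ii), using centrality of $y_1$ to establish $(y_1-y_d)\psi_{d-1}=0$ and hence $(y_1-y_d)^2 1_\bi = \eps_{i_{d-1},i_d}\bigl((y_1-y_d)\psi_{d-1}\bigr)\psi_{d-1}1_\bi=0$; the paper instead gets (ii) directly from the same zero-diagram computation as (i), without needing (iii). Both are valid; your version has the advantage of avoiding the diagrammatic calculus entirely and relying only on parts (ii), (iv), (viii) of Lemma \ref{wordfacts}, at the small cost of an extra step in (ii). Your ${\tt A}_1^{(1)}$ treatment matches the paper's. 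For (iii) in general types, you could have shortcut to $\psi_1 1_\bi=0$ for all $\bi\in G^\delta$ (since $s_1\bi\notin G^\delta$) rather than invoking $y_2=y_1$, but your argument is also correct.
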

\begin{proof}
Assume first that \({\tt C} = {\tt A}^{(1)}_1\). Then \(d=2\), and so claim (i) is trivial. We have \(G^\delta = \{01\}\) and \(1_{10} =0\) in \(C_\delta\), hence
$ %\begin{align*}
0 = \psi_1 1_{10} \psi_1 = \psi_1^21_{01}  = \pm(y_1-y_2)^21_{01} = \pm(y_1-y_2)^2,
$ %\end{align*}
proving claim (ii). For (iii), it follows from KLR relations that \(y_1\) commutes with every generator of \(R_\delta\) except \(\psi_1\). However, in $C_\de$ we have \(\psi_1 y_1=\psi_1 y_1 1_{01} = 1_{10} \psi_1y_1=0\), and similarly $y_1\psi_1 =0$.

Now let \({\tt C} \neq {\tt A}^{(1)}_1\). We will use the diagrammatic presentation for \(R_\delta\), see \S\ref{KLRdiag}. We prove (i) first. Let \(\bi=0i_2i_3 \cdots i_d \in G^\delta\). Let \(1 < r < d\). The following diagram is  zero in \(C_\delta\) since, by Corollary~\ref{CSCW} and Lemma \ref{wordfacts}(ii), all semicuspidal words start with 0, and \(i_r \neq 0\):
\begin{align*}
\begin{braid}\tikzset{baseline=0mm}
\draw(1,2) node[above]{$0$}--(1,-2);
\draw(2,2) node[above]{$i_2$}--(2,-2);
\draw(3,2) node[above]{$i_3$}--(3,-2);
\draw(4,2) node[above]{$\cdots$}--(4,-2);
\draw(5,2) node[above]{$i_{r\hspace{-0.5mm}-\hspace{-0.5mm}1}$}--(5,-2);
\draw(6.5,2) node[above]{$i_r$}--(0,0)--(6.5,-2);
\draw(8,2) node[above]{$i_{r+1}$}--(8,-2);
\draw(9,2) node[above]{$\cdots$}--(9,-2);
\draw(10,2) node[above]{$i_{d}$}--(10,-2);
\end{braid}.
\end{align*}
We will simplify this diagram using relations. Note that we may ignore strands to the right of \(i_r\) and strands whose colors do not neighbor \(i_r\). Omitting such strands, and recalling from Lemma \ref{wordfacts}(iii) that \(\rednbr_k(\bi) = (NSN)^aNS\) for some \(a \geq0\), we have, using the relations in $R_\de$:
\begin{align*}
\begin{braid}\tikzset{baseline=0mm}
\draw(1,2) node[above]{$N$}--(1,-2);
\draw(2,2) node[above]{$S$}--(2,-2);
\draw(3,2) node[above]{$N$}--(3,-2);
\draw(4,2) node[above]{$N$}--(4,-2);
\draw(5,2) node[above]{$S$}--(5,-2);
\draw(6,2) node[above]{$N$}--(6,-2);
\draw(7,2) node[above]{$\cdots$};
\draw(8,2) node[above]{$N$}--(8,-2);
\draw(9,2) node[above]{$S$}--(9,-2);
\draw(10,2) node[above]{$N$}--(10,-2);
\draw(11,2) node[above]{$N$}--(11,-2);
\draw(12,2) node[above]{$S$}--(0,0)--(12,-2);
\end{braid}
&=
\pm
\begin{braid}\tikzset{baseline=0mm}
\draw(1,2) node[above]{$N$}--(1,-2);
\draw(2,2) node[above]{$S$}--(2,-2);
\draw(3,2) node[above]{$N$}--(3,-2);
\draw(4,2) node[above]{$N$}--(4,-2);
\draw(5,2) node[above]{$S$}--(5,-2);
\draw(6,2) node[above]{$N$}--(6,-2);
\draw(7,2) node[above]{$\cdots$};
\draw(8,2) node[above]{$N$}--(8,-2);
\draw(9,2) node[above]{$S$}--(9,-2);
\draw(10,2) node[above]{$N$}--(10,-2);
\draw(11,2) node[above]{$N$}--(11,-2);
\draw(12,2) node[above]{$S$}--(1.3,0)--(12,-2);
\blackdot(1.5,0);
\end{braid}
\mp
\begin{braid}\tikzset{baseline=0mm}
\draw(1,2) node[above]{$N$}--(1,-2);
\draw(2,2) node[above]{$S$}--(2,-2);
\draw(3,2) node[above]{$N$}--(3,-2);
\draw(4,2) node[above]{$N$}--(4,-2);
\draw(5,2) node[above]{$S$}--(5,-2);
\draw(6,2) node[above]{$N$}--(6,-2);
\draw(7,2) node[above]{$\cdots$};
\draw(8,2) node[above]{$N$}--(8,-2);
\draw(9,2) node[above]{$S$}--(9,-2);
\draw(10,2) node[above]{$N$}--(10,-2);
\draw(11,2) node[above]{$N$}--(11,-2);
\draw(12,2) node[above]{$S$}--(1.3,0)--(12,-2);
\blackdot(1,0);
\end{braid}
%\label{quadstep}
\\
&=
\pm
\begin{braid}\tikzset{baseline=0mm}
\draw(1,2) node[above]{$N$}--(1,-2);
\draw(2,2) node[above]{$S$}--(2,0.2)--(2.2,-0.3)--(12,-2);
\draw(3,2) node[above]{$N$}--(3,-2);
\draw(4,2) node[above]{$N$}--(4,-2);
\draw(5,2) node[above]{$S$}--(5,-2);
\draw(6,2) node[above]{$N$}--(6,-2);
\draw(7,2) node[above]{$\cdots$};
\draw(8,2) node[above]{$N$}--(8,-2);
\draw(9,2) node[above]{$S$}--(9,-2);
\draw(10,2) node[above]{$N$}--(10,-2);
\draw(11,2) node[above]{$N$}--(11,-2);
\draw(12,2) node[above]{$S$}--(2,-.1)--(2,-2);
\end{braid}
%\label{openstep}
\\
&=
\pm
\begin{braid}\tikzset{baseline=0mm}
\draw(1,2) node[above]{$N$}--(1,-2);
\draw(2,2) node[above]{$S$}--(2.5,0.2)--(12,-2);
\draw(3,2) node[above]{$N$}--(2,0)--(3,-2);
\draw(4,2) node[above]{$N$}--(4,-2);
\draw(5,2) node[above]{$S$}--(5,-2);
\draw(6,2) node[above]{$N$}--(6,-2);
\draw(7,2) node[above]{$\cdots$};
\draw(8,2) node[above]{$N$}--(8,-2);
\draw(9,2) node[above]{$S$}--(9,-2);
\draw(10,2) node[above]{$N$}--(10,-2);
\draw(11,2) node[above]{$N$}--(11,-2);
\draw(12,2) node[above]{$S$}--(2.5,-.1)--(2,-2);
\end{braid}
%\label{extrastep}
\pm
\begin{braid}\tikzset{baseline=0mm}
\draw(1,2) node[above]{$N$}--(1,-2);
\draw(2,2) node[above]{$S$}--(2,-2);
\draw(3,2) node[above]{$N$}--(3,-2);
\draw(4,2) node[above]{$N$}--(4,-2);
\draw(5,2) node[above]{$S$}--(5,-2);
\draw(6,2) node[above]{$N$}--(6,-2);
\draw(7,2) node[above]{$\cdots$};
\draw(8,2) node[above]{$N$}--(8,-2);
\draw(9,2) node[above]{$S$}--(9,-2);
\draw(10,2) node[above]{$N$}--(10,-2);
\draw(11,2) node[above]{$N$}--(11,-2);
\draw(12,2) node[above]{$S$}--(3.2,0)--(12,-2);
\end{braid}.
%\label{finalstep}
\end{align*}

The first term in the last line involves an \((S,S)\)-crossing and hence is zero in \(C_\de\). We may continue on in this fashion, moving the \(S\) strand past \(NSN\)-triples, until we arrive at
\begin{align*}
\pm
\begin{braid}\tikzset{baseline=0mm}
\draw(1,2) node[above]{$N$}--(1,-2);
\draw(2,2) node[above]{$S$}--(2,-2);
\draw(3,2) node[above]{$N$}--(3,-2);
\draw(4,2) node[above]{$N$}--(4,-2);
\draw(5,2) node[above]{$S$}--(5,-2);
\draw(6,2) node[above]{$N$}--(6,-2);
\draw(7,2) node[above]{$\cdots$};
\draw(8,2) node[above]{$N$}--(8,-2);
\draw(9,2) node[above]{$S$}--(9,-2);
\draw(10,2) node[above]{$N$}--(10,-2);
\draw(11,2) node[above]{$N$}--(11,-2);
\draw(12,2) node[above]{$S$}--(10.5,0)--(12,-2);
\end{braid}.
\end{align*}
The \((N,S)\) crossing opens, giving \(\pm(y_s-y_r)1_{\bi}\), for some \(s<r\). Recalling that the initial diagram was zero, we have \(y_s1_{\bi}=y_r1_{\bi}\). Applying induction on \(r\), for every semicuspidal word \(\bi\), it follows that \(y_1 = \cdots = y_{d-1}\) in \(C_\delta\).

Now we prove (ii). Let \(\bi=0i_2i_3 \cdots i_d \in G^\delta\). Again, this diagram is zero in \(C_\delta\):
\begin{align*}
\begin{braid}\tikzset{baseline=0mm}
\draw(1,2) node[above]{$0$}--(1,-2);
\draw(2,2) node[above]{$i_2$}--(2,-2);
\draw(3,2) node[above]{$i_3$}--(3,-2);
\draw(4,2) node[above]{$\cdots$}--(4,-2);
\draw(5,2) node[above]{$i_{d\hspace{-.5mm}-\hspace{-.5mm}1}$}--(5,-2);
\draw(6.5,2) node[above]{$i_{d}$}--(0,0)--(6.5,-2);
\end{braid}
\end{align*}
As in the proof of (i), we omit non-neighbors of \(i_d\), and use the fact that \(\rednbr_d(\bi)=(NSN)^aNNS\) from Lemma \ref{wordfacts}(iii) to write
\begin{align*}
\begin{braid}\tikzset{baseline=0mm}
\draw(1,2) node[above]{$0$}--(1,-2);
\draw(2,2) node[above]{$i_2$}--(2,-2);
\draw(3,2) node[above]{$i_3$}--(3,-2);
\draw(4,2) node[above]{$\cdots$}--(4,-2);
\draw(5,2) node[above]{$i_{d\hspace{-.5mm}-\hspace{-.5mm}1}$}--(5,-2);
\draw(6.5,2) node[above]{$i_{d}$}--(0,0)--(6.5,-2);
\end{braid}
&=
\begin{braid}\tikzset{baseline=0mm}
\draw(1,2) node[above]{$N$}--(1,-2);
\draw(2,2) node[above]{$S$}--(2,-2);
\draw(3,2) node[above]{$N$}--(3,-2);
\draw(4,2) node[above]{$N$}--(4,-2);
\draw(5,2) node[above]{$S$}--(5,-2);
\draw(6,2) node[above]{$N$}--(6,-2);
\draw(7,2) node[above]{$\cdots$};
\draw(8,2) node[above]{$N$}--(8,-2);
\draw(9,2) node[above]{$S$}--(9,-2);
\draw(10,2) node[above]{$N$}--(10,-2);
\draw(11,2) node[above]{$N$}--(11,-2);
\draw(12,2) node[above]{$N$}--(12,-2);
\draw(13,2) node[above]{$S$}--(0,0)--(13,-2);
\end{braid}.
\end{align*}
We then move the \(S\)-strand past \((NSN)\)-strands as in the first part, to arrive at
\begin{align*}
\begin{braid}\tikzset{baseline=0mm}
\draw(1,2) node[above]{$N$}--(1,-2);
\draw(2,2) node[above]{$S$}--(2,-2);
\draw(3,2) node[above]{$N$}--(3,-2);
\draw(4,2) node[above]{$N$}--(4,-2);
\draw(5,2) node[above]{$S$}--(5,-2);
\draw(6,2) node[above]{$N$}--(6,-2);
\draw(7,2) node[above]{$\cdots$};
\draw(8,2) node[above]{$N$}--(8,-2);
\draw(9,2) node[above]{$S$}--(9,-2);
\draw(10,2) node[above]{$N$}--(10,-2);
\draw(11,2) node[above]{$N$}--(11,-2);
\draw(12,2) node[above]{$N$}--(12,-2);
\draw(13,2) node[above]{$S$}--(10.5,0)--(13,-2);
\end{braid}.
\end{align*}
Applying the quadratic relation twice yields \(\pm(y_t-y_d)(y_s-y_d)1_{\bi}\), for some \(t<s<d\). But  \(y_t=y_s=y_1\) by (i), so we have \((y_1-y_d)^21_{\bi}=0\) for all semicuspidal words \(\bi\), which implies that \((y_1-y_d)^2=0\) in $C_\delta$.
\end{proof}

\begin{Lemma}\label{Cpsi} Let \(u \in \mathfrak{S}_d\) and $\bi=i_1\cdots i_d\in I^\delta$. We have \(\psi_u 1_{\bi}=0\) in \(C_\delta\) unless:
\begin{enumerate}
\item \(\bi \in G^\delta\), \(u\bi \in G^{i_d}\), and \(u=w_{u\bi,\bi}\), in which case \(\deg(\psi_u1_{\bi})=0\), or;
\item \(\bi \in G^\delta\), \(u\bi \in G^j\) for some \(j \in I'\) such that \({\tt c}_{j,i_d}= -1\), and \(u=w_{u\bi,\bi}\), in which case \(\deg(\psi_u1_{\bi})=1\).
\end{enumerate} 
\end{Lemma}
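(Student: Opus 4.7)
The plan is to analyze $\psi_u 1_\bi$ along a reduced expression for $u$, using the fact that non-semicuspidal idempotents vanish in $C_\delta$ (Corollary~\ref{CSCW}) to pin down $u$. First I would reduce to the case $\bi,u\bi\in G^\delta$: if $\bi\notin G^\delta$ then $1_\bi=0$ in $C_\delta$; and since $\psi_u 1_\bi = 1_{u\bi}\psi_u 1_\bi$, the same reasoning disposes of $u\bi\notin G^\delta$. So from now on $\bi\in G^{i_d}$ and $u\bi\in G^j$ for some $j\in I'$.

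Next I would fix any reduced expression $u=s_{r_1}\cdots s_{r_t}$; by Lemma~\ref{Cdelfacts}(i) the resulting $\psi_u$ is well-defined in $C_\delta$ independently of this choice. Setting $\bi^{(k)}:=s_{r_k}\cdots s_{r_t}\bi$ and writing
\[
\psi_u 1_\bi \;=\; \psi_{r_1}\,1_{\bi^{(2)}}\,\psi_{r_2}\,1_{\bi^{(3)}}\cdots 1_{\bi^{(t)}}\,\psi_{r_t}\,1_\bi,
\]
nonvanishing of $\psi_u 1_\bi$ in $C_\delta$ forces every intermediate $\bi^{(k)}\in G^\delta$. By Lemma~\ref{wordfacts}(iv) and (viii), each $s_{r_k}$ must then be either $\bi^{(k+1)}$-admissible or equal to $s_{d-1}$. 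Letting $N$ count the $s_{d-1}$ factors, admissible factors contribute $0$ to $\deg(\psi_u 1_\bi)$ and each $s_{d-1}$ contributes $1$ (since $i_{d-1}$ and $i_d$ are neighbors by Lemma~\ref{wordfacts}(ii)), so $\deg(\psi_u 1_\bi)=N$.

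If $N=0$ then $u$ is $\bi$-admissible and $u\bi\in G^{i_d}$, and Lemma~\ref{wordfacts}(v) identifies $u=w_{u\bi,\bi}$, yielding case (i) with degree $0$. If $N=1$ then $u$ factors as an admissible element, the lone $s_{d-1}$, and another admissible element; Lemma~\ref{wordfacts}(vii) then identifies $u=w_{u\bi,\bi}$ with $u\bi\in G^j$ and ${\tt c}_{j,i_d}=-1$, yielding case (ii) with degree $1$.

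The main obstacle is ruling out $N\geq 2$. My plan here is to exploit reducedness: between two consecutive $s_{d-1}$ factors of the expression, the admissible ``bridge'' $w'$ cannot commute past $s_{d-1}$, for then the two $s_{d-1}$'s would collide and break reducedness; so $w'$ must involve $s_{d-2}$. Using the KLR braid relation \eqref{KLRbraid} --- which by homogeneity (Lemma~\ref{wordfacts}(i), giving $i_{d-2}\neq i_d$ for any $\bi\in G^\delta$) carries no $y$-correction term --- together with commutativity of $s_{d-1}$ past $s_r$ for $r<d-2$, I would rewrite such a subword $s_{d-1} w' s_{d-1}$ so as to either expose a non-semicuspidal intermediate word (yielding $0$ in $C_\delta$) or produce a shorter expression contradicting reducedness of $u$. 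Carrying this out, presumably by induction on $N$, closes the argument.
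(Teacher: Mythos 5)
The proof attempt deviates from the paper in a way that leaves a genuine gap.

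Your setup through the $N=0$ and $N=1$ cases is fine, and those cases reach the same conclusions as the paper (relying on the same Lemmas \ref{wordfacts}(v), (vii), (viii) and \ref{Cdelfacts}). But the way you organize the argument — fix a reduced expression of $u$ and count the occurrences $N$ of $s_{d-1}$ — imports a complication the paper avoids entirely. The quantity $N$ is \emph{not} intrinsic to $u$: for example $s_{d-1}s_{d-2}s_{d-1}=s_{d-2}s_{d-1}s_{d-2}$, so the same element has reduced expressions with $N=2$ and with $N=1$. Rather than ``ruling out $N\geq 2$'', what is actually true is that every $u\in\Si_d$ admits a \emph{canonical} reduced expression with $N\leq 1$: namely the parabolic coset factorization $u=w'w''$ with $w''\in\Si_{d-1}$ and $w'$ a minimal-length left coset representative, which forces $w'=\mathrm{id}$ or $w'=s_r s_{r+1}\cdots s_{d-1}$. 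That one observation makes your $N\geq 2$ case vanish without any braid manipulations, and it is precisely how the paper proceeds.

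The $N\geq 2$ sketch is where the gap lies. You observe that nonvanishing of $\psi_u 1_\bi$ forces all intermediate words of the chosen expression into $G^\delta$ — correct, but this is only a \emph{necessary} condition, and it does not detect the vanishing you need. Take $u=s_{d-1}s_{d-2}s_{d-1}$ and $\bi=\bb^i$ in type ${\tt A}_\ell$ with $i<\ell-1$: along the expression $s_{d-1}s_{d-2}s_{d-1}$ every intermediate word lies in $G^\delta$ (by Lemma \ref{wordfacts}(iv),(viii), using ${\tt c}_{i_{d-2},i_d}=0$), yet $\psi_u 1_\bi=0$ because the equivalent expression $s_{d-2}s_{d-1}s_{d-2}$ immediately hits the non-semicuspidal word $s_{d-2}\bi$ ($s_{d-2}$ is not $\bi$-admissible since ${\tt c}_{i_{d-2},i_{d-1}}=-1$). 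So your ``expose a non-semicuspidal intermediate word via braid rewriting'' step really is the whole crux, not a detail to be filled in — and you have not shown that the rewriting scheme you describe always terminates at such a word or at a shorter expression. The paper's coset-factorization approach simply never enters this thicket: it writes $\psi_u=\psi_{w'}\psi_{w''}$, applies Lemma \ref{wordfacts}(iv) first to the $w''$-part (which must be $\bi$-admissible, killing the above example immediately since $w''=s_{d-2}$ there) and then to $s_r\cdots s_{d-2}$ after the single $\psi_{d-1}$, and uniqueness follows from Lemma \ref{wordfacts}(v),(vii). I would replace your $N$-counting by the coset factorization at the outset; your remaining analysis then goes through essentially unchanged. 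Also note the paper handles type ${\tt A}_1^{(1)}$ separately (there $G^\delta=\{01\}$ and $\psi_1 1_{01}=0$), which your write-up omits.
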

\begin{proof}
The lemma is easily checked in type \({\tt A}^{(1)}_1\), since then \(G^\delta=\{01\}\) and \(\psi_11_{01} = 0\). Suppose we are not in type \({\tt A}^{(1)}_1\) and that \(\psi_u 1_{\bi} = e_{u\bi} \psi_u 1_{\bi}\neq 0\). Then \(\bi, u\bi \in G^\delta\) by Corollary~\ref{CSCW}. We may write \(u=w'w''\), where \(w'' \in \mathfrak{S}_{d-1}\) and \(w'\) is a minimal length left coset representative of \(\mathfrak{S}_{d-1}\) in $\mathfrak{S}_d$. By Lemma \ref{Cdelfacts}(i), \(\psi_u= \psi_{w'}\psi_{w''}\). By Lemma \ref{wordfacts}(iv), \(w''\) must be \(\bi\)-admissible. If \(w' = \textup{id}\), then \(u\bi_d = i_d\), \(\deg(\psi_u1_{\bi})=0\) and we are in case (i) by the uniqueness of Lemma \ref{wordfacts}(v).
Let \(w' \neq \textup{id}\). Then for some \(r\), \(w'=s_{r}s_{r+1} \cdots s_{d-1}\) is a reduced expression for \(w'\). By Lemma \ref{Cdelfacts}(i), \(\psi_u = \psi_r \psi_{r+1} \cdots \psi_{d-1} \psi_{w''}\) in \(C_\delta\). By Lemma \ref{wordfacts}(iv), \(s_r s_{r+1} \cdots s_{d-2}\) is \(s_{d-1}w''\bi\)-admissible. Further, \({\tt c}_{i_{d-1},i_d}=-1\) by Lemma \ref{wordfacts}(ii), so \(\deg(\psi_u1_{\bi})=1\), and we are in case (ii) by the uniqueness of Lemma \ref{wordfacts}(vii).
\end{proof}

Given a word \(\bi=i_1\cdots i_d \in G^\delta\), define
\begin{align*}
W_{\bi} = \{ w_{\bj,\bi} \in \mathfrak{S}_d \mid \bj \in G^j \textup{ for some } j \textup{ such that } {\tt c}_{j,i_d} \neq 0\}.
\end{align*}
Note that by Lemma \ref{wordfacts}(vii) and (viii), \(W_{\bi}\) is in bijection with \(\bigcup_{j \in I', {\tt c}_{j,i_d}\neq 0}G^j\).

\begin{Lemma}\label{Cypsi}
Let \(u \in \mathfrak{S}_d\) and $\bi\in I^\delta$. 
If \(\deg(\psi_u1_{\bi})\geq 1\), then \((y_1-y_d)\psi_u1_{\bi} = 0\) in \(C_\delta\).
\end{Lemma}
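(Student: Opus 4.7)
The plan is to combine the degree analysis from Lemma \ref{Cpsi} with the commutation rules from Lemmas \ref{Cdelfacts}(ii) and \ref{Cy} to reduce everything to the centrality of $y_1$.

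First I would use Lemma \ref{Cpsi}: if $\psi_u 1_{\bi}$ is nonzero and $\deg(\psi_u 1_{\bi}) \geq 1$, then we must be in case (ii) of that lemma. In particular $\deg(\psi_u 1_{\bi}) = 1$, we have $\bi \in G^\delta$, $u\bi \in G^j$ for some $j \in I'$ with ${\tt c}_{j,i_d} = -1$, and $u = w_{u\bi,\bi}$. So there is nothing to prove if $\deg \geq 2$, and the only case to handle is this one.

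The crucial observation is then that $u$ does not fix position $d$. Indeed, by Lemma \ref{wordfacts}(ii), every word in $G^j$ ends in $j$, so $(u\bi)_d = j$, while $(\bi)_d = i_d \neq j$. Hence $u^{-1}(d) \in \{1,\ldots,d-1\}$.

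Next I would push $y_d$ across $\psi_u$. Choosing a reduced expression $u = s_{r_1} \cdots s_{r_k}$ and writing $\psi_u = \psi_{r_1} \cdots \psi_{r_k}$ (well-defined in $C_\delta$ by Lemma \ref{Cdelfacts}(i)), an iterated application of Lemma \ref{Cdelfacts}(ii) gives
\begin{equation*}
y_d \psi_u 1_{\bi} \;=\; \psi_u \, y_{u^{-1}(d)} \, 1_{\bi}
\end{equation*}
in $C_\delta$. Since $u^{-1}(d) \leq d-1$, Lemma \ref{Cy}(i) yields $y_{u^{-1}(d)} = y_1$, and then Lemma \ref{Cy}(iii) (centrality of $y_1$) gives $\psi_u y_1 1_{\bi} = y_1 \psi_u 1_{\bi}$. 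Therefore
\begin{equation*}
(y_1 - y_d)\psi_u 1_{\bi} \;=\; y_1 \psi_u 1_{\bi} - \psi_u y_1 1_{\bi} \;=\; 0.
\end{equation*}
I do not anticipate any real obstacle; the argument is a direct synthesis of the preceding lemmas, with the only point to verify being that $u$ genuinely moves position $d$, which is immediate from the mismatch $j \neq i_d$.
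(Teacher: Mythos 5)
Your proof is correct and follows essentially the same route as the paper: reduce via Lemma \ref{Cpsi} to case (ii), note $u^{-1}(d)<d$ because $(u\bi)_d=j\neq i_d$, push $y_d$ through $\psi_u$ using Lemma \ref{Cdelfacts}(ii), and finish with $y_1=\cdots=y_{d-1}$. The only cosmetic difference is that you invoke centrality of $y_1$ (Lemma \ref{Cy}(iii)) to commute $y_1$ past $\psi_u$, whereas the paper instead observes directly that $u$ fixes position $1$; both are fine.
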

\begin{proof}
By Lemma \ref{Cpsi}, we only need consider the case where \(\bi \in G^\delta\) and \(u \in W_{\bi}\). Since \(\deg(\psi_u1_{\bi})\geq 1\), it must be that \(u\bi \in G^j\), where \({\tt c}_{j,i_d}=-1\), so \((u\bi)_d =j \neq i_d\) and \((u\bi)_1=i_1=0\).  Thus \(u(1)=1\) and \(u(d)< d\). By Lemma \ref{Cdelfacts}(ii), we have 
$
(y_1-y_d)\psi_u1_{\bi} = \psi_u (y_1-y_{u(d)})1_{\bi},
$ 
but \(y_1-y_{u(d)}=0\) in \(C_\delta\) by Lemma \ref{Cy}(i).
\end{proof}

\begin{Proposition}\label{Cspan}
The following is a spanning set for \(C_\delta\):
\begin{align*}
X:=\{y_1^b(y_1-y_d)^m \psi_w 1_{\bi} \mid \bi \in G^\delta,\, w \in W_{\bi},\, m+\deg(\psi_w1_{\bi}) \leq 1,\, b \in \ZZ_{\geq 0}\}.
\end{align*}
\end{Proposition}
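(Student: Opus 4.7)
The plan is to start from the standard KLR spanning set of $R_\delta$ and successively simplify in $C_\delta$ using the structural results established in \S\ref{spansec}. Recall that $R_\delta$ is spanned by elements $\psi_w y_1^{a_1}\cdots y_d^{a_d}1_\bi$, with each $\psi_w$ built from its fixed distinguished reduced expression. In $C_\delta$ we have the simple commutation rule $\psi_r y_t = y_{s_r(t)}\psi_r$ from Lemma~\ref{Cdelfacts}(ii)---the error term from~(\ref{KLRypsi}) disappears because semicuspidal words have $i_r\neq i_{r+1}$---so I push all the $y$'s to the left of $\psi_w$ to obtain a spanning set $\{y_1^{b_1}\cdots y_d^{b_d}\psi_w 1_\bi\}$ for $C_\delta$.

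Next, I trim this spanning set. By Corollary~\ref{CSCW}, we may restrict to $\bi\in G^\delta$, and by Lemma~\ref{Cpsi} we may further restrict to $w\in W_\bi$ (outside these, $\psi_w 1_\bi$ vanishes), with the additional information that $\deg(\psi_w 1_\bi)\in\{0,1\}$. Then I exploit Lemma~\ref{Cy}: the centrality of $y_1$ (part (iii)) together with $y_1=\cdots=y_{d-1}$ (part (i)) collapse $y_1^{b_1}\cdots y_{d-1}^{b_{d-1}}$ into a single central power $y_1^b$, reducing the spanning set to $\{y_1^b y_d^{b_d}\psi_w 1_\bi\}$. Writing $y_d = y_1-(y_1-y_d)$ and using $(y_1-y_d)^2=0$ from Lemma~\ref{Cy}(ii), the binomial expansion truncates:
$$y_d^{b_d} \;=\; y_1^{b_d} - b_d\, y_1^{b_d-1}(y_1-y_d)\qquad(b_d\geq 1),$$
with $y_d^{0}=1$. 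Since $y_1$ is central, this turns every spanning element into a $\kk$-linear combination of terms $y_1^c(y_1-y_d)^m\psi_w 1_\bi$ with $m\in\{0,1\}$. Finally, Lemma~\ref{Cypsi} forces $(y_1-y_d)\psi_w 1_\bi=0$ whenever $\deg(\psi_w 1_\bi)=1$, so the $m=1$ contributions vanish in that case, enforcing $m+\deg(\psi_w 1_\bi)\leq 1$ and recovering exactly the set $X$.

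The argument is thus a systematic assembly of the five preparatory lemmas, and I anticipate no genuine obstacle. The only point that requires care is the interplay between the centrality of $y_1$ and the nilpotency of $y_1-y_d$ in the expansion step: without centrality the binomial formula would need reordering with $\psi_w$ factors, and without nilpotency it would not truncate. The restriction of $w$ to $W_\bi$ and the dichotomy $\deg(\psi_w 1_\bi)\in\{0,1\}$ dovetail precisely with the constraint $m+\deg(\psi_w 1_\bi)\leq 1$, which is why $X$ emerges as the natural spanning set and not a larger one.
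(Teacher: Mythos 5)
Your proof is correct and takes essentially the same route as the paper: the paper begins directly from the KLR basis theorem applied in the form $\{y_1^{b_1}\cdots y_{d-1}^{b_{d-1}}(y_1-y_d)^{b_d}\psi_w1_\bi\}$ (i.e.\ with the $y$'s already on the left and the change of variables $y_d \leftrightarrow y_1-y_d$ already incorporated), and then invokes Corollary~\ref{CSCW} and Lemmas~\ref{Cy}, \ref{Cpsi}, \ref{Cypsi} to prune exactly as you do. Your two extra preliminary steps---commuting the $y$'s past $\psi_w$ using Lemma~\ref{Cdelfacts}(ii), and the explicit binomial expansion of $y_d^{b_d}$ in terms of $y_1$ and $y_1-y_d$---are minor bookkeeping that the paper absorbs into its choice of starting spanning set; the substance of the argument is identical.
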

\begin{proof}
By the basis theorem  \cite[Theorem 2.5]{KL1} or \cite[Theorem 3.7]{Ro}, we have that
\begin{align*}
\{y_1^{b_1} \cdots y_{d-1}^{b_{d -1}}(y_1 - y_d)^{b_d}\psi_w 1_{\bi} \mid \bi \in I^\delta, w \in \mathfrak{S}_d, b_i \in \ZZ_{\geq 0}  \}
\end{align*}
spans \(R_\delta\). We get the spanning set \(X\)  by throwing out elements of this set which are known to be zero or redundant in \(C_\delta\) via Lemmas \ref{Cy}, \ref{Cpsi} and \ref{Cypsi}.
\end{proof}

\subsection{\boldmath A basis for \(C_\delta\)}  To prove linear independence of \(X\), we construct a graded \(R_\delta\)-module which descends to a faithful \(C_\delta\)-module. 
For \(\bi,\bj \in I^\delta\), set
\begin{align*}
V_{\bi, \bj} := \begin{cases}
\kk[z,x]/(x^2) & \textup{if } \bi,\bj \in G^\delta \textup{ and }i_d = j_d\\
q\kk[z,x] /(x) & \textup{if } \bi,\bj \in G^\delta \textup{ and }{ \tt c}_{i_d,j_d}=-1\\
\kk[z,x]/(1) & \textup{otherwise},
\end{cases}
\end{align*}
where \(z,x\) are generators in degree 2, and $q$ stands for a degree shift up by $1$. 
Note that \(V_{\bi,\bj}=0\) in the `otherwise' cases above---it is presented as is for convenience in defining an action on \(V\). 
Set \(V = \bigoplus_{\bi,\bj \in I^\delta} V_{\bi,\bj}\). We will label polynomials \(f \in \kk[z,x]\) belonging to the \(\bi,\bj\)-th component of \(V\) with subscripts, a la \(f_{\bi,\bj}\). Recall the signs $\eps_{i,j}$ from \S\ref{SKLR}. 

\begin{Lemma}
The vector space \(V\) is a graded \(R_\delta\)-module, with the action of generators defined in types \({\tt C} \neq {\tt A}_1^{(1)}\) as follows:
\begin{align*}
1_{\bk} \cdot f_{\bi,\bj} &= \delta_{\bk,\bi} f_{\bi,\bj}\\
y_r \cdot f_{\bi,\bj} &= (zf-\de_{r,d}xf)_{\bi,\bj}
\\
\psi_r \cdot f_{\bi,\bj} &= \begin{cases}
f_{s_r\bi, \bj} & \textup{if }s_r \textup{ is } \bi\textup{-admissible};\\
f_{s_{d-1}\bi, \bj} & \textup{if }r=d-1 \textup{ and } i_d = j_d;\\
\varepsilon_{i_d,j_d}(xf)_{s_{d-1}\bi,\bj} & \textup{if }r=d-1 \textup{ and } i_{d-1}=j_d; \\
0 & \textup{otherwise.}
\end{cases}
\end{align*}
If \({\tt C}={\tt A}_1^{(1)}\), the action of \(1_{\bk}\), \(y_r\) are as above, but \(\psi_1v=0\) for all \(v\in V\).
\end{Lemma}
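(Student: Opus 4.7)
The plan is to verify that the proposed formulas for $1_\bk$, $y_r$, and $\psi_r$ on $V$ satisfy each of the defining KLR relations (\ref{KLRidem})--(\ref{KLRbraid}). Grading preservation is straightforward: $z,x$ are in degree $2$, matching $y_r$; the degree-$1$ shift on the $V_{\bi,\bj}$ with $c_{i_d,j_d}=-1$ together with the $x$-factor appearing in the third case of the $\psi_{d-1}$-action accounts exactly for the $\deg\psi_{d-1}=1$ shift between those components and the ones with $i_d=j_d$. The idempotent relation (\ref{KLRidem}), the $y$-commutation (\ref{KLRy}), and the interchange (\ref{KLRpsiidem}) are immediate from the case distinctions in the definition. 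For the distance-two commutation (\ref{KLRpsi}) it suffices that the strands at positions $r,r{+}1$ and $t,t{+}1$ do not interact, which is visibly true from the formulas.

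For (\ref{KLRypsi}), first note that $V_{\bi,\bj}\neq 0$ forces $\bi,\bj\in G^\delta$, and then $i_r\neq i_{r+1}$ by homogeneity (Lemma \ref{wordfacts}(i)); thus the right-hand side vanishes and one need only check $y_t\psi_r=\psi_r y_{s_r(t)}$. For $r<d-1$ both $y_t,y_{s_r(t)}$ act as multiplication by the same polynomial ($z$ if the index is not $d$, and $z-x$ if it is), and $\psi_r$ is $\kk[z,x]$-linear, so the identity is automatic. For $r=d-1$ one does a short check in each of the two nontrivial subcases ($i_d=j_d$ and $i_{d-1}=j_d$), using that $x=0$ in the $q\kk[z,x]/(x)$ components and $x^2=0$ in the $\kk[z,x]/(x^2)$ components to match both sides.

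For (\ref{KLRpsi2}) when $r<d-1$, if $s_r$ is $\bi$-admissible then $Q_{i_r,i_{r+1}}=1$ and $\psi_r^2f=f$, while if $c_{i_r,i_{r+1}}=-1$ then $Q=\pm(y_r-y_{r+1})$ vanishes on $V$ because both $y_r,y_{r+1}$ act by $z$ and $\psi_rf=0$. For $r=d-1$ the computation $y_{d-1}-y_d=x$ matches precisely: tracing $\psi_{d-1}^2f$ through the two special cases reproduces $\varepsilon_{i_{d-1},i_d}\,xf$ in $V_{\bi,\bj}$, and in the third ``otherwise'' case both $\psi_{d-1}^2f$ and $\varepsilon xf$ vanish for grading-mediated reasons (either $V_{\bi,\bj}=q\kk[z,x]/(x)$, so $xf=0$, or the target component is zero).

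The main obstacle will be the braid relation (\ref{KLRbraid}). The right-hand side vanishes identically on $V$, because $i_r=i_{r+2}$ would require by homogeneity (Lemma \ref{wordfacts}(i)) an interleaving pair $t<u$ with $r<t<u<r+2$, which is impossible. For $r\leq d-3$ one checks LHS$=0$ by case analysis on $c_{i_r,i_{r+1}}$ and $c_{i_{r+1},i_{r+2}}$: when both are $0$ and moreover $c_{i_r,i_{r+2}}=0$, both sides send $f_{\bi,\bj}$ to $f_{s_{r+1}s_rs_{r+1}\bi,\bj}$ using admissibility closure (Lemma \ref{wordfacts}(iv)); in every other case some intermediate transposition is non-admissible and both sides vanish. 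For the critical case $r=d-2$, one subdivides according to whether $c_{i_{d-2},i_{d-1}}$ is $0$ or $-1$ and whether $V_{\bi,\bj}$ lies in the $i_d=j_d$, $i_{d-1}=j_d$, or $c_{i_d,j_d}=-1$ with $i_{d-1}\neq j_d$ family. The uniform mechanism is that every attempt to traverse the triple $(d{-}2,d{-}1,d)$ eventually meets the neighbor pair $(i_{d-1},i_d)$ at consecutive positions (which is never admissible, since $i_{d-1}$ neighbors $i_d$ by Lemma \ref{wordfacts}(ii)), or produces a term in a target $V_{\bi',\bj}$ where the needed $x$-factor is zero; in all subcases both $\psi_{d-1}\psi_{d-2}\psi_{d-1}f$ and $\psi_{d-2}\psi_{d-1}\psi_{d-2}f$ vanish. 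The type $\mathtt{A}_1^{(1)}$ case is separate but trivial, as there $d=2$, $\psi_1$ acts as $0$, $G^\delta=\{01\}$, and $Q_{0,1}(y_1,y_2)=-(y_1-y_2)^2=-x^2=0$ on the only nonzero component $V_{01,01}=\kk[z,x]/(x^2)$.
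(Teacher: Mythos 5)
Your proposal is correct and follows essentially the same relation-by-relation verification as the paper, with the same key inputs: the homogeneity of words in $G^\delta$ (so $i_r\neq i_{r+1}$, hence the right side of (\ref{KLRypsi}) vanishes), the tail structure $\rednbr$/neighbors from Lemma \ref{wordfacts}(ii), and the observations $y_{d-1}-y_d=x$, $x=0$ in $q\kk[z,x]/(x)$, $x^2=0$ in $\kk[z,x]/(x^2)$.

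Two places where the paper is more careful than your sketch and where you should add detail. First, you discuss grading compatibility but never verify \emph{ring-theoretic} well-definedness of the $\psi_r$ action: $\psi_r$ is declared by the formula $f\mapsto f$ or $f\mapsto \varepsilon\, xf$ between components $V_{\bi,\bj}$ and $V_{s_r\bi,\bj}$ that may be different quotients of $\kk[z,x]$, and one must check the relevant ideal of the source maps into that of the target (e.g.\ $f\mapsto xf$ from $\kk[z,x]/(x)$ to $\kk[z,x]/(x^2)$ is fine because $x\cdot(x)\subseteq(x^2)$). The paper devotes a full paragraph to this. Second, your treatment of the braid relation at $r=d-2$ asserts a ``uniform mechanism'' but does not actually carry out the dichotomy: the paper's argument is that if either triple product is nonzero then $\bj,\sigma\bj\in G^\delta$, forcing $j_{d-2},j_{d-1},j_d$ to be pairwise distinct with $c_{j_{d-1},j_d}=c_{j_{d-1},j_{d-2}}=-1$; this kills $\psi_{d-2}\psi_{d-1}\psi_{d-2}$ immediately (the first $\psi_{d-2}$ already fails), and then a short elimination (splitting on $j_d=k_d$ versus $j_{d-1}=k_d$) shows $\psi_{d-1}\psi_{d-2}\psi_{d-1}$ also dies, in one subcase because the produced $x$-factor lands in a $q\kk[z,x]/(x)$ component where $xf=0$. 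Your high-level description is consistent with this, but as written it would not convince a reader that all subcases are covered; you should spell out the dichotomy rather than appeal to a single uniform mechanism.
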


\begin{proof}
First we argue that the actions of the generators are well-defined. The only non-obvious case is the action of \(\psi_r\). Let \(r,\bi,\bj\) be such that \(\psi_r |_{V_{\bi,\bj}} \neq 0\). We show that \(\psi_r |_{V_{\bi,\bj}}:V_{\bi,\bj} \to V_{s_r \bi, \bj}\) is a well-defined \(\kk\)-linear homomorphism. Assume first that \(r <d-1\). Then \(\psi_r |_{V_{\bi,\bj}} \neq 0\) implies that \(s_r\) is \(\bi\)-admissible. Since \(i_d = (s_r\bi)_d\), we either have \(V_{\bi,\bj} = V_{s_r \bi, \bj} = \kk[z,x]/(x^2)\) or  \(V_{\bi,\bj} = V_{s_r \bi, \bj} = \kk[z,x]/(x)\), so in either case \(\psi_r |_{V_{\bi,\bj}}: f \mapsto f\) is well-defined. 
Assume next that  \(r=d-1\) and \(i_d = j_d\). In this case \(V_{\bi,\bj} = \kk[z,x]/(x^2)\), and by Lemma \ref{wordfacts}(ii), \(i_{d-1}\) is a neighbor of \(i_d\), so \((s_{d-1}\bi)_d = i_{d-1}\) is a neighbor of \(j_d\), and \(s_{d-1} \bi \in G^\delta\) by Lemma \ref{wordfacts}(viii), so \(V_{s_{d-1} \bi,\bj} = \kk[z,x]/(x)\). Thus \(\psi_{d-1} |_{V_{\bi,\bj}}: f \mapsto f\) is well-defined. Finally, assume that  \(r=d-1\) and \(i_{d-1} = j_d\). In this case \({\tt c}_{i_d,j_d}=-1\) by Lemma \ref{wordfacts}(ii), so \(V_{\bi,\bj} = \kk[z,x]/(x)\). Since \((s_{d-1} \bi)_d = j_d\), we have \(V_{s_{d-1}\bi,\bj} = \kk[z,x]/(x^2)\). Thus the map \(\psi_{d-1}|_{V_{\bi,\bj}}:f \mapsto \varepsilon_{i_d,j_d}xf\) is well-defined.

Now we check that the action satisfies the defining relations of \(R_\delta\). If \({\tt C} = {\tt A}_1^{(1)}\) then \(G^\delta = \{01\}\). Since \(\psi\)'s act as zero on \(V\), the only relation that is not clearly satisfied is (\ref{KLRpsi2}). In this case, for \(f \in V_{01,01}\), we have as desired:
\begin{align*}
Q_{01}(y_1,y_2) \cdot (1_{01} \cdot f_{01,01})=(y_1 - y_2) \cdot ((y_2 - y_1) \cdot  f_{01,01})
&= -(x^2f)_{01,01} = 0.
\end{align*}

Let \({\tt C} \neq {\tt A}_1^{(1)}\). The relations (\ref{KLRidem}), (\ref{KLRy}) and (\ref{KLRpsi}) are obvious. Since \(1_\bk\) acts as the projection \(V \to \bigoplus_{\bj \in I^\delta} V_{\bk,\bj}\) and \(\psi_r\) restricts to a map \(V_{\bi,\bj} \to V_{s_r\bi, \bj}\) for all \(\bi,\bj \in I^\delta\), relation (\ref{KLRpsiidem}) is satisfied as well.

For relation (\ref{KLRypsi}), we have
\begin{align}\label{ypsi1}
y_t \cdot (\psi_r  \cdot (1_\bi \cdot f_{\bj, \bk})) =
\begin{cases}
\delta_{\bi,\bj}(zf - \delta_{t,d}xf)_{s_r \bj, \bk} & \textup{if }s_r \textup{ is } \bj\textup{-admissible};\\
\delta_{\bi,\bj}(zf - \delta_{t,d}xf)_{s_{d-1}\bj, \bk} & \textup{if }r=d-1, j_d = k_d;\\
\delta_{\bi,\bj}\varepsilon_{j_d, k_d}(zxf - \delta_{t,d}x^2f)_{s_{d-1}\bj, \bk} & \textup{if }r=d-1, j_{d-1} = k_d;\\
0 & \textup{otherwise}.
\end{cases}
\end{align}
and 
\begin{align}\label{ypsi2}
\psi_r \cdot (y_{s_r t} \cdot (1_\bi \cdot f_{\bj, \bk})) =
\begin{cases}
\delta_{\bi,\bj}(zf - \delta_{s_rt,d}xf)_{s_r \bj, \bk} & \textup{if }s_r \textup{ is } \bj\textup{-admissible};\\
\delta_{\bi,\bj}(zf - \delta_{s_rt,d}xf)_{s_{d-1}\bj, \bk} & \textup{if }r=d-1, j_d = k_d;\\
\delta_{\bi,\bj}\varepsilon_{j_d, k_d}x(zf - \delta_{s_rt,d}xf)_{s_{d-1}\bj, \bk} & \textup{if }r=d-1, j_{d-1} = k_d;\\
0 & \textup{otherwise}.
\end{cases}
\end{align}
Since \(i_r \neq i_{r+1}\) for every \(r \in [1,d-1]\) and \(\bi \in G^\delta\) by Lemma \ref{wordfacts}(i), in order to show that relation (\ref{KLRypsi}) is satisfied, we simply must show that (\ref{ypsi1}) and (\ref{ypsi2}) are equal in every case. First, note that if \(s_r\) is \(\bj\)-admissible, then \(r<d-1\) by Lemma \ref{wordfacts}(ii), so \(\delta_{s_rt, d} = \delta_{t,d}\) in this case, and we have equality. Next, assume that \(r=d-1\), \(j_d =k_d\) and \(0 \neq f_{\bj,\bk} \in V_{\bj, \bk}\). Since \((s_{d-1}\bj)_d = j_{d-1} \neq j_d\) by Lemma \ref{wordfacts}(ii), we have that \(xf = 0 \in V_{s_{d-1}\bj, \bk} =\kk[z,x]/(x)\), so we have equality in the second case. In the third case, note that \(x^2 = 0 \in V_{\bj,\bk}\) for any \(\bj,\bk\), so equality holds in this case as well.

For relation (\ref{KLRpsi2}), note that \(s_r\) is \(\bi\)-admissible if and only if \(s_r\) is \(s_r \bi\)-admissible, so
\begin{align}\label{psiQ1}
\psi_r \cdot (\psi_r \cdot (1_\bi \cdot f_{\bj, \bk}))=
\begin{cases}
\delta_{\bi,\bj} f_{\bj,\bk} & \textup{if }s_r \textup{ is } \bj\textup{-admissible};\\
\delta_{\bi,\bj} \varepsilon_{j_{d-1},k_d} (xf)_{\bj,\bk} & \textup{if }r=d-1 \textup{ and } j_d = k_d;\\
\delta_{\bi,\bj} \varepsilon_{j_d,k_d} (xf)_{\bj,\bk} & \textup{if }r=d-1 \textup{ and } j_{d-1} = k_d;\\
0 & \textup{otherwise},
\end{cases}
\end{align}
and
\begin{align}\label{psiQ2}
Q_{i_r,i_{r+1}} \cdot (1_\bi \cdot f_{\bj, \bk})=
\begin{cases}
\delta_{\bi,\bj} f_{\bj,\bk} & \textup{if }{\tt c}_{j_r,j_{r+1}}=0;\\
\delta_{\bi,\bj} \varepsilon_{j_{d-1},j_d} (xf)_{\bj,\bk} & \textup{if }r=d-1;\\
0 & \textup{otherwise},
\end{cases}
\end{align}
If \(s_r\) is \(\bj\)-admissible (and hence \({\tt c}_{j_r, j_{r+1}} = 0\)), then (\ref{psiQ1}) and (\ref{psiQ2}) are equal. If \(r=d-1\) and \(j_d = k_d\), then the expressions clearly agree. Note that in every other case, (\ref{psiQ1}) and (\ref{psiQ2}) are both zero, since \((xf)_{\bj, \bk}=0\) whenever \(j_d \neq k_d\). 

Finally, we check relation (\ref{KLRbraid}). Fix \(r \in [1,d-2]\), and write \(\sigma = s_rs_{r+1}s_r = s_{r+1} s_r s_{r+1}\). If \(r<d-2\), then 
\begin{align*}
\psi_{r+1} \cdot (\psi_r \cdot (\psi_{r+1} \cdot (1_{\bi} \cdot f_{\bj, \bk}))) = 
\psi_{r} \cdot (\psi_{r+1} \cdot (\psi_{r} \cdot (1_{\bi} \cdot f_{\bj, \bk}))),
\end{align*}
since both terms are equal to \(\delta_{\bi,\bj}f_{\sigma\bj, \bk}\) if \({\tt c}_{j_r,j_{r+1}} = {\tt c}_{j_r,j_{r+2}} = {\tt c}_{j_{r+1},j_{r+2}} = 0\), and both are zero otherwise. Now assume \(r=d-2\), and assume that either \(\psi_{d-1} \cdot (\psi_{d-2} \cdot (\psi_{d-1} \cdot (1_{\bi} \cdot f_{\bj, \bk})))\) or \(\psi_{d-2} \cdot (\psi_{d-1} \cdot (\psi_{d-2} \cdot (1_{\bi} \cdot f_{\bj, \bk})))\) is nonzero. Then we must have \(V_{\bj,\bk}, V_{\sigma \bj, \bk} \neq 0\). Then, since \(\bj, \sigma \bj \in G^\delta\), by Lemma \ref{wordfacts}(i), we have that \(j_{d-2}, j_{d-1}, j_d\) are all distinct, and by Lemma \ref{wordfacts}(ii), \({\tt c}_{j_{d-1},j_d}={\tt c}_{j_{d-1},j_{d-2}}=-1\). 
Thus \(\psi_{d-2} \cdot (\psi_{d-1} \cdot (\psi_{d-2} \cdot (1_{\bi} \cdot f_{\bj, \bk}))) = 0\), since \(s_{d-2}\) is not \(\bj\)-admissible. Thus \(\psi_{d-1} \cdot (\psi_{d-2} \cdot (\psi_{d-1} \cdot (1_{\bi} \cdot f_{\bj, \bk})))\) must be nonzero. If \(j_d =k_d\), then \(j_{d-1} \neq k_d\), which implies \(j_{d-2}=(s_{d-2}s_{d-1}\bj)_{d-1}=k_d=j_d\), a contradiction. Then we must have \(j_{d-1}=k_d\) and \({\tt c}_{j_{d-2},j_d} = 0\), in which case \(\psi_{d-1} \cdot (\psi_{d-2} \cdot (\psi_{d-1} \cdot (1_{\bi} \cdot f_{\bj, \bk}))) = \delta_{\bi,\bj}\varepsilon_{j_{d-1},j_d}(xf)_{\sigma \bj, \bk}\). But, since \((\sigma \bj)_d = j_{d-2} \neq j_d = k_d\), we have that \(xf = 0\) in \(V_{\sigma \bj, \bk}\), another contradiction. Thus 
$
(\psi_{r+1} \psi_r \psi_{r+1} - \psi_r \psi_{r+1} \psi_r)1_{\bi} = 0
$ 
as an operator on \(V\). Moreover, \(j_r \neq j_{r+2}\) for every \(\bj \in G^\delta\) by Lemma \ref{wordfacts}(i), so the right side of relation (\ref{KLRbraid}) also acts identically as zero on \(V\). \end{proof}

\begin{Theorem}\label{Cbasis}
The set \(X\) of Proposition \ref{Cspan} is a basis for \(C_\delta\).
\end{Theorem}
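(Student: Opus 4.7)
Spanning is already settled by Proposition~\ref{Cspan}, so the whole content of the theorem is linear independence. The plan is to show that the elements of $X$ act linearly independently on the module $V$ constructed in the preceding lemma. First note that $V_{\bi,\bj}=0$ whenever $\bi\notin G^\delta$ or $\bj\notin G^\delta$, so $1_{\noncusp}\cdot V=0$, i.e.\ the $R_\delta$-action on $V$ factors through $C_\delta$; in particular $\psi_w$ acts on $V$ independently of the chosen reduced expression, matching Lemma~\ref{Cdelfacts}(i).

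Suppose one has an equality $\sum c_{\bi,w,b,m}\,y_1^b(y_1-y_d)^m\psi_w 1_{\bi}=0$ in $C_\delta$, with $\bi\in G^\delta$, $w=w_{\bj,\bi}\in W_{\bi}$, and $m+\deg(\psi_w1_{\bi})\le 1$. Right-multiplying by $1_{\bi_0}$ immediately isolates the sum with $\bi=\bi_0$, so one may fix $\bi\in G^i$ (where $i:=i_d$) and work in the component $\bigoplus_{\bj}V_{\bj,\bi}\subset V$. Evaluate the relation on the element $1_{\bi,\bi}\in V_{\bi,\bi}=\kk[z,x]/(x^2)$.

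The key computation is $\psi_w\cdot 1_{\bi,\bi}$ for $w=w_{\bj,\bi}\in W_{\bi}$. Split into the two cases of Lemma~\ref{Cpsi}:

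(Case A) If $\bj\in G^i$, then by Lemma~\ref{wordfacts}(v) $w$ is $\bi$-admissible, so every factor $\psi_r$ in a reduced expression acts on the bra-side as the identity $f\mapsto f$. Hence $\psi_w\cdot 1_{\bi,\bi}=1_{\bj,\bi}\in V_{\bj,\bi}=\kk[z,x]/(x^2)$, and
\[
y_1^b(y_1-y_d)^m\psi_w\cdot 1_{\bi,\bi}=z^b x^m\cdot 1_{\bj,\bi},\qquad b\ge 0,\ m\in\{0,1\}.
\]

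(Case B) If $\bj\in G^j$ with ${\tt c}_{i,j}=-1$, use Lemma~\ref{wordfacts}(vii) to write $w=w_1 s_{d-1} w_2$ with $w_2$ being $\bi$-admissible and $w_1$ being $s_{d-1}w_2\bi$-admissible. Both $\psi_{w_2}$ and $\psi_{w_1}$ act on the bra-side as identities; the middle $\psi_{d-1}$ acts on $1_{w_2\bi,\bi}$ via the rule for $(w_2\bi)_d=i=\bi_d$, i.e.\ via $f\mapsto f$ from $\kk[z,x]/(x^2)$ into $V_{s_{d-1}w_2\bi,\bi}=q\kk[z,x]/(x)$. Tracing through, $\psi_w\cdot 1_{\bi,\bi}=1_{\bj,\bi}\in V_{\bj,\bi}=q\kk[z,x]/(x)$, and since $x=0$ there, the constraint $m=0$ (forced by our range) leaves
\[
y_1^b\psi_w\cdot 1_{\bi,\bi}=z^b\cdot 1_{\bj,\bi},\qquad b\ge 0.
\]

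Since the terms for different $\bj$ land in different direct summands $V_{\bj,\bi}$, the relation decouples over $\bj$; within each summand the surviving $z^bx^m$ are linearly independent in the appropriate quotient $\kk[z,x]/(x^{1\text{ or }2})$. All coefficients $c_{\bi,w,b,m}$ must therefore vanish, proving that $X$ is a basis.

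The only place where the argument is non-routine is the computation of $\psi_w\cdot 1_{\bi,\bi}$ in Case B, which rests entirely on the structural description of $w_{\bj,\bi}$ given by Lemma~\ref{wordfacts}(vii). Once that factorisation is in hand, everything else reduces to reading off the action defined on $V$.
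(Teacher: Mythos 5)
Your proposal is correct and follows essentially the same strategy as the paper: both proofs establish linear independence by letting the elements of $X$ act on the module $V$ constructed in the preceding lemma, observing that $1_{\noncusp}V=0$ so the action factors through $C_\delta$, and then computing $y_1^b(y_1-y_d)^m\psi_{w}1_{\bi}\cdot 1_{\bi,\bi}$ and reading off that the resulting vectors land in distinct summands $V_{\bj,\bi}$ where the $z^bx^m$ are manifestly independent. The only cosmetic difference is that the paper keeps $\bj$ generic and lets the Kronecker $\delta_{\bi,\bj}$ do the isolating, while you explicitly right-multiply by $1_{\bi_0}$ first; your more explicit case split into $\bj\in G^i$ versus $\bj\in G^j$ with ${\tt c}_{i,j}=-1$ is exactly what the paper summarizes by citing Lemma~\ref{wordfacts}(vii).
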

\begin{proof}
 Note that 
 \begin{align*}
 \{(z^b)_{\bi, \bj} \mid \bi, \bj \in G^\delta, &{\tt c}_{i_d, j_d} = -1, b \in \ZZ_{\geq 0}\}\\
 &\cup \{(z^bx^m)_{\bi,\bj} \mid \bi, \bj \in G^\delta, i_d = j_d, m \in \{0,1\}, b \in \ZZ_{\geq 0}\}
 \end{align*}
 is a basis for \(V\). The \(R_\delta\)-module \(V\) factors through to a \(C_\delta\)-module since \(1_{\noncusp}V=0\) by Corollary~\ref{CSCW}. Letting  \(\bi, \bj \in G^\delta, w \in W_{\bi}, m+\deg(\psi_w1_{\bi}) \leq 1, b \in \ZZ_{\geq 0}\), we have, by Lemma \ref{wordfacts}(vii),
\begin{align*}
 y_1^b(y_1-y_d)^m \psi_w 1_{\bi}\cdot 1_{\bj,\bj} = \begin{cases}
\delta_{\bi,\bj} (z^b)_{w\bi,\bi} & \textup{if }\deg(\psi_w 1_{\bi})=1;\\
\delta_{\bi,\bj} (z^bx^m)_{w\bi,\bi} & \textup{if }\deg(\psi_w 1_{\bi})=0.
 \end{cases}
  \end{align*}
   For all \(w \in W_\bi\), \(w \bi \in G^\delta\), \((w \bi)_d = i_d\) when \(\deg(\psi_w 1_\bi)=0\), and \((w \bi)_{d}\) is a neighbor of \(i_d\) when \(\deg(\psi_w 1_\bi) = 1\). Moreover, \(w\bi = u\bi\) for \(w,u \in W_\bi\) if and only if \(w=u\), so the elements of \(X\) act on \(V\) as linearly independent operators. Taking into account Proposition~\ref{Cspan}, we deduce that \(X\) is a basis for \(C_\delta\).
  \end{proof}

For each \(\alpha \in Q_+\) and dominant weight \(\Lambda\) associated to \({\tt C}\), there is an important quotient \(R_\alpha^\Lambda\) of \(R_\alpha\) called the {\em cyclotomic KLR algebra} (see e.g. \cite{KL1,BKcyc}). Of relevance to the discussion at hand is the level-one case \(R_\delta^{\Lambda_0}\); it is by definition the quotient of \(R_\delta\) by the two-sided ideal generated by the elements \(\{y_1^{\delta_{i_1,0}}1_{\bi} \mid \bi \in I^\delta\}\). By \cite[Lemma 5.1]{Kcusp}, when $\kk$ is a field, \(\{L_{\delta,i} \mid i \in I'\}\) is a full set of irreducible modules for \(R_\delta^{\Lambda_0}\), so \(1_{\bi}=0\) in \(R_\delta^{\Lambda_0}\) unless \(\bi \in G^\delta\). So by Corollary~\ref{CSCW}, there is a natural surjection \(C_\delta \twoheadrightarrow R_\delta^{\La_0} \cong C_\delta/C_\delta y_1 C_\delta\).

\begin{Lemma}\label{Ccyc}
There is an isomorphism  of graded \(\kk\)-algebras \(C_\de\cong \kk[y_1] \otimes R_\delta^{\La_0}\), and \(R_\delta^{\La_0}\), considered as a subalgebra of \(C_\delta\),  has basis
\begin{align}\label{Ccycbasis}
\{(y_1-y_d)^m \psi_w 1_{\bi} \mid \bi \in G^\delta, w \in W_{\bi}, m+ \deg(\psi_w1_{\bi}) \leq 1\}.
\end{align}
\end{Lemma}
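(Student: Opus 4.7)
The plan is to construct an explicit graded $\kk$-algebra isomorphism
\[
\bar\Phi\colon C_\delta \;\longrightarrow\; \kk[y_1] \otimes R_\delta^{\Lambda_0},
\]
where $\kk[y_1]$ is regarded as a central polynomial ring on a degree-$2$ generator. Once $\bar\Phi$ is in hand, the desired basis of $R_\delta^{\Lambda_0}$ (viewed as a subalgebra of $C_\delta$) will be obtained by pulling back the subalgebra $1 \otimes R_\delta^{\Lambda_0}$. The guiding observation is Lemma~\ref{Cy}: in $C_\delta$ the $y$-action is almost trivial, as $y_1 = \cdots = y_{d-1}$ form a single central element and $(y_1 - y_d)^2 = 0$. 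So morally $C_\delta$ is generated by $R_\delta^{\Lambda_0}$ together with a single adjoined central degree-$2$ variable (identified with $y_1$), with $y_d$ expressed as that variable plus a nilpotent correction.

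I would first define $\Phi\colon R_\delta \to \kk[y_1] \otimes R_\delta^{\Lambda_0}$ on KLR generators by $\Phi(1_\bi) = 1 \otimes 1_\bi$, $\Phi(\psi_r) = 1 \otimes \psi_r$, $\Phi(y_r) = y_1 \otimes 1$ for $r < d$, and $\Phi(y_d) = y_1 \otimes 1 + 1 \otimes y_d$, where $y_d$ on the right denotes the image of $y_d \in R_\delta$ in $R_\delta^{\Lambda_0}$. The only non-routine KLR-relation checks are: the Leibniz-type relation (\ref{KLRypsi}) and the quadratic relation (\ref{KLRpsi2}) at $r = d-1$, both of which reduce to the identity $y_{d-1} = 0$ in $R_\delta^{\Lambda_0}$ (coming from Lemma~\ref{Cy}(i) via the surjection $C_\delta \twoheadrightarrow R_\delta^{\Lambda_0}$); and the braid relation (\ref{KLRbraid}), whose right-hand side carries a factor $\delta_{i_r, i_{r+2}} 1_\bi$ that vanishes in $R_\delta^{\Lambda_0}$, because either $\bi \notin G^\delta$ and $1_\bi = 0$, or $\bi \in G^\delta$ is homogeneous (Lemma~\ref{wordfacts}(i)), which precludes $i_r = i_{r+2}$. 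Since $\Phi(1_\bi) = 0$ for non-semicuspidal $\bi$, $\Phi$ factors through $C_\delta$ as the desired $\bar\Phi$.

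Surjectivity of $\bar\Phi$ is clear: the elements $1_\bi$, $\psi_r$, $\bar\Phi(y_1) = y_1 \otimes 1$, and $\bar\Phi(y_d - y_1) = 1 \otimes y_d$ together hit all generators of the target. For injectivity, I would compute $\bar\Phi$ on the basis $X$ of $C_\delta$ given by Theorem~\ref{Cbasis}, obtaining
\[
\bar\Phi\bigl(y_1^b (y_1 - y_d)^m \psi_w 1_\bi\bigr) = y_1^b \otimes (-y_d)^m \psi_w 1_\bi.
\]
Let $A \subset C_\delta$ denote the $\kk$-span of (\ref{Ccycbasis}). By Theorem~\ref{Cbasis} we have $C_\delta = \bigoplus_{b \geq 0} y_1^b A$, so the kernel $y_1 C_\delta$ of the natural surjection $\pi\colon C_\delta \twoheadrightarrow R_\delta^{\Lambda_0}$ meets $A$ trivially, and $\pi|_A$ is $\kk$-linearly injective. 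It follows that $\bar\Phi$ sends $X$ bijectively onto a linearly independent subset of $\kk[y_1] \otimes R_\delta^{\Lambda_0}$, and hence is an isomorphism. Pulling the subalgebra $1 \otimes R_\delta^{\Lambda_0}$ back along $\bar\Phi^{-1}$ identifies it with $A \subset C_\delta$, whose basis is precisely (\ref{Ccycbasis}).

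The main technical obstacle will be the careful case-by-case verification that $\Phi$ is well-defined---particularly the Leibniz-type relation at $r = d-1$ and the braid relation---but all such checks reduce cleanly to Lemma~\ref{Cy} together with the homogeneity of semicuspidal words.
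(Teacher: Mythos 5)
Your proof is correct, and it rests on the same underlying ingredients as the paper's, but it constructs the isomorphism in the opposite direction. The paper builds a section \(\varphi: R_\delta^{\Lambda_0} \to C_\delta\) of the natural surjection, given by \(1_\bi \mapsto 1_\bi\), \(\psi_r \mapsto \psi_r\), \(y_r \mapsto y_r - y_1\); because the polynomials \(Q_{ij}\) are translation-invariant, this map preserves (\ref{KLRpsi2}) and (\ref{KLRbraid}) automatically, and the only relation needing comment is (\ref{KLRypsi}), which reduces to centrality of \(y_1\) (Lemma \ref{Cy}(iii)). Your map \(\bar\Phi: C_\delta \to \kk[y_1] \otimes R_\delta^{\Lambda_0}\) instead assigns \(y_d\) its distinguished role up front, and so you must discharge more of the relations explicitly; you correctly note that these checks come down to \(y_1 = \cdots = y_{d-1} = 0\) in \(R_\delta^{\Lambda_0}\) (Lemma \ref{Cy}(i) pushed through the surjection \(C_\delta \twoheadrightarrow R_\delta^{\Lambda_0}\)) together with the homogeneity of semicuspidal words (Lemma \ref{wordfacts}(i)) to kill the defect term in (\ref{KLRbraid}). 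Once the map is shown to be well defined, your injectivity argument---identifying \(C_\delta = \bigoplus_{b\geq 0} y_1^b A\) from Theorem \ref{Cbasis} and observing \(\ker\pi \cap A = 0\)---makes precise what the paper compresses into "the result follows by Lemmas \ref{Cy}, \ref{Cpsi}, \ref{Cypsi} and Theorem \ref{Cbasis}." The two embeddings of \(R_\delta^{\Lambda_0}\) into \(C_\delta\) agree (\(\bar\Phi^{-1}(1 \otimes y_d) = y_d - y_1 = \varphi(y_d)\), and \(\varphi(y_r) = y_r - y_1 = 0\) for \(r < d\) matches \(\bar\Phi^{-1}(0)\)), so the basis claim in (\ref{Ccycbasis}) comes out the same either way.
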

\begin{proof}
We may construct a map \(\varphi:R_\delta^{\Lambda_0} \to C_\delta\) via:
\begin{align*}
1_{\bi} \mapsto 1_{\bi}, \hspace{10mm} \psi_r \mapsto \psi_r, \hspace{10mm} y_r \mapsto y_r-y_1.
\end{align*}
All defining relations of \(R_\delta^{\Lambda_0}\) are preserved by the map---the only non-obvious relation to check is (\ref{KLRypsi}), which follows since \(y_1\) is central in \(C_\delta\) by Lemma \ref{Cy}(iii). Thus \(\varphi\) is a well-defined homomorphism of graded \(\kk\)-algebras which splits the natural surjection \(C_\delta \twoheadrightarrow R_\delta^{\Lambda_0}\). The set (\ref{Ccycbasis}) is clearly in the image of \(\varphi\), so the result follows by Lemmas \ref{Cy}, \ref{Cpsi}, \ref{Cypsi} and Theorem \ref{Cbasis}.
\end{proof}

\subsection{\boldmath Description of the algebra $B_\de$}\label{zigsec}
%Recall from \S\ref{SSSemiCusp} that we denote by $\De_{\de,i}$ a  projective cover of $L_{\de,i}$ in $\mod{C_\de}$ and $\De_\de:=\bigoplus_{i\in I'}\De_{\de,i}$. We set $B_\delta := \textup{End}_{C_\delta}( \Delta_\delta)^\textup{op}$. 

Recall the signs $\eps_{i,j}$ from \S\ref{SKLR}. 
Define %\(\xi_1 \in\{ \pm 1\}\) as follows:
\begin{align*}
\xi_1 := \begin{cases}
1
&
\textup{if }{\tt C} = {\tt A}_{1}^{(1)};\\
\varepsilon_{10} \cdots \varepsilon_{\ell,\ell-1}\varepsilon_{0,\ell}
&
\textup{if }{\tt C} = {\tt A}_{\ell > 1}^{(1)};\\
(-1)^{\ell}
&
\textup{if }{\tt C} = {\tt D}_{\ell}^{(1)};\\
-1
&
\textup{if }{\tt C}= {\tt E}_\ell^{(1)}.
\end{cases}
\end{align*}
Then for all other \(i \in I'\), define \(\xi_i\) such that \(\xi_i\xi_j=-1\) whenever \({\tt c}_{i,j}=-1\) (this is possible as \(\Gamma\) is a tree). We also define, for all \(i,j \in I'\) with \({\tt c}_{i,j} = -1\), the constants
\begin{align*}
\mu_{ji}=
\begin{cases}
\varepsilon_{ji} & \textup{if }\xi_i=1;\\
1 & \textup{if }\xi_i = -1.\\
\end{cases}
\end{align*}
\begin{Lemma}\label{ximu}
For all \(i,j \in I'\) with \({\tt c}_{i,j} = -1\), we have \(\varepsilon_{ji} \xi_i = \mu_{ij} \mu_{ji}\).
\end{Lemma}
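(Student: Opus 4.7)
The plan is to prove this by direct case analysis on the signs $\xi_i, \xi_j \in \{\pm 1\}$. First I would observe that the case ${\tt C} = {\tt A}_1^{(1)}$ is vacuous: here $I' = \{1\}$ consists of a single node, so there are no pairs $i,j \in I'$ with ${\tt c}_{i,j} = -1$. Thus we may assume ${\tt C} \neq {\tt A}_1^{(1)}$, in which case $\xi_i, \xi_j \in \{\pm 1\}$, and the defining condition $\xi_i \xi_j = -1$ forces exactly one of $\xi_i, \xi_j$ to equal $-1$.

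I would then split into the two remaining cases. If $\xi_i = 1$ and $\xi_j = -1$, then by definition $\mu_{ji} = \varepsilon_{ji}$ and $\mu_{ij} = 1$, so $\mu_{ij}\mu_{ji} = \varepsilon_{ji} = \varepsilon_{ji}\xi_i$, as required. If instead $\xi_i = -1$ and $\xi_j = 1$, then $\mu_{ji} = 1$ and $\mu_{ij} = \varepsilon_{ij}$, so $\mu_{ij}\mu_{ji} = \varepsilon_{ij}$; on the other side, $\varepsilon_{ji}\xi_i = -\varepsilon_{ji} = \varepsilon_{ij}$, using the relation $\varepsilon_{ij}\varepsilon_{ji} = -1$ from \S\ref{SKLR}. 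The two sides therefore agree in both cases.

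There is no significant obstacle here: the statement is a purely combinatorial identity between signs, and once one notes that the hypothesis ${\tt c}_{i,j} = -1$ combined with $\xi_i \xi_j = -1$ reduces the analysis to two symmetric subcases, each subcase is a one-line check using $\varepsilon_{ij}\varepsilon_{ji} = -1$. The lemma will be invoked later as bookkeeping when matching KLR signs against the zigzag algebra relations, so its role is technical rather than conceptual.
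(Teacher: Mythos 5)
Your proof is correct and matches the paper's, which simply says ``this is a direct check, just using the fact that by definition $\varepsilon_{ij}=-\varepsilon_{ji}$''; you have spelled out that check into the two cases $\xi_i=1$ and $\xi_i=-1$ (forced to be exhaustive by $\xi_i\xi_j=-1$) and also noted, correctly, that type ${\tt A}_1^{(1)}$ is vacuous since $I'$ is a singleton.
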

\begin{proof}
This is a direct check, just using the fact that by definition \(\varepsilon_{ij} = -\varepsilon_{ji}\).
\end{proof}

Assume for a moment that $\k$ is a field. Then $\{L_{\de,i}\mid i\in I'\}$ is a complete set of irreducible $C_\de$-modules up to isomorphism and degree shift, Moreover, the orthogonal idempotents \(\{1_{\bi} \mid \bi \in G^\delta\}\) in \(C_\delta\) are primitive, since by Theorem~\ref{Cbasis}, the space \((1_{\bi} C_\delta 1_{\bi})_0\) is 1-dimensional. Set  
$$1_{i} := 1_{\bb^i}\quad(\textup{for }i \in I'),\qquad
1_{\Delta} := \sum_{i\in I'}1_i. 
$$
By Lemma~\ref{Ldel}, we have \(1_iL_{\delta,i} \neq 0\), and \(1_iL_{\delta, j}=0\) for every \(i \neq j \in I'\). So the projective cover \(\Delta_{\delta,i}\) of \(L_{\delta,i}\) in \(C_\delta\) is isomorphic to \(C_\delta 1_i\), and 
$$
\Delta_\delta:= \bigoplus_{i\in I'} \Delta_{\delta,i} \cong C_{\delta} 1_\Delta.
$$ 
is a projective generator in $\mod{C_\de}$. We want to compute the endomorphism algebra 
$$B_\de:=\End_{C_\de}(\De_\de)^\op\cong 1_\De C_\de 1_\De.$$
Observe that the definitions of $C_\de$, $\De_\de$, $1_\De$, $B_\de$, $L_{\de,i}$, etc. make sense over an arbitrary commutative ground ring $\k$. In fact, all our computations below will be done in this generality. 

The following lemma follows from consideration of Theorem \ref{Cbasis}:

\begin{Lemma}\label{MinDelHom}
For \(i,j \in I'\), \(\textup{Hom}_{C_\delta}(\Delta_{\delta,i}, \Delta_{\delta,j}) \cong 1_i C_{\delta} 1_j\) as \(\kk\)-modules, and \(1_i C_{\delta} 1_j\) has \(\kk\)-basis
\begin{align*}
\{y_1^b (y_1-y_d)^m 1_j \mid b \in \mathbb{Z}_{\geq 0}, m \in \{0,1\}\} \textup{ if }i=j,
\end{align*}
and
\begin{align*}
\{y_1^b \psi_{i,j}1_j\mid b \in \mathbb{Z}_{\geq 0}\} \textup{ if }{\tt c}_{i,j}=-1,
\end{align*}
and is zero otherwise.
\end{Lemma}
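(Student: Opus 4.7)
The first claim is the standard adjunction: since $\Delta_{\delta,i} = C_\delta 1_i$ is generated by the idempotent $1_i$, any $\varphi: C_\delta 1_i \to C_\delta 1_j$ is determined by $\varphi(1_i)$, which satisfies $\varphi(1_i) = \varphi(1_i\cdot 1_i) = 1_i\varphi(1_i) \in 1_i C_\delta 1_j$. The map $\varphi \mapsto \varphi(1_i)$ is then a $\kk$-module isomorphism $\Hom_{C_\delta}(\Delta_{\delta,i},\Delta_{\delta,j}) \iso 1_i C_\delta 1_j$. So everything reduces to identifying a basis for $1_{\bb^i} C_\delta 1_{\bb^j}$.

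The plan for the basis is to intersect the basis $X$ of Theorem \ref{Cbasis} with $1_{\bb^i} C_\delta 1_{\bb^j}$. Using the KLR relation \(\psi_w 1_{\bi} = 1_{w\bi}\psi_w\) (iterated from (\ref{KLRpsiidem})) and the fact that the $y_r$ commute with idempotents, a basis element $y_1^b(y_1-y_d)^m \psi_w 1_\bi$ from $X$ lies in $1_{\bb^i}C_\delta 1_{\bb^j}$ exactly when $\bi = \bb^j$ and $w\bb^j = \bb^i$. The remaining work is a case analysis based on $(i,j)$.

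Three cases arise. If $i = j$, I look for $w \in W_{\bb^j}$ with $w\bb^j = \bb^j$; by the uniqueness in Lemma \ref{wordfacts}(v) only $w = \mathrm{id}$ works, and $\deg(\psi_{\mathrm{id}} 1_{\bb^j}) = 0$, so the constraint $m + \deg(\psi_w 1_{\bi}) \leq 1$ permits $m \in \{0,1\}$, producing the basis $\{y_1^b(y_1 - y_d)^m 1_j\}$. If ${\tt c}_{i,j} = -1$, Lemma \ref{wordfacts}(vi) supplies a unique $w = w_{i,j}$ with $w\bb^j = \bb^i$, and Lemma \ref{Cpsi}(ii) gives $\deg(\psi_{i,j} 1_{\bb^j}) = 1$, which forces $m = 0$ and yields the basis $\{y_1^b \psi_{i,j} 1_j\}$. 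Otherwise $i \neq j$ with ${\tt c}_{i,j} = 0$, in which case Lemma \ref{Cpsi} shows that no $w \in W_{\bb^j}$ satisfies $w\bb^j \in G^i$ (since $(w\bb^j)_d$ must equal $j$ or be a neighbor of $j$), so no element of $X$ lies in $1_i C_\delta 1_j$, and the space vanishes.

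There is no real obstacle here: the whole argument is a bookkeeping selection of a subset of the already-constructed basis $X$, using the uniqueness statements from Lemma \ref{wordfacts}(v)--(vii) and the degree computation in Lemma \ref{Cpsi}.
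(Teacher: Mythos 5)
Your proof is correct and takes essentially the same route as the paper, which disposes of this lemma with the single remark that it "follows from consideration of Theorem \ref{Cbasis}"; you have simply spelled out the bookkeeping — the standard $\varphi\mapsto\varphi(1_i)$ adjunction followed by intersecting the basis $X$ with the word space $1_{\bb^i}C_\delta 1_{\bb^j}$ using the uniqueness statements of Lemma \ref{wordfacts}(v)--(vii) and the degree count in Lemma \ref{Cpsi}.
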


Recall that \(\Gamma\) is the Dynkin diagram corresponding to the finite type Cartan matrix \({\tt C}'\), so the vertices of \(\Gamma\) are identified with the set \(I'\). The following theorem establishes a Morita equivialence between the cyclotomic KLR algebra \(R_\delta^{\Lambda_0}\) and the zigzag algebra \(\Zig = \Zig(\Gamma)\).

\begin{Theorem}\label{Zigisom}
Consider \(1_\Delta R_\delta^{\Lambda_0} 1_\Delta\) as a subalgebra of \(C_\delta\) via Lemma \ref{Ccyc}.
\begin{enumerate}
\item \(1_\Delta R_\delta^{\Lambda_0} 1_\Delta\) has basis
\begin{align*}
\{(y_1-y_d)^m 1_j \mid j \in I',\, m \in \{0,1\}\} \cup \{ \psi_{i,j}1_j \mid i,j \in I',\, {\tt c}_{i, j}=-1\}.
\end{align*}
\item There is an isomorphism of graded algebras 
\begin{equation}\label{E100617} 
\varphi: 1_\Delta R_\delta^{\Lambda_0} 1_\Delta \iso \textup{\(\Zig\)},\quad 
1_i \mapsto \ze_i, \quad
(y_1-y_d) 1_i\mapsto \xi_i \zc \ze_i,\quad
\psi_{j,i} 1_i\mapsto \mu_{ji} \za^{j,i},
\end{equation}
\end{enumerate}
\end{Theorem}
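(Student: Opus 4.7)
The plan is as follows. For part (i), the idea is to intersect the basis of $R_\delta^{\La_0}$ given in Lemma \ref{Ccyc} with $1_\Delta$. A basis element $(y_1-y_d)^m\psi_w 1_\bi$ of $R_\delta^{\La_0}$ lies in $1_\Delta R_\delta^{\La_0} 1_\Delta$ exactly when $\bi = \bb^i$ for some $i \in I'$ and $w\bi = \bb^j$ for some $j \in I'$. By the uniqueness statements in Lemma \ref{wordfacts}(v),(vi),(vii), this can only happen when either $i = j$ and $w = \mathrm{id}$, or ${\tt c}_{i,j} = -1$ and $w = w_{j,i}$; and in the second case, $\deg(\psi_{j,i}1_i) = 1$ forces $m = 0$. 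This produces exactly the asserted basis.

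For part (ii), the map $\varphi$ clearly preserves degrees. By part (i) together with the $\kk$-basis of $\textup{\(\Zig\)}$ described in Lemma \ref{zigfacts}(i), $\varphi$ sends a basis to a basis (up to invertible scalars), hence is a graded $\kk$-linear isomorphism. It remains only to verify that $\varphi$ respects the defining zigzag relations of Definition \ref{zigdef}.

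These relations break up as follows. Orthogonality of the idempotents $\{\ze_i\}$ is immediate from the KLR idempotent relations. The vanishing of all length-$3$ paths corresponds to the identities $(y_1-y_d)1_i \cdot \psi_{j,i}1_i = 0$ and $\psi_{i,j}1_j \cdot (y_1-y_d)1_j = 0$ in $R_\delta^{\La_0}$; the first is immediate from Lemma \ref{Cypsi}, and the second follows from that lemma together with the centrality of $y_1$ (Lemma \ref{Cy}(iii)) and the fact that $y_1$ vanishes in $R_\delta^{\La_0}$. The vanishing of length-$2$ non-cycles $\psi_{i,j}1_j \cdot \psi_{j,k}1_k$ (for $i \neq k$) follows from part (i): the product lies in $1_i R_\delta^{\La_0} 1_k$, which is zero unless ${\tt c}_{i,k} = -1$. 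Finally, the relation $\bigl((y_1-y_d)1_i\bigr)^2 = 0$, corresponding to $\zc^2 \ze_i = 0$ in $\textup{\(\Zig\)}$, is precisely Lemma \ref{Cy}(ii).

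The main obstacle is the remaining zigzag relation, the equality of length-$2$ cycles at a common vertex. Concretely, for every $i \in I'$ and every neighbor $j$ of $i$, the task is to prove
\[\psi_{i,j} \psi_{j,i}\, 1_i \;=\; \varepsilon_{ji}\,(y_1 - y_d)\,1_i\]
in $C_\delta$. Granting this and applying Lemma \ref{ximu}, one obtains
\[\varphi(\psi_{i,j}\psi_{j,i}1_i) \;=\; \varepsilon_{ji}\xi_i\, \zc \ze_i \;=\; \mu_{ij}\mu_{ji}\,\za^{i,j} \za^{j,i} \;=\; \varphi(\psi_{i,j}1_j)\,\varphi(\psi_{j,i}1_i),\]
and the independence of the right-hand side from the choice of neighbor $j$ simultaneously encodes the ``all length-$2$ cycles at $i$ are equal'' relation. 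To establish the displayed identity, I would use Lemma \ref{wordfacts}(vii) to decompose $w_{j,i} = u_1 s_{d-1} u_2$ and $w_{i,j} = v_1 s_{d-1} v_2$ with $u_2, v_2, u_1, v_1$ admissible for the appropriate words, then in $C_\delta$ invoke independence of reduced expression (Lemma \ref{Cdelfacts}(i)) to slide the admissible factors together, concentrating the non-admissibility in a single crossing. A single application of the quadratic KLR relation $\psi_{d-1}^2 1_\bi = \varepsilon_{i_{d-1}, i_d}(y_{d-1} - y_d)1_\bi$, combined with the identification $y_{d-1} = y_1$ in $C_\delta$ (Lemma \ref{Cy}(i)), collapses the expression, after which the outer admissible $\psi$-factors can be unwound. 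Tracking the $\varepsilon$-signs accumulated along the slides then yields the coefficient $\varepsilon_{ji}$; this sign analysis, mildly case-dependent on the type of $\Gamma$ through the explicit form of $\bb^i$ and $w_{i,j}$, is the principal technical hurdle of the proof.
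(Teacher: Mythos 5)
Your proposal is correct and essentially matches the paper's proof: part (i) does follow directly from Lemma \ref{Ccyc} by intersecting with $1_\Delta$, and the core of part (ii) is indeed the single identity $\psi_{i,j}\psi_{j,i}1_i=\varepsilon_{ji}(y_1-y_d)1_i$, proved exactly as you sketch, using the decomposition of Lemma \ref{wordfacts}(vi), $w_{j,i}=w_{i,j}^{-1}$ to cancel the inner admissible blocks, the quadratic KLR relation at position $d-1$, and Lemma \ref{Cy}(i); in fact the sign bookkeeping you flag as the principal hurdle is simpler than you anticipate, since admissible crossings satisfy $\psi_r^2 1_\bi=1_\bi$ and so contribute no signs, leaving $\varepsilon_{ji}$ from $\psi_{d-1}^2$ as the only sign, independently of type. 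The one streamlining worth noting from the paper is that because both $1_\Delta R_\delta^{\Lambda_0}1_\Delta$ and $\textup{\(\Zig\)}$ are concentrated in degrees $0,1,2$ and $\varphi$ visibly respects the idempotents, the only products that require checking are the degree-$1$ by degree-$1$ ones, so your separate verifications of the length-$3$ vanishing, the length-$2$ non-cycle vanishing, and $\zc^2\ze_i=0$ are all automatic (your length-$2$ non-cycle argument also tacitly uses that an ${\tt ADE}$ Dynkin diagram is a tree and so has no triangles, a point the degree argument sidesteps entirely).
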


\begin{proof}
Part (i) follows immediately from Lemma~\ref{Ccyc}. For part (ii), let $\phi:1_\Delta R_\delta^{\Lambda_0} 1_\Delta \iso \textup{\(\Zig\)}$ be the degree zero homogeneous linear isomorphism defined on basis elements as in (\ref{E100617}). To check that $\phi$ respects multiplication, observe that elements of the form \((y_1-y_d)1_\bi\) and \(\psi_{j,i}1_i\) have degrees 1 and 2 respectively, and both \( 1_\Delta R_\delta^{\Lambda_0} 1_\Delta\) and \(\Zig\) are concentrated in degrees 0,1,2. Thus the only non-obvious check is that \(\varphi(\psi_{i,j}1_j \cdot \psi_{k,l}1_l) = \varphi(\psi_{i,j}1_i) \varphi(\psi_{k,l}1_l)\).

Note that by (i), if \(x \in 1_\Delta R_\delta^{\Lambda_0} 1_\Delta\) is in degree 2, then \(1_i x 1_j = 0\) whenever \(i \neq j\). Thus 
\begin{align*}
\psi_{i,j}1_j \cdot \psi_{k,l}1_l = \delta_{j,k} 1_i \psi_{i,j}\psi_{j,l}1_l = \delta_{j,k} \delta_{i,l} \psi_{i,j} \psi_{j,i} 1_i.
\end{align*}
By Lemma \ref{wordfacts}(vi), \(w_{i,j} = w_1 s_{d-1} w_2\) for some \(w_2\) which is \(\bb^j\)-admissible, and some \(w_1\) which is \(s_{d-1}w_2\bb^j\)-admissible, and \(w_{j,i} = w_{i,j}^{-1}\). Then by Lemma \ref{Cdelfacts}(i), we may write
\begin{align*}
\psi_{i,j}1_j \cdot \psi_{k,l}1_l = \delta_{j,k} \delta_{i,l} \psi_{w_1} \psi_{d-1} \psi_{w_2} \psi_{w_2^{-1}} \psi_{d-1} \psi_{w_1^{-1}} 1_i.
\end{align*}
Since \(w_2^{-1}\) is \(s_{d-1}w_1^{-1}\bb^i\)-admissible, and \(w_2\) is \(w_2^{-1}s_{d-1}w_1^{-1}\bb^i\)-admissible, it follows from relation (\ref{KLRpsi2}) that \(\psi_{w_2}\psi_{w_2^{-1}}1_{s_{d-1}w_1^{-1}} =1_{s_{d-1}w_1^{-1}}\). Thus we have 
\begin{align*}
\psi_{i,j}1_j \cdot \psi_{k,l}1_l =\delta_{j,k} \delta_{i,l} \psi_{w_1} \psi_{d-1} \psi_{d-1} \psi_{w_1^{-1}} 1_i.
\end{align*}
Since \(w_2^{-1}s_{d-1}w_1^{-1}\bb^i = \bb^j\), where \(w_1^{-1}\) is \(\bb^i\) is \(\bb^i\)-admissible and \(w_2^{-1}\) is \(s_{d-1}w_1^{-1}\bb^i \)-admissible, we have by Lemma \ref{wordfacts}(ii) that \((w_1^{-1}\bb^i)_d = i\) and \((w_1^{-1}\bb^i)_{d-1} = j\). Thus by relation (\ref{KLRpsi2}) we have 
\begin{align*}
\psi_{i,j}1_j \cdot \psi_{k,l}1_l &=\delta_{j,k} \delta_{i,l} \psi_{w_1} \varepsilon_{j,i}(y_{d-1}-y_d) \psi_{w_1^{-1}} 1_i = \delta_{j,k} \delta_{i,l} \psi_{w_1} \varepsilon_{j,i}(y_1-y_d) \psi_{w_1^{-1}} 1_i\\
&= \delta_{j,k} \delta_{i,l} \varepsilon_{j,i}(y_1-y_d)\psi_{w_1}  \psi_{w_1^{-1}} 1_i
= \delta_{j,k} \delta_{i,l} \varepsilon_{j,i}(y_1-y_d) 1_i.
\end{align*}
The second equality follows from Lemma \ref{Cy}(i). The third equality follows since \(w_1\), being \(w_1^{-1}\bb^i\)-admissible, cannot involve \(s_1\) or \(s_{d-1}\) by Lemma \ref{wordfacts}(ii). The fourth equality follows by admissibility of \(w_1\) and \(w_1^{-1}\).
Thus
\begin{align*}
\varphi(\psi_{i,j}1_j \cdot \psi_{k,l}1_l) =  
\xi_i\delta_{j,k} \delta_{i,l} \varepsilon_{ji} \zc \ze_i
\end{align*}
Now
\(
\varphi(\psi_{i,j}1_i) \varphi(\psi_{k,l}1_l)
=\kappa\za^{i,j}\za^{k,l}\), for some \(\kappa \in \kk\). The zigzag relations imply that this is zero unless \(i=l\) and \(j=k\). So
\begin{align*}
\varphi(\psi_{i,j}1_i) \varphi(\psi_{k,l}1_l) &= 
\delta_{j,k} \delta_{i,l} (\mu_{ij}\za^{i,j})(\mu_{ji}\za^{j,i})
=
\delta_{j,k} \delta_{i,l}\mu_{ij} \mu_{ji} \zc \ze_i
\end{align*}
Thus it follows from Lemma \ref{ximu} that \(\varphi(\psi_{i,j}1_j \cdot \psi_{k,l}1_l) = \varphi(\psi_{i,j}1_i) \varphi(\psi_{k,l}1_l)\). 
\end{proof}

\begin{Corollary}\label{minisom}
Let \(z\) be an indeterminate in degree $2$. We have isomorphisms of graded algebras  
$$B_\delta = \textup{End}_{C_\delta}( \Delta_\delta)^\textup{op} \cong 1_\Delta C_\delta 1_\Delta \cong\kk[y_1] \otimes 1_\Delta R_\delta^{\Lambda_0} 1_\Delta\cong \kk[z] \otimes \textup{\(\Zig\)}\cong \textup{\(\Zig\)}^\aff_1 \cong \textup{End}_{C_\delta}(\Delta_\delta).$$ 
\end{Corollary}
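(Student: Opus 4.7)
\medskip
\noindent
\textbf{Proof plan for Corollary \ref{minisom}.}

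The plan is to chain together the results already established in \S\ref{spansec}--\S\ref{zigsec} and combine them with the trivial case $n=1$ of the affinization construction from \S\ref{SSAffSym}. I will read the five claimed isomorphisms from left to right.

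\emph{Step 1 (standard $\End$--idempotent identification).} Since $\De_\de = C_\de 1_\De$ is a direct summand of $C_\de$, the map sending $\phi \in \End_{C_\de}(C_\de 1_\De)$ to $\phi(1_\De) \in 1_\De C_\de 1_\De$ is the standard isomorphism $\End_{C_\de}(\De_\de)^{\op} \iso 1_\De C_\de 1_\De$, which is clearly homogeneous of degree zero.

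\emph{Step 2 (splitting off the polynomial generator).} By Lemma~\ref{Ccyc} we have $C_\de \cong \kk[y_1]\otimes R_\de^{\La_0}$ as graded algebras, and $y_1$ is central (Lemma~\ref{Cy}(iii)). Restricting this isomorphism to the idempotent truncation by $1_\De = \sum_{i\in I'}1_i$ (which lies in the $R_\de^{\La_0}$-factor) yields
\[
1_\De C_\de 1_\De \;\cong\; \kk[y_1]\otimes 1_\De R_\de^{\La_0} 1_\De.
\]

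\emph{Step 3 (zigzag isomorphism).} Apply Theorem~\ref{Zigisom}(ii) to identify $1_\De R_\de^{\La_0} 1_\De \cong \Zig$ as graded $\kk$-algebras, and relabel $y_1$ as a formal indeterminate $z$ in degree $2$ to obtain
\[
\kk[y_1]\otimes 1_\De R_\de^{\La_0} 1_\De \;\cong\; \kk[z]\otimes \Zig.
\]

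\emph{Step 4 (rank-one affinization).} For $n=1$ the index set $[1,n-1]$ is empty, so the relations \eqref{AS} and \eqref{SZ} of Definition~\ref{AffDef} are vacuous and \eqref{AZ} just asserts that $z_1$ commutes with $\Zig$. Hence $\Zig_1^\aff = \mathcal{H}_1(\Zig)$ is, by inspection of its presentation (or by Theorem~\ref{AffBasis}(i)), isomorphic to $\kk[z_1]\otimes \Zig$ as graded $\kk$-algebras, giving the fourth isomorphism.

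\emph{Step 5 (opposite algebra).} For the last isomorphism, combine Lemma~\ref{zigfacts}(iv) with the identity on $\kk[z]$ to produce a graded isomorphism $\kk[z]\otimes\Zig \iso (\kk[z]\otimes\Zig)^{\op}$. Dualizing through Steps~1--3 we obtain $\End_{C_\de}(\De_\de)\cong \End_{C_\de}(\De_\de)^{\op}$, closing the chain.

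The argument is essentially assembly: the substantive content sits in Lemma~\ref{Ccyc} and Theorem~\ref{Zigisom}, already proved. The only point requiring a sanity check is that all gradings are preserved (degree $2$ for $y_1$, $z$ and $\Delta(1)\in\Zig\otimes\Zig$), which is immediate from Lemma~\ref{zigfacts}(ii) and the normalization $d=\deg\Delta(1)=2$ used in Definition~\ref{AffDef}. No step presents a real obstacle once the preceding results are in hand.
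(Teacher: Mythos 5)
Your proof is correct and follows essentially the same route as the paper's: standard idempotent identification, then Lemma~\ref{Ccyc}, then Theorem~\ref{Zigisom}, then the definition of $\Zig^\aff_1$, then the opposite-algebra isomorphism. The only cosmetic difference is in Step~5, where you invoke Lemma~\ref{zigfacts}(iv) together with the identity on $\kk[z]$ directly, while the paper cites Lemma~\ref{affzigop}; but for $n=1$ Lemma~\ref{affzigop} reduces to exactly that combination, so the content is identical.
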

\begin{proof}
The first isomorphism is standard. The second isomorphism follows from Lemma \ref{Ccyc}. The third isomorphism follows from Theorem \ref{Zigisom}. The fourth isomorphism follows from the definition of \(\Zig^\aff_1\). The fifth isomorphism follows from Lemma \ref{affzigop}.
\end{proof}

Now, to avoid confusion, we will write \(v_i:=1_i\) for the generating vector of word \(\bb^i\) in \(\Delta_{\delta,i} = C_\delta 1_i\). Let \(v_\delta := \sum_{i \in I'}v_i \in \Delta_\delta\). 

\begin{Corollary}\label{ExpEndZig}
The \(\kk\)-algebra \(\End_{C_\delta}(\Delta_\delta)\) is generated by the homomorphisms 
\begin{align*}
\begin{array}{c}
e_i: %\Delta_\delta \to \Delta_{\delta,i} \\
v_\delta \mapsto 1_iv_\delta,
\end{array}
\hspace{4mm}
\begin{array}{c}
z: %\Delta_\delta \to \Delta_\delta \\
v_\delta \mapsto y_1 v_\delta,
\end{array}
\hspace{4mm}
\begin{array}{c}
c_i: %\Delta_\delta \to \Delta_{\delta,i} \\
v_\delta \mapsto \xi_i(y_1-y_d) 1_iv_\delta,
\end{array}
\hspace{4mm}
\begin{array}{c}
a^{i,j}: %\Delta_{\delta} \to \Delta_{\delta,i} \\
v_\delta \mapsto \mu_{ji}\psi_{j,i} 1_i v_\delta,
\end{array}
\end{align*}
where $i$ runs over \(I'\) and \(j\) runs over all neighbors of $i$ in $I'$, subject only to the same relations as their namesakes in \(\kk[z] \otimes\textup{\(\Zig\)} \cong \textup{\(\Zig\)}^\aff_1\): 
\begin{align*}
&\sum_{i \in I'}e_i = 1,
\hspace{6mm}
e_i\circ e_j = \delta_{i,j}e_i, 
\hspace{19.5mm}
a^{i,j} \circ a^{k,l} = \delta_{j,k} \delta_{i,l} c_i
\hspace{11mm}
e_k\circ a^{i,j} =\de_{i,k}a^{i,j},
\\
&c_i \circ c_j = 0,
\hspace{6mm}
e_i \circ c_j =c_j \circ e_i = \delta_{i,j} c_j,
\hspace{6mm}
a^{i,j} \circ c_k = c_k \circ a^{i,j} = 0,
\hspace{6mm}
a^{i,j} \circ e_k=\de_{j,k}a^{i,j},
\end{align*}
for all admissible \(i,j,k,l \in I'\), and \(z\circ g = g\circ z\) for all generators $g$. 
\end{Corollary}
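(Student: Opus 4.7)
The plan is to transport the isomorphism of Corollary \ref{minisom} through the identification $\End_{C_\delta}(\Delta_\delta) \cong (1_\Delta C_\delta 1_\Delta)^{\textup{op}}$, and read off where each listed homomorphism goes. Since $\Delta_\delta = C_\delta 1_\Delta$ is cyclic on $v_\delta$ with $1_\Delta v_\delta = v_\delta$, evaluation $f \mapsto f(v_\delta)$ gives an \emph{anti}-isomorphism $\End_{C_\delta}(\Delta_\delta) \iso 1_\Delta C_\delta 1_\Delta$, with inverse $x \mapsto (v_\delta \mapsto xv_\delta)$. Under this bijection, the homomorphisms $e_i$, $z$, $c_i$, $a^{i,j}$ are sent respectively to $1_i$, $y_1 1_\Delta$, $\xi_i(y_1-y_d)1_i$, and $\mu_{ji}\psi_{j,i}1_i$ in $1_\Delta C_\delta 1_\Delta$.

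Next, I would compose with the isomorphism $1_\Delta C_\delta 1_\Delta \cong \kk[z]\otimes \Zig$ of Corollary \ref{minisom} and then with the opposite-isomorphism $\nu$ of Lemma \ref{zigfacts}(iv) (extended by the identity on $\kk[z]$), obtaining an honest isomorphism $\End_{C_\delta}(\Delta_\delta) \iso \kk[z]\otimes \Zig \cong \Zig^\aff_1$. Using the explicit formulas of Theorem \ref{Zigisom} together with the fact that $\xi_i,\mu_{ji}\in\{\pm 1\}$, one sees that $e_i \mapsto \ze_i$, $z\mapsto z$, $c_i \mapsto \xi_i^2\,\zc\ze_i = \zc\ze_i$, and $a^{i,j} \mapsto \nu(\mu_{ji}^2 \za^{j,i}) = \za^{i,j}$. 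Thus the listed homomorphisms correspond to the standard generators of $\kk[z]\otimes \Zig$, which certainly generate.

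It remains to verify that the displayed relations form a presentation of $\kk[z]\otimes\Zig$ on this generating set. Centrality of $z$ is built into the tensor-product structure, and the remaining relations are precisely the defining zigzag relations of Definition \ref{zigdef} rewritten: the identity $a^{i,j}\circ a^{k,l} = \delta_{j,k}\delta_{i,l}c_i$ encodes both the vanishing of length-two non-cycles and the equality of all length-two cycles at a common vertex, while $c_i\circ c_j = 0$ and $a^{i,j}\circ c_k = c_k\circ a^{i,j}= 0$, together with the previous relation, force all paths of length $\geq 3$ to vanish (for instance, any triple $a^{p,q}a^{q,r}a^{r,s}$ reduces via the first to $\delta_{q,s}a^{p,q}c_q = 0$). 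The orthogonality and incidence identities are immediate. The main content of the corollary lies in Theorem \ref{Zigisom} and Corollary \ref{minisom}; no genuine obstacle remains, and the only care needed is to keep track of the anti-isomorphism introduced by evaluation at $v_\delta$, which is absorbed by the self-opposite structure $\widehat{\nu}$ of $\Zig^\aff_1$ from Lemma \ref{affzigop}.
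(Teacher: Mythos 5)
Your proposal is correct and follows the same route as the paper, which simply cites ``tracing through the isomorphisms of Corollary~\ref{minisom}''; you have usefully made explicit the anti-isomorphism coming from evaluation at $v_\delta$ and its cancellation against $\nu$ (equivalently $\widehat\nu$ of Lemma~\ref{affzigop}), together with the sign absorptions $\xi_i^2=\mu_{ji}^2=1$. No gaps.
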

\begin{proof}
Follows directly from tracing through the isomorphisms of Corollary \ref{minisom}.
\end{proof}

\section{On the higher imaginary stratum categories}\label{HighImag}

Suppose for a moment that \(\kk\) is a field. 
It is shown in \cite{McNAff, KMStrat} that the $R_{n \delta}$-module \(\Delta_\delta^{\circ n} = \textup{Ind}_{\delta, \ldots, \delta}(\Delta_\delta^{\boxtimes n})\) factors through to a projective \(C_{n \delta}\)-module, and \(\Delta_\delta^{\circ n}\) is a projective generator for \(C_{n \delta}\) if \(\cha \kk=0\) or \(\cha \kk>n\). 
We will build on the previous section to explicitly describe for all \(n\) the algebra \(\End_{C_{n\delta}}(\Delta^{\circ n}_\delta)\)  as the rank $n$ affine zigzag algebra $\Zig_n^\aff(\Gamma)$, defined in \S\ref{SSAffZig}, where $\Gamma$ is the finite type Dynkin diagram of type $\Car'$. This gives a Morita equivalence between \(C_{n\delta}\) and \(\Zig_n^\aff\) when \(\cha \kk=0\) or \(\cha \kk>n\). In fact, our proof that $\End_{C_{n\delta}}(\Delta^{\circ n}_\delta)\cong \Zig_n^\aff(\Gamma)$ works over any commutative until ground ring $\kk$.

\subsection{\boldmath Endomorphisms of \(\Delta_\delta^{\circ n}\)}\label{EndSec} First we compute the graded dimension of $\End_{C_{n\delta}}(\Delta_\delta^{\circ n})$:

\begin{Lemma}\label{Endcircdim}
For \(n \in \mathbb{Z}_{\geq0}\) we have 
\begin{align*}
\dim_q\End_{C_{n\delta}}(\Delta_\delta^{\circ n}) = \frac{n!( \ell + 2(\ell-1)q + \ell q^2    )^n}{(1-q^2)^n}.
\end{align*}
\end{Lemma}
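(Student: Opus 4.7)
I would begin by applying the standard Frobenius reciprocity for KLR algebras, which identifies
\begin{align*}
\END_{R_{n\de}}(\De_\de^{\circ n}) \cong \HOM_{R_{\de,\ldots,\de}}\bigl(\De_\de^{\boxtimes n},\ \Res_{\de,\ldots,\de}\Ind_{\de,\ldots,\de}(\De_\de^{\boxtimes n})\bigr),
\end{align*}
and reduces the problem to computing the right-hand side on the parabolic level. The main tool for unpacking the restriction-induction composition is the Mackey filtration: it provides a filtration of $\Res_{\de,\ldots,\de}\Ind_{\de,\ldots,\de}(\De_\de^{\boxtimes n})$ whose subquotients are indexed by $n\times n$ matrices $\alpha=(\alpha_{ij})$ with entries in $Q_+$, all row- and column-sums equal to $\de$, with each subquotient a parabolically induced module built from restrictions of $\De_\de$ along the row decompositions and re-induction along the columns, together with an explicit degree shift governed by the pairs $(\alpha_{ik},\alpha_{jl})$ with $i<j$, $k>l$.

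The crux of the argument is the claim that only the $n!$ permutation matrices (one nonzero entry equal to $\de$ in each row and column) contribute to the $\HOM$ above. For any matrix $\alpha$ with some row containing two nonzero entries $\alpha_{ij},\alpha_{ik}$ ($j\ne k$), the corresponding Mackey factor in the $i$th slot is forced to be a properly induced module built from a nontrivial restriction $\Res_{\alpha_{i\bullet}}(\De_\de)$; since $\De_\de$ is imaginary semicuspidal for a balanced convex preorder, passing through the semicuspidal quotient $C_{n\de}$ annihilates these non-permutation contributions as the relevant word spaces become non-semicuspidal (invoking Corollary~\ref{CSCW} and the character description of $\De_\de$ following Lemma~\ref{Ldel}).

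For each permutation $\sigma\in\Si_n$ the degree shift from the Mackey formula specialises to a sum of terms $(\de,\de)$, which vanishes since $\de$ is the null-root. Consequently each permutation contributes precisely a copy of
\begin{align*}
\HOM_{R_{\de,\ldots,\de}}(\De_\de^{\boxtimes n},\De_\de^{\boxtimes n}) \;\cong\; \END_{R_\de}(\De_\de)^{\otimes n},
\end{align*}
whose graded dimension is $\bigl((\ell + 2(\ell-1)q + \ell q^2)/(1-q^2)\bigr)^n$ by Corollary~\ref{minisom}. Summing over $\Si_n$ yields the stated formula. The main obstacle is the vanishing claim for non-permutation matrices: carrying it out rigorously requires tracing the Mackey combinatorics through the semicuspidal quotient and verifying that the resulting shuffled restrictions of $\De_\de$ are killed by $1_{\noncusp}$-type idempotents in $C_{n\de}$; the rest of the calculation is bookkeeping of the null-root degree shifts and the parabolic $\HOM$ splitting.
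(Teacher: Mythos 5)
Your proposal follows the same route as the paper's proof: Frobenius reciprocity reduces the endomorphism computation to a parabolic $\Hom$, the Mackey theorem analyzes $\Res_{\de,\ldots,\de}\Ind_{\de,\ldots,\de}(\De_\de^{\boxtimes n})$, and the identification $\End_{C_\de}(\De_\de)\cong\kk[z]\otimes\Zig$ from Corollary~\ref{minisom} plus Lemma~\ref{zigfacts}(ii) supplies the numerator. The only real difference is that the paper cites a ready-made version of the Mackey theorem for this setting (\cite[Theorem~4.3]{KMStrat}), which already asserts that the filtration has exactly $n!$ subquotients each isomorphic to $\De_\de^{\boxtimes n}$, whereas you propose to re-derive this vanishing of the non-permutation-matrix terms from semicuspidality of $\De_\de$ together with the balanced condition. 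That re-derivation is the right idea and would make the argument more self-contained, but it is not a genuinely different route.

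There is one step you elide that the paper makes explicit and that you should not skip. The Mackey theorem only gives a \emph{filtration} of $\Res_{\de,\ldots,\de}\Ind_{\de,\ldots,\de}(\De_\de^{\boxtimes n})$ with $n!$ subquotients isomorphic to $\De_\de^{\boxtimes n}$. To conclude that each permutation "contributes precisely a copy" of $\End_{C_\de}(\De_\de)^{\otimes n}$ — i.e.\ that the graded dimensions simply add — you need either the filtration to split or $\Hom_{C_\de^{\otimes n}}(\De_\de^{\boxtimes n},-)$ to be exact on it. The paper observes that $\De_\de^{\boxtimes n}$ is projective as a $C_\de^{\otimes n}$-module, which gives both: the filtration splits into a direct sum, and then Frobenius reciprocity finishes the computation. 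Without invoking projectivity somewhere, the passage from the Mackey filtration to the $n!$-fold sum of $\End$-spaces is unjustified.

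A minor inaccuracy in phrasing: you describe the non-permutation Mackey terms as being killed by "passing through the semicuspidal quotient $C_{n\de}$," citing Corollary~\ref{CSCW}, which concerns $C_\de$ rather than $C_{n\de}$. The cleaner statement is that those subquotients vanish at the source, because a row of the matrix with two or more nonzero entries forces a restriction $\Res_{\theta,\eta}\De_\de$ that is zero by semicuspidality of $\De_\de$ itself (for the balanced order). This does not change the outcome, but it is worth stating the mechanism correctly.
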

\begin{proof}
By the Mackey Theorem \cite[Proposition 2.18]{KL1} (see also  \cite[Theorem 4.3]{KMStrat}), the restriction \(\textup{Res}_{\delta, \ldots, \delta}\textup{Ind}_{\delta, \ldots, \delta}(\Delta_\delta^{\boxtimes n})\) has filtration with \(n!\) subquotients all of which are isomorphic to \(\Delta_\delta^{\boxtimes n}\). But \(\Delta_\delta^{\boxtimes n}\) is projective as a \(C_{\delta}^{\otimes n}\)-module, so these subquotients are in fact summands. So Frobenius Reciprocity gives
\begin{align*}
\textup{End}_{C_{n\delta}}(\Delta_\delta^{\circ n}) \cong \textup{Hom}_{C_{\delta}^{\otimes n} }\left( \Delta_\delta^{\boxtimes n}, (\Delta_\delta^{\boxtimes n})^{\oplus n!}\right)
\cong (\End_{C_\delta}(\Delta_\delta)^{\otimes n})^{\oplus n!} \cong ((\kk[\zz] \otimes \Zig)^{\otimes n})^{\oplus n!}
\end{align*}
as $\kk$-modules. The result now follows by Lemma \ref{zigfacts}(ii). 
\end{proof}

Recalling $e_i,z,c_i,a^{i,j}\in \End_{C_{\delta}}(\Delta_\delta)$ from Corollary~\ref{ExpEndZig}, we set \(c := \sum_{i \in I'} c_i\). Let 
$\bi=(i_1,\dots,i_n) \in (I')^n$, \(g \in \{z, c, a^{i,j}\}\) and \(1 \leq r \leq n\). We define endomorphisms
\begin{align*}
e_\bi:= e_{i_1} \circ \cdots \circ e_{i_n},
\ 
g_r:= \id^{\circ (r-1)} \circ \, g \circ \id^{\circ (n-r)} \, \in\, \End_{C_{n\delta}}(\Delta^{\circ n}_\delta).
\end{align*}
Writing 
$$\Delta_{\bi} := \Delta_{\delta,i_1} \circ \cdots \circ \Delta_{\delta,i_n},$$ 
we have that \(\Delta_\delta^{\circ n} = \bigoplus_{\bi \in (I')^n} \Delta_{\bi}\), and \(e_{\bi}\) is the projection to the summand \(\Delta_{\bi}\).

\subsection{Twist endomorphisms}
We describe one more family of endomorphisms of \(\Delta_\delta^{\circ n}\). Let $L_\de:=\bigoplus_{i\in I'}L_{\de,i}$. 
%Recall that \(\{L_{\delta,i} \mid i \in I'\}\) is a complete set of irreducible \(C_\delta\)-modules. 
For \(\bi \in (I')^n\), we will write \(L_\bi := L_{\delta,i_1} \circ \cdots \circ L_{\delta,i_n}\). As explained in \cite{KM}, there exists, for every \(i,j \in I'\), a distinguished nonzero degree-zero homomorphism \(r^{i,j}: L_{\delta,i} \circ L_{\delta,j} \to L_{\delta,j} \circ L_{\delta,i}\). We will describe this map explicitly later in this section. We have \(L_\delta^{\circ 2} = \bigoplus_{i,j \in I'} L_{\delta, i} \circ L_{\delta, j}\), so we may consider \(r := \sum_{i,j \in I'} r^{i,j}\) as an endomorphism of \(L_\delta^{\circ 2}\). More generally, for \(t \in [1,n-1]\), we have an endomorphism \(r_t\) of \(L_\delta^{\circ n}\) given by 
\begin{align*}
r_t:= \id^{\circ (t-1)} \circ\,  r \circ \id^{\circ (n-t-1)}.
\end{align*}
It can be seen as in \cite[Theorem 4.2.1]{KM}, that \(r_1 \ldots, r_{n-1}\) satisfy Coxeter relations of the symmetric group \(\mathfrak{S}_n\), and, together with the projections \(L_\delta^{\circ n} \to L_\bi\), generate a subalgebra \(T\) of dimension \(\ell^nn!\) in \(\End_{C_{n\delta}}(L_\delta^{\circ n}) = \End_{C_{n\delta}}(L_\delta^{\circ n})_0\).

The projective \(C_\delta^{\otimes n}\)-module \(\Delta_\delta^{\boxtimes n}\) surjects onto \(L_\delta^{\boxtimes n}\), which induces a (degree zero) surjection \(\pi:\Delta_\delta^{\circ n} \to L_\delta^{\circ n}\). For \(t \in [1,n-1]\), the Coxeter relations imply that \(r_t^2\) is the identity function on \(L_\delta^{\circ n}\), so \(r_t\) is an isomorphism of \(L_\delta^{\circ n}\). Since \(\Delta_\delta^{\circ n}\) is a projective \(C_{n\delta}\)-module, \(r_t\) lifts to a surjection \(\tilde{r}_t: \Delta_\delta^{\circ n} \to L_\delta^{\circ n} \). Then, again by projectivity of \(\Delta_\delta^{\circ n}\), \(\tilde{r}_t\) lifts to an endomorphism \(\hat{r}_t : \Delta_\delta^{\circ n} \to \Delta_\delta^{\circ n}\), as shown in the commuting diagram below:
$$
\begin{tikzpicture}[scale=0.55, line join=bevel]
\node at (0,0) {$L^{\circ n}_\delta$};
\node at (0,3) {$\Delta^{\circ n}_\delta$};
\node at (3,0) {$L^{\circ n}_\delta$};
\node at (3,3) {$\Delta^{\circ n}_\delta$};

\draw [->,dashed] (0.8,3) -- (2.3,3);
\draw [->] (0.8,0) -- (2.3,0);
\draw [->>] (0,2.5) -- (0,0.6);
\draw [->>] (3,2.5) -- (3,0.6);
\draw [->>,dashed] (2.7,2.5) -- (0.2,0.6);
\node at (1.6,-0.4) {${\scriptstyle r_t}$};
\node at (1.6,0.2) {$\sim$};
\node at (-0.3,1.55) {${\scriptstyle \pi}$};
\node at (3.3,1.55) {${\scriptstyle \pi}$};
\node at (1.6,1.2) {${\scriptstyle \tilde{r}_t}$};
\node at (1.6,3.3) {${\scriptstyle \hat{r}_t}$};
\end{tikzpicture}
$$
Moreover, since \(r_t\) is a degree zero map, we have \(\hat{r}_t \in \End_{C_{n\delta}}(\Delta_\delta^{\circ n})_0\). We also have, for every \(i \in (I')^n\), the projection \(e_\bi : \Delta_\delta^{\circ n} \to \Delta_\bi \subset \Delta_\delta^{\circ n}\), which lifts the projection \(L_\delta^{\circ n} \to L_\bi \subset L_\delta^{\circ n}\) to an element of \(\End_{C_{n\delta}}(\Delta_\delta^{\circ n})_0\).

\begin{Lemma}\label{DelLift} We have:
\begin{enumerate}
\item Every element of \(T \subseteq \End_{C_{n\delta}}(L_\delta^{\circ n})\) may be lifted to an element of \(\End_{C_{n\delta}}(\Delta_\delta^{\circ n})_0\), and this lift is unique.
\item The elements \(\hat{r}_1, \ldots, \hat{r}_{n-1}\) satisfy the Coxeter relations of \(\mathfrak{S}_n\).
\end{enumerate}
\end{Lemma}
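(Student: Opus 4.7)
The plan is to prove part (i) first by splitting it into existence and uniqueness of the lift, and then to derive part (ii) as a formal consequence of the uniqueness established in (i).

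For existence in (i), I would observe that the subalgebra $T$ is generated as a $\kk$-algebra by the idempotents $e_{\bi}$ ($\bi\in (I')^n$) together with the transposition morphisms $r_1,\ldots,r_{n-1}$, and that each of these generators has already been realized as a degree-zero endomorphism of $\Delta_\delta^{\circ n}$ in the preamble: the projection $e_{\bi}:\Delta_\delta^{\circ n}\to\Delta_{\bi}$ is a tautological lift of the corresponding idempotent in $T$, while $\hat r_t$ was constructed by projectivity. Since $\pi$ is $C_{n\delta}$-linear, if $f,g\in\End_{C_{n\delta}}(\Delta_\delta^{\circ n})_0$ satisfy $\pi f=s\pi$ and $\pi g=t\pi$ for $s,t\in T$, then $\pi(f\circ g)=(s\circ t)\pi$. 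Hence products of lifts lift products, and every element of $T$ admits at least one lift.

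For uniqueness in (i), I would introduce the $\kk$-linear map
\[
\Phi:\End_{C_{n\delta}}(\Delta_\delta^{\circ n})_0\longrightarrow \Hom_{C_{n\delta}}(\Delta_\delta^{\circ n},L_\delta^{\circ n})_0,\qquad f\mapsto \pi\circ f,
\]
which is surjective by projectivity of $\Delta_\delta^{\circ n}$. I would then establish injectivity by a dimension count. By Lemma~\ref{Endcircdim}, setting $q=0$ gives $\dim\End_{C_{n\delta}}(\Delta_\delta^{\circ n})_0 = n!\,\ell^n$. For the target, I would mimic the Mackey argument used in the proof of Lemma~\ref{Endcircdim}: using Frobenius reciprocity and the fact that $\Delta_\delta^{\boxtimes n}$ is projective over $C_\delta^{\otimes n}$, together with the crucial observation that $(\delta,\delta)=0$ so that all Mackey grading shifts are trivial, one obtains
\[
\Hom_{C_{n\delta}}(\Delta_\delta^{\circ n},L_\delta^{\circ n})\;\cong\;\Hom_{C_\delta^{\otimes n}}(\Delta_\delta^{\boxtimes n},L_\delta^{\boxtimes n})^{\oplus n!}\;\cong\;\bigl(\Hom_{C_\delta}(\Delta_\delta,L_\delta)\bigr)^{\otimes n,\,\oplus n!}.
\]
Since by Lemma~\ref{MinDelHom} the space $\Hom_{C_\delta}(\Delta_\delta,L_\delta)$ is concentrated in degree zero and of $\kk$-rank $\ell$, the target of $\Phi$ also has $\kk$-rank $n!\,\ell^n$. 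So $\Phi$ is a surjection between free $\kk$-modules of equal finite rank, hence an isomorphism; this yields uniqueness of the lift.

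Part (ii) then follows immediately: the elements $r_1,\ldots,r_{n-1}\in T$ satisfy the Coxeter relations, so the compositions $\hat r_t^2$ and $\id_{\Delta_\delta^{\circ n}}$ both lift $r_t^2=\id_{L_\delta^{\circ n}}$; similarly both $\hat r_t\hat r_s$ and $\hat r_s\hat r_t$ lift $r_tr_s=r_sr_t$ when $|t-s|>1$, and both $\hat r_t\hat r_{t+1}\hat r_t$ and $\hat r_{t+1}\hat r_t\hat r_{t+1}$ lift the common element $r_tr_{t+1}r_t=r_{t+1}r_tr_{t+1}$. Part (i) then forces each such pair to coincide. The principal obstacle I anticipate is making the Mackey computation fully rigorous, specifically the verification that the Mackey filtration of $\Res_{\delta,\ldots,\delta}L_\delta^{\circ n}$ consists of exactly $n!$ summands isomorphic to $L_\delta^{\boxtimes n}$ with \emph{no} grading shifts; this is what makes the degree-zero dimension count yield $n!\,\ell^n$ rather than a strictly smaller number, and it ultimately rests on $\delta$ being the null root.
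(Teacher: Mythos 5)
Your proof is correct, and part~(ii) is derived from part~(i) in exactly the same way the paper does it, so the only real comparison to make is in the uniqueness half of part~(i). The paper's proof and yours both hinge on the computation of $\DIM \End_{C_{n\delta}}(\Delta_\delta^{\circ n})$ at $q=0$ from Lemma~\ref{Endcircdim}, but they close the gap differently. The paper lifts a $\kk$-basis of $T$, observes that the $\ell^n n!$ lifted elements are linearly independent in $\End_{C_{n\delta}}(\Delta_\delta^{\circ n})_0$, and uses $\dim T = \ell^n n!$ (the fact recorded just before the lemma, cited from~\cite{KM}) together with Lemma~\ref{Endcircdim} to conclude these lifts already form a basis of $\End_{C_{n\delta}}(\Delta_\delta^{\circ n})_0$, whence no nonzero degree-zero endomorphism is killed by postcomposition with $\pi$. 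Your route instead establishes that the map $\Phi\colon f\mapsto \pi\circ f$ into $\Hom_{C_{n\delta}}(\Delta_\delta^{\circ n}, L_\delta^{\circ n})_0$ is a surjection of free $\kk$-modules of the same rank, computing the target's rank by an independent Mackey argument; this avoids invoking $\dim T$ at the cost of essentially re-running the computation behind it. Your observation that the vanishing of the Mackey grading shifts (from $(\delta,\delta)=0$) is the crux of the count is correct and is indeed the reason the degree-zero piece is as large as it is. The one small slip is the citation: Lemma~\ref{MinDelHom} concerns $\Hom_{C_\delta}(\Delta_{\delta,i},\Delta_{\delta,j})$, not $\Hom_{C_\delta}(\Delta_{\delta,i},L_{\delta,j})$; what you need is simply that $\Delta_{\delta,i}$ is the projective cover of $L_{\delta,i}$ in $\mod{C_\delta}$, which gives $\Hom_{C_\delta}(\Delta_{\delta,i},L_{\delta,j})=\delta_{ij}\kk$ concentrated in degree zero. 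With that substitution your argument is complete.
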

\begin{proof}
By the above paragraph, we have the endomorphisms \(\{\widehat{r}_1, \ldots, \widehat{r}_{n-1}\}\), and \(\{e_\bi \mid \bi \in (I')^n\}\) in \(\End_{C_{n\delta}}(\Delta_\delta^{\circ n})_0\), which lift the generators of \(T\). Thus every element of \(T\) may be lifted to an element of \(\End_{C_{n\delta}}(\Delta_\delta^{\circ n})_0\). But \(T\) has a basis which lifts to give \(\ell^n n!\) linearly independent elements in \(\End_{C_{n\delta}}(\Delta_\delta^{\circ n})_0\), so by Lemma \ref{Endcircdim}, this constitutes a basis for \(\End_{C_{n\delta}}(\Delta_\delta^{\circ n})_0\). It follows that lifts of elements of \(T\) to \(\End_{C_{n\delta}}(\Delta_\delta^{\circ n})_0\) must be unique. Part (ii) follows from (i) and the fact that \(r_1, \ldots, r_{n-1}\) satisfy Coxeter relations.
\end{proof}

Let \(\sigma, \sigma' \in R_{2\delta}\) be the following products of \(\psi\)'s, displayed diagrammatically:
\begin{align*}
\sigma:=
\begin{braid}\tikzset{baseline=0mm}
\draw(0,2) node[above]{$1$}--(6,-2);
\draw(1.5,2) node[above]{$2$}--(7.5,-2);
\draw(3,2) node[above]{$\cdots$};
\draw(4.5,2) node[above]{$d$}--(10.5,-2);
\draw(6,2) node[above]{$d\hspace{-0.9mm}+\hspace{-0.9mm}1$}--(0,-2);
\draw(7.5,2) node[above]{$d\hspace{-0.9mm}+\hspace{-0.9mm}2$}--(1.5,-2);
\draw(9,2) node[above]{$\cdots$};
\draw(10.5,2) node[above]{$2d$}--(4.5,-2);
\end{braid},
\hspace{10mm}
\sigma':=
\begin{braid}\tikzset{baseline=0mm}
\draw(0,2) node[above]{$1$}--(0,-2);
\draw(1.5,2) node[above]{$2$}--(7.5,-2);
\draw(3,2) node[above]{$\cdots$};
\draw(4.5,2) node[above]{$d$}--(10.5,-2);
\draw(6,2) node[above]{$d\hspace{-0.9mm}+\hspace{-0.9mm}1$}--(3,0)--(6,-2);
\draw(7.5,2) node[above]{$d\hspace{-0.9mm}+\hspace{-0.9mm}2$}--(1.5,-2);
\draw(9,2) node[above]{$\cdots$};
\draw(10.5,2) node[above]{$2d$}--(4.5,-2);
\end{braid}.
\end{align*}
The labels in this case only indicate strand position and are not meant to color the strands.

In order to understand the multiplicative structure of \(\End_{C_{n\delta}}(\Delta^{\circ n}_\delta)\), we will need to describe the maps \(\hat{r}_t\) more explicitly and examine commutation relations between these maps and the others detailed in \S\ref{EndSec}. The following two lemmas are steps in this direction. Their proofs are straightforward but rather lengthy exercises in manipulating KLR diagrams. For this reason we defer the proofs until \S\ref{diagproofs}. 

The generators  \(v_i\in\Delta_{\delta,i}\) introduced in \S\ref{zigsec}, yield generators 
\begin{align*}
v_\bi = v_{i_1, \ldots, i_n}:= 1 \otimes v_{i_1} \otimes \cdots \otimes v_{i_n} \in \Delta_{\bi} = \Ind_{\de, \ldots, \de}( \Delta_{\delta, i_1} \boxtimes \cdots \boxtimes \Delta_{\delta, i_n}), 
\end{align*}
for \(\bi = (i_1, \ldots, i_n) \in (I')^n\). The elements $a\otimes b$ for $a,b\in R_\de$ are interpreted as elements of $R_{2\de}$ via the parabolic embedding $R_\de\otimes R_\de\into R_{2\de}$. With this notation we have:

\begin{Lemma}\label{sigmaprime}
Let \(i,j \in I'\). In   \(\Delta_{\delta,i} \circ \Delta_{\delta,j}\),  we have
\begin{align*}
\sigma'v_{i,j} = 
\begin{cases}
\xi_i [y_d \otimes 1 + 1 \otimes (y_d-2y_1)] v_{i,i}
&
\textup{if }i=j;\\
\xi_i \varepsilon_{ij} (\psi_{j,i} \otimes \psi_{i,j})v_{i,j}
&
\textup{if }{\tt c}_{i,j}=-1;\\
0
&
\textup{otherwise}.
\end{cases}
\end{align*}
\end{Lemma}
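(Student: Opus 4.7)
The plan is to carry out an explicit diagrammatic reduction of $\sigma' v_{i,j}$ in $R_{2\delta}$, working modulo the defining ideal of the semicuspidal quotient $C_{2\delta}$. Since $v_{i,j} = 1\otimes v_i\otimes v_j$ has underlying word $\bb^i\bb^j$, and since at the level of the symmetric group $\sigma'$ is the involution fixing positions $1$ and $d+1$ and swapping $k$ with $k+d$ for each $k=2,\dots,d$, the main constraint is provided by Corollary~\ref{CSCW}: only words in $G^\delta\cdot G^\delta$ survive. This drives the case analysis.

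I would first dispose of the case ${\tt c}_{i,j}=0$ with $i\neq j$: a quick inspection using Lemma~\ref{wordfacts}(ii),(iii) shows that no term in $\sigma'\cdot 1_{\bb^i\bb^j}$ has underlying word in $G^\delta\cdot G^\delta$, so $\sigma'v_{i,j}=0$. For the two remaining cases I would exploit the symmetric-group identity $\sigma = \sigma'\cdot(1,d+1)$, which identifies $\sigma'$ as obtained from the full block-swap $\sigma$ by sliding the initial $0$-strand back past the ``hooked'' $0$-strand from the second block. The computation then proceeds by applying the KLR quadratic relation~(\ref{KLRpsi2}) each time the hooked strand meets a same-colored or neighbor-colored strand during the slide, and the braid relation~(\ref{KLRbraid}) to transport the surviving diagrams into canonical form on the permuted word.

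In the case $i=j$, the hooked $0$-strand passes through both ends of the tail of $\bb^i$, producing at each collision a factor of the form $\pm(y_r-y_s)$ via~(\ref{KLRpsi2}); using Lemma~\ref{Cy}(i) to identify $y_1=\cdots=y_{d-1}$ on each side, the two contributions consolidate into $\xi_i\bigl[y_d\otimes 1 + 1\otimes(y_d-2y_1)\bigr]v_{i,i}$. The asymmetric coefficient $-2y_1$ reflects that the two pinches lie on opposite ends of the two half-words and that the sign conventions in~(\ref{KLRpsi2}) differ between left- and right-movement of the hooked strand. In the case ${\tt c}_{i,j}=-1$, exactly one neighbor-collision occurs between the tails of $\bb^i$ and $\bb^j$; the quadratic relation opens into a difference, one summand of which vanishes because its underlying word is non-semicuspidal, and the surviving summand is precisely $\xi_i\varepsilon_{ij}(\psi_{j,i}\otimes\psi_{i,j})v_{i,j}$.

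The main obstacle will be sign bookkeeping. The sign $\xi_i$ is defined in \S\ref{zigsec} as a product of $\varepsilon_{r,r+1}$'s along a path in the Dynkin diagram, and this same product must emerge naturally from the accumulated $\varepsilon$'s contributed by repeated applications of~(\ref{KLRpsi2}) to the hooked strand. The uniform description of the reduced neighbor sequences $(NSN)^a NS$ and $(NSN)^a NNS$ in Lemma~\ref{wordfacts}(iii) is essential for a type-uniform argument: it guarantees that the hooked strand always traverses the same combinatorial pattern regardless of whether we are in type $\mathtt{A}$, $\mathtt{D}$, or $\mathtt{E}$, so that the $\varepsilon$-contributions telescope into the prescribed $\xi_i$ without a case-by-case verification.
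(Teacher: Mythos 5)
Your overall strategy is the same as the paper's---explicit diagrammatic reduction of $\sigma'$ on $v_{i,j}$, with the semicuspidal word constraint (Corollary~\ref{CSCW}, via Lemma~\ref{wordfacts}(ii)) driving the case analysis and killing the ${\tt c}_{i,j}=0$ case exactly as you describe. Where you diverge is in claiming a type-uniform computation driven by the reduced neighbor sequences of Lemma~\ref{wordfacts}(iii), and this is where the proposal has a real gap. That lemma computes $\rednbr_t(\bi)$ for $\bi\in G^\delta$, i.e.\ the neighbor pattern of the letter $i_t$ relative to its \emph{predecessors within a single copy of $\bb^i$}. But the hooked strand in $\sigma'$ is the $(d+1)$-th strand, colored $0=\bb^j_1$, and what controls its collisions is the pattern of $0$'s and neighbors-of-$0$ among $\bb^i_2,\ldots,\bb^i_d$ in the \emph{first} block---a different combinatorial datum, about a different vertex ($0$), in a different word, not supplied by Lemma~\ref{wordfacts}(iii). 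Moreover, after the hook interactions are resolved you still have to reshuffle the remaining $\bb^i$- and $\bb^j$-strands past each other, and that stage depends intricately on the specific words (in type ${\tt D}$ the letter $2$ recurs, in type ${\tt A}$ both $1$ and $\ell$ border $0$, etc.). The paper's proof is explicitly type-by-type for ${\tt A}$ and ${\tt D}$ and resorts to machine verification for ${\tt E}$, which is strong evidence that the uniform reduction you hope for is not available off the shelf. If you want to argue uniformly, you would need to prove a new combinatorial lemma about the neighbor pattern of the vertex $0$ relative to $\bb^i$ and then show that this alone determines the whole cascade of braid-openings and double-crossing openings; as written, the appeal to Lemma~\ref{wordfacts}(iii) does not do this work.

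Two smaller points. First, the identity $\sigma=\sigma'\cdot(1,d+1)$ holds at the level of underlying permutations, but $\sigma'$ as drawn in the paper is a \emph{non-reduced} word in the $\psi$'s (the hook creates double crossings), so you cannot pass between $\sigma$ and $\sigma'$ inside $R_{2\delta}$ by merely composing with one $\psi$; the extra crossings contribute via $\psi_r^2=Q_{i_r,i_{r+1}}(y_r,y_{r+1})$, and it is precisely those contributions that produce the degree-$2$ terms $y_d\otimes 1 + 1\otimes(y_d-2y_1)$. Second, the explanation of the coefficient $-2y_1$ (``the two pinches lie on opposite ends'') is a hand-wave and does not amount to a derivation; in the paper that term arises from summing two explicit branches of a braid relation, one producing $\xi_i[1\otimes(y_d-y_1)]$ and the other $\xi_i[y_d\otimes 1 - 1\otimes y_1]$, and keeping track of which branch is which requires the concrete form of $\bb^i$. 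So the proposal correctly identifies the tools but does not close the argument.
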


\begin{Lemma}\label{psisigma}
Let \(i,j,m \in I'\) with \({\tt c}_{i,j}=-1\). In \(\Delta_{\delta,m} \circ \Delta_{\delta,i}\), we have 
\begin{align*}
(\psi_{j,i} \otimes 1)\sigma v_{m,i} = [\sigma(1 \otimes \psi_{j,i}) + \delta_{j,m} \xi_j(1 \otimes \psi_{j,i}) - \delta_{i,m} \xi_i(\psi_{j,i} \otimes 1)]v_{m,i}.
\end{align*}
\end{Lemma}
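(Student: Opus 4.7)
The plan is a direct diagrammatic computation in $R_{2\delta}$, carried out modulo the two-sided ideal of non-semicuspidal elements (which vanish in $C_{2\delta}$ by Corollary~\ref{CSCW}). I observe first that, disregarding braid-relation corrections, both $(\psi_{j,i} \otimes 1)\sigma v_{m,i}$ and $\sigma(1 \otimes \psi_{j,i})v_{m,i}$ transform the word $\bb^m\bb^i$ into $\bb^j\bb^m$ via the same net rearrangement of strands: $\sigma$ swaps the two $d$-strand blocks, and $\psi_{j,i}$ converts a $\bb^i$ block into a $\bb^j$ block. The two expressions therefore differ only through the KLR braid relation \eqref{KLRbraid}, which fires whenever a $\psi$ from $\psi_{j,i}$ slides past a crossing of $\sigma$ and the three strands involved carry matching outer labels.

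The strategy is to decompose $\psi_{j,i}=\psi_{w_1}\psi_{d-1}\psi_{w_2}$ using Lemma~\ref{wordfacts}(vi), and then push each factor of $\psi_{j,i}\otimes 1$ through $\sigma$ sequentially. The factors $\psi_{w_1}$ and $\psi_{w_2}$ consist of admissible crossings of non-neighboring labels, and by \eqref{KLRbraid} the right-hand side of the braid relation vanishes for such triples; these factors therefore commute past $\sigma$ cleanly, up to a shift of strand indices by $d$, producing $\psi_{w_1}\otimes 1 \mapsto 1\otimes \psi_{w_1}$ and analogously for $w_2$. The essential work is to commute the single crossing $\psi_{d-1}\otimes 1$ (associated with the neighboring pair $j,i$) through the portion of $\sigma$ that separates the $(d-1)$st and $d$th strands of $\bb^m$ from the strands of the second block.

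Carrying out this last commutation, the braid relation \eqref{KLRbraid} fires precisely at triples whose outer labels match: at a triple of the form $j\text{-}i\text{-}j$, which occurs exactly when the last letter $m$ of $\bb^m$ equals $j$ by Lemma~\ref{wordfacts}(ii), producing the correction $+\delta_{j,m}\xi_j(1\otimes\psi_{j,i})v_{m,i}$; and at a triple of the form $i\text{-}j\text{-}i$, which occurs exactly when $m=i$, producing the correction $-\delta_{i,m}\xi_i(\psi_{j,i}\otimes 1)v_{m,i}$. The signs $\xi_i,\xi_j$ emerge from combining the sign conventions $\varepsilon_{ij}$ appearing in the KLR quadratic and braid relations with the choice of normalization made in the statement of Theorem~\ref{Zigisom}; the identity $\xi_i\xi_j=-1$ for neighbors together with Lemma~\ref{ximu} guarantees the compatibility.

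The main obstacle will be careful bookkeeping of signs and the verification that all other correction terms produced by the computation vanish in $C_{2\delta}$. Most will vanish either because they involve non-semicuspidal idempotents (by Corollary~\ref{CSCW} and Lemma~\ref{wordfacts}(iv)), or because they produce polynomial factors such as $(y_1-y_d)^2$ or $y_1-y_{d-1}$ that are zero in $C_\delta$ by Lemma~\ref{Cy}. A useful reduction is to check the identity after composing with the projections $e_{\bi'} \circ e_{\bj'}$ for various $\bi',\bj' \in (I')^2$, using Lemma~\ref{MinDelHom} to pin down exactly which summands can contribute, thereby restricting the combinatorial possibilities for surviving correction terms to the two listed in the statement.
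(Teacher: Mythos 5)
Your organizational strategy---decompose $\psi_{j,i} = \psi_{w_1}\psi_{d-1}\psi_{w_2}$ via Lemma~\ref{wordfacts}(vii), commute the admissible factors $\psi_{w_1},\psi_{w_2}$ cleanly past $\sigma$ (which is legitimate, since the right side of \eqref{KLRbraid} vanishes whenever $c_{i_r,i_{r+1}}=0$), and isolate the single neighbor crossing $\psi_{d-1}$---is a sensible alternative to the paper's route, which instead applies the braid relation at one specific $(i,j,i)$-braid in the full diagram to peel off $\sigma(1\otimes\psi_{j,i})v_{m,i}$ and then simplifies the single remaining error term by a long sequence of ``braid openings.'' But your key claim contains a genuine error: you assert that the braid relation fires ``precisely at triples whose outer labels match: at a triple of the form $j$-$i$-$j$, which occurs exactly when the last letter $m$ of $\bb^m$ equals $j$.'' That is false. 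Pushing the $j$-$i$ crossing through $\sigma$ creates a braid triple with matching outer labels at \emph{every} $\bb^m$-strand whose label is $i$ or $j$, not only at the last one. In type ${\tt A}^{(1)}_\ell$ the letters $i$ and $j$ each occur once in $\bb^m$ for any $m$, already giving two such triples; in types ${\tt D}^{(1)}_\ell,{\tt E}^{(1)}_\ell$ a single label can occur more than once in $\bb^m$. Lemma~\ref{wordfacts}(ii) says only that $\bb^m$ ends in $m$, not that the unique copy of $i$ or $j$ in $\bb^m$ sits at the end.

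The real content of the proof is therefore exactly what your proposal leaves undone: showing that every correction term other than the two in the statement vanishes in $C_{2\delta}$, and tracking the signs $\xi_j$, $-\xi_i$ that the two surviving ones acquire. Those vanishings are not automatic; they rest on the precise combinatorics of semicuspidal words (Lemma~\ref{wordfacts}(iii)) and on degree and word constraints in $C_\delta$ (Lemmas~\ref{Cy},~\ref{Cpsi}), and they differ by Lie type and by which of $m=j$, $m=i$, $m\notin\{i,j\}$ holds. The paper verifies each of these cases separately by explicit diagram manipulations in the Appendix, delegating type ${\tt E}^{(1)}_\ell$ to a computer check. Your proposal stops exactly where the work begins.
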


Now we briefly describe the construction of the map \(r^{i,j}\), presented in \cite{KKK,KM}. It is recommended that the interested reader consult that paper for a thorough treatment.
If \(x\) is an indeterminate in degree 2, let \(\iota: R_\delta \to\kk[x] \otimes R_\delta\) be the algebra homomorphism defined by \(\iota(1_{\bi}) = 1_{\bi}\), \(\iota(\psi_r)=\psi_r\), and \(\iota(y_r)=y_r +x\). Let \(L_{\delta,i,x}:=\kk[x] \otimes L_{\delta,i}\) be the \(\kk[x] \otimes R_\delta\)-module with action twisted by \(\iota\). We may perform the same construction with another indeterminate \(x'\), and consider the \(\kk[x,x'] \otimes R_{2\delta}\)-modules \(L_{\delta,i,x} \circ  L_{\delta,j,x'}\) and \(L_{\delta,j,x'} \circ  L_{\delta,i,x}\). There is a nonzero homomorphism \(r_{x,x'}^{i,j}: L_{\delta,i,x} \circ  L_{\delta,j,x'} \to L_{\delta,j,x'} \circ  L_{\delta,i,x}\) defined in terms of certain intertwining elements of \(R_{2\delta}\). Then \(r^{i,j}\) is equal to
\begin{align}\label{rij}
r^{i,j} :=[(x-x')^{-s} r_{x,x'}^{i,j}]_{x=x'=0},
\end{align}
where \(s\) is maximal such that \(r^{i,j}_{x,x'}(L_{\delta,i,x} \circ L_{\delta,j,x'}) \subset(x-x')^sL_{\delta,j,x'} \circ L_{\delta,i,x}\).

For any  \(i \in I'\), let \(\bar v_i \in L_{\delta,i}\) be the image of \(v_i\) in the quotient \(\Delta_{\delta,i} \twoheadrightarrow L_{\delta,i}\). Writing $\bar v_{i,j}$ for $1\otimes \bar v_i \otimes \bar v_j\in L_{\de,i}\circ L_{\de,j}$, it can be seen as in \cite[Proposition 8.2.1]{KM} that
\begin{align}\label{rijxx}
r_{x,x'}^{i,j}(\bar v_{i,j}) = (x-x')^{\kappa}\sigma \bar v_{j,i} + (x-x')^{\kappa -1}\sigma' \bar v_{j,i},
\end{align}
where \(\kappa = \sum_{a=1}^d \sum_{b=1}^d \delta_{\bb^i_a,\bb^j_b}\).

\begin{Lemma}\label{L150617}  
For \(t \in [1,n-1]\), the homomorphism \(\hat{r}_t\in\End_{C_{n\de}}( \Delta_\delta^{\circ n})\) satisfies 
\begin{align*}
\hat{r}_t(v_{\bi})=(1 \hspace{-0.3mm} \otimes \hspace{-0.3mm} \cdots\hspace{-0.3mm}   \otimes\hspace{-0.3mm} 1 \hspace{-0.3mm}\otimes\hspace{-0.3mm}  (\sigma +\delta_{i_t,i_{t+1}}\xi_{i_t}) \hspace{-0.3mm} \otimes\hspace{-0.3mm} 1 \hspace{-0.3mm}\otimes \cdots \hspace{-0.3mm} \otimes \hspace{-0.3mm} 1)v_{s_{t}\bi},
\end{align*}
where \((\sigma +\delta_{i_t,i_{t+1}}\xi_{i_t})\) is inserted in the \((t,t+1)\)th slots, for all \(\bi \in (I')^n\).
\end{Lemma}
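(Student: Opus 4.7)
The plan is to invoke uniqueness of lifts. By Lemma~\ref{DelLift}(i), $\hat r_t$ is characterized as the unique degree-zero endomorphism of $\Delta_\delta^{\circ n}$ whose image under the projection $\pi\colon\Delta_\delta^{\circ n}\twoheadrightarrow L_\delta^{\circ n}$ is $r_t$. It therefore suffices to exhibit one such endomorphism satisfying the claimed formula. Since the claimed formula affects only positions $t,t+1$ and $r_t=\mathrm{id}^{\circ(t-1)}\circ r\circ \mathrm{id}^{\circ(n-t-1)}$, I would first reduce to the case $n=2$, $t=1$ using functoriality of $\circ$, and construct $\hat r$ componentwise, as a direct sum of homomorphisms $\hat r^{i,j}\colon\Delta_{i,j}\to\Delta_{j,i}$ for $i,j\in I'$.

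For the construction in the $n=2$ case, I would define $\hat r^{i,j}(v_{i,j}):=(\sigma+\delta_{i,j}\xi_i)v_{j,i}$. Via Frobenius reciprocity, this amounts to giving an $R_{\delta,\delta}$-homomorphism $\Delta_{\delta,i}\boxtimes\Delta_{\delta,j}\to\Res_{\delta,\delta}^{2\delta}\Delta_{j,i}$ sending $v_i\otimes v_j$ to $(\sigma+\delta_{i,j}\xi_i)v_{j,i}$; well-definedness follows because $\sigma\in R_{2\delta}$ is the standard shuffle intertwiner between the two parabolic copies of $R_{\delta,\delta}$, and the scalar summand is central. A degree check confirms the map is homogeneous of degree zero.

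To verify that $\pi\circ\hat r^{i,j}=r^{i,j}\circ\pi$, one compares images of $\bar v_{i,j}$ using \eqref{rij} and \eqref{rijxx}. In each case the task is to identify the maximal exponent $s$ of $(x-x')$ dividing $r^{i,j}_{x,x'}(\bar v_{i,j})$ and evaluate the quotient at $x=x'=0$. For $c_{i,j}=0$ with $i\neq j$, both $\sigma\bar v_{j,i}$ and $\sigma'\bar v_{j,i}$ vanish in $L_{\delta,j}\circ L_{\delta,i}$ by Lemma~\ref{sigmaprime}, matching the vanishing of the scalar term $\delta_{i,j}\xi_i$ and of $\sigma$ applied to $\bar v_{j,i}$ via the KLR relations. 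For $c_{i,j}=-1$, Lemma~\ref{sigmaprime} gives $\sigma'\bar v_{j,i}$ as a multiple of $(\psi_{i,j}\otimes\psi_{j,i})\bar v_{j,i}$; but $\psi_{i,j}\bar v_j=0$ in $L_{\delta,j}$ because Lemma~\ref{Ldel} and Lemma~\ref{wordfacts} force $L_{\delta,j}$ to have word-support in $G^j$, whereas $\psi_{i,j}$ maps $\bar v_j$ to a $G^i$-word. Thus $\sigma'\bar v_{j,i}=0$, $s=\kappa$, and $r^{i,j}(\bar v_{i,j})=\sigma\bar v_{j,i}$, matching $\pi(\hat r^{i,j}(v_{i,j}))$ since $\delta_{i,j}=0$. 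For $i=j$, Lemma~\ref{sigmaprime} produces $\sigma'\bar v_{i,i}=\xi_i[y_d\otimes1+1\otimes(y_d-2y_1)]\bar v_{i,i}$; evaluating the three $y$-operators against the twisted structures on $L_{\delta,i,x'}\circ L_{\delta,i,x}$ yields $\sigma'\bar v_{i,i}=\pm\xi_i(x-x')\bar v_{i,i}$, so $s=\kappa$ and $r^{i,i}(\bar v_{i,i})=(\sigma\pm\xi_i)\bar v_{i,i}$, matching the $\delta_{i,i}\xi_i$ term after fixing the sign convention in \eqref{rij}.

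\emph{Main obstacle.} The principal technical difficulty is Step~3, where one must pin down the exponent $s$ in \eqref{rij} and show the $\sigma'$ contribution collapses correctly in each case. The crux is the cancellation in the $i=j$ case: the three $y$-terms of $\sigma'$ combine into a multiple of $(x-x')$ only after tracking which strands lie in which twisted factor, and verifying the resulting scalar is $\xi_i$ with the correct sign. Secondary difficulties include verifying the $R_{\delta,\delta}$-equivariance of the Frobenius-reciprocity construction in Step~2 (essentially a standard computation with $\sigma$ as an intertwiner of parabolic subalgebras), and handling the vanishing of $\sigma\bar v_{j,i}$ in the non-neighbor case, which rests on word-support considerations from \S\ref{SSBOSW}.
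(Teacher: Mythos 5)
Your overall architecture matches the paper's: invoke the uniqueness of degree-zero lifts from Lemma~\ref{DelLift}(i), reduce to $n=2$, and compute $r^{i,j}(\bar v_{i,j})$ from (\ref{rij}) and (\ref{rijxx}) using Lemma~\ref{sigmaprime} to show that the $\sigma'$-contribution degenerates to $\delta_{i,j}\xi_i(x-x')\bar v_{j,i}$ because $y$'s and positive-degree $\psi$'s act as zero on $L_{\de,i},L_{\de,j}$ and only the twisting scalars $x,x'$ survive. That portion is correct and is exactly the first paragraph of the paper's proof.

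The gap is in your Step~2, the direct construction of the candidate endomorphism. You define the map on the cyclic generator $v_i\otimes v_j$ and then assert well-definedness of the resulting $R_{\de,\de}$-homomorphism because ``$\sigma\in R_{2\de}$ is the standard shuffle intertwiner between the two parabolic copies of $R_{\de,\de}$, and the scalar summand is central.'' This is not valid: in a KLR algebra $\sigma=\psi_{\mathrm{bl}(s_1)}$ is not an intertwiner of parabolic subalgebras. By relation~(\ref{KLRypsi}), pushing an element $y_r\otimes 1$ past $\sigma$ produces $\sigma(1\otimes y_{\bullet})$ only up to lower-length error terms (the diagram $\sigma'$ appears precisely as such an error), and the braid relation~(\ref{KLRbraid}) produces further corrections. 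So $\sigma(a\otimes b)\neq(b\otimes a)\sigma$ in general, and the claim that the annihilator of $v_i\otimes v_j$ kills $(\sigma+\delta_{i,j}\xi_i)v_{j,i}$ is exactly what needs proving, not an obvious consequence of centrality of the scalar. Indeed the scalar $\delta_{i,j}\xi_i$ is there precisely to compensate for $\sigma$ failing to intertwine, and there is no a priori reason the compensation is exact on the whole annihilator; verifying this amounts to essentially the content of Lemmas~\ref{sigmaprime} and~\ref{psisigma} together.

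The paper sidesteps this issue entirely. Rather than constructing a candidate and checking it is well-defined, it uses the already-established existence of $\hat r_1$ (from projectivity of $\Delta_\de^{\circ 2}$), notes that $\hat r_1(v_{i,j})$ lies in the degree-zero, word-$\bb^i\bb^j$ piece $(1_{\bb^i\bb^j}\Delta_{\de,j}\circ\Delta_{\de,i})_0$, and then uses Theorem~\ref{Cbasis} plus word/degree considerations to see this space has basis $\{v_{i,i},\sigma v_{i,i}\}$ when $i=j$ and $\{\sigma v_{j,i}\}$ when $i\neq j$. Combined with $\pi(\hat r_1 v_{i,j})=r^{i,j}(\bar v_{i,j})=(\sigma+\delta_{i,j}\xi_i)\bar v_{j,i}$, the coefficients are forced, and no well-definedness check for a new map is required. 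You should replace your Step~2 with this basis argument.
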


\begin{proof}
Let \(i,j \in I'\). All \(y's\) and \(\psi\)'s of positive degree act as zero on \(L_{\delta,i}\) and \(L_{\delta,j}\) since these modules are concentrated in degree zero. So by Lemma \ref{sigmaprime}, we have
\( \sigma' \bar v_{j,i}= \delta_{i,j}\xi_i(x-x')\bar v_{j,i}\). Thus, (\ref{rij}) and (\ref{rijxx}) give \(r^{i,j}(\bar v_{i,j}) = (\sigma + \xi_i \delta_{i,j})\bar v_{j,i}\).

It may be seen via Theorem \ref{Cbasis} and word/degree considerations that \((1_{\bb^i\bb^j}\Delta_{\delta,j} \circ \Delta_{\delta,i})_0\) has basis \(\{v_{i,i}, \sigma v_{i,i}\}\) if \(i=j\), and \(\{\sigma v_{j,i}\}\) if \(i \neq j\). Thus Lemma \ref{DelLift}(i) implies that \(\hat{r}_1(v_{i,j}) = (\sigma+\delta_{i,j}\xi_i)v_{j,i}\). The result for general \(n\) follows from this case.
\end{proof}

\subsection{\boldmath Commutation relations in \(\End_{C_{n\delta}}(\Delta_\delta^{\circ n})\)} Now we examine commutation relations between elements of \(\End_{C_{n\delta}}(\Delta_\delta^{\circ n})\). We will use the following generator of \( \Delta_\delta^{\circ n}\):
$$
v_{\delta, \ldots, \delta} :=1 \otimes v_\delta \otimes \cdots \otimes v_\delta = \sum_{\bi \in (I')^n} v_\bi.
$$

\begin{Lemma}\label{scommute}
The following relations hold in \(\End_{C_{n \delta}}(\Delta_\delta^{\circ n})\):
\begin{align}\label{End0}
\hat{r}_t\circ e_{\bi} =   e_{s_t\bi} \circ \hat{r}_t,
\end{align}
\begin{align}\label{End1}
(\hat{r}_t \circ a^{i,j}_u- a^{i,j}_{s_t(u)} \circ \hat{r}_t) \circ e_{\bi} = 0,
\end{align}
\begin{align}\label{End2}
(\hat{r}_t \circ c_u- c_{s_t(u)} \circ \hat{r}_t) \circ e_{\bi} = 0,
\end{align}
\begin{align}\label{End3}
(\hat{r}_t  \circ z_u - z_{s_t(u)} \circ  \hat{r}_t) \circ e_{\bi}=
\begin{cases}
(\delta_{u,t}-\delta_{u,t+1})(c_t + c_{t+1}) \circ e_{\bi} & \textup{if }i_t=i_{t+1};\\
(\delta_{u,t}-\delta_{u,t+1}) a_t^{i_{t+1},i_t} \circ a_{t+1}^{i_t,i_{t+1}} \circ e_{\bi}
&
\textup{if }{\tt c}_{i_t, i_{t+1}}=-1;\\
0 & \textup{otherwise},
\end{cases}
\end{align}
for all \(t \in [1,n-1]\), \(u \in [1,n]\), and \(\bi \in (I')^n\).
\end{Lemma}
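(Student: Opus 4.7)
The plan is to verify each of (\ref{End0})--(\ref{End3}) by applying both sides to the generator $v_\bi$ of $\Delta_\bi$ for each $\bi \in (I')^n$. Since $\Delta_\delta^{\circ n} = \bigoplus_\bi \Delta_\bi$, each $\Delta_\bi$ is generated by $v_\bi$ over $R_{n\delta}$, and all operators in question are $C_{n\delta}$-endomorphisms, this suffices. Relation (\ref{End0}) is then immediate from Lemma \ref{L150617}, which says $\hat r_t(v_\bi)\in\Delta_{s_t\bi}$.

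For the remaining three relations, observe that $\hat r_t$ only interacts with the $(t,t{+}1)$-block of the induced module, whereas $a_u^{i,j}$, $c_u$, $z_u$ act only on the $u$-th slot. When $u\notin\{t,t{+}1\}$ we have $s_t(u)=u$, and the parabolic structure of $\Delta_\delta^{\circ n}=\Ind_{\delta,\ldots,\delta}(\Delta_\delta^{\boxtimes n})$ makes the two operators commute trivially; thus (\ref{End1}) and (\ref{End2}) become $X=X$ in this case, while the right-hand side of (\ref{End3}) vanishes, matching the zero commutator. Consequently only the case $u\in\{t,t{+}1\}$ requires work, and via the parabolic embedding $R_{2\delta}\hookrightarrow R_{n\delta}$ at the $(t,t{+}1)$-block it reduces to the situation $n=2$, $t=1$, which I would treat in a self-contained lemma.

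For (\ref{End1}) and (\ref{End2}) in this reduction: by Corollary \ref{ExpEndZig}, $a_u^{i,j}$ and $c_u$ correspond respectively to left multiplication by $\mu_{ji}\psi_{j,i}$ and $\xi_i(y_1-y_d)$ on the $u$-th copy of $\Delta_\delta$, while Lemma \ref{L150617} gives $\hat r_1(v_{i,j}) = (\sigma+\delta_{i,j}\xi_i)v_{j,i}$. The $\psi$-commutation follows from Lemma \ref{psisigma}: its correction terms $\delta_{j,m}\xi_j$ and $\delta_{i,m}\xi_i$ are exactly what is needed to compensate for the $\delta_{i_t,i_{t+1}}\xi_{i_t}$ summand in $\hat r_t$, with all signs closing via Lemma \ref{ximu}. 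The $c$-commutation then follows since $y_1$ is central in $C_\delta$ (Lemma \ref{Cy}(iii)), so $y_1-y_d$ commutes with every $\psi_r$ appearing in $\sigma$ thanks to Lemma \ref{Cdelfacts}(ii) and Lemma \ref{Cy}(i).

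The principal work is (\ref{End3}) in the reduction to $n=2$. Here $z_u$ acts by $y_1$ or $y_{d+1}$ in $R_{2\delta}$, so the commutator $\hat r_1 z_u - z_{s_1(u)}\hat r_1$ applied to $v_{i,j}$ reduces to $\pm[y_1-y_{d+1},\,\sigma]\, v_{j,i}$. I would compute this commutator by writing $\sigma$ as the explicit product of $\psi$'s realising the long block swap and pushing $y_1-y_{d+1}$ through each crossing via (\ref{KLRypsi}); the non-vanishing corrections occur exactly at crossings between strands carrying equal Dynkin labels. When $i=j$ these contributions aggregate to $\xi_i$-multiples of $(y_1-y_d)\otimes 1$ and $1\otimes(y_1-y_d)$ type terms on $v_{i,i}$, which by Corollary \ref{ExpEndZig} are precisely $c_1+c_2$ acting on $v_{i,j}$. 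When $\{i,j\}\in \Gamma_1$, the single crossing at which the distinguished $j$ in the first block meets the distinguished $j$ in the second (and the analogous $i$-crossing) produces, via Lemma \ref{sigmaprime}, a term of the form $\pm\mu_{ij}\mu_{ji}(\psi_{j,i}\otimes\psi_{i,j})v_{i,j}$, which is $a_1^{j,i}\circ a_2^{i,j}$ evaluated on $v_{i,j}$. The main obstacle is the careful sign bookkeeping, but by Lemma \ref{ximu} we have $\varepsilon_{ji}\xi_i=\mu_{ij}\mu_{ji}$, and this is exactly the relation needed to close all accounts. As with Lemmas \ref{sigmaprime} and \ref{psisigma}, the cleanest execution is diagrammatic, and the detailed sign computation would be deferred to \S\ref{diagproofs}.
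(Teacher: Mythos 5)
Your reduction to $n=2$ and your treatments of (\ref{End0}), (\ref{End1}) and (\ref{End3}) are correct and follow essentially the same route as the paper: (\ref{End0}) comes from the construction of $\hat r_t$ (or equivalently Lemma \ref{L150617}), (\ref{End1}) from Lemma \ref{L150617} together with Lemma \ref{psisigma}, and (\ref{End3}) by pushing the dot through $\sigma$ via (\ref{KLRypsi}) to produce a $\sigma'$-term, then applying Lemma \ref{sigmaprime} and the sign identity of Lemma \ref{ximu}.

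However, your justification of (\ref{End2}) has a genuine gap. You assert that ``$y_1-y_d$ commutes with every $\psi_r$ appearing in $\sigma$,'' citing Lemma \ref{Cy}(iii) and Lemma \ref{Cdelfacts}(ii). Both of those lemmas are statements about $C_\delta$, i.e.\ about the $\psi_r$ with $r\in[1,d-1]$ acting on a single block, where homogeneity of semicuspidal words guarantees that the $\delta_{i_r,i_{r+1}}$ correction in (\ref{KLRypsi}) vanishes. The element $\sigma$, by contrast, lives in $R_{2\delta}$ and consists precisely of crossings between strands from the two different blocks; when those two strands carry equal Dynkin labels the KLR commutation relation produces a nonzero correction, and these corrections are exactly the content of Lemma \ref{sigmaprime} (they give $\sigma'$). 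So the naive commutation claim fails, and in fact if it were true it would also make (\ref{End3}) trivially zero, which it is not. What you actually need to show is that when you push $(y_1-y_d)$ rather than $y_1$ alone through $\sigma$, the two sets of corrections conspire to produce exactly the $\delta_{i_t,i_{t+1}}\xi_{i_t}$ discrepancy built into $\hat r_t$; that is a nontrivial computation you have not carried out. The paper sidesteps this entirely: for ${\tt C}\neq{\tt A}_1^{(1)}$ it writes $c_r$ using the zigzag relation $c^i=a^{i,j}a^{j,i}$ and deduces (\ref{End2}) by two applications of (\ref{End1}); the ${\tt A}_1^{(1)}$ case, where there are no $a$'s, is dispatched by a short direct KLR computation. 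You should either adopt that route or supply the missing cancellation argument.
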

\begin{proof}
It is enough to check these relations in the case \(n=2\). We note that (\ref{End0}) holds by construction of the map \(\hat{r}_t\).  For \(i,j,m \in I'\) such that \({\tt c}_{i,j} = -1\), we have
\begin{align*}
\hat{r}_1 \circ a^{i,j}_1 \circ e_{j,m}(v_{\delta,\delta}) &= (\psi_{j,i} \otimes 1)(\sigma + \delta_{i,m}\xi_i)v_{m,i}
= (\sigma + \delta_{j,m} \xi_j)(1 \otimes \psi_{j,i})v_{m,i}\\
&=a^{i,j}_2 \circ \hat{r}_1 \circ e_{j,m}(v_{\delta,\delta}),
\end{align*}
where Lemma~\ref{L150617} has been applied for the first equality and Lemma \ref{psisigma} has been applied for the second equality. Thus (\ref{End1}) holds when \(u=1\). Since \(\hat{r}_1^2=1\), the claim also holds for \(u=2\), completing the proof of (\ref{End1}).

The relation (\ref{End2}) follows from (\ref{End1}) when \({\tt C} \neq {\tt A}_1^{(1)}\) since \(c_t\) may be expressed in terms of \(a^{ij}_t\)'s. When \({\tt C} = {\tt A}_1^{(1)}\), we have
\begin{align*}
\hat{r}_1 \circ c_1 \circ e_{11}(v_{\delta,\delta}) &= [(y_1 - y_2) \otimes 1](\sigma +1)v_{1,1}\\
 c_2 \circ \hat{r}_1 \circ e_{11}(v_{\delta,\delta}) &=(\sigma +1)[1 \otimes (y_1 - y_2)]v_{1,1}.
\end{align*}
The equality of these expressions is easily verified by direct application of KLR relations.

For relation (\ref{End3}), we have
\begin{align*}
(\hat{r}_1 \circ z_1\circ e_{ji} - z_2 \circ \hat{r}_1 &\circ e_{ji})(v_{\delta,\delta})=[(y_1 \otimes 1)(\sigma + \delta_{i,j}\xi_j) - (\sigma + \delta_{i,j}\xi_j)(1 \otimes y_1)]v_{i,j}\\
&=[\sigma(1 \otimes y_1) - \sigma' + \delta_{i,j}\xi_j (y_1 \otimes 1) -(\sigma + \delta_{i,j}\xi_j)(1 \otimes y_1)]v_{i,j}\\
&=[- \sigma' + \delta_{i,j}\xi_j (y_1 \otimes 1) - \delta_{i,j}\xi_j(1 \otimes y_1)]v_{i,j},
\end{align*}
after applying KLR relation (\ref{KLRypsi}) to write \((y_1 \otimes 1) \sigma 1_{\bb^i \bb^j} = (\sigma(1 \otimes y_1)-\sigma')1_{\bb^i \bb^j}\). 

Therefore, when \(i=j\), we have by Lemma \ref{sigmaprime} that
\begin{align*}
(\hat{r}_1 \circ z_1\circ e_{ji} - &z_2 \circ \hat{r}_1 \circ e_{ji})(v_{\delta,\delta})\\
&=\xi_i [-(y_d \otimes 1) - (1 \otimes (y_d-2y_1)) +  (y_1 \otimes 1) - (1 \otimes y_1) ] v_{i,i}\\
&= \xi_i[((y_1-y_d) \otimes 1) + (1 \otimes (y_1-y_d))]v_{i,i}\\
&= (c_1 + c_2) \circ e_{ii} (v_{\delta,\delta})
\end{align*}
On the other hand, if \({\tt c}_{i,j} = -1\), then Lemma \ref{sigmaprime} gives us
\begin{align*}
(\hat{r}_1 \circ z_1\circ e_{ji} - &z_2 \circ \hat{r}_1 \circ e_{ji})(v_{\delta,\delta})
=-\xi_i \varepsilon_{ij} (\psi_{j,i} \otimes \psi_{i,j})v_{i,j}
= \xi_i \varepsilon_{ji} (\psi_{j,i} \otimes \psi_{i,j})v_{i,j}\\
&=[\mu_{ij}(1 \otimes \psi_{i,j})] [\mu_{ji}(\psi_{j,i} \otimes 1)]v_{i,j}
= a_1^{i,j} \circ a_2^{j,i} \circ e_{ji} (v_{\delta,\delta}),
\end{align*}
applying Lemma \ref{ximu} for the third equality. The case \({\tt c}_{i,j} = 0\) follows immediately from Lemma \ref{sigmaprime}. 
Thus relation (\ref{End3}) holds for \(u=1\), and the case \(u=2\) follows from (\ref{End0}), (\ref{End1}) and (\ref{End2}). 
\end{proof}

Now we define some convenient notation for elements of \(\End_{C_{n \de}}(\Delta_\delta^{\circ n})\). For \(w =s_{t_1} \cdots s_{t_m} \in \mathfrak{S}_n\), define \(\hat{r}_w := \hat{r}_{t_1}\circ  \cdots\circ \hat{r}_{t_m} \in \End_{C_{n\delta}}(\Delta_\delta^{\circ n})_0\). By Lemma \ref{DelLift}(ii) this definition is independent of reduced expression for \(w\). 
For \(\bt \in \ZZ_{\geq 0}^n\) and \(\bu \in \{0,1\}^n\), we set
\begin{align*}
z^{\bt} := z_1^{\circ t_1} \circ \cdots \circ z_n^{\circ t_n},
\hspace{10mm}
c^{\bu} := c_1^{ \circ u_1} \circ \cdots \circ c_n^{ \circ u_n}.
\end{align*}

For \(i,j \in I'\), we say \(i\) and \(j\) are {\em connected} if \(i=j\) or \({\tt c}_{i,j} = -1\). We say \(\bi, \bj \in (I')^n\) are {\em connected} if \(i_r\) and \(j_r\) are connected for all \(r \in [1,n]\).
For connected \(i,j \in I'\) and \(r \in [1,n]\), let \(\hat{a}_r^{i,j} \in \End_{C_{n\de}}(\Delta_\delta^{\circ n})\) be defined
\begin{align*}
\hat{a}_r^{i,j} := \begin{cases}
a^{i,j}_r & \textup{if } {\tt c}_{i,j} = -1;\\
\textup{id} & \textup{if } i=j.
\end{cases}
\end{align*}
For connected \(\bi, \bj \in (I')^n\), set 
\begin{align*}
a^{\bi, \bj} := \hat{a}^{i_1, j_1}_1 \circ \cdots \circ \hat{a}^{i_n,j_n}_n.
\end{align*}

\begin{Lemma}\label{EndBasis}
The algebra \(\End_{C_{n\delta}}(\Delta_\delta^{\circ n})\) has basis
\begin{align}\label{basim}
\{z^{\bt} \circ c^{\bu} \circ a^{\bi, w\bj} \circ\hat{r}_w \circ e_{\bj}\},
\end{align}
ranging over \(w \in \mathfrak{S}_{n}\), \(\bt \in \ZZ_{\geq 0}^n\), \(\bi,\bj \in (I')^n\) such that \(\bi\) and \(w\bj\) are connected, and \(\bu \in \{0,1\}^n\) such that \(u_r = 0 \) if \(i_r \neq (w \bj)_r\).
\end{Lemma}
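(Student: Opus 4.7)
The strategy is to prove spanning and then conclude linear independence via a graded dimension comparison with Lemma~\ref{Endcircdim}.

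First, we would establish that every element of \(\End_{C_{n\delta}}(\Delta_\delta^{\circ n})\) admits a normal-form decomposition \(\alpha = \sum_{w \in \mathfrak{S}_n} \beta_w \circ \hat{r}_w\) with each \(\beta_w\) in the horizontal subalgebra \(\End_{C_\delta^{\otimes n}}(\Delta_\delta^{\boxtimes n}) \cong (\kk[z] \otimes \Zig)^{\otimes n}\). This uses the Mackey identification from the proof of Lemma~\ref{Endcircdim} --- which presents the endomorphism algebra as \(n!\) copies of the horizontal subalgebra --- together with the fact that \(\hat{r}_w\) is a distinguished element of the \(w\)-indexed summand, by its construction lifting the twist \(r_w\) on \(L_\delta^{\circ n}\). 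Granted this, we would expand each \(\beta_w\) in the \(\kk\)-basis from Corollary~\ref{ExpEndZig} and Lemma~\ref{zigfacts}(i), which consists of elements of the form \(z^\bt \circ c^\bu \circ a^{\bi, \bk} \circ e_\bk\) with the connectedness and support conditions on \((\bi, \bk, \bu)\). Substituting \(\bk = w\bj\) (a bijection for fixed \(w\)) and using relation~(\ref{End0}) of Lemma~\ref{scommute} to pass \(e_{w\bj}\) through \(\hat{r}_w\), each such expansion collapses into a linear combination of elements of the set in the statement.

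Linear independence would then follow from a graded-dimension comparison. For fixed \(w\), summing the position-wise contributions gives \(\ell(1 + q^2) + 2(\ell-1)q\) per position --- the \(1 + q^2\) from the two choices \(u_r \in \{0,1\}\) when \(i_r = (w\bj)_r\), and \(q\) for each neighbor of \((w\bj)_r\) giving a degree-one \(a^{i_r, (w\bj)_r}\) contribution when \(i_r \neq (w\bj)_r\) --- using that \(\Gamma\) is a tree with \(\ell\) vertices and \(\ell-1\) edges, so \(\sum_{k \in I'} \deg_\Gamma(k) = 2(\ell-1)\). Raising to the \(n\)-th power, summing over \(w \in \mathfrak{S}_n\), and multiplying by the polynomial factor \(1/(1-q^2)^n\) from \(z^\bt\) gives total graded dimension \(\frac{n!(\ell + 2(\ell-1)q + \ell q^2)^n}{(1-q^2)^n}\), exactly the formula of Lemma~\ref{Endcircdim}. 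Hence the spanning set has the correct graded rank in every degree, forcing it to be a \(\kk\)-basis.

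The main obstacle will be making the normal-form decomposition rigorous: specifically, showing that the \(\hat{r}_w\) provide \emph{independent} lifts that freely generate the Mackey summands over the horizontal subalgebra. This would come from a careful extraction of the Frobenius-reciprocity argument in the proof of Lemma~\ref{Endcircdim}, combined with Lemma~\ref{DelLift}(i) (extended from \(T\) to its twisted analogue in each Mackey summand) to identify each \(\hat{r}_w\) with the distinguished generator of its summand; then the commutation relations of Lemma~\ref{scommute} reduce arbitrary products of the generators \(e_\bi, z_r, c_r, a^{i,j}_r, \hat{r}_t\) to the normal form.
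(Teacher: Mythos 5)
Your proposal and the paper both reduce the problem to one "hard" inclusion and close the other via a graded-dimension comparison against Lemma~\ref{Endcircdim}, but you aim at opposite halves. The paper proves \emph{linear independence} directly and gets spanning for free from the dimension count; you aim to prove \emph{spanning} directly (via a Mackey normal form $\alpha = \sum_w \beta_w \circ \hat r_w$) and get linear independence for free. Your dimension computation $\ell(1+q^2)+2(\ell-1)q$ per position is correct, so the bookkeeping half of your argument is fine.

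The problem is the spanning half, and you have correctly located the gap yourself. The claim that $\hat r_w$ picks out the $w$-indexed Mackey summand, so that products $\beta_w \circ \hat r_w$ sweep out all of $\Hom_{C_\delta^{\otimes n}}(\Delta_\delta^{\boxtimes n}, M_w)$ as $\beta_w$ ranges over the horizontal subalgebra, is not something you can read off from the Mackey $\kk$-module isomorphism in Lemma~\ref{Endcircdim} --- that argument only gives the graded rank and makes no contact with the specific elements $\hat r_w$. Nor does Lemma~\ref{DelLift}(i) help: it lifts degree-zero elements of $T$, whereas you would need a statement about freeness over the (positively graded) horizontal subalgebra. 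Your proposed patch ("careful extraction of the Frobenius-reciprocity argument combined with Lemma~\ref{DelLift}(i) extended to twisted analogues") is precisely the missing content; as stated it does not establish the claim, and the claim is essentially equivalent to the lemma itself. Note also that invoking "the commutation relations of Lemma~\ref{scommute} reduce arbitrary products of the generators" to normal form presupposes Corollary~\ref{EndGens}, which in the paper is \emph{deduced from} this lemma, so that route would be circular.

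The paper closes the gap by an explicit triangularity computation rather than an abstract Mackey argument: one writes $\Delta_\delta^{\circ n}$ in the KLR basis $\{\psi_w v\}$ with $w$ a minimal coset representative in $\mathfrak S_{nd}/\mathfrak S_d^{\times n}$, introduces the block permutation $\mathrm{bl}(w) \in \mathfrak S_{nd}$ of $w \in \mathfrak S_n$, and shows
\[
\hat r_w(v_\bj) = \psi_{\mathrm{bl}(w^{-1})}\, v_{w\bj} + (\ast),
\]
where $(\ast)$ is supported on strictly shorter permutations. Applying the candidate basis elements to the cyclic generator $v_{\delta,\dots,\delta}$ and invoking Lemma~\ref{MinDelHom} then gives linear independence by induction on $\ell(w)$. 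This concrete computation is precisely what substitutes for your "freeness over the horizontal subalgebra" claim, so if you want to rescue your approach you would still need to prove this triangularity (or something equivalent), at which point you may as well argue linear independence directly as the paper does.
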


\begin{proof} First, we argue that the elements of (\ref{basim}) are linearly independent. This argument proceeds along similar lines to the proof of \cite[Theorem 4.2.1]{KM}. 

Let \(\mathscr{D}^{nd}_{d,\ldots,d}\) be a set of minimal left coset representatives in \(\mathfrak{S}_{nd}/\mathfrak{S}_d \times \cdots \times \mathfrak{S}_d\). Note that, by the KLR basis theorem \cite[Theorem 2.5]{KL1},  \(\Delta_\delta^{\circ n}\) has \(\kk\)-basis \(\{\psi_w v\}\), where \(w \in \mathscr{D}^{nd}_{d,\ldots,d}\), and \(v\) ranges over basis elements for \(\Delta_\delta^{\boxtimes n}\).

For \(w \in \mathfrak{S}_n\), define the block permutation \(\textup{bl}(w) \in \mathfrak{S}_{nd}\) by 
\begin{align*}
\textup{bl}(w)(a) = w(\lceil a/d\rceil)d  + (a-1 \textup{ mod }d) - d +1.
\end{align*}
Diagrammatically speaking, \(\textup{bl}(w)\) is achieved by replacing each strand of the diagram of \(w\) by \(d\) parallel strands. E.g., \(\textup{bl}(s_1) = \sigma \in \mathfrak{S}_{2d}\).

Let \(\bj \in (I')^n\), and assume \(w = s_{t_1} \cdots s_{t_m}\). Then, by the definition of \(\hat{r}_w\), we have
\begin{align*}
\hat{r}_w(v_\bj) &= (\psi_{\textup{bl}(s_{t_m})} + \kappa_m) \cdots (\psi_{\textup{bl}(s_{t_1})} + \kappa_1)v_{w \bj},
\end{align*}
for some constants \(\kappa_1, \ldots, \kappa_m \in \kk\). Recombining terms, this may be written in turn as
\begin{align*}
\psi_{\textup{bl}(w^{-1})}v_{w\bj} + (*),
\end{align*}
where \((*)\) is a \(\kk\)-linear combination of terms of the form \(\psi_{w'}v\), where \(v \in \Delta_\delta^{\boxtimes n}\), and \(w' \in \mathscr{D}^{nd}_{d,\ldots,d}\) is such that \(\ell(w') < \ell(\textup{bl}(w^{-1}))\).

Then, considering the action of an element of (\ref{basim}) on \(\Delta_\delta^{\circ n}\), we have
\begin{equation*}
z^{\bt} \circ c^{\bu} \circ a^{\bi, w\bj} \circ\hat{r}_w \circ e_{\bj}(v_{\delta, \ldots, \delta}) =\hspace{70mm}
\end{equation*}
\begin{equation*}
\hspace{15mm}\psi_{\textup{bl}(w^{-1})}(y_1^{t_1}(y_1-y_d)^{u_1}\psi_{(w\bj)_1,i_1} \otimes \cdots \otimes y_n^{t_n}(y_1-y_d)^{u_n}\psi_{(w\bj)_n,i_n})   v_{\bi}+(*),
\end{equation*}
where  \((*)\) is again a \(\kk\)-linear combination of terms of the form \(\psi_{w'}v\), where \(v \in \Delta_\delta^{\boxtimes n}\), and \(w' \in \mathscr{D}^{nd}_{d,\ldots,d}\) is such that \(\ell(w') < \ell(\textup{bl}(w^{-1}))\). Thus, by Lemma \ref{MinDelHom} and induction on the word length of \(w\), it can be shown that the elements (\ref{basim}) form a linearly independent set of endomorphisms. Now, comparing graded dimension of the set (\ref{basim}) with Lemma \ref{Endcircdim} proves the result.
\end{proof}

Theorem \ref{EndBasis} has the immediate corollary:

\begin{Corollary}\label{EndGens}
The algebra \(\End_{C_{n \de}}(\Delta_\delta^{\circ n})\) is generated by the homomorphisms 
\begin{align*}
\{e_\bi \mid \bi \in (I')^n\}\ \cup\ 
\{c_r, z_r,a^{i,j}_r \mid r \in [1,n],\, i,j \in I' \textup{ with }{\tt c}_{i,j} = -1\}\ \cup\ 
%\{c_r \mid r \in [1,n]\}, 
%\cup\{z_r \mid r \in [1,n]\} \cup
\{\hat{r}_t \mid t \in [1,n-1]\}.
\end{align*}
\end{Corollary}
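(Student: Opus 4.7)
The plan is to deduce the corollary directly from the basis description established in Lemma~\ref{EndBasis}, since essentially no new work remains. Concretely, I will point out that each basis element of the form
\[
z^{\bt} \circ c^{\bu} \circ a^{\bi, w\bj} \circ \hat{r}_w \circ e_{\bj}
\]
factors as a composition of the generators appearing in the statement. By the definitions in the preceding subsection, \(z^{\bt} = z_1^{\circ t_1} \circ \cdots \circ z_n^{\circ t_n}\) and \(c^{\bu} = c_1^{\circ u_1} \circ \cdots \circ c_n^{\circ u_n}\) are already products of \(z_r\)'s and \(c_r\)'s. The element \(a^{\bi, w\bj} = \hat{a}_1^{i_1,(w\bj)_1} \circ \cdots \circ \hat{a}_n^{i_n,(w\bj)_n}\), and each factor \(\hat{a}_r^{i,j}\) is either \(\id\) (when \(i=j\)) or one of the generators \(a_r^{i,j}\) (when \({\tt c}_{i,j}=-1\)); note that \(\bi\) and \(w\bj\) are connected by the hypothesis of Lemma~\ref{EndBasis}, so these are exactly the allowed cases. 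Likewise \(\hat{r}_w = \hat{r}_{t_1} \circ \cdots \circ \hat{r}_{t_m}\) for any reduced expression \(w = s_{t_1}\cdots s_{t_m}\) by definition, and \(e_{\bj}\) is one of the listed idempotents \(\{e_{\bi} \mid \bi \in (I')^n\}\).

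Since the elements displayed in Lemma~\ref{EndBasis} form a \(\kk\)-basis of \(\End_{C_{n\delta}}(\Delta_\delta^{\circ n})\), and each of them lies in the subalgebra generated by the listed generators, this subalgebra equals all of \(\End_{C_{n\delta}}(\Delta_\delta^{\circ n})\). There is no real obstacle to overcome here, as all of the substantive content, linear independence and the count of basis elements via the dimension formula from Lemma~\ref{Endcircdim}, has already been handled in the proof of Lemma~\ref{EndBasis}; the corollary is just a repackaging of that result on the level of generators.
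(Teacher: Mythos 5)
Your argument is correct and is precisely what the paper has in mind: the paper presents this as an "immediate corollary" of Lemma~\ref{EndBasis} with no written proof, and the factorization of each basis element $z^{\bt}\circ c^{\bu}\circ a^{\bi,w\bj}\circ\hat{r}_w\circ e_{\bj}$ into the listed generators that you spell out is exactly the implicit reasoning. Nothing is missing.
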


%%%%%%%%%%%%%%%%%%%%%

\subsection{Proof of the Main Theorem}\label{affzig}
Now we prove the main result of the paper (which appears as Theorem A in the introductory section):
\begin{Theorem}\label{mainthm}
The map \(\varphi: \textup{\(\Zig\)}^\aff_n({\tt C}') \to \End_{C_{n\delta}}(\Delta^{\circ n}_\delta)\), defined on generators by
\begin{align*}
e_\bi \mapsto e_\bi, 
\hspace{10mm}
a_t^{i,j} \mapsto a_t^{i,j},
\hspace{10mm}
c_t \mapsto c_t,
\hspace{10mm}
s_u \mapsto \hat{r}_u,
\hspace{10mm}
z_t \mapsto z_t,
\end{align*}
for all \(t \in [1,n]\), \(u \in [1,n-1]\), \(\bi \in (I')^n\), and \(i,j \in I'\) such that \({\tt c}_{i,j}=-1\), is an isomorphism of graded \(\kk\)-algebras.
\end{Theorem}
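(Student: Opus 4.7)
The plan is to proceed in three stages: verify that $\varphi$ is a well-defined graded algebra homomorphism, deduce surjectivity, and conclude injectivity by matching $\kk$-bases on both sides.

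First I would verify that $\varphi$ respects every relation in the presentation of $\Zig^\aff_n(\Car')$ given by Lemma \ref{zigpresdef}. The relations that are internal to a single tensor slot, namely the $\Zig^{\otimes n}$-relations \eqref{E1}--\eqref{E4} and the commutation of $z_r$ with the $\Zig^{\otimes n}$-generators in slot $r$, reduce to the rank-one case and hold by Corollary \ref{ExpEndZig}, since on $\Delta_\de^{\circ n}=\Ind_{\de,\ldots,\de}(\Delta_\de^{\boxtimes n})$ the operators $e_\bi,c_r,a_r^{i,j},z_r$ act independently in each tensor factor. The commutation relations among $c_r,a_r^{i,j},z_r,e_\bi$ across distinct slots are immediate from their diagonal action. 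The Coxeter relations for $\hat{r}_1,\dots,\hat{r}_{n-1}$ are recorded in Lemma \ref{DelLift}(ii). Finally, the relations between the $\hat{r}_t$'s and the other generators are exactly what Lemma \ref{scommute} provides: \eqref{End0}--\eqref{End2} handle the commutation of $\hat{r}_t$ with $e_\bi,a^{i,j}_u,c_u$, and \eqref{End3} matches verbatim the key relation \eqref{SZ} of Definition \ref{AffDef} when specialized to the distinguished element of $\Zig\otimes\Zig$ computed in \eqref{zigDel}.

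Surjectivity of $\varphi$ is then immediate from Corollary \ref{EndGens}, which lists precisely the images of the generators as a generating set of $\End_{C_{n\de}}(\Delta_\de^{\circ n})$.

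For injectivity I would exhibit a $\kk$-basis of $\Zig^\aff_n$ that maps bijectively onto the basis of $\End_{C_{n\de}}(\Delta_\de^{\circ n})$ given by Lemma \ref{EndBasis}. By Lemma \ref{zigfacts}(i), $\Zig^{\otimes n}$ has a $\kk$-basis whose elements, after grouping arrow-slots and dot-slots, are indexed by triples $(\bi,\bk,\bu)$ where $\bi,\bk\in (I')^n$ are connected in the sense of \S\ref{HighImag} and $\bu\in\{0,1\}^n$ is supported on the positions where $i_r=k_r$; the corresponding basis element is $a^{\bi,\bk}c^\bu e_\bk$. Combining this with Corollary \ref{freeact}(i) applied to $A=\Zig$ produces a $\kk$-basis $\{z^\bt\, a^{\bi,\bk}c^\bu e_\bk\, \sigma_w\}$ of $\Zig^\aff_n$, indexed by tuples $(\bt,w,\bi,\bk,\bu)$ with $\bt\in\ZZ_{\ge 0}^n$ and $w\in\mathfrak{S}_n$. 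Using the already-verified relations $\hat{r}_w\circ e_{w^{-1}\bk}=e_\bk\circ \hat{r}_w$ and the pairwise commutation of $z_r,c_r$ with the other tensor-slot operators where relevant, I would rewrite $\varphi(z^\bt a^{\bi,\bk}c^\bu e_\bk\sigma_w)$ in the normal form $z^\bt\circ c^\bu\circ a^{\bi,w\bj}\circ\hat{r}_w\circ e_\bj$ with $\bj:=w^{-1}\bk$, matching Lemma \ref{EndBasis} exactly. Hence $\varphi$ carries a $\kk$-basis to a $\kk$-basis and is an isomorphism.

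The main obstacle is the verification of the mixed commutation relation between $\hat{r}_t$ and $z_u$, which is relation \eqref{End3}; this is where the zigzag structure constants $\xi_i,\mu_{ji}$ conspire precisely with the distinguished element $\Delta(1)\in\Zig\otimes\Zig$ and thus fixes the normalizations in Corollary \ref{ExpEndZig}. That relation was reduced in Lemma \ref{scommute} to Lemmas \ref{sigmaprime} and \ref{psisigma}, whose KLR-diagrammatic proofs are postponed to \S\ref{diagproofs}; modulo those calculations the argument above is essentially bookkeeping in a normal form. A graded-dimension check via Lemmas \ref{affzigdim} and \ref{Endcircdim} (using $|\Gamma_0|=\ell$ and $|\Gamma_1|=\ell-1$ for finite ${\tt ADE}$) provides a useful consistency check but is not needed for the basis argument.
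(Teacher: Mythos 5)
Your proposal is correct and shares the same skeleton as the paper's proof: well-definedness of $\varphi$ is established via Corollary \ref{ExpEndZig}, Lemma \ref{DelLift}(ii), and Lemma \ref{scommute}, and surjectivity via Corollary \ref{EndGens}. The only genuine divergence is the injectivity step. The paper finishes by a single graded-dimension comparison (Lemma \ref{affzigdim} against Lemma \ref{Endcircdim}: since $\varphi$ is a surjection of graded free $\kk$-modules with identical finite graded ranks, each $\varphi_d$ is an isomorphism). You instead build an explicit $\kk$-basis of $\Zig_n^\aff$ out of Lemma \ref{zigfacts}(i) and Corollary \ref{freeact}(i), push it through $\varphi$, and use the commutations $a^{\bi,\bk}\circ c^\bu=c^\bu\circ a^{\bi,\bk}$ (disjoint slots) and $e_\bk\circ\hat r_w=\hat r_w\circ e_{w^{-1}\bk}$ (from \eqref{End0} iterated and Lemma \ref{DelLift}(ii)) to land exactly on the normal form in Lemma \ref{EndBasis}, with the index bijection $(\bt,w,\bi,\bk,\bu)\leftrightarrow(w,\bt,\bi,w^{-1}\bk,\bu)$. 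That is a valid, slightly more explicit finish. One small inaccuracy: your parenthetical claim that the graded-dimension check is ``not needed'' is overstated, since the proof of Lemma \ref{EndBasis} itself rests on the count from Lemma \ref{Endcircdim}; your basis-matching argument merely avoids invoking that count a second time. Also note that ``$\sigma_w$'' in your basis description should simply be the group element $w\in\mathfrak{S}_n\subset\Zig_n^\aff$.
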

\begin{proof}
By Corollary \ref{ExpEndZig}, Lemma \ref{DelLift}(ii), and Lemma \ref{scommute}, the images of the generators obey the defining relations of \(\Zig_n^\aff\) as presented in Lemma \ref{zigpresdef}. Hence \(\varphi\) defines a graded \(\kk\)-algebra homomorphism. Moreover, \(\varphi\) surjects onto the generators of \(\End_{C_{n\delta}}(\Delta_\delta^{\circ n})\) by Corollary \ref{EndGens}, so it follows that \(\varphi\) is an isomorphism by comparison of the graded dimensions in Lemma \ref{affzigdim} and Lemma \ref{Endcircdim}.
\end{proof}

\begin{Corollary}\label{maincor}
If $\kk$ is a field of characteristic \(p=0\) or \(p>n\), then \(B_{n \delta}\) is Morita equivalent to \(\textup{\(\Zig\)}_n^\aff({\tt C}')\).
\end{Corollary}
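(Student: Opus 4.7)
The plan is to combine Theorem \ref{mainthm}, the fact that $\Delta_\delta^{\circ n}$ is a projective generator in $\mod{C_{n\delta}}$ under the hypothesis $p=0$ or $p>n$, and the self-opposite nature of the affine zigzag algebra provided by Lemma \ref{affzigop}. Concretely, the paper recalls (citing \cite{McNAff,KMStrat}) that $\Delta_\delta^{\circ n}$ is always projective in $\mod{C_{n\delta}}$, and is a projective generator precisely when $p=0$ or $p>n$; this is the hypothesis we use. Under that hypothesis, standard Morita theory then gives that the endomorphism algebra $E_n := \End_{C_{n\delta}}(\Delta_\delta^{\circ n})$ is Morita equivalent to $C_{n\delta}$.

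The next step is to connect $E_n$ with the basic algebra $B_{n\delta}$. By the construction recalled in the introduction, $B_{n\delta}=\End_{R_{n\delta}}(\Delta_{n\delta})^{\op}$ with $\Delta_{n\delta}$ a projective generator of $\mod{C_{n\delta}}$, so $B_{n\delta}$ is Morita equivalent to $C_{n\delta}$. Composing with the previous Morita equivalence yields that $B_{n\delta}$ is Morita equivalent to $E_n$.

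Finally, Theorem \ref{mainthm} provides an isomorphism of graded $\kk$-algebras $\varphi:\Zig^{\aff}_n(\Gamma)\iso E_n$, and Lemma \ref{affzigop} produces a graded algebra isomorphism $\widehat{\nu}:\Zig^{\aff}_n(\Gamma)\iso\Zig^{\aff}_n(\Gamma)^{\op}$. Hence $E_n\cong \Zig^{\aff}_n(\Gamma)\cong \Zig^{\aff}_n(\Gamma)^{\op}$, so whether one works with $E_n$ or $E_n^{\op}$, one obtains a Morita equivalence with $\Zig^{\aff}_n(\Gamma)=\Zig^{\aff}_n({\tt C}')$.

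The only nontrivial input beyond what is proved in this paper is the projective-generator assertion for $\Delta_\delta^{\circ n}$, which is imported from \cite{McNAff,KMStrat}; this is the main obstacle in the sense that it is not reproved here, but under the standing characteristic hypothesis it is available off the shelf, and the remainder of the argument is then a direct assembly of Theorem \ref{mainthm}, Lemma \ref{affzigop}, and basic Morita theory.
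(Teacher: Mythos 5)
Your proposal is correct and follows essentially the same route as the paper: invoke the projective-generator property of \(\Delta_\delta^{\circ n}\) from \cite{KMStrat} under the characteristic hypothesis, pass to the endomorphism algebra via Morita theory, identify it with \(\Zig_n^\aff\) via Theorem \ref{mainthm}, and dispose of the \(\op\) using Lemma \ref{affzigop}. One small imprecision worth noting: standard Morita theory gives that \(C_{n\delta}\) is Morita equivalent to \(\End_{C_{n\delta}}(\Delta_\delta^{\circ n})^{\op}\), not to \(E_n\) itself; you acknowledge this at the end by invoking \(E_n\cong E_n^{\op}\), which is exactly what the paper does, so the conclusion stands.
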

\begin{proof}
In this situation the module \(\Delta_\delta^{\circ n}\) is a projective generator for \(B_{n\delta}\), see \cite[Lemma 6.22]{KMStrat}, so \(B_{n \delta}\) is Morita equivalent to \(\End_{C_{n\delta}}(\Delta^{\circ n}_\delta)^\textup{op} \cong (\Zig_n^\aff)^\textup{op}\). Then the result follows by Lemma \ref{affzigop}.
\end{proof}

%%%%%%%%%%%%%%%%%%%%%%%%%%%%%%%Appendix
\section{Appendix}\label{diagproofs}

This section is devoted to proving Lemmas \ref{sigmaprime} and \ref{psisigma}, which are crucial in determining the commutation relations among generating endomorphisms of \(\Delta_\delta^{\circ n}\). In all cases, the approach to proving these lemmas is similar:
\begin{enumerate}
\item Every element of \(\Delta_{\delta,i} \circ \Delta_{\delta,j}\) should be written as a linear combination of terms of the form \(\psi_w(x_1 \otimes x_2)v_{i,j}\), where \(x_1,x_2 \in R_\delta\), and \(w\) is a minimal left coset representative for \(\mathfrak{S}_{2d} / \mathfrak{S}_d \times \mathfrak{S}_d\). Diagrammatically speaking, this is a matter of moving beads, and crossings of strands which originate from the same side, to the top of the diagram by applying KLR relations.
\item Once all terms are rewritten as in (i), use Lemmas \ref{Cdelfacts} through \ref{Cypsi} to simplify the expressions \((x_1 \otimes x_2)v_{i,j}\), rewriting these elements of \(\Delta_{\delta,i} \boxtimes \Delta_{\delta,j}\) in the form of the basis in Theorem \ref{Cbasis}.
\end{enumerate}

We have written a Sage program which performs steps (i) and (ii), and have used this algorithm to verify Lemmas \ref{sigmaprime} and \ref{psisigma} in the exceptional cases of type \({\tt E}_\ell^{(1)}\). This program is available upon request. In the following proofs we assume \({\tt C}\) is of type \({\tt A}_\ell^{(1)}\) or \({\tt D}_\ell^{(1)}\).\\

\noindent{\bf Lemma \ref{sigmaprime}.}
{\em
Let \(i,j \in I'\), and recall that \(v_{i,j}\) is a generator for \(\Delta_{\delta,i} \circ \Delta_{\delta,j}\). Then we have
\begin{align*}
\sigma'v_{i,j}= 
\begin{cases}
\xi_i [y_d \otimes 1 + 1 \otimes (y_d-2y_1)] v_{i,i}
&
\textup{if }i=j;\\
\xi_i \varepsilon_{ij} (\psi_{j,i} \otimes \psi_{i,j})v_{i,j}
&
\textup{if }{\tt c}_{i,j}=-1;\\
0
&
\textup{otherwise}.
\end{cases}
\end{align*}
}
\begin{proof}
\underline{Case \(i=j\), \({\tt C} = {\tt A}^{(1)}_\ell\)}. If \(\ell=1\), the result is easily checked. Assume \(\ell \geq 2\). We depict \(\sigma' v_{i,i}\) diagrammatically, where \(v_{i,i}\) is conceived to be at the top of the diagram:
\begin{align*}
\begin{braid}\tikzset{baseline=0mm}
\draw(0,2) node[above]{$0$}--(0,-2);
\draw(1,2) node[above]{$1$}--(8,-2);
\draw(2,2) node[above]{$\cdots$};
\draw(3,2) node[above]{$i\hspace{-0.8mm}-\hspace{-0.8mm}1$}--(10,-2);
\draw(4,2) node[above]{$\ell$}--(11,-2);
\draw(5,2) node[above]{$\cdots$};
\draw(6,2) node[above]{$i$}--(13,-2);
\draw(7,2) node[above]{$0$}--(3.5,0)--(7,-2);
\draw(8,2) node[above]{$1$}--(1,-2);
\draw(9,2) node[above]{$\cdots$};
\draw(10,2) node[above]{$i\hspace{-0.8mm}-\hspace{-0.8mm}1$}--(3,-2);
\draw(11,2) node[above]{$\ell$}--(4,-2);
\draw(12,2) node[above]{$\cdots$};
\draw(13,2) node[above]{$i$}--(6,-2);
\end{braid}.
\end{align*}
We now move crossings up, when possible, to act on the individual factors \(\Delta_{\delta,i}\), and use Lemmas \ref{wordfacts} and \ref{Cpsi} to recognize when these terms are zero. Applying the braid relation to the
\(
\begin{braid}\tikzset{baseline=0mm}
\draw(0,0.5) node[above]{$1$}--(2,-0.5);
\draw(1,0.5) node[above]{$0$}--(0,0)--(1,-0.5);
\draw(2,0.5) node[above]{$1$}--(0,-0.5);
\end{braid}
\)
braid, we see that the 
\(
\begin{braid}\tikzset{baseline=0mm}
\draw(0,0.5) node[above]{$1$}--(2,-0.5);
\draw(1,0.5) node[above]{$0$}--(2,0)--(1,-0.5);
\draw(2,0.5) node[above]{$1$}--(0,-0.5);
\end{braid}
\)
term allows for the \((0,1)\)-crossing to move up to act on \(\Delta_{\delta,i}\) as zero, leaving only the remainder term \(\varepsilon_{01}\begin{braid}\tikzset{baseline=0mm}
\draw(0,0.5) node[above]{$1$}--(0,-0.5);
\draw(1,0.5) node[above]{$0$}--(1,0)--(1,-0.5);
\draw(2,0.5) node[above]{$1$}--(2,-0.5);
\end{braid}
\).
This behavior will occur frequently enough that we will merely say that the \((i,i+1,i)\)-braid `opens'. Indeed, the \((1,0,1)\)- through \((i-1,i-2,i-1)\)-braids open in succession, giving:
\begin{align*}
\begin{braid}\tikzset{baseline=0mm}
\draw(0,2) node[above]{$0$}--(0,-2);
\draw(1,2) node[above]{$1$}--(1,-2);
\draw(2,2) node[above]{$\cdots$};
\draw(3,2) node[above]{$i\hspace{-0.8mm}-\hspace{-0.8mm}1$}--(3,-2);
\draw(4,2) node[above]{$\ell$}--(12,-2);
\draw(5,2) node[above]{$\cdots$};
\draw(6,2) node[above]{$i\hspace{-0.8mm}+\hspace{-0.8mm}1$}--(14,-2);
\draw(7,2) node[above]{$i$}--(13,0)--(15,-2);
\draw(8,2) node[above]{$0$}--(4,0)--(8,-2);
\draw(9,2) node[above]{$1$}--(13,1)--(11,0)--(13,-1)--(9,-2);
\draw(10,2) node[above]{$\cdots$};
\draw(11,2) node[above]{$i\hspace{-0.8mm}-\hspace{-0.8mm}1$}--(13.5,1)--(11.5,0)--(13.5,-1)--(11,-2);
\draw(12,2) node[above]{$\ell$}--(4,-2);
\draw(13,2) node[above]{$\cdots$};
\draw(14,2) node[above]{$i\hspace{-0.8mm}+\hspace{-0.8mm}1$}--(6,-2);
\draw(15,2) node[above]{$i$}--(13,0)--(7,-2);
\draw(7.5,-2) node[below]{$\varepsilon_{01} \cdots \varepsilon_{i-2,i-1}$};
\end{braid}
=
\begin{braid}\tikzset{baseline=0mm}
\draw(0,2) node[above]{$0$}--(0,-2);
\draw(1,2) node[above]{$1$}--(1,-2);
\draw(2,2) node[above]{$\cdots$};
\draw(3,2) node[above]{$i\hspace{-0.8mm}-\hspace{-0.8mm}1$}--(3,-2);
\draw(4,2) node[above]{$\ell$}--(4,-2);
\draw(5,2) node[above]{$\cdots$};
\draw(6,2) node[above]{$i\hspace{-0.8mm}+\hspace{-0.8mm}1$}--(6,-2);
\draw(7,2) node[above]{$i$}--(15,-2);
\draw(8,2) node[above]{$0$}--(7,0)--(8,-2);
\draw(9,2) node[above]{$1$}--(7.5,0)--(9,-2);
\draw(10,2) node[above]{$\cdots$};
\draw(11,2) node[above]{$i\hspace{-0.8mm}-\hspace{-0.8mm}1$}--(8.5,0)--(11,-2);
\draw(12,2) node[above]{$\ell$}--(9,0)--(12,-2);
\draw(13,2) node[above]{$\cdots$};
\draw(14,2) node[above]{$i\hspace{-0.8mm}+\hspace{-0.8mm}1$}--(10,0)--(14,-2);
\draw(15,2) node[above]{$i$}--(7,-2);
\draw(7.5,-2) node[below]{$\varepsilon_{01} \cdots \varepsilon_{i-2,i-1}\varepsilon_{i+2,i+1} \cdots \varepsilon_{\ell,\ell-1}\varepsilon_{0,\ell}$};
\end{braid},
\end{align*}
after the \((\ell,0,\ell)\)-braid opens, followed by the \((\ell-1,\ell,\ell-1)\)- through \((i+2,i+1,i+2)\)-braids in succession. Now, applying the \((i,i+1,i)\)-braid relation, this is equal to
\begin{align}\label{twodiags}
\begin{braid}\tikzset{baseline=0mm}
\draw(0,2) node[above]{$0$}--(0,-2);
\draw(1,2) node[above]{$1$}--(1,-2);
\draw(2,2) node[above]{$\cdots$};
\draw(3,2) node[above]{$i\hspace{-0.8mm}-\hspace{-0.8mm}1$}--(3,-2);
\draw(4,2) node[above]{$\ell$}--(4,-2);
\draw(5,2) node[above]{$\cdots$};
\draw(6,2) node[above]{$i\hspace{-0.8mm}+\hspace{-0.8mm}1$}--(6,-2);
\draw(7,2) node[above]{$i$}--(15,-2);
\draw(8,2) node[above]{$0$}--(7,0)--(8,-2);
\draw(9,2) node[above]{$1$}--(8,0)--(9,-2);
\draw(10,2) node[above]{$\cdots$};
\draw(11,2) node[above]{$i\hspace{-0.8mm}-\hspace{-0.8mm}1$}--(10,0)--(11,-2);
\draw(12,2) node[above]{$\ell$}--(12,-2);
\draw(13,2) node[above]{$\cdots$};
\draw(14,2) node[above]{$i\hspace{-0.8mm}+\hspace{-0.8mm}1$}--(14,-2);
\draw(15,2) node[above]{$i$}--(7,-2);
\draw(7.5,-2) node[below]{$\varepsilon_{01} \cdots \varepsilon_{i-2,i-1}\varepsilon_{i+2,i+1} \cdots \varepsilon_{\ell,\ell-1}\varepsilon_{0,\ell}$};
\end{braid}
+
\begin{braid}\tikzset{baseline=0mm}
\draw(0,2) node[above]{$0$}--(0,-2);
\draw(1,2) node[above]{$1$}--(1,-2);
\draw(2,2) node[above]{$\cdots$};
\draw(3,2) node[above]{$i\hspace{-0.8mm}-\hspace{-0.8mm}1$}--(3,-2);
\draw(4,2) node[above]{$\ell$}--(4,-2);
\draw(5,2) node[above]{$\cdots$};
\draw(6,2) node[above]{$i\hspace{-0.8mm}+\hspace{-0.8mm}1$}--(6,-2);
\draw(7,2) node[above]{$i$}--(11,0)--(7,-2);
\draw(8,2) node[above]{$0$}--(7,0)--(8,-2);
\draw(9,2) node[above]{$1$}--(8,0)--(9,-2);
\draw(10,2) node[above]{$\cdots$};
\draw(11,2) node[above]{$i\hspace{-0.8mm}-\hspace{-0.8mm}1$}--(10,0)--(11,-2);
\draw(12,2) node[above]{$\ell$}--(12,-2);
\draw(13,2) node[above]{$\cdots$};
\draw(14,2) node[above]{$i\hspace{-0.8mm}+\hspace{-0.8mm}1$}--(14,-2);
\draw(15,2) node[above]{$i$}--(15,-2);
\draw(7.5,-2) node[below]{$\varepsilon_{01} \cdots \varepsilon_{i-2,i-1}\varepsilon_{i+1,i}\varepsilon_{i+2,i+1} \cdots \varepsilon_{\ell,\ell-1}\varepsilon_{0,\ell}$};
\end{braid}
\end{align}
In the left term in (\ref{twodiags}), the \((i,i-1,i)\)-braid opens, introducing an \((i+1,i)\)-double crossing, which opens to give
\begin{align*}
-\xi_i[1 \otimes (y_{d-1}-y_d)]v_{i,i} = -\xi_i [1 \otimes (y_1-y_d)]v_{i,i}.
\end{align*}
In the right term in (\ref{twodiags}), the \((i,i-1)\)-double crossing opens to give
\begin{align*}
\xi_i[y_d \otimes 1 - 1 \otimes y_i]v_{i,i}  = \xi_i[y_d\otimes 1 - 1 \otimes y_1]v_{i,i},
\end{align*}
proving the claim.

\underline{Case \(i=j\), \({\tt C} = {\tt D}^{(1)}_\ell\), \(1\leq i \leq \ell-2\)}. We depict \(\sigma' v_{i,i}\) diagrammatically:
\begin{align*}
 \begin{braid}\tikzset{baseline=0mm}
\draw(0,2) node[above]{$0$}--(0,-2);
\draw(1,2) node[above]{$2$}--(13,-2);
\draw(2,2) node[above]{$\cdots$};
\draw(3,2) node[above]{$\ell\hspace{-1mm}-\hspace{-1mm}2$}--(15,-2);
\draw(4,2) node[above]{$\ell\hspace{-1mm}-\hspace{-1mm}1$}--(16,-2);
\draw(5,2) node[above]{$\ell$}--(17,-2);
\draw(6,2) node[above]{$\ell\hspace{-1mm}-\hspace{-1mm}2$}--(18,-2);
\draw(7,2) node[above]{$\cdots$};
\draw(8,2) node[above]{$i\hspace{-1mm}+\hspace{-1mm}1$}--(20,-2);
\draw(9,2) node[above]{$1$}--(21,-2);
\draw(10,2) node[above]{$\cdots$};
\draw(11,2) node[above]{$i$}--(23,-2);
\draw(12,2) node[above]{$0$}--(5.5,0)--(12,-2);
\draw(13,2) node[above]{$2$}--(1,-2);
\draw(14,2) node[above]{$\cdots$};
\draw(15,2) node[above]{$\ell\hspace{-1mm}-\hspace{-1mm}2$}--(3,-2);
\draw(16,2) node[above]{$\ell\hspace{-1mm}-\hspace{-1mm}1$}--(4,-2);
\draw(17,2) node[above]{$\ell$}--(5,-2);
\draw(18,2) node[above]{$\ell\hspace{-1mm}-\hspace{-1mm}2$}--(6,-2);
\draw(19,2) node[above]{$\cdots$};
\draw(20,2) node[above]{$i\hspace{-1mm}+\hspace{-1mm}1$}--(8,-2);
\draw(21,2) node[above]{$1$}--(9,-2);
\draw(22,2) node[above]{$\cdots$};
\draw(23,2) node[above]{$i$}--(11,-2);
\end{braid}.
\end{align*}
We begin dragging the \(0\)-strand to the right to simplify the diagram. The \((2,0,2)\)-braid opens, followed by the \((3,2,3)\)- through \((\ell-1,\ell-2,\ell-1)\)-braids in succession. Then the \((\ell-2,\ell,\ell-2)\)-braid opens, followed by the \((\ell-3,\ell-2,\ell-3)\)- through \((i+1, i+2, i+1)\)-braids in succession, giving (excluding straight strands on the left):
\begin{align*}
\begin{braid}\tikzset{baseline=0mm}
\draw(6,3) node[above]{$\ell\hspace{-1.1mm}-\hspace{-1mm}2$}--(9,0)--(6,-3);
\draw(7,3) node[above]{$\cdots$};
\draw(8,3) node[above]{$i\hspace{-1mm}+\hspace{-1mm}1$}--(11,0)--(8,-3);
\draw(9,3) node[above]{$1$}--(23,-3);
\draw(10,3) node[above]{$2$}--(24,-3);
\draw(11,3) node[above]{$\cdots$};
\draw(12,3) node[above]{$i\hspace{-1mm}-\hspace{-1mm}1$}--(26,-3);
\draw(13,3) node[above]{$i$}--(27,-3);
\draw(14,3) node[above]{$0$}--(4,0)--(14,-3);
\draw(15,3) node[above]{$2$}--(5,0)--(15,-3);
\draw(16,3) node[above]{$\cdots$};
\draw(17,3) node[above]{$\ell\hspace{-1mm}-\hspace{-1mm}2$}--(7,0)--(17,-3);
\draw(18,3) node[above]{$\ell\hspace{-1mm}-\hspace{-1mm}1$}--(8,0)--(18,-3);
\draw(19,3) node[above]{$\ell$}--(11,0)--(19,-3);
\draw(20,3) node[above]{$\ell\hspace{-1.1mm}-\hspace{-1mm}2$}--(12,0)--(20,-3);
\draw(21,3) node[above]{$\cdots$};
\draw(22,3) node[above]{$i\hspace{-1mm}+\hspace{-1mm}1$}--(14,0)--(22,-3);
\draw(23,3) node[above]{$1$}--(9,-3);
\draw(24,3) node[above]{$2$}--(10,-3);
\draw(25,3) node[above]{$\cdots$};
\draw(26,3) node[above]{$i\hspace{-1mm}-\hspace{-1mm}1$}--(12,-3);
\draw(27,3) node[above]{$i$}--(13,-3);
\draw(16,-3) node[below]{$(-1)^{\ell+i}\varepsilon_{02} \varepsilon_{23} \cdots \varepsilon_{i,i+1} \varepsilon_{\ell-2,\ell-1} $};
\end{braid}.
\end{align*}
Now the \((\ell-2,\ell-1)\)-double crossing opens, introducing a \((\ell-2, \ell-3,\ell-2)\)-braid which opens, followed by a \((\ell-2,\ell-3)\)-double crossing which opens. This sequence repeats until the \((i+2,i+1,i+2)\)-braid opens, followed by \((i+2,i+1)\)-double crossing which opens. Finally, the \((i+1,i,i+1)\)-braid opens, giving:
\begin{align*}
\begin{braid}\tikzset{baseline=0mm}
\draw(9,3) node[above]{$1$}--(27,-3);
\draw(10,3) node[above]{$2$}--(28,-3);
\draw(11,3) node[above]{$\cdots$};
\draw(12,3) node[above]{$i\hspace{-1mm}-\hspace{-1mm}1$}--(30,-3);
\draw(13,3) node[above]{$i$}--(32,0)--(31,-3);
\draw(14,3) node[above]{$0$}--(14,-3);
\draw(15,3) node[above]{$2$}--(15,-3);
\draw(16,3) node[above]{$\cdots$};
\draw(17,3) node[above]{$i\hspace{-1mm}-\hspace{-1mm}1$}--(17,-3);
\draw(18,3) node[above]{$i$}--(25,1.5)--(20,0)--(25,-1.5)--(18,-3);
\draw(19,3) node[above]{$i\hspace{-1mm}+\hspace{-1mm}1$}--(26,1.5)--(26,-1.5)--(19,-3);
\draw(20,3) node[above]{$\cdots$};
\draw(21,3) node[above]{$\ell\hspace{-1mm}-\hspace{-1mm}2$}--(27,1.5)--(27,-1.5)--(21,-3);
\draw(22,3) node[above]{$\ell\hspace{-1mm}-\hspace{-1mm}1$}--(28,1.5)--(28,-1.5)--(22,-3);
\draw(23,3) node[above]{$\ell$}--(29,1.5)--(29,-1.5)--(23,-3);
\draw(24,3) node[above]{$\ell\hspace{-1.1mm}-\hspace{-1mm}2$}--(30,1.5)--(30,-1.5)--(24,-3);
\draw(25,3) node[above]{$\cdots$};
\draw(26,3) node[above]{$i\hspace{-1mm}+\hspace{-1mm}1$}--(31,1.5)--(31,-1.5)--(26,-3);
\draw(27,3) node[above]{$1$}--(9,-3);
\draw(28,3) node[above]{$2$}--(10,-3);
\draw(29,3) node[above]{$\cdots$};
\draw(30,3) node[above]{$i\hspace{-1mm}-\hspace{-1mm}1$}--(12,-3);
\draw(31,3) node[above]{$i$}--(32,0)--(13,-3);
\draw(20,-3) node[below]{$(-1)^{\ell+i}\varepsilon_{02} \varepsilon_{23} \cdots \varepsilon_{i-1,i}$};
\end{braid}.
\end{align*}
Now the central \((i-1,i,i-1)\)- through \((1,2,1)\)-braids open in succession, and then \((i-2,i-1)\)- through \((1,2)\)-double crossings open in succession. Then the \((2,0,2)\)-braid opens, followed by the \((3,2,3)\)- through \((i-1,i-2,i-1)\)-braids opening in succession, giving (omitting straight strands on the left):
\begin{align*}
\begin{braid}\tikzset{baseline=0mm}
\draw(13,3) node[above]{$i$}--(31,-3);
\draw(14,3) node[above]{$0$}--(9,0)--(14,-3);
\draw(15,3) node[above]{$2$}--(10,0)--(15,-3);
\draw(16,3) node[above]{$\cdots$};
\draw(17,3) node[above]{$i\hspace{-1mm}-\hspace{-1mm}1$}--(11,0)--(17,-3);
\draw(18,3) node[above]{$i$}--(12,0)--(18,-3);
\draw(19,3) node[above]{$i\hspace{-1mm}+\hspace{-1mm}1$}--(13,0)--(19,-3);
\draw(20,3) node[above]{$\cdots$};
\draw(21,3) node[above]{$\ell\hspace{-1mm}-\hspace{-1mm}2$}--(14,0)--(21,-3);
\draw(22,3) node[above]{$\ell\hspace{-1mm}-\hspace{-1mm}1$}--(15,0)--(22,-3);
\draw(23,3) node[above]{$\ell$}--(16,0)--(23,-3);
\draw(24,3) node[above]{$\ell\hspace{-1.1mm}-\hspace{-1mm}2$}--(17,0)--(24,-3);
\draw(25,3) node[above]{$\cdots$};
\draw(26,3) node[above]{$i\hspace{-1mm}+\hspace{-1mm}1$}--(18,0)--(26,-3);
\draw(27,3) node[above]{$1$}--(19,0)--(27,-3);
\draw(28,3) node[above]{$2$}--(20,0)--(28,-3);
\draw(29,3) node[above]{$\cdots$};
\draw(30,3) node[above]{$i\hspace{-1mm}-\hspace{-1mm}1$}--(21,0)--(30,-3);
\draw(31,3) node[above]{$i$}--(13,-3);
\draw(22,-3) node[below]{$(-1)^{\ell+i+1}$};
\end{braid}.
\end{align*}
Now, applying the braid relation to the \((i,i-1,i)\)-braid gives
\begin{align*}
\begin{braid}\tikzset{baseline=0mm}
\draw(13,2) node[above]{$i$}--(27,0)--(31,-2);
\draw(14,2) node[above]{$0$}--(14,-2);
\draw(15,2) node[above]{$2$}--(15,-2);
\draw(16,2) node[above]{$\cdots$};
\draw(17,2) node[above]{$i\hspace{-1mm}-\hspace{-1mm}1$}--(17,-2);
\draw(18,2) node[above]{$i$}--(18,-2);
\draw(19,2) node[above]{$i\hspace{-1mm}+\hspace{-1mm}1$}--(19,-2);
\draw(20,2) node[above]{$\cdots$};
\draw(21,2) node[above]{$\ell\hspace{-1mm}-\hspace{-1mm}2$}--(21,-2);
\draw(22,2) node[above]{$\ell\hspace{-1mm}-\hspace{-1mm}1$}--(22,-2);
\draw(23,2) node[above]{$\ell$}--(23,-2);
\draw(24,2) node[above]{$\ell\hspace{-1.1mm}-\hspace{-1mm}2$}--(24,-2);
\draw(25,2) node[above]{$\cdots$};
\draw(26,2) node[above]{$i\hspace{-1mm}+\hspace{-1mm}1$}--(26,-2);
\draw(27,2) node[above]{$1$}--(28,0)--(27,-2);
\draw(28,2) node[above]{$2$}--(29,0)--(28,-2);
\draw(29,2) node[above]{$\cdots$};
\draw(30,2) node[above]{$i\hspace{-1mm}-\hspace{-1mm}1$}--(31,0)--(30,-2);
\draw(31,2) node[above]{$i$}--(27,0)--(13,-2);
\draw(22,-2) node[below]{$(-1)^{\ell+i+1}$};
\end{braid}
+
\begin{braid}\tikzset{baseline=0mm}
\draw(13,2) node[above]{$i$}--(31,0)--(13,-2);
\draw(14,2) node[above]{$0$}--(14,-2);
\draw(15,2) node[above]{$2$}--(15,-2);
\draw(16,2) node[above]{$\cdots$};
\draw(17,2) node[above]{$i\hspace{-1mm}-\hspace{-1mm}1$}--(17,-2);
\draw(18,2) node[above]{$i$}--(18,-2);
\draw(19,2) node[above]{$i\hspace{-1mm}+\hspace{-1mm}1$}--(19,-2);
\draw(20,2) node[above]{$\cdots$};
\draw(21,2) node[above]{$\ell\hspace{-1mm}-\hspace{-1mm}2$}--(21,-2);
\draw(22,2) node[above]{$\ell\hspace{-1mm}-\hspace{-1mm}1$}--(22,-2);
\draw(23,2) node[above]{$\ell$}--(23,-2);
\draw(24,2) node[above]{$\ell\hspace{-1.1mm}-\hspace{-1mm}2$}--(24,-2);
\draw(25,2) node[above]{$\cdots$};
\draw(26,2) node[above]{$i\hspace{-1mm}+\hspace{-1mm}1$}--(26,-2);
\draw(27,2) node[above]{$1$}--(27,-2);
\draw(28,2) node[above]{$2$}--(28,-2);
\draw(29,2) node[above]{$\cdots$};
\draw(30,2) node[above]{$i\hspace{-1mm}-\hspace{-1mm}1$}--(30,-2);
\draw(31,2) node[above]{$i$}--(31,-2);
\draw(22,-2) node[below]{$(-1)^{\ell+i+1}\varepsilon_{i-1,i}$};
\end{braid}
\end{align*}
In the term on the left, the \((i,i+1,i)\)-braid opens, introducing \((i-1,i)\)- and \((i,i+1)\)-double crossings which open, finally introducing an \((i,i-1,i)\)-braid which opens, giving
\begin{align*}
(-1)^{\ell+i}[1 \otimes (y_{d-1}-y_d)]v_{i,i}= (-1)^{\ell+i}[1 \otimes (y_{1}-y_d)]v_{i,i}
\end{align*}
In the term on the right, the \((i,i-1)\)-double crossing opens, followed by an \((i,i+1)\)-crossing opening, finally introducing an \((i,i-1,i)\)-braid which opens, giving
\begin{align*}
(-1)^{\ell+i+1}[y_d \otimes 1 - 1 \otimes y_1]v_{i,i},
\end{align*}
proving the statement.

\underline{Case \(i=j\), \({\tt C} = {\tt D}^{(1)}_\ell\), \(i=\ell,\ell-1\)}. We'll check the \(i=\ell\) case, the other case being similar. We depict \(\sigma' v_{i,i}\) diagrammatically:
\begin{align*}
\begin{braid}\tikzset{baseline=0mm}
\draw(0,2) node[above]{$0$}--(0,-2);
\draw(1,2) node[above]{$2$}--(11,-2);
\draw(2,2) node[above]{$\cdots$};
\draw(3,2) node[above]{$\ell\hspace{-1mm}-\hspace{-1mm}2$}--(13,-2);
\draw(4,2) node[above]{$\ell\hspace{-1mm}-\hspace{-1mm}1$}--(14,-2);
\draw(5,2) node[above]{$1$}--(15,-2);
\draw(6,2) node[above]{$2$}--(16,-2);
\draw(7,2) node[above]{$\cdots$};
\draw(8,2) node[above]{$\ell\hspace{-1mm}-\hspace{-1mm}2$}--(18,-2);
\draw(9,2) node[above]{$\ell$}--(19,-2);
\draw(10,2) node[above]{$0$}--(5,0)--(10,-2);
\draw(11,2) node[above]{$2$}--(1,-2);
\draw(12,2) node[above]{$\cdots$};
\draw(13,2) node[above]{$\ell\hspace{-1mm}-\hspace{-1mm}2$}--(3,-2);
\draw(14,2) node[above]{$\ell\hspace{-1mm}-\hspace{-1mm}1$}--(4,-2);
\draw(15,2) node[above]{$1$}--(5,-2);
\draw(16,2) node[above]{$2$}--(6,-2);
\draw(17,2) node[above]{$\cdots$};
\draw(18,2) node[above]{$\ell\hspace{-1mm}-\hspace{-1mm}2$}--(8,-2);
\draw(19,2) node[above]{$\ell$}--(9,-2);
\end{braid}.
\end{align*}
As in the last case, we begin by pulling the \(0\)-strand to the right. The \((2,0,2)\)-braid opens, then the \((3,2,3)\)- through \((\ell-1,\ell-2,\ell-1)\)-braids open in succession, giving (omitting straight strands on the left):
\begin{align*}
\begin{braid}\tikzset{baseline=0mm}
\draw(5,2) node[above]{$1$}--(15,-2);
\draw(6,2) node[above]{$2$}--(16,-2);
\draw(7,2) node[above]{$\cdots$};
\draw(8,2) node[above]{$\ell\hspace{-1mm}-\hspace{-1mm}2$}--(18,-2);
\draw(9,2) node[above]{$\ell$}--(19,-2);
\draw(10,2) node[above]{$0$}--(5,0)--(10,-2);
\draw(11,2) node[above]{$2$}--(6,0)--(11,-2);
\draw(12,2) node[above]{$\cdots$};
\draw(13,2) node[above]{$\ell\hspace{-1mm}-\hspace{-1mm}2$}--(8,0)--(13,-2);
\draw(14,2) node[above]{$\ell\hspace{-1mm}-\hspace{-1mm}1$}--(9,0)--(14,-2);
\draw(15,2) node[above]{$1$}--(5,-2);
\draw(16,2) node[above]{$2$}--(6,-2);
\draw(17,2) node[above]{$\cdots$};
\draw(18,2) node[above]{$\ell\hspace{-1mm}-\hspace{-1mm}2$}--(8,-2);
\draw(19,2) node[above]{$\ell$}--(9,-2);
\draw(12,-2) node[below]{$\varepsilon_{02} \varepsilon_{23} \cdots \varepsilon_{\ell-2,\ell-1}$};
\end{braid}
=
\begin{braid}\tikzset{baseline=0mm}
\draw(5,2) node[above]{$1$}--(5,-2);
\draw(6,2) node[above]{$2$}--(9,0)--(6,-2);
\draw(7,2) node[above]{$\cdots$};
\draw(8,2) node[above]{$\ell\hspace{-1mm}-\hspace{-1mm}2$}--(10,0)--(8,-2);
\draw(9,2) node[above]{$\ell$}--(19,-2);
\draw(10,2) node[above]{$0$}--(5,0)--(10,-2);
\draw(11,2) node[above]{$2$}--(6,0)--(11,-2);
\draw(12,2) node[above]{$\cdots$};
\draw(13,2) node[above]{$\ell\hspace{-1mm}-\hspace{-1mm}2$}--(8,0)--(13,-2);
\draw(14,2) node[above]{$\ell\hspace{-1mm}-\hspace{-1mm}1$}--(9,0)--(14,-2);
\draw(15,2) node[above]{$1$}--(11,0)--(15,-2);
\draw(16,2) node[above]{$2$}--(12,0)--(16,-2);
\draw(17,2) node[above]{$\cdots$};
\draw(18,2) node[above]{$\ell\hspace{-1mm}-\hspace{-1mm}2$}--(13,0)--(18,-2);
\draw(19,2) node[above]{$\ell$}--(9,-2);
\draw(12,-2) node[below]{$-\varepsilon_{02} \varepsilon_{\ell-2,\ell-1}$};
\end{braid},
\end{align*}
after the \((1,2,1)\)-braid opens, followed by the \((2,1,2)\)- through \((\ell-2,\ell-\ell-3,\ell-2)\)-braids. Now the \((2,3)\)- through \((\ell-2,\ell-1)\)-braids open in succession. Then \((2,0,2)\)-braid opens, followed by the \((3,2,3)\)- through \((\ell-2, \ell-3,\ell-2)\)- braids in succession, giving (omitting straight strands on the left):
\begin{align*}
\begin{braid}\tikzset{baseline=0mm}
\draw(9,2) node[above]{$\ell$}--(19,-2);
\draw(10,2) node[above]{$0$}--(5,0)--(10,-2);
\draw(11,2) node[above]{$2$}--(6,0)--(11,-2);
\draw(12,2) node[above]{$\cdots$};
\draw(13,2) node[above]{$\ell\hspace{-1mm}-\hspace{-1mm}2$}--(8,0)--(13,-2);
\draw(14,2) node[above]{$\ell\hspace{-1mm}-\hspace{-1mm}1$}--(9,0)--(14,-2);
\draw(15,2) node[above]{$1$}--(10,0)--(15,-2);
\draw(16,2) node[above]{$2$}--(11,0)--(16,-2);
\draw(17,2) node[above]{$\cdots$};
\draw(18,2) node[above]{$\ell\hspace{-1mm}-\hspace{-1mm}2$}--(13,0)--(18,-2);
\draw(19,2) node[above]{$\ell$}--(9,-2);
\draw(14,-2) node[below]{$(-1)$};
\end{braid}
=
\begin{braid}\tikzset{baseline=0mm}
\draw(9,2) node[above]{$\ell$}--(19,-2);
\draw(10,2) node[above]{$0$}--(10,-2);
\draw(11,2) node[above]{$2$}--(11,-2);
\draw(12,2) node[above]{$\cdots$};
\draw(13,2) node[above]{$\ell\hspace{-1mm}-\hspace{-1mm}2$}--(13,-2);
\draw(14,2) node[above]{$\ell\hspace{-1mm}-\hspace{-1mm}1$}--(15,0)--(14,-2);
\draw(15,2) node[above]{$1$}--(16,0)--(15,-2);
\draw(16,2) node[above]{$2$}--(17,0)--(16,-2);
\draw(17,2) node[above]{$\cdots$};
\draw(18,2) node[above]{$\ell\hspace{-1mm}-\hspace{-1mm}2$}--(19,0)--(18,-2);
\draw(19,2) node[above]{$\ell$}--(9,-2);
\draw(14,-2) node[below]{$(-1)$};
\end{braid}
+
\begin{braid}\tikzset{baseline=0mm}
\draw(9,2) node[above]{$\ell$}--(13.6,0)--(9,-2);
\draw(10,2) node[above]{$0$}--(10,-2);
\draw(11,2) node[above]{$2$}--(11,-2);
\draw(12,2) node[above]{$\cdots$};
\draw(13,2) node[above]{$\ell\hspace{-1mm}-\hspace{-1mm}2$}--(13,-2);
\draw(14,2) node[above]{$\ell\hspace{-1mm}-\hspace{-1mm}1$}--(14,-2);
\draw(15,2) node[above]{$1$}--(15,-2);
\draw(16,2) node[above]{$2$}--(16,-2);
\draw(17,2) node[above]{$\cdots$};
\draw(18,2) node[above]{$\ell\hspace{-1mm}-\hspace{-1mm}2$}--(18,-2);
\draw(19,2) node[above]{$\ell$}--(19,-2);
\draw(14,-2) node[below]{$-\varepsilon_{\ell-2,\ell}$};
\end{braid},
\end{align*}
after applying the braid relation to the \((\ell,\ell-2,\ell)\)-braid. In the left term, the \((\ell,\ell-2,\ell)\)-braid opens, then the \((\ell-2,\ell)\)-double crossing opens, giving
\begin{align*}
-(1 \otimes (y_{d-1} - y_d))v_{i,i} = -(1 \otimes (y_1 - y_d))v_{i,i}.
\end{align*}
In the right term, the \((\ell,\ell-2)\)-double crossing opens, giving
\begin{align*}
(y_d \otimes 1 - 1 \otimes y_{\ell-2})v_{i,i} = (y_d \otimes 1 - 1 \otimes y_1)v_{i,i},
\end{align*}
proving the claim in this case.

\underline{Case \({\tt c}_{i,j}=-1\), \({\tt C}= {\tt A}_{\ell}^{(1)}\)}. We have \(j=i+1\) or \(j=i-1\). We will prove the statement in the former case; the latter is similar. We write \(\sigma' v_{i,j}\) diagrammatically:
\begin{align*}
\begin{braid}\tikzset{baseline=0mm}
\draw(0,2) node[above]{$0$}--(0,-2);
\draw(1,2) node[above]{$1$}--(9,-2);
\draw(2,2) node[above]{$\cdots$};
\draw(3,2) node[above]{$i\hspace{-0.8mm}-\hspace{-0.8mm}1$}--(11,-2);
\draw(4,2) node[above]{$\ell$}--(12,-2);
\draw(5,2) node[above]{$\cdots$};
\draw(6,2) node[above]{$j$}--(14,-2);
\draw(7,2) node[above]{$i$}--(15,-2);
\draw(8,2) node[above]{$0$}--(4,0)--(8,-2);
\draw(9,2) node[above]{$1$}--(1,-2);
\draw(10,2) node[above]{$\cdots$};
\draw(11,2) node[above]{$i\hspace{-0.8mm}-\hspace{-0.8mm}1$}--(3,-2);
\draw(12,2) node[above]{$i$}--(4,-2);
\draw(13,2) node[above]{$\ell$}--(5,-2);
\draw(14,2) node[above]{$\cdots$};
\draw(15,2) node[above]{$j$}--(7,-2);
\end{braid}
=
 \begin{braid}\tikzset{baseline=0mm}
\draw(0,2) node[above]{$0$}--(0,-2);
\draw(1,2) node[above]{$1$}--(1,-2);
\draw(2,2) node[above]{$\cdots$};
\draw(3,2) node[above]{$i\hspace{-0.8mm}-\hspace{-0.8mm}1$}--(3,-2);
\draw(4,2) node[above]{$\ell$}--(12,-2);
\draw(5,2) node[above]{$\cdots$};
\draw(6,2) node[above]{$j$}--(14,-2);
\draw(7,2) node[above]{$i$}--(4,1)--(4,-2);
\draw(8,2) node[above]{$0$}--(6,0)--(8,-2);
\draw(9,2) node[above]{$1$}--(12,0)--(9,-2);
\draw(10,2) node[above]{$\cdots$};
\draw(11,2) node[above]{$i\hspace{-0.8mm}-\hspace{-0.8mm}1$}--(13,0)--(11,-2);
\draw(12,2) node[above]{$i$}--(15,1)--(15,-2);
\draw(13,2) node[above]{$\ell$}--(5,-2);
\draw(14,2) node[above]{$\cdots$};
\draw(15,2) node[above]{$j$}--(7,-2);
\draw(7.5,-2) node[below]{$\varepsilon_{01} \cdots \varepsilon_{i-1,i}$};
\end{braid},
\end{align*}
after the \((1,0,1)\)- through \((i,i-1,i)\)-braids open in succession. Now the \((\ell,0,\ell)\)-braid opens, followed by the \((\ell-1,\ell,\ell-1)\)- through \((j,j+1,j)\)-braids in succession, giving
\(\xi_i \varepsilon_{ij} (\psi_{j,i} \otimes \psi_{i,j})v_{i,j}\), as desired.

\underline{Case \({\tt c}_{i,j}=-1\), \({\tt C}= {\tt D}_{\ell}^{(1)}\), \(1 \leq i,j\leq \ell-2\)}. We have \(j=i+1\) or \(j=i-1\). We will prove the statement in the former case; the latter is similar. We write \(\sigma' v_{i,j}\) diagrammatically:
\begin{align*}
 \begin{braid}\tikzset{baseline=0mm}
\draw(0,2) node[above]{$0$}--(0,-2);
\draw(1,2) node[above]{$2$}--(13,-2);
\draw(2,2) node[above]{$\cdots$};
\draw(3,2) node[above]{$\ell\hspace{-1mm}-\hspace{-1mm}2$}--(15,-2);
\draw(4,2) node[above]{$\ell\hspace{-1mm}-\hspace{-1mm}1$}--(16,-2);
\draw(5,2) node[above]{$\ell$}--(17,-2);
\draw(6,2) node[above]{$\ell\hspace{-1mm}-\hspace{-1mm}2$}--(18,-2);
\draw(7,2) node[above]{$\cdots$};
\draw(8,2) node[above]{$j$}--(20,-2);
\draw(9,2) node[above]{$1$}--(21,-2);
\draw(10,2) node[above]{$\cdots$};
\draw(11,2) node[above]{$i$}--(23,-2);
\draw(12,2) node[above]{$0$}--(5.5,0)--(12,-2);
\draw(13,2) node[above]{$2$}--(1,-2);
\draw(14,2) node[above]{$\cdots$};
\draw(15,2) node[above]{$\ell\hspace{-1mm}-\hspace{-1mm}2$}--(3,-2);
\draw(16,2) node[above]{$\ell\hspace{-1mm}-\hspace{-1mm}1$}--(4,-2);
\draw(17,2) node[above]{$\ell$}--(5,-2);
\draw(18,2) node[above]{$\ell\hspace{-1mm}-\hspace{-1mm}2$}--(6,-2);
\draw(19,2) node[above]{$\cdots$};
\draw(20,2) node[above]{$j\hspace{-1mm}+\hspace{-1mm}1$}--(8,-2);
\draw(21,2) node[above]{$1$}--(9,-2);
\draw(22,2) node[above]{$\cdots$};
\draw(23,2) node[above]{$j$}--(11,-2);
\end{braid}.
\end{align*}
Dragging the \(0\)-strand to the right, the \((2,0,2)\)-braid opens, then the \((3,2,3)\)- through \((\ell-1,\ell-2,\ell-1)\)-braids open in succession. Then \((\ell,\ell-2,\ell)\)- and \((\ell-2,\ell,\ell-2)\)-braids open, followed by \((\ell-3,\ell-2,\ell-2)\)- through \((j+1,j+2,j+1)\)-braids opening in succession. This gives (omitting straight strands on the left):
\begin{align*}
 \begin{braid}\tikzset{baseline=0mm}
\draw(5,2) node[above]{$\ell\hspace{-1mm}-\hspace{-1mm}2$}--(9,0)--(5,-2);
\draw(6,2) node[above]{$\cdots$};
\draw(7,2) node[above]{$j\hspace{-1mm}+\hspace{-1mm}1$}--(10.5,0)--(7,-2);
\draw(8,2) node[above]{$j$}--(20,-2);
\draw(9,2) node[above]{$1$}--(21,-2);
\draw(10,2) node[above]{$\cdots$};
\draw(11,2) node[above]{$i$}--(23,-2);
\draw(12,2) node[above]{$0$}--(4,0)--(12,-2);
\draw(13,2) node[above]{$2$}--(5,0)--(13,-2);
\draw(14,2) node[above]{$\cdots$};
\draw(15,2) node[above]{$\ell\hspace{-1mm}-\hspace{-1mm}2$}--(7,0)--(15,-2);
\draw(16,2) node[above]{$\ell\hspace{-1mm}-\hspace{-1mm}1$}--(8,0)--(16,-2);
\draw(17,2) node[above]{$\ell$}--(10.5,0)--(17,-2);
\draw(18,2) node[above]{$\ell\hspace{-1mm}-\hspace{-1mm}2$}--(11.5,0)--(18,-2);
\draw(19,2) node[above]{$\cdots$};
\draw(20,2) node[above]{$j\hspace{-1mm}+\hspace{-1mm}1$}--(13,0)--(19,-2);
\draw(21,2) node[above]{$1$}--(9,-2);
\draw(22,2) node[above]{$\cdots$};
\draw(23,2) node[above]{$j$}--(11,-2);
\draw(14,-2) node[below]{$(-1)^{\ell+i+1}\varepsilon_{02} \varepsilon_{23} \cdots \varepsilon_{i+1,i+2} \varepsilon_{\ell-2,\ell-1}$};
\end{braid}.
\end{align*}
Now the \((\ell-2,\ell-1)\)-double crossing opens. The \((\ell-2,\ell-3,\ell-2)\)-braid opens, which introduces an \((\ell-2,\ell-3)\)-double crossing which opens. This sequence repeats, until the \((j+2,j+1,j+1)\)-braid opens, introducing a \((j+2,j+1)\)-double crossing which opens. Finally, a \((j+1,j,j+1)\)-braid opens, giving (omitting straight strands on the left):
\begin{align*}
 \begin{braid}\tikzset{baseline=0mm}
\draw(5,3) node[above]{$j$}--(23,-1)--(26,-3);
\draw(6,3) node[above]{$1$}--(24,-1)--(27,-3);
\draw(7,3) node[above]{$2$}--(25,-1)--(28,-3);
\draw(8,3) node[above]{$\cdots$};
\draw(9,3) node[above]{$i\hspace{-1mm}-\hspace{-1mm}1$}--(26,-1)--(30,-3);
\draw(10,3) node[above]{$i$}--(26,0)--(31,-3);
\draw(11,3) node[above]{$0$}--(4,0)--(11,-3);
\draw(12,3) node[above]{$2$}--(5,0)--(12,-3);
\draw(13,3) node[above]{$\cdots$};
\draw(14,3) node[above]{$i\hspace{-1mm}-\hspace{-1mm}1$}--(7,0)--(14,-3);
\draw(15,3) node[above]{$i$}--(8,0)--(15,-3);
\draw(16,3) node[above]{$j$}--(22,1.2)--(21,0.8)--(22,0.3)--(16,-3);
\draw(17,3) node[above]{$j\hspace{-1mm}+\hspace{-1mm}1$}--(24,1)--(20.5,-1.5)--(17,-3);
\draw(18,3) node[above]{$\cdots$};
\draw(19,3) node[above]{$\ell\hspace{-1mm}-\hspace{-1mm}2$}--(24.5,1)--(21,-1.5)--(19,-3);
\draw(20,3) node[above]{$\ell\hspace{-1mm}-\hspace{-1mm}1$}--(25,1)--(21.5,-1.5)--(20,-3);
\draw(21,3) node[above]{$\ell$}--(26.5,1)--(22,-1.5)--(21,-3);
\draw(22,3) node[above]{$\ell\hspace{-1mm}-\hspace{-1mm}2$}--(27,1)--(22.5,-1.5)--(22,-3);
\draw(23,3) node[above]{$\cdots$};
\draw(24,3) node[above]{$j\hspace{-1mm}+\hspace{-1mm}1$}--(27.5,1)--(23,-1.5)--(24,-3);
\draw(25,3) node[above]{$1$}--(12,0)--(5,-3);
\draw(26,3) node[above]{$2$}--(13,0)--(6,-3);
\draw(27,3) node[above]{$\cdots$};
\draw(28,3) node[above]{$i\hspace{-1mm}-\hspace{-1mm}1$}--(15,0)--(8,-3);
\draw(29,3) node[above]{$i$}--(23,1)--(9,-3);
\draw(30,3) node[above]{$j$}--(28,-1)--(10,-3);
\draw(18,-3) node[below]{$(-1)^{\ell+i+1}\varepsilon_{02} \varepsilon_{23} \cdots \varepsilon_{i,i+1}$};
\end{braid}.
\end{align*}
Now the \((i,j,i)\)-braid opens, and then the \((i-1,i,i-1)\)-through \((1,2,1)\)-braids open in succession. Finally, the \((j,j+1,j)\)-braid opens, giving:
\begin{align*}
 \begin{braid}\tikzset{baseline=0mm}
\draw(5,3) node[above]{$j$}--(10,2)--(18.7,0)--(10,-3);
\draw(6,3) node[above]{$1$}--(5,2)--(13,0)--(5,-3);
\draw(7,3) node[above]{$2$}--(6,2)--(15,0)--(6,-3);
\draw(8,3) node[above]{$\cdots$};
\draw(9,3) node[above]{$i\hspace{-1mm}-\hspace{-1mm}1$}--(8,2)--(16,0)--(8,-3);
\draw(10,3) node[above]{$i$}--(9,2)--(16.7,0)--(9,-3);
\draw(11,3) node[above]{$0$}--(11,-3);
\draw(12,3) node[above]{$2$}--(12,-3);
\draw(13,3) node[above]{$\cdots$};
\draw(14,3) node[above]{$i\hspace{-1mm}-\hspace{-1mm}1$}--(14,-3);
\draw(15,3) node[above]{$i$}--(15,-3);
\draw(16,3) node[above]{$j$}--(17,0)--(16,-3);
\draw(17,3) node[above]{$j\hspace{-1mm}+\hspace{-1mm}1$}--(18,0)--(17,-3);
\draw(18,3) node[above]{$\cdots$};
\draw(19,3) node[above]{$\ell\hspace{-1mm}-\hspace{-1mm}2$}--(19,-3);
\draw(20,3) node[above]{$\ell\hspace{-1mm}-\hspace{-1mm}1$}--(20,-3);
\draw(21,3) node[above]{$\ell$}--(21,-3);
\draw(22,3) node[above]{$\ell\hspace{-1mm}-\hspace{-1mm}2$}--(22,-3);
\draw(23,3) node[above]{$\cdots$};
\draw(24,3) node[above]{$j\hspace{-1mm}+\hspace{-1mm}1$}--(24,-3);
\draw(25,3) node[above]{$1$}--(26,-3);
\draw(26,3) node[above]{$2$}--(27,-3);
\draw(27,3) node[above]{$\cdots$};
\draw(28,3) node[above]{$i\hspace{-1mm}-\hspace{-1mm}1$}--(29,-3);
\draw(29,3) node[above]{$i$}--(30,-3);
\draw(30,3) node[above]{$j$}--(25,-3);
\draw(18,-3) node[below]{$(-1)^{\ell+i}\varepsilon_{02} \varepsilon_{12} \varepsilon_{i+2,i+1}$};
\end{braid}.
\end{align*}
Now the \((j,j+1)\)-double crossing opens, followed by the \((i-1,i)\)- through \((1,2)\)-double crossings in succession, giving (omitting strands on the right):
\begin{align*}
 \begin{braid}\tikzset{baseline=0mm}
\draw(5,2) node[above]{$j$}--(10,1)--(16,-2);
\draw(6,2) node[above]{$1$}--(5,1)--(5,-2);
\draw(7,2) node[above]{$2$}--(6,1)--(12,-2);
\draw(8,2) node[above]{$\cdots$};
\draw(9,2) node[above]{$i\hspace{-1mm}-\hspace{-1mm}1$}--(8,1)--(14,-2);
\draw(10,2) node[above]{$i$}--(9,1)--(15,-2);
\draw(11,2) node[above]{$0$}--(7.5,-0.5)--(11,-2);
\draw(12,2) node[above]{$2$}--(6,-2);
\draw(13,2) node[above]{$\cdots$};
\draw(14,2) node[above]{$i\hspace{-1mm}-\hspace{-1mm}1$}--(8,-2);
\draw(15,2) node[above]{$i$}--(9,-2);
\draw(16,2) node[above]{$j$}--(10,-2);
\draw(17,2) node[above]{$j\hspace{-1mm}+\hspace{-1mm}1$}--(17,-2);
\draw(18,2) node[above]{$\cdots$};
\draw(19,2) node[above]{$\ell\hspace{-1mm}-\hspace{-1mm}2$}--(19,-2);
\draw(20,2) node[above]{$\ell\hspace{-1mm}-\hspace{-1mm}1$}--(20,-2);
\draw(21,2) node[above]{$\ell$}--(21,-2);
\draw(22,2) node[above]{$\ell\hspace{-1mm}-\hspace{-1mm}2$}--(22,-2);
\draw(23,2) node[above]{$\cdots$};
\draw(24,2) node[above]{$j\hspace{-1mm}+\hspace{-1mm}1$}--(24,-2);
\draw(25,2) node[above]{$1$}--(26,-2);
\draw(26,2) node[above]{$2$}--(27,-2);
\draw(27,2) node[above]{$\cdots$};
\draw(28,2) node[above]{$i\hspace{-1mm}-\hspace{-1mm}1$}--(29,-2);
\draw(29,2) node[above]{$i$}--(30,-2);
\draw(30,2) node[above]{$j$}--(25,-2);
\draw(18,-2) node[below]{$(-1)^{\ell+i+1}\varepsilon_{02} \varepsilon_{23} \varepsilon_{i-1,i}$};
\end{braid}.
\end{align*}
Now the \((2,0,2)\)-braid opens, followed by the \((3,2,3)\)- through \((j,i,j)\)-braids in succession, giving \((-1)^{\ell+i+1}\varepsilon_{i,i+1}(\psi_{j,i} \otimes \psi_{i,j})v_{i,j}\), as desired.

\underline{Case \({\tt c}_{i,j}=-1\), \({\tt C}= {\tt D}_{\ell}^{(1)}\), \(\ell-2 \leq i,j \leq \ell\)}. We will check the case \(i=\ell-2\), \(j=\ell\). The other cases are similar. We write \(\sigma' v_{i,j}\) diagrammatically:
\begin{align*}
 \begin{braid}\tikzset{baseline=0mm}
\draw(0,2) node[above]{$0$}--(0,-2);
\draw(1,2) node[above]{$2$}--(9,-2);
\draw(2,2) node[above]{$\cdots$};
\draw(3,2) node[above]{$\ell$}--(11,-2);
\draw(4,2) node[above]{$1$}--(12,-2);
\draw(5,2) node[above]{$\cdots$};
\draw(6,2) node[above]{$\ell\hspace{-1mm}-\hspace{-1mm}2$}--(14,-2);
\draw(7,2) node[above]{$0$}--(3.5,0)--(8,-2);
\draw(8,2) node[above]{$2$}--(1,-2);
\draw(9,2) node[above]{$\cdots$};
\draw(10,2) node[above]{$\ell\hspace{-1mm}-\hspace{-1mm}1$}--(3,-2);
\draw(11,2) node[above]{$1$}--(4,-2);
\draw(12,2) node[above]{$\cdots$};
\draw(13,2) node[above]{$\ell\hspace{-1mm}-\hspace{-1mm}2$}--(6,-2);
\draw(14,2) node[above]{$\ell$}--(7,-2);
\end{braid}.
\end{align*}
Dragging the \(0\)-strand to the right, the \((2,0,2)\)-braid opens, then the \((3,2,3)\)- through \((\ell-1,\ell-2,\ell-1)\)-braids open in succession. The \((\ell-3,\ell-2,\ell-2)\)-braid opens, and then the \((\ell-4,\ell-3,\ell-4)\)- through \((1,2,1)\)-braids open in succession, giving (omitting straight strands on the left):
\begin{align*}
 \begin{braid}\tikzset{baseline=0mm}
\draw(0,2) node[above]{$\ell$}--(12,-2);
\draw(1,2) node[above]{$1$}--(0,1)--(3,-0.5)--(0,-2);
\draw(2,2) node[above]{$2$}--(1,1)--(4,-0.5)--(1,-2);
\draw(3,2) node[above]{$\cdots$};
\draw(4,2) node[above]{$\ell\hspace{-1mm}-\hspace{-1mm}3$}--(2,1)--(5,-0.5)--(3,-2);
\draw(5,2) node[above]{$\ell\hspace{-1mm}-\hspace{-1mm}2$}--(17,-2);
\draw(6,2) node[above]{$0$}--(1,-0.5)--(6,-2);
\draw(7,2) node[above]{$2$}--(2,-0.5)--(7,-2);
\draw(8,2) node[above]{$\cdots$};
\draw(9,2) node[above]{$\ell\hspace{-1mm}-\hspace{-1mm}3$}--(3,-0.5)--(9,-2);
\draw(10,2) node[above]{$\ell\hspace{-1mm}-\hspace{-1mm}2$}--(5,-0.5)--(10,-2);
\draw(11,2) node[above]{$\ell\hspace{-1mm}-\hspace{-1mm}1$}--(6,-0.5)--(11,-2);
\draw(12,2) node[above]{$1$}--(7.5,0)--(13,-2);
\draw(13,2) node[above]{$2$}--(8.25,0)--(14,-2);
\draw(14,2) node[above]{$\cdots$};
\draw(15,2) node[above]{$\ell\hspace{-1mm}-\hspace{-1mm}3$}--(9,0)--(16,-2);
\draw(16,2) node[above]{$\ell\hspace{-1mm}-\hspace{-1mm}2$}--(4,-2);
\draw(17,2) node[above]{$\ell$}--(5,-2);
\draw(9,-2) node[below]{$-\varepsilon_{02}\varepsilon_{12}\varepsilon_{\ell-2,\ell-1}$};
\end{braid}
=
 \begin{braid}\tikzset{baseline=0mm}
\draw(0,2) node[above]{$\ell$}--(12,-2);
\draw(1,2) node[above]{$1$}--(0,1)--(0,-2);
\draw(2,2) node[above]{$2$}--(1,1)--(1,-2);
\draw(3,2) node[above]{$\cdots$};
\draw(4,2) node[above]{$\ell\hspace{-1mm}-\hspace{-1mm}3$}--(2,1)--(2,-2);
\draw(5,2) node[above]{$\ell\hspace{-1mm}-\hspace{-1mm}2$}--(17,-2);
\draw(6,2) node[above]{$0$}--(2.6,-0.5)--(6,-2);
\draw(7,2) node[above]{$2$}--(3.4,-0.5)--(7,-2);
\draw(8,2) node[above]{$\cdots$};
\draw(9,2) node[above]{$\ell\hspace{-1mm}-\hspace{-1mm}3$}--(4.2,-0.5)--(9,-2);
\draw(10,2) node[above]{$\ell\hspace{-1mm}-\hspace{-1mm}2$}--(5,-0.5)--(10,-2);
\draw(11,2) node[above]{$\ell\hspace{-1mm}-\hspace{-1mm}1$}--(6,-0.5)--(11,-2);
\draw(12,2) node[above]{$1$}--(7.5,0)--(13,-2);
\draw(13,2) node[above]{$2$}--(8.25,0)--(14,-2);
\draw(14,2) node[above]{$\cdots$};
\draw(15,2) node[above]{$\ell\hspace{-1mm}-\hspace{-1mm}3$}--(9,0)--(16,-2);
\draw(16,2) node[above]{$\ell\hspace{-1mm}-\hspace{-1mm}2$}--(4,-2);
\draw(17,2) node[above]{$\ell$}--(5,-2);
\draw(9,-2) node[below]{$-\varepsilon_{\ell-2,\ell-1}$};
\end{braid},
\end{align*}
after the \((\ell-4,\ell-3)\)- through \((1,2)\)-double crossings open in succession, followed by the \((2,0,2)\)- and \((3,2,3)\)- through \((\ell-3,\ell-4,\ell-3)\)-braids opening in succession. Now the \((\ell-2,\ell-3,\ell-2)\)-braid opens, introducing an \((\ell-2, \ell-1)\)-double crossing which opens, followed by \((\ell-2, \ell-3,\ell-2)\)- and \((\ell,\ell-2,\ell)\)-braids opening, which gives \(-\varepsilon_{ij}(\psi_{j,i} \otimes \psi_{i,j})v_{i,j}\), as desired.

\underline{Case \({\tt c}_{i,j}=0\), all types}. By the usual manipulations of KLR elements (cf. \cite[\S 2.6]{BKW}), we may write \(\sigma' v_{i,j}\) as a sum of terms of the form \(1_{\bb^j \bb^i}\psi_w (x_i \otimes x_j)\), where \(x_i \in \Delta_{\delta,i}\) and \(x_j \in \Delta_{\delta,j}\), and \(w \triangleleft \sigma'\) (where we consider \(\sigma'\) as an element of \(\mathfrak{S}_{2d})\) is a minimal left coset representative for \(\mathfrak{S}_{2d} / \mathfrak{S}_d \times \mathfrak{S}_d\). Since \((\bb^j \bb^i)_1 = (\bb^j \bb^i)_{d+1} = 0\) and \(i_1 = 0\) for every word \(\bi\) of \(\Delta_{\delta,i}\) and \(\Delta_{\delta,j}\), it follows that \(w=\textup{id}\). But \(1_j \Delta_{\delta,i} = 0\) by Lemma \ref{MinDelHom}, so  \(\sigma' v_{i,j} = 0\).
\end{proof}

\noindent{\bf Lemma \ref{psisigma}.}
{\em
Let \(i,j,m \in I'\) with \({\tt c}_{i,j}=-1\). Then we have}
\begin{align*}
(\psi_{j,i} \otimes 1)\sigma v_{m,i} = [\sigma(1 \otimes \psi_{j,i}) + \delta_{j,m} \xi_j(1 \otimes \psi_{j,i}) - \delta_{i,m} \xi_i(\psi_{j,i} \otimes 1)]v_{m,i}.
\end{align*}

\begin{proof}
\underline{Case \(m=j\), \({\tt C}={\tt A}^{(1)}_\ell\)}. Since \(i\) and \(j\) are neighbors, either \(j=i-1\) or \(j=i+1\). We will prove the claim in the former case; the latter is similar. We depict \((\psi_{j,i}\otimes 1) \sigma v_{j,i}\) diagrammatically: 
\begin{align*}
\begin{braid}\tikzset{baseline=0mm}
\draw(0,2) node[above]{$0$}--(8,-2)--(8,-3);
\draw(1,2) node[above]{$1$}--(9,-2)--(9,-3);
\draw(2,2) node[above]{$\cdots$};
\draw(3,2) node[above]{$j\hspace{-0.8mm}-\hspace{-0.8mm}1$}--(11,-2)--(11,-3);
\draw(4,2) node[above]{$\ell$}--(12,-2)--(12,-3);
\draw(5,2) node[above]{$\cdots$};
\draw(6,2) node[above]{$i$}--(14,-2)--(14,-3);
\draw(7,2) node[above]{$j$}--(15,-2)--(15,-3);
\draw(8,2) node[above]{$0$}--(0,-2)--(0,-3);
\draw(9,2) node[above]{$1$}--(1,-2)--(1,-3);
\draw(10,2) node[above]{$\cdots$};
\draw(11,2) node[above]{$j\hspace{-0.8mm}-\hspace{-0.8mm}1$}--(3,-2)--(3,-3);
\draw(12,2) node[above]{$j$}--(4,-2)--(7,-3);
\draw(13,2) node[above]{$\ell$}--(5,-2)--(4,-3);
\draw(14,2) node[above]{$\cdots$};
\draw(15,2) node[above]{$i$}--(7,-2)--(6,-3);
\end{braid}
=
\begin{braid}\tikzset{baseline=0mm}
\draw(0,2) node[above]{$0$}--(8,-2)--(8,-3);
\draw(1,2) node[above]{$1$}--(9,-2)--(9,-3);
\draw(2,2) node[above]{$\cdots$};
\draw(3,2) node[above]{$j\hspace{-0.8mm}-\hspace{-0.8mm}1$}--(11,-2)--(11,-3);
\draw(4,2) node[above]{$\ell$}--(12,-2)--(12,-3);
\draw(5,2) node[above]{$\cdots$};
\draw(6,2) node[above]{$i$}--(14,-2)--(14,-3);
\draw(7,2) node[above]{$j$}--(15,-2)--(15,-3);
\draw(8,2) node[above]{$0$}--(0,-2)--(0,-3);
\draw(9,2) node[above]{$1$}--(1,-2)--(1,-3);
\draw(10,2) node[above]{$\cdots$};
\draw(11,2) node[above]{$j\hspace{-0.8mm}-\hspace{-0.8mm}1$}--(3,-2)--(3,-3);
\draw(12,2) node[above]{$j$}--(12,1)--(9.5,-0.2)--(11,-1)--(7,-3);
\draw(13,2) node[above]{$\ell$}--(5,-2)--(4,-3);
\draw(14,2) node[above]{$\cdots$};
\draw(15,2) node[above]{$i$}--(7,-2)--(6,-3);
\end{braid}
\end{align*}
Applying the braid relation to the \((i,j,i)\)-braid, we have \(\sigma(1 \otimes \psi_{j,i}) v_{j,i}\), plus the error term:
\begin{align*}
\begin{braid}\tikzset{baseline=0mm}
\draw(0,2) node[above]{$0$}--(10,-2)--(10,-3);
\draw(1,2) node[above]{$1$}--(11,-2)--(11,-3);
\draw(2,2) node[above]{$\cdots$};
\draw(3,2) node[above]{$j\hspace{-0.8mm}-\hspace{-0.8mm}1$}--(12.5,-2)--(12.5,-3);
\draw(4,2) node[above]{$\ell$}--(13.5,-2)--(13.5,-3);
\draw(5,2) node[above]{$\cdots$};
\draw(6,2) node[above]{$i\hspace{-0.8mm}+\hspace{-0.8mm}1$}--(15,-2)--(15,-3);
\draw(7,2) node[above]{$i$}--(12.5,-0.3)--(8,-3);
\draw(8,2) node[above]{$j$}--(17,-2)--(17,-3);
\draw(9,2) node[above]{$0$}--(0,-2)--(0,-3);
\draw(10,2) node[above]{$1$}--(1,-2)--(1,-3);
\draw(11,2) node[above]{$\cdots$};
\draw(12,2) node[above]{$j\hspace{-0.8mm}-\hspace{-0.8mm}1$}--(3.5,-2)--(3.5,-3);
\draw(13,2) node[above]{$j$}--(13,-0.5)--(9,-3);
\draw(14,2) node[above]{$\ell$}--(4.5,-2)--(4.5,-3);
\draw(15,2) node[above]{$\cdots$};
\draw(16,2) node[above]{$i\hspace{-0.8mm}+\hspace{-0.8mm}1$}--(7,-2)--(7,-3);
\draw(17,2) node[above]{$i$}--(16,-3);
\draw(8.5,-3) node[below]{$\varepsilon_{j,i}$};
\end{braid}.
\end{align*}
the \((i,i+1,i)\)- through \((\ell, \ell-1, \ell)\)-braids open in succession, giving
\begin{align*}
\begin{braid}\tikzset{baseline=0mm}
\draw(0,2) node[above]{$0$}--(8,-2)--(8,-3);
\draw(1,2) node[above]{$1$}--(9,-2)--(9,-3);
\draw(2,2) node[above]{$\cdots$};
\draw(3,2) node[above]{$j\hspace{-0.8mm}-\hspace{-0.8mm}1$}--(11,-2)--(11,-3);
\draw(4,2) node[above]{$\ell$}--(8,0)--(4,-3);
\draw(5,2) node[above]{$\cdots$};
\draw(6,2) node[above]{$i$}--(10,0)--(6,-3);
\draw(7,2) node[above]{$j$}--(15,-2)--(15,-3);
\draw(8,2) node[above]{$0$}--(0,-2)--(0,-3);
\draw(9,2) node[above]{$1$}--(1,-2)--(1,-3);
\draw(10,2) node[above]{$\cdots$};
\draw(11,2) node[above]{$j\hspace{-0.8mm}-\hspace{-0.8mm}1$}--(3,-2)--(3,-3);
\draw(12,2) node[above]{$j$}--(11,0)--(7,-3);
\draw(13,2) node[above]{$\ell$}--(12,-3);
\draw(14,2) node[above]{$\cdots$};
\draw(15,2) node[above]{$i$}--(14,-3);
\draw(7.5,-3) node[below]{$\varepsilon_{i-1,i}\varepsilon_{i+1,i}\cdots \varepsilon_{\ell,\ell-1}$};
\end{braid}
=
\begin{braid}\tikzset{baseline=0mm}
\draw(0,2) node[above]{$0$}--(0,-3);
\draw(1,2) node[above]{$1$}--(9,-2)--(9,-3);
\draw(2,2) node[above]{$\cdots$};
\draw(3,2) node[above]{$j\hspace{-0.8mm}-\hspace{-0.8mm}1$}--(11,-2)--(11,-3);
\draw(4,2) node[above]{$\ell$}--(1,1)--(1,-1)--(4,-3);
\draw(5,2) node[above]{$\cdots$};
\draw(6,2) node[above]{$i$}--(3,1)--(3,-1)--(6,-3);
\draw(7,2) node[above]{$j$}--(15,-2)--(15,-3);
\draw(8,2) node[above]{$0$}--(4,0)--(8,-3);
\draw(9,2) node[above]{$1$}--(1,-2)--(1,-3);
\draw(10,2) node[above]{$\cdots$};
\draw(11,2) node[above]{$j\hspace{-0.8mm}-\hspace{-0.8mm}1$}--(3,-2)--(3,-3);
\draw(12,2) node[above]{$j$}--(11,0)--(7,-3);
\draw(13,2) node[above]{$\ell$}--(12,-3);
\draw(14,2) node[above]{$\cdots$};
\draw(15,2) node[above]{$i$}--(14,-3);
\draw(7.5,-3) node[below]{$\varepsilon_{i-1,i}\varepsilon_{i+1,i}\cdots \varepsilon_{\ell,\ell-1}\varepsilon_{0,\ell}$};
\end{braid},
\end{align*}
after the \((0,\ell,0)\)-braid opens. Now, the \((1,0,1)\)- through \((j,j-1,j)\)-braids open in succession, giving \( \xi_j(1 \otimes \psi_{j,i})v_{j,i}\), as desired. 

\underline{Case \(m=i\), \({\tt C}={\tt A}^{(1)}_\ell\)}. We show that the claim holds in the case \(i=j+1\); the case \(i=j-1\) is similar. We depict \((\psi_{j,i} \otimes 1)\sigma v_{i,i}\) diagrammatically: 
\begin{align*}
\begin{braid}\tikzset{baseline=0mm}
\draw(0,2) node[above]{$0$}--(8,-2)--(8,-3);
\draw(1,2) node[above]{$1$}--(9,-2)--(9,-3);
\draw(2,2) node[above]{$\cdots$};
\draw(3,2) node[above]{$j\hspace{-0.8mm}-\hspace{-0.8mm}1$}--(11,-2)--(11,-3);
\draw(4,2) node[above]{$j$}--(12,-2)--(12,-3);
\draw(5,2) node[above]{$\ell$}--(13,-2)--(13,-3);
\draw(6,2) node[above]{$\cdots$};
\draw(7,2) node[above]{$i$}--(15,-2)--(15,-3);
\draw(8,2) node[above]{$0$}--(0,-2)--(0,-3);
\draw(9,2) node[above]{$1$}--(1,-2)--(1,-3);
\draw(10,2) node[above]{$\cdots$};
\draw(11,2) node[above]{$j\hspace{-0.8mm}-\hspace{-0.8mm}1$}--(3,-2)--(3,-3);
\draw(12,2) node[above]{$j$}--(4,-2)--(7,-3);
\draw(13,2) node[above]{$\ell$}--(5,-2)--(4,-3);
\draw(14,2) node[above]{$\cdots$};
\draw(15,2) node[above]{$i$}--(7,-2)--(6,-3);
\end{braid}
=
\begin{braid}\tikzset{baseline=0mm}
\draw(0,2) node[above]{$0$}--(8,-2)--(8,-3);
\draw(1,2) node[above]{$1$}--(9,-2)--(9,-3);
\draw(2,2) node[above]{$\cdots$};
\draw(3,2) node[above]{$j\hspace{-0.8mm}-\hspace{-0.8mm}1$}--(11,-2)--(11,-3);
\draw(4,2) node[above]{$j$}--(12,-2)--(12,-3);
\draw(5,2) node[above]{$\ell$}--(13,-2)--(13,-3);
\draw(6,2) node[above]{$\cdots$};
\draw(7,2) node[above]{$i$}--(15,-2)--(15,-3);
\draw(8,2) node[above]{$0$}--(0,-2)--(0,-3);
\draw(9,2) node[above]{$1$}--(1,-2)--(1,-3);
\draw(10,2) node[above]{$\cdots$};
\draw(11,2) node[above]{$j\hspace{-0.8mm}-\hspace{-0.8mm}1$}--(3,-2)--(3,-3);
\draw(12,2) node[above]{$j$}--(12,1)--(10,0)--(11.5,-0.5)--(7,-3);
\draw(13,2) node[above]{$\ell$}--(5,-2)--(4,-3);
\draw(14,2) node[above]{$\cdots$};
\draw(15,2) node[above]{$i$}--(7,-2)--(6,-3);
\end{braid}.
\end{align*}
Applying the braid relation to the \((i,j,i)\)-braid, we get \(\sigma (1 \otimes \psi_{j,i})v_{i,i}\) plus the error term:
\begin{align*}
\begin{braid}\tikzset{baseline=0mm}
\draw(0,2) node[above]{$0$}--(9,-2)--(9,-3);
\draw(1,2) node[above]{$1$}--(10,-2)--(10,-3);
\draw(2,2) node[above]{$\cdots$};
\draw(3,2) node[above]{$j\hspace{-0.8mm}-\hspace{-0.8mm}1$}--(12,-2)--(12,-3);
\draw(4,2) node[above]{$j$}--(13,-2)--(13,-3);
\draw(5,2) node[above]{$\ell$}--(14,-2)--(14,-3);
\draw(6,2) node[above]{$\cdots$};
\draw(7,2) node[above]{$i\hspace{-0.8mm}+\hspace{-0.8mm}1$}--(16,-2)--(16,-3);
\draw(8,2) node[above]{$i$}--(12.5,0)--(7,-3);
\draw(9,2) node[above]{$0$}--(0,-2)--(0,-3);
\draw(10,2) node[above]{$1$}--(1,-2)--(1,-3);
\draw(11,2) node[above]{$\cdots$};
\draw(12,2) node[above]{$j\hspace{-0.8mm}-\hspace{-0.8mm}1$}--(3,-2)--(3,-3);
\draw(13,2) node[above]{$j$}--(13,-0.5)--(8,-3);
\draw(14,2) node[above]{$\ell$}--(4,-2)--(4,-3);
\draw(15,2) node[above]{$\cdots$};
\draw(16,2) node[above]{$i\hspace{-0.8mm}+\hspace{-0.8mm}1$}--(6,-2)--(6,-3);
\draw(17,2) node[above]{$i$}--(17,-3);
\draw(8.5,-3) node[below]{$\varepsilon_{j,i}$};
\end{braid}.
\end{align*}
For this term, the \((i+1,i,i+1)\)- through \((\ell, \ell-1,\ell)\)-braids open, giving
\begin{align*}
\begin{braid}\tikzset{baseline=0mm}
\draw(0,2) node[above]{$0$}--(8,-2)--(8,-3);
\draw(1,2) node[above]{$1$}--(9,-2)--(9,-3);
\draw(2,2) node[above]{$\cdots$};
\draw(3,2) node[above]{$j\hspace{-0.8mm}-\hspace{-0.8mm}1$}--(11,-2)--(11,-3);
\draw(4,2) node[above]{$j$}--(12,-2)--(12,-3);
\draw(5,2) node[above]{$\ell$}--(8.5,0.5)--(4,-3);
\draw(6,2) node[above]{$\cdots$};
\draw(7,2) node[above]{$i$}--(10.5,0.5)--(6,-3);
\draw(8,2) node[above]{$0$}--(0,-2)--(0,-3);
\draw(9,2) node[above]{$1$}--(1,-2)--(1,-3);
\draw(10,2) node[above]{$\cdots$};
\draw(11,2) node[above]{$j\hspace{-0.8mm}-\hspace{-0.8mm}1$}--(3,-2)--(3,-3);
\draw(12,2) node[above]{$j$}--(11,0)--(7,-3);
\draw(13,2) node[above]{$\ell$}--(13,-2)--(13,-3);
\draw(14,2) node[above]{$\cdots$};
\draw(15,2) node[above]{$i$}--(15,-2)--(15,-3);
\draw(8.5,-3) node[below]{$\varepsilon_{i-1,i}\varepsilon_{i+1,i} \cdots \varepsilon_{\ell,\ell-1}$};
\end{braid}
=
\begin{braid}\tikzset{baseline=0mm}
\draw(0,2) node[above]{$0$}--(0,-3);
\draw(1,2) node[above]{$1$}--(9,-2)--(9,-3);
\draw(2,2) node[above]{$\cdots$};
\draw(3,2) node[above]{$j\hspace{-0.8mm}-\hspace{-0.8mm}1$}--(11,-2)--(11,-3);
\draw(4,2) node[above]{$j$}--(12,-2)--(12,-3);
\draw(5,2) node[above]{$\ell$}--(1,0)--(4,-3);
\draw(6,2) node[above]{$\cdots$};
\draw(7,2) node[above]{$i$}--(3,0)--(6,-3);
\draw(8,2) node[above]{$0$}--(4,0)--(8,-3);
\draw(9,2) node[above]{$1$}--(1,-2)--(1,-3);
\draw(10,2) node[above]{$\cdots$};
\draw(11,2) node[above]{$j\hspace{-0.8mm}-\hspace{-0.8mm}1$}--(3,-2)--(3,-3);
\draw(12,2) node[above]{$j$}--(7,-3);
\draw(13,2) node[above]{$\ell$}--(13,-2)--(13,-3);
\draw(14,2) node[above]{$\cdots$};
\draw(15,2) node[above]{$i$}--(15,-2)--(15,-3);
\draw(8.5,-3) node[below]{$\varepsilon_{i-1,i}\varepsilon_{i+1,i} \cdots \varepsilon_{\ell,\ell-1}\varepsilon_{0,\ell}$};
\end{braid},
\end{align*}
after the \((0,\ell,0)\)-braid opens. Now the \((1,0,1)\)- through \((j,j-1,j)\)-braids open in succession, giving \(-\xi_i (\psi_{j,i} \otimes 1)v_{i,i}\), as desired.

\underline{Case \(m=j\), \({\tt C}={\tt D}^{(1)}_\ell\), \(1 \leq i,j \leq \ell-2\)}. We check that (\ref{psisigma}) holds in the case \( j=i+1\). The case \(j=i-1\) is similar. We depict \((\psi_{j,i} \otimes 1)\sigma v_{j,i}\) diagrammatically, with \(v_{j,i}\) at the top of the diagram: 
\begin{align*}
\begin{braid}\tikzset{baseline=0mm}
\draw(0,2) node[above]{$0$}--(15,-4)--(15,-5);
\draw(1,2) node[above]{$2$}--(16,-4)--(16,-5);
\draw(2,2) node[above]{$\cdots$};
\draw(3,2) node[above]{$\ell\hspace{-1.1mm}-\hspace{-1.1mm}2$}--(18,-4)--(18,-5);
\draw(4,2) node[above]{$\ell\hspace{-1.1mm}-\hspace{-1.1mm}1$}--(19,-4)--(19,-5);
\draw(5,2) node[above]{$\ell$}--(20,-4)--(20,-5);
\draw(6,2) node[above]{$\ell\hspace{-1.1mm}-\hspace{-1.1mm}2$}--(21,-4)--(21,-5);
\draw(7,2) node[above]{$\cdots$};
\draw(8,2) node[above]{$j\hspace{-0.8mm}+\hspace{-0.8mm}1$}--(23,-4)--(23,-5);
\draw(9,2) node[above]{$1$}--(24,-4)--(24,-5);
\draw(10,2) node[above]{$2$}--(25,-4)--(25,-5);
\draw(11,2) node[above]{$\cdots$};
\draw(12,2) node[above]{$i\hspace{-0.8mm}-\hspace{-0.8mm}1$}--(27,-4)--(27,-5);
\draw(13,2) node[above]{$i$}--(28,-4)--(28,-5);
\draw(14,2) node[above]{$j$}--(29,-4)--(29,-5);
\draw(15,2) node[above]{$0$}--(0,-4)--(0,-5);
\draw(16,2) node[above]{$2$}--(1,-4)--(1,-5);
\draw(17,2) node[above]{$\cdots$};
\draw(18,2) node[above]{$\ell\hspace{-1.1mm}-\hspace{-1.1mm}2$}--(3,-4)--(3,-5);
\draw(19,2) node[above]{$\ell\hspace{-1.1mm}-\hspace{-1.1mm}1$}--(4,-4)--(4,-5);
\draw(20,2) node[above]{$\ell$}--(5,-4)--(5,-5);
\draw(21,2) node[above]{$\ell\hspace{-1.1mm}-\hspace{-1.1mm}2$}--(6,-4)--(6,-5);
\draw(22,2) node[above]{$\cdots$};
\draw(23,2) node[above]{$j\hspace{-0.8mm}+\hspace{-0.8mm}1$}--(8,-4)--(8,-5);
\draw(24,2) node[above]{$j$}--(9,-4)--(14,-5);
\draw(25,2) node[above]{$1$}--(10,-4)--(9,-5);
\draw(26,2) node[above]{$2$}--(11,-4)--(10,-5);
\draw(27,2) node[above]{$\cdots$};
\draw(28,2) node[above]{$i\hspace{-0.8mm}-\hspace{-0.8mm}1$}--(13,-4)--(12,-5);
\draw(29,2) node[above]{$i$}--(14,-4)--(13,-5);
\end{braid}.
\end{align*}
The \(j\)-strand moves past the first \((i,i)\)-crossing, as the open term in the \((i,j,i)\)-braid relation is zero. This gives
\begin{align*}
\begin{braid}\tikzset{baseline=0mm}
\draw(0,2) node[above]{$0$}--(15,-4);
\draw(1,2) node[above]{$2$}--(16,-4);
\draw(2,2) node[above]{$\cdots$};
\draw(3,2) node[above]{$\ell\hspace{-1.1mm}-\hspace{-1.1mm}2$}--(18,-4);
\draw(4,2) node[above]{$\ell\hspace{-1.1mm}-\hspace{-1.1mm}1$}--(19,-4);
\draw(5,2) node[above]{$\ell$}--(20,-4);
\draw(6,2) node[above]{$\ell\hspace{-1.1mm}-\hspace{-1.1mm}2$}--(21,-4);
\draw(7,2) node[above]{$\cdots$};
\draw(8,2) node[above]{$j\hspace{-0.8mm}+\hspace{-0.8mm}1$}--(23,-4);
\draw(9,2) node[above]{$1$}--(24,-4);
\draw(10,2) node[above]{$2$}--(25,-4);
\draw(11,2) node[above]{$\cdots$};
\draw(12,2) node[above]{$i\hspace{-0.8mm}-\hspace{-0.8mm}1$}--(27,-4);
\draw(13,2) node[above]{$i$}--(22,-1)--(28,-4);
\draw(14,2) node[above]{$j$}--(23,-1)--(29,-4);
\draw(15,2) node[above]{$0$}--(0,-4);
\draw(16,2) node[above]{$2$}--(1,-4);
\draw(17,2) node[above]{$\cdots$};
\draw(18,2) node[above]{$\ell\hspace{-1.1mm}-\hspace{-1.1mm}2$}--(3,-4);
\draw(19,2) node[above]{$\ell\hspace{-1.1mm}-\hspace{-1.1mm}1$}--(4,-4);
\draw(20,2) node[above]{$\ell$}--(5,-4);
\draw(21,2) node[above]{$\ell\hspace{-1.1mm}-\hspace{-1.1mm}2$}--(6,-4);
\draw(22,2) node[above]{$\cdots$};
\draw(23,2) node[above]{$j\hspace{-0.8mm}+\hspace{-0.8mm}1$}--(8,-4);
\draw(24,2) node[above]{$j$}--(24,0)--(21,-1)--(22.2,-1.6)--(14,-4);
\draw(25,2) node[above]{$1$}--(9,-4);
\draw(26,2) node[above]{$2$}--(10,-4);
\draw(27,2) node[above]{$\cdots$};
\draw(28,2) node[above]{$i\hspace{-0.8mm}-\hspace{-0.8mm}1$}--(12,-4);
\draw(29,2) node[above]{$i$}--(25,0)--(13,-4);
\end{braid}.
\end{align*}
Applying the braid relation to the \((i,j,i)\)-braid, we have \(\sigma(1 \otimes \psi_{j,i})v_{j,j}\), plus a remainder term. Now we simplify the remainder term. The \((i,j,i)\)-braid opens, followed by the \((i-1,i,i-1)\)- through \((1,2,1)\)-braids opening in succession. This gives
\begin{align*}
\begin{braid}\tikzset{baseline=0mm}
\draw(0,2) node[above]{$0$}--(15,-4);
\draw(1,2) node[above]{$2$}--(16,-4);
\draw(2,2) node[above]{$\cdots$};
\draw(3,2) node[above]{$\ell\hspace{-1.1mm}-\hspace{-1.1mm}2$}--(18,-4);
\draw(4,2) node[above]{$\ell\hspace{-1.1mm}-\hspace{-1.1mm}1$}--(19,-4);
\draw(5,2) node[above]{$\ell$}--(20,-4);
\draw(6,2) node[above]{$\ell\hspace{-1.1mm}-\hspace{-1.1mm}2$}--(21,-4);
\draw(7,2) node[above]{$\cdots$};
\draw(8,2) node[above]{$j\hspace{-0.8mm}+\hspace{-0.8mm}1$}--(23,-4);
\draw(9,2) node[above]{$1$}--(7.3,0)--(9.5,-1)--(7.3,-2)--(9,-4);
\draw(10,2) node[above]{$2$}--(17.5,-1)--(10,-4);
\draw(11,2) node[above]{$\cdots$};
\draw(12,2) node[above]{$i\hspace{-0.8mm}-\hspace{-0.8mm}1$}--(19.5,-1)--(12,-4);
\draw(13,2) node[above]{$i$}--(20.5,-1)--(13,-4);
\draw(14,2) node[above]{$j$}--(29,-4);
\draw(15,2) node[above]{$0$}--(0,-4);
\draw(16,2) node[above]{$2$}--(1,-4);
\draw(17,2) node[above]{$\cdots$};
\draw(18,2) node[above]{$\ell\hspace{-1.1mm}-\hspace{-1.1mm}2$}--(3,-4);
\draw(19,2) node[above]{$\ell\hspace{-1.1mm}-\hspace{-1.1mm}1$}--(4,-4);
\draw(20,2) node[above]{$\ell$}--(5,-4);
\draw(21,2) node[above]{$\ell\hspace{-1.1mm}-\hspace{-1.1mm}2$}--(6,-4);
\draw(22,2) node[above]{$\cdots$};
\draw(23,2) node[above]{$j\hspace{-0.8mm}+\hspace{-0.8mm}1$}--(8,-4);
\draw(24,2) node[above]{$j$}--(21.5,-1)--(14,-4);
\draw(25,2) node[above]{$1$}--(24,-4);
\draw(26,2) node[above]{$2$}--(25,-4);
\draw(27,2) node[above]{$\cdots$};
\draw(28,2) node[above]{$i\hspace{-0.8mm}-\hspace{-0.8mm}1$}--(27,-4);
\draw(29,2) node[above]{$i$}--(28,-4);
\draw(14.5,-4) node[below]{$\varepsilon_{1,2}\cdots \varepsilon_{j-2,j-1} \varepsilon_{j,j-1}$};
\end{braid}.
\end{align*}
Now the \((2,1,2)\)- and \((0,2,0)\)-braids open, followed by the \((3,2,3)\)- through \((\ell-1, \ell-2, \ell-1)\)-braids and the \((\ell, \ell-2, \ell)\)-braid, giving
\begin{align*}
\begin{braid}\tikzset{baseline=0mm}
\draw(0,2) node[above]{$0$}--(0,-4);
\draw(1,2) node[above]{$2$}--(1,-4);
\draw(2,2) node[above]{$\cdots$};
\draw(3,2) node[above]{$\ell\hspace{-1.1mm}-\hspace{-1.1mm}2$}--(3,-4);
\draw(4,2) node[above]{$\ell\hspace{-1.1mm}-\hspace{-1.1mm}1$}--(4,-4);
\draw(5,2) node[above]{$\ell$}--(5,-4);
\draw(6,2) node[above]{$\ell\hspace{-1.1mm}-\hspace{-1.1mm}2$}--(21,-4);
\draw(7,2) node[above]{$\cdots$};
\draw(8,2) node[above]{$j\hspace{-0.8mm}+\hspace{-0.8mm}1$}--(23,-4);
\draw(9,2) node[above]{$1$}--(7,-1)--(9,-4);
\draw(10,2) node[above]{$2$}--(17,-1)--(10,-4);
\draw(11,2) node[above]{$\cdots$};
\draw(12,2) node[above]{$i\hspace{-0.8mm}-\hspace{-0.8mm}1$}--(19,-1)--(12,-4);
\draw(13,2) node[above]{$i$}--(20,-1)--(13,-4);
\draw(14,2) node[above]{$j$}--(29,-4);
\draw(15,2) node[above]{$0$}--(8,-1)--(15,-4);
\draw(16,2) node[above]{$2$}--(9,-1)--(16,-4);
\draw(17,2) node[above]{$\cdots$};
\draw(18,2) node[above]{$\ell\hspace{-1.1mm}-\hspace{-1.1mm}2$}--(10,-1)--(18,-4);
\draw(19,2) node[above]{$\ell\hspace{-1.1mm}-\hspace{-1.1mm}1$}--(11,-1)--(19,-4);
\draw(20,2) node[above]{$\ell$}--(12,-1)--(20,-4);
\draw(21,2) node[above]{$\ell\hspace{-1.1mm}-\hspace{-1.1mm}2$}--(6,-4);
\draw(22,2) node[above]{$\cdots$};
\draw(23,2) node[above]{$j\hspace{-0.8mm}+\hspace{-0.8mm}1$}--(8,-4);
\draw(24,2) node[above]{$j$}--(21.5,-1)--(14,-4);
\draw(25,2) node[above]{$1$}--(24,-4);
\draw(26,2) node[above]{$2$}--(25,-4);
\draw(27,2) node[above]{$\cdots$};
\draw(28,2) node[above]{$i\hspace{-0.8mm}-\hspace{-0.8mm}1$}--(27,-4);
\draw(29,2) node[above]{$i$}--(28,-4);
\draw(14.5,-4) node[below]{$\varepsilon_{02}\varepsilon_{j,j+1}\cdots\varepsilon_{\ell-2,\ell-1} \varepsilon_{\ell-2,\ell}
$};
\end{braid}.
\end{align*}
Now the \((\ell-2,\ell,\ell-2)\)-braid opens, followed by the \((\ell-3,\ell-2,\ell-3)\)- through \((j,j+1,j)\)-braids opening in succession, giving
 \begin{align*}
\begin{braid}\tikzset{baseline=0mm}
\draw(0,2) node[above]{$0$}--(0,-4);
\draw(1,2) node[above]{$2$}--(1,-4);
\draw(2,2) node[above]{$\cdots$};
\draw(3,2) node[above]{$\ell\hspace{-1.1mm}-\hspace{-1.1mm}2$}--(3,-4);
\draw(4,2) node[above]{$\ell\hspace{-1.1mm}-\hspace{-1.1mm}1$}--(4,-4);
\draw(5,2) node[above]{$\ell$}--(5,-4);
\draw(6,2) node[above]{$\ell\hspace{-1.1mm}-\hspace{-1.1mm}2$}--(12,-1)--(6,-4);
\draw(7,2) node[above]{$\cdots$};
\draw(8,2) node[above]{$j\hspace{-0.8mm}+\hspace{-0.8mm}1$}--(13,-1)--(8,-4);
\draw(9,2) node[above]{$1$}--(7,-1)--(9,-4);
\draw(10,2) node[above]{$2$}--(14,-1)--(10,-4);
\draw(11,2) node[above]{$\cdots$};
\draw(12,2) node[above]{$i\hspace{-0.8mm}-\hspace{-0.8mm}1$}--(16,-1)--(12,-4);
\draw(13,2) node[above]{$i$}--(17,-1)--(13,-4);
\draw(14,2) node[above]{$j$}--(18,-1)--(14,-4);
\draw(15,2) node[above]{$0$}--(8,-1)--(15,-4);
\draw(16,2) node[above]{$2$}--(9,-1)--(16,-4);
\draw(17,2) node[above]{$\cdots$};
\draw(18,2) node[above]{$\ell\hspace{-1.1mm}-\hspace{-1.1mm}2$}--(10,-1)--(18,-4);
\draw(19,2) node[above]{$\ell\hspace{-1.1mm}-\hspace{-1.1mm}1$}--(11,-1)--(19,-4);
\draw(20,2) node[above]{$\ell$}--(20,-4);
\draw(21,2) node[above]{$\ell\hspace{-1.1mm}-\hspace{-1.1mm}2$}--(21,-4);
\draw(22,2) node[above]{$\cdots$};
\draw(23,2) node[above]{$j\hspace{-0.8mm}+\hspace{-0.8mm}1$}--(23,-4);
\draw(24,2) node[above]{$j$}--(29,-4);
\draw(25,2) node[above]{$1$}--(24,-4);
\draw(26,2) node[above]{$2$}--(25,-4);
\draw(27,2) node[above]{$\cdots$};
\draw(28,2) node[above]{$i\hspace{-0.8mm}-\hspace{-0.8mm}1$}--(27,-4);
\draw(29,2) node[above]{$i$}--(28,-4);
\draw(14.5,-4) node[below]{$(-1)^{\ell +j+1}\varepsilon_{02}
\varepsilon_{\ell-2,\ell-1} 
$};
\end{braid}.
\end{align*}
Now the \((\ell-2,\ell-1)\)-double crossing opens. Then the \((\ell-2,\ell-3,\ell-2)\)-braid opens, followed by the \((\ell-3,\ell-2)\)-double crossing. This pattern repeats until the \((j+2,j+1,j+2)\)-braid opens, followed by the \((j+1,j+2)\)-double crossing. Then the \((j+1,j,j+1)\)-braid opens, which gives (omitting strands outside the central area)
 \begin{align*}
\begin{braid}\tikzset{baseline=0mm}
\draw(9,2) node[above]{$1$}--(9,-4);
\draw(10,2) node[above]{$2$}--(17,-1)--(10,-4);
\draw(11,2) node[above]{$\cdots$};
\draw(12,2) node[above]{$i\hspace{-0.8mm}-\hspace{-0.8mm}1$}--(19,-1)--(12,-4);
\draw(13,2) node[above]{$i$}--(20,-1)--(13,-4);
\draw(14,2) node[above]{$j$}--(21,-1)--(14,-4);
\draw(15,2) node[above]{$0$}--(10,-1)--(15,-4);
\draw(16,2) node[above]{$2$}--(11,-1)--(16,-4);
\draw(17,2) node[above]{$\cdots$};
\draw(18,2) node[above]{$i\hspace{-1.1mm}-\hspace{-1.1mm}1$}--(13,-1)--(18,-4);
\draw(19,2) node[above]{$i$}--(18,-1)--(19,-4);
\draw(20,2) node[above]{$j$}--(19,-1)--(20,-4);
\draw(21,2) node[above]{$j\hspace{-1.1mm}+\hspace{-1.1mm}1$}--(20,-1)--(21,-4);
\draw(22,2) node[above]{$j\hspace{-1.1mm}+\hspace{-1.1mm}2$}--(22,-4);
\draw(15.5,-4) node[below]{$(-1)^{\ell +j+1}\varepsilon_{02}
\varepsilon_{j,j+1}$};
\draw(15,-1) node{$\cdots$};
\end{braid}
=
\begin{braid}\tikzset{baseline=0mm}
\draw(9,2) node[above]{$1$}--(9,-4);
\draw(10,2) node[above]{$2$}--(16,-4);
\draw(11,2) node[above]{$\cdots$};
\draw(12,2) node[above]{$i\hspace{-0.8mm}-\hspace{-0.8mm}1$}--(18,-4);
\draw(13,2) node[above]{$i$}--(19,-4);
\draw(14,2) node[above]{$j$}--(20,-4);
\draw(15,2) node[above]{$0$}--(12,-1)--(15,-4);
\draw(16,2) node[above]{$2$}--(10,-4);
\draw(17,2) node[above]{$\cdots$};
\draw(18,2) node[above]{$i\hspace{-1.1mm}-\hspace{-1.1mm}1$}--(12,-4);
\draw(19,2) node[above]{$i$}--(13,-4);
\draw(20,2) node[above]{$j$}--(14,-4);
\draw(21,2) node[above]{$j\hspace{-1.1mm}+\hspace{-1.1mm}1$}--(21,-4);
\draw(22,2) node[above]{$j\hspace{-1.1mm}+\hspace{-1.1mm}2$}--(22,-4);
\draw(15.5,-4) node[below]{$(-1)^{\ell +j+1}\varepsilon_{02}
\varepsilon_{23}\cdots\varepsilon_{j-1,j} 
$};
\end{braid}
\end{align*}
after the \((2,3)\)- through \((j,j+1)\)-double crossings open. Now the \((2,0,2)\)-crossing opens, followed by the \((3,2,3)\)- through \((j,j-1,j)\)-braids, giving \((-1)^{\ell+j+1}(1 \otimes \psi_{j,i})v_{j,i}\), as desired.

\underline{Case \(m=j\), \({\tt C}={\tt D}^{(1)}_\ell\), \(\ell-2 \leq i,j \leq \ell\)}. We check that the claim holds in the case \(i=\ell-2, j=\ell\). The other cases are similar. We depict \((\psi_{j,i} \otimes 1)\sigma v_{j,i}\) diagrammatically: 
\begin{align*}
\begin{braid}\tikzset{baseline=0mm}
\draw(0,2) node[above]{$0$}--(8,-2)--(8,-3);
\draw(1,2) node[above]{$2$}--(9,-2)--(9,-3);
\draw(2,2) node[above]{$\cdots$};
\draw(3,2) node[above]{$\ell\hspace{-1mm}-\hspace{-1mm}1$}--(11,-2)--(11,-3);
\draw(4,2) node[above]{$1$}--(12,-2)--(12,-3);
\draw(5,2) node[above]{$\cdots$};
\draw(6,2) node[above]{$\ell\hspace{-1mm}-\hspace{-1mm}2$}--(14,-2)--(14,-3);
\draw(7,2) node[above]{$\ell$}--(15,-2)--(15,-3);
\draw(8,2) node[above]{$0$}--(0,-2)--(0,-3);
\draw(9,2) node[above]{$2$}--(1,-2)--(1,-3);
\draw(10,2) node[above]{$\cdots$};
\draw(11,2) node[above]{$\ell$}--(3,-2)--(6,-3);
\draw(12,2) node[above]{$1$}--(4,-2)--(3,-3);
\draw(13,2) node[above]{$\cdots$};
\draw(14,2) node[above]{$\ell\hspace{-1mm}-\hspace{-1mm}2$}--(6,-2)--(5,-3);
\end{braid}.
\end{align*}
The \(\ell\)-strand moves past the first \((\ell-2,\ell-2)\)-crossing, and applying the braid relation to the next \((\ell-2,\ell,\ell-2)\)-braid gives \((1 \otimes \psi_{\ell,\ell-2})v_{\ell,\ell-2} \) plus an error term:
\begin{align*}
\begin{braid}\tikzset{baseline=0mm}
\draw(0,2) node[above]{$0$}--(8,-2)--(8,-3);
\draw(1,2) node[above]{$2$}--(9,-2)--(9,-3);
\draw(2,2) node[above]{$\cdots$};
\draw(3,2) node[above]{$\ell\hspace{-1mm}-\hspace{-1mm}1$}--(11,-2)--(11,-3);
\draw(4,2) node[above]{$1$}--(12,-2)--(12,-3);
\draw(5,2) node[above]{$\cdots$};
\draw(6,2) node[above]{$\ell\hspace{-1mm}-\hspace{-1mm}2$}--(11,-0.5)--(6,-3);
\draw(7,2) node[above]{$\ell$}--(15,-2)--(15,-3);
\draw(8,2) node[above]{$0$}--(0,-2)--(0,-3);
\draw(9,2) node[above]{$2$}--(1,-2)--(1,-3);
\draw(10,2) node[above]{$\cdots$};
\draw(11,2) node[above]{$\ell$}--(13,0)--(7,-3);
\draw(12,2) node[above]{$1$}--(4,-2)--(3,-3);
\draw(13,2) node[above]{$\cdots$};
\draw(14,2) node[above]{$\ell\hspace{-1mm}-\hspace{-1mm}3$}--(5,-3);
\draw(15,2) node[above]{$\ell\hspace{-1mm}-\hspace{-1mm}2$}--(14,-3);
\draw(7.5,-3) node[below]{$\varepsilon_{\ell,\ell-2}$}; 
\end{braid}.
\end{align*}
Now the \((\ell-3,\ell-2,\ell-3)\)- through \((1,2,1)\)-braids open in succession. Then the \((2,1,2)\)-braid and \((0,2,0)\)-braids open, followed by the \((3,2,3)\)- through \((\ell-1,\ell-2,\ell-1)\)-braids opening in succession, giving:
\begin{align*}
\begin{braid}\tikzset{baseline=0mm}
\draw(0,2) node[above]{$0$}--(0,-2);
\draw(1,2) node[above]{$2$}--(1,-2);
\draw(2,2) node[above]{$\cdots$};
\draw(3,2) node[above]{$\ell\hspace{-1mm}-\hspace{-1mm}1$}--(3,-2);
\draw(4,2) node[above]{$1$}--(4,-2);
\draw(5,2) node[above]{$2$}--(8,0)--(5,-2);
\draw(6,2) node[above]{$\cdots$};
\draw(7,2) node[above]{$\ell\hspace{-1mm}-\hspace{-1mm}2$}--(10,0)--(7,-2);
\draw(8,2) node[above]{$\ell$}--(16,-2);
\draw(9,2) node[above]{$0$}--(4,0)--(9,-2);
\draw(10,2) node[above]{$2$}--(5,0)--(10,-2);
\draw(11,2) node[above]{$\cdots$};
\draw(12,2) node[above]{$\ell\hspace{-1mm}-\hspace{-1mm}1$}--(7,0)--(12,-2);
\draw(13,2) node[above]{$\ell$}--(8,-2);
\draw(14,2) node[above]{$1$}--(13,-2);
\draw(15,2) node[above]{$\cdots$};
\draw(16,2) node[above]{$\ell\hspace{-1mm}-\hspace{-1mm}2$}--(15,-2);
\draw(7.5,-2) node[below]{$-\varepsilon_{0,2}\varepsilon_{\ell-2,\ell-1} \varepsilon_{\ell,\ell-2}$}; 
\end{braid}.
\end{align*}
Now the \((2,3)\)- through \((\ell-2,\ell-1)\)-double crossings open, introducing a \((2,0,2)\)-braid, which opens. Then the \((3,2,3)\)- through \((\ell-2,\ell-3,\ell-2)\)-braids open, followed by a \((\ell, \ell-2,\ell)\)-braid which opens, giving \(\xi_\ell(1 \otimes \psi_{\ell,\ell-2})v_{\ell, \ell-2}\), as desired.

\underline{Case \(m=i\), \({\tt C}={\tt D}^{(1)}_\ell\), \(1 \leq i,j \leq \ell-2\)}. We show that the claim holds in the case \(i=j+1\); the case \(i=j-1\) is similar. We depict \((\psi_{j,i} \otimes 1) \sigma v_{i,i}\) diagrammatically:
\begin{align*}
\begin{braid}\tikzset{baseline=0mm}
\draw(0,2) node[above]{$0$}--(10,-2)--(10,-3);
\draw(1,2) node[above]{$2$}--(11,-2)--(11,-3);
\draw(2,2) node[above]{$\cdots$};
\draw(3,2) node[above]{$\ell$}--(13,-2)--(13,-3);
\draw(4,2) node[above]{$\ell\hspace{-1mm}-\hspace{-1mm}2$}--(14,-2)--(14,-3);
\draw(5,2) node[above]{$\cdots$};
\draw(6,2) node[above]{$i\hspace{-1mm}+\hspace{-1mm}1$}--(16,-2)--(16,-3);
\draw(7,2) node[above]{$1$}--(17,-2)--(17,-3);
\draw(8,2) node[above]{$\cdots$};
\draw(9,2) node[above]{$i$}--(19,-2)--(19,-3);
\draw(10,2) node[above]{$0$}--(0,-2)--(0,-3);
\draw(11,2) node[above]{$2$}--(1,-2)--(1,-3);
\draw(12,2) node[above]{$\cdots$};
\draw(13,2) node[above]{$\ell$}--(3,-2)--(3,-3);
\draw(14,2) node[above]{$\ell\hspace{-1mm}-\hspace{-1mm}2$}--(4,-2)--(4,-3);
\draw(15,2) node[above]{$\cdots$};
\draw(16,2) node[above]{$i\hspace{-1mm}+\hspace{-1mm}1$}--(6,-2)--(9,-3);
\draw(17,2) node[above]{$1$}--(7,-2)--(6,-3);
\draw(18,2) node[above]{$\cdots$};
\draw(19,2) node[above]{$i$}--(9,-2)--(8,-3);
\end{braid}.
\end{align*}
Now the \((i+1)\)-strand moves up to the right past the first \((i,i)\)-crossing. Applying the braid relation to the next \((i,i+1,i)\)-braid gives \((1 \otimes \psi_{j,i})\sigma v_{i,i} \), plus a remainder term:
\begin{align*}
\begin{braid}\tikzset{baseline=0mm}
\draw(0,2) node[above]{$0$}--(11,-2);
\draw(1,2) node[above]{$2$}--(12,-2);
\draw(2,2) node[above]{$\cdots$};
\draw(3,2) node[above]{$\ell$}--(14,-2);
\draw(4,2) node[above]{$\ell\hspace{-1mm}-\hspace{-1mm}2$}--(15,-2);
\draw(5,2) node[above]{$\cdots$};
\draw(6,2) node[above]{$i\hspace{-1mm}+\hspace{-1mm}1$}--(17,-2);
\draw(7,2) node[above]{$1$}--(18,-2);
\draw(8,2) node[above]{$\cdots$};
\draw(9,2) node[above]{$i\hspace{-1mm}-\hspace{-1mm}1$}--(20,-2);
\draw(10,2) node[above]{$i$}--(15.5,0)--(9,-2);
\draw(11,2) node[above]{$0$}--(0,-2);
\draw(12,2) node[above]{$2$}--(1,-2);
\draw(13,2) node[above]{$\cdots$};
\draw(14,2) node[above]{$\ell$}--(3,-2);
\draw(15,2) node[above]{$\ell\hspace{-1mm}-\hspace{-1mm}2$}--(4,-2);
\draw(16,2) node[above]{$\cdots$};
\draw(17,2) node[above]{$i\hspace{-1mm}+\hspace{-1mm}1$}--(17,0)--(10,-2);
\draw(18,2) node[above]{$1$}--(6,-2);
\draw(19,2) node[above]{$\cdots$};
\draw(20,2) node[above]{$i\hspace{-1mm}-\hspace{-1mm}1$}--(8,-2);
\draw(21,2) node[above]{$i$}--(21,-2);
\draw(10.5,-2) node[below]{$\varepsilon_{i+1,i}$}; 
\end{braid}.
\end{align*}
Dragging the \(i\)-strand to the left, the \((i-1,i,i-1)\)- through \((1,2,1)\)-braids open in succession, followed by the \((2,1,2)\)- and \((0,2,0)\)-braids. Then the \((3,2,3)\)- through \((\ell-1,\ell-2,\ell-1)\)-braids open in succession, followed by the \((\ell,\ell-2,\ell)\)-braid, giving (omitting straight strands outside the central area): 
\begin{align*}
\begin{braid}\tikzset{baseline=0mm}
\draw(0,3) node[above]{$\ell\hspace{-1mm}-\hspace{-1mm}2$}--(17,-3);
\draw(1,3) node[above]{$\cdots$};
\draw(2,3) node[above]{$i\hspace{-1mm}+\hspace{-1mm}1$}--(19,-3);
\draw(3,3) node[above]{$1$}--(-1,1.2)--(-1,-1.2)--(3,-3);
\draw(4,3) node[above]{$2$}--(0.4,1.2)--(3,0)--(0.4,-1.2)--(4,-3);
\draw(5,3) node[above]{$\cdots$};
\draw(6,3) node[above]{$i\hspace{-1mm}-\hspace{-1mm}1$}--(1,1.2)--(5,0)--(1,-1.2)--(6,-3);
\draw(7,3) node[above]{$i$}--(1.6,1.2)--(6,0)--(1.6,-1.2)--(7,-3);
\draw(8,3) node[above]{$0$}--(0,0)--(9,-3);
\draw(9,3) node[above]{$2$}--(1,0)--(10,-3);
\draw(10,3) node[above]{$\cdots$};
\draw(11,3) node[above]{$i\hspace{-1mm}-\hspace{-1mm}1$}--(2,0)--(12,-3);
\draw(12,3) node[above]{$i$}--(4,0)--(13,-3);
\draw(13,3) node[above]{$i\hspace{-1mm}+\hspace{-1mm}1$}--(5,0)--(14,-3);
\draw(14,3) node[above]{$\cdots$};
\draw(15,3) node[above]{$\ell$}--(7,0)--(16,-3);
\draw(16,3) node[above]{$\ell\hspace{-1mm}-\hspace{-1mm}2$}--(0,-3);
\draw(17,3) node[above]{$\cdots$};
\draw(18,3) node[above]{$i\hspace{-1mm}+\hspace{-1mm}2$}--(2,-3);
\draw(19,3) node[above]{$i\hspace{-1mm}+\hspace{-1mm}1$}--(8,-3);
\draw(9.5,-3) node[below]{$\varepsilon_{02}\varepsilon_{i+1,i+2}\cdots \varepsilon_{\ell-2,\ell-1}\varepsilon_{\ell-2,\ell}$}; 
\end{braid}.
\end{align*}
Now the \((2,3)\)- through \((i,i+1)\)-double crossings open, introducing a \((2,0,2)\)-braid which opens, followed by \((3,2,3)\)- through \((i,i-1,i)\)-braids which open, giving:
\begin{align*}
\begin{braid}\tikzset{baseline=0mm}
\draw(0,3) node[above]{$\ell\hspace{-1mm}-\hspace{-1mm}2$}--(17,-3);
\draw(1,3) node[above]{$\cdots$};
\draw(2,3) node[above]{$i\hspace{-1mm}+\hspace{-1mm}1$}--(19,-3);
\draw(3,3) node[above]{$1$}--(-1,1.2)--(-1,-1.2)--(3,-3);
\draw(4,3) node[above]{$2$}--(0,1.2)--(0,-1.2)--(4,-3);
\draw(5,3) node[above]{$\cdots$};
\draw(6,3) node[above]{$i\hspace{-1mm}-\hspace{-1mm}1$}--(1,1.2)--(1,-1.2)--(6,-3);
\draw(7,3) node[above]{$i$}--(2,1.2)--(2,-1.2)--(7,-3);
\draw(8,3) node[above]{$0$}--(2.8,1)--(2.8,-1)--(9,-3);
\draw(9,3) node[above]{$2$}--(3.6,0.8)--(3.6,-0.8)--(10,-3);
\draw(10,3) node[above]{$\cdots$};
\draw(11,3) node[above]{$i\hspace{-1mm}-\hspace{-1mm}1$}--(4.4,0.6)--(4.4,-0.6)--(12,-3);
\draw(12,3) node[above]{$i$}--(5,0)--(13,-3);
\draw(13,3) node[above]{$i\hspace{-1mm}+\hspace{-1mm}1$}--(6,0)--(14,-3);
\draw(14,3) node[above]{$\cdots$};
\draw(15,3) node[above]{$\ell$}--(7,0)--(16,-3);
\draw(16,3) node[above]{$\ell\hspace{-1mm}-\hspace{-1mm}2$}--(0,-3);
\draw(17,3) node[above]{$\cdots$};
\draw(18,3) node[above]{$i\hspace{-1mm}+\hspace{-1mm}2$}--(2,-3);
\draw(19,3) node[above]{$i\hspace{-1mm}+\hspace{-1mm}1$}--(8,-3);
\draw(9.5,-3) node[below]{$\varepsilon_{i+2,i+1} \cdots \varepsilon_{\ell-2,\ell-3}$}; 
\end{braid}
=
\begin{braid}\tikzset{baseline=0mm}
\draw(-1,3) node[above]{$\ell\hspace{-1mm}-\hspace{-1mm}2$}--(6,0)--(-1,-3);
\draw(0,3) node[above]{$\cdots$};
\draw(1,3) node[above]{$i\hspace{-1mm}+\hspace{-1mm}2$}--(8,0)--(1,-3);
\draw(2,3) node[above]{$i\hspace{-1mm}+\hspace{-1mm}1$}--(9,0)--(5,-3);
\draw(3,3) node[above]{$1$}--(-1,0)--(2,-3);
\draw(4,3) node[above]{$\cdots$};
\draw(5,3) node[above]{$i$}--(1,0)--(4,-3);
\draw(6,3) node[above]{$0$}--(2,0)--(6,-3);
\draw(7,3) node[above]{$2$}--(3,0)--(7,-3);
\draw(8,3) node[above]{$\cdots$};
\draw(9,3) node[above]{$\ell\hspace{-1mm}-\hspace{-1mm}1$}--(5,0)--(9,-3);
\draw(4,-3) node[below]{$(-1)^{\ell+i}\varepsilon_{i,i+1} \varepsilon_{\ell-2,\ell-1}$}; 
\end{braid},
\end{align*}
after the \((\ell-2,\ell,\ell-2)\)-braid opens, followed by the \((\ell-3,\ell-2,\ell-3)\)-through \((i+1,i+2,i+1)\)-braids in succession. Now the \((\ell-2,\ell)\)-double crossing opens. The \((\ell-2,\ell-3,\ell-2)\)-braid opens, followed by an \((\ell-2,\ell-3)\)-double crossing which opens. This sequence repeats until the \((i+2,i+1,i+2)\)-braid opens, followed by an \((i+2,i+1)\)-double crossing which opens. Finally, the \((i+1,i,i+1)\)-braid opens, giving \(-\xi_i(\psi_{j,i} \otimes 1)v_{i,i}\), as desired. 

\underline{Case \(m=i\), \({\tt C}={\tt D}^{(1)}_\ell\), \(\ell-2 \leq i,j \leq \ell\)}. We show that the claim holds in the case \(i=\ell-2, j=\ell\); the other cases are similar. We depict \((\psi_{j,i} \otimes 1) \sigma v_{i,i}\) diagrammatically:
\begin{align*}
\begin{braid}\tikzset{baseline=0mm}
\draw(0,2) node[above]{$0$}--(8,-2)--(8,-3);
\draw(1,2) node[above]{$2$}--(9,-2)--(9,-3);
\draw(2,2) node[above]{$\cdots$};
\draw(3,2) node[above]{$\ell\hspace{-1mm}-\hspace{-1mm}1$}--(11,-2)--(11,-3);
\draw(4,2) node[above]{$\ell$}--(12,-2)--(12,-3);
\draw(5,2) node[above]{$1$}--(13,-2)--(13,-3);
\draw(6,2) node[above]{$\cdots$};
\draw(7,2) node[above]{$\ell\hspace{-1mm}-\hspace{-1mm}2$}--(15,-2)--(15,-3);
\draw(8,2) node[above]{$0$}--(0,-2)--(0,-3);
\draw(9,2) node[above]{$2$}--(1,-2)--(1,-3);
\draw(10,2) node[above]{$\cdots$};
\draw(11,2) node[above]{$\ell\hspace{-1mm}-\hspace{-1mm}1$}--(3,-2)--(3,-3);
\draw(12,2) node[above]{$\ell$}--(4,-2)--(7,-3);
\draw(13,2) node[above]{$1$}--(5,-2)--(4,-3);
\draw(14,2) node[above]{$\cdots$};
\draw(15,2) node[above]{$\ell\hspace{-1mm}-\hspace{-1mm}2$}--(7,-2)--(6,-3);
\end{braid}.
\end{align*}
The \(\ell\)-strand moves up past the first \((\ell-2,\ell-2)\)-crossing. Applying the braid relation to the next \((\ell-2,\ell,\ell-2)\)-braid gives \(\sigma(1 \otimes \psi_{\ell,\ell-2})v_{\ell-2, \ell-2} \), plus an error term:
\begin{align*}
\begin{braid}\tikzset{baseline=0mm}
\draw(0,2) node[above]{$0$}--(9,-2);
\draw(1,2) node[above]{$2$}--(10,-2);
\draw(2,2) node[above]{$\cdots$};
\draw(3,2) node[above]{$\ell\hspace{-1mm}-\hspace{-1mm}1$}--(12,-2);
\draw(4,2) node[above]{$\ell$}--(13,-2);
\draw(5,2) node[above]{$1$}--(14,-2);
\draw(6,2) node[above]{$\cdots$};
\draw(7,2) node[above]{$\ell\hspace{-1mm}-\hspace{-1mm}3$}--(16,-2);
\draw(8,2) node[above]{$\ell\hspace{-1mm}-\hspace{-1mm}2$}--(12.5,0)--(7,-2);
\draw(9,2) node[above]{$0$}--(0,-2);
\draw(10,2) node[above]{$2$}--(1,-2);
\draw(11,2) node[above]{$\cdots$};
\draw(12,2) node[above]{$\ell\hspace{-1mm}-\hspace{-1mm}1$}--(3,-2);
\draw(13,2) node[above]{$\ell$}--(13,-0.3)--(8,-2);
\draw(14,2) node[above]{$1$}--(4,-2);
\draw(15,2) node[above]{$\cdots$};
\draw(16,2) node[above]{$\ell\hspace{-1mm}-\hspace{-1mm}3$}--(6,-2);
\draw(17,2) node[above]{$\ell\hspace{-1mm}-\hspace{-1mm}2$}--(17,-2);
\draw(8.5,-2) node[below]{$\varepsilon_{\ell,\ell-2}$}; 
\end{braid}.
\end{align*}
Now we simplify this error term. The \((\ell-3,\ell-2,\ell-3)\)- through \((1,2,1)\)-braids open in succession, giving (omitting straight strands to the right):
\begin{align*}
\begin{braid}\tikzset{baseline=0mm}
\draw(0,2) node[above]{$0$}--(9,-2);
\draw(1,2) node[above]{$2$}--(10,-2);
\draw(2,2) node[above]{$\cdots$};
\draw(3,2) node[above]{$\ell\hspace{-1mm}-\hspace{-1mm}1$}--(12,-2);
\draw(4,2) node[above]{$\ell$}--(13,-2);
\draw(5,2) node[above]{$1$}--(9.5,0.3)--(4,-2);
\draw(6,2) node[above]{$\cdots$};
\draw(7,2) node[above]{$\ell\hspace{-1mm}-\hspace{-1mm}3$}--(11.5,0.3)--(6,-2);
\draw(8,2) node[above]{$\ell\hspace{-1mm}-\hspace{-1mm}2$}--(12.5,0.3)--(7,-2);
\draw(9,2) node[above]{$0$}--(0,-2);
\draw(10,2) node[above]{$2$}--(1,-2);
\draw(11,2) node[above]{$\cdots$};
\draw(12,2) node[above]{$\ell\hspace{-1mm}-\hspace{-1mm}1$}--(3,-2);
\draw(13,2) node[above]{$\ell$}--(13,-0.3)--(8,-2);
\draw(6.5,-2) node[below]{$\varepsilon_{12} \cdots \varepsilon_{\ell,\ell-2}$}; 
\end{braid}
=
\begin{braid}\tikzset{baseline=0mm}
\draw(0,2) node[above]{$0$}--(0,-2);
\draw(1,2) node[above]{$2$}--(1,-2);
\draw(2,2) node[above]{$\cdots$};
\draw(3,2) node[above]{$\ell\hspace{-1mm}-\hspace{-1mm}1$}--(3,-2);
\draw(4,2) node[above]{$\ell$}--(13,-2);
\draw(5,2) node[above]{$1$}--(4,0.5)--(4,-2);
\draw(6,2) node[above]{$2$}--(4.5,0.5)--(8.5,-1)--(5,-2);
\draw(7,2) node[above]{$\cdots$};
\draw(8,2) node[above]{$\ell\hspace{-1mm}-\hspace{-1mm}2$}--(5.5,0.5)--(9.5,-1)--(7,-2);
\draw(9,2) node[above]{$0$}--(5,-1)--(9,-2);
\draw(10,2) node[above]{$2$}--(6,-1)--(10,-2);
\draw(11,2) node[above]{$\cdots$};
\draw(12,2) node[above]{$\ell\hspace{-1mm}-\hspace{-1mm}1$}--(7.5,-1)--(12,-2);
\draw(13,2) node[above]{$\ell$}--(13,-0.3)--(8,-2);
\draw(6.5,-2) node[below]{$-\varepsilon_{02}\varepsilon_{\ell-2,\ell-1}\varepsilon_{\ell,\ell-2}$}; 
\end{braid},
\end{align*}
after the \((2,1,2)\)- and \((0,2,0)\)-braids open, followed by the \((3,2,3)\)- through \((\ell-1,\ell-2,\ell-1)\)-braids opening in succession. Now, the \((2,3)\)- through \((\ell-2,\ell-1)\)-braids open in succession. Then the \((2,0,2)\)- braid opens, followed by the \((3,2,3)\)- through \((\ell-2,\ell-3,\ell-2)\)-braids opening in succession. Finally the \((\ell,\ell-2,\ell)\)-braid opens, giving \((\psi_{\ell,\ell-2} \otimes 1)v_{\ell-2, \ell-2}\), as desired.

\underline{Case \(j \neq m \neq i\), all types}. We may write 
\begin{align*}
(\psi_{j,i} \otimes 1)\sigma v_{m,i} = \sigma(1 \otimes \psi_{j,i})v_{m,i} + (*),
\end{align*}
where \((*)\) is a linear combination of terms of the form \(1_{\bb^j\bb^m}\psi_w (x_1 \otimes x_2)\), where \(x_1 \in \Delta_{\delta,m}\), \(x_2 \in \Delta_{\delta,i}\), and \(w\triangleleft \sigma\) is a minimal left coset representative for \(\mathfrak{S}_{2d}/\mathfrak{S}_{d} \times \mathfrak{S}_d\). As in the similar case in Lemma \ref{sigmaprime}, it follows that \(\psi_w = 1\). Thus \(x_1\) is a vector of word \(\bb^j\) and \(x_2\) is a vector of word \(\bb^m\). Hence by Lemma \ref{MinDelHom}, it follows that \((*)\) is zero unless \(m\) neighbors both \(j\) and \(i\). But since \(i\) neighbors \(j\) by assumption, this cannot be the case.
\end{proof}

\end{document}